\documentclass{amsart}

\usepackage{amsmath,amsfonts,amssymb,amsthm,enumerate,tikz-cd,color}
\usetikzlibrary{matrix,arrows}

\theoremstyle{plain}
\newtheorem{Theorem}{Theorem}[section]
\newtheorem*{Theorem2*}{Theorem 1.2}
\newtheorem{Lemma}{Lemma}[section]
\newtheorem{Proposition}[Theorem]{Proposition}
\newtheorem{question}{Question}

\newtheorem{Corollary}[Theorem]{Corollary}
\newtheorem{Conjecture}[Theorem]{Conjecture}

\newtheorem*{theorem*}{Theorem}
\theoremstyle{definition}
\newtheorem{Definition}{Definition}

\theoremstyle{remark}
\newtheorem{Remark}{Remark}
\setcounter{tocdepth}{1}
\DeclareMathOperator{\spec}{Spec}
\DeclareMathOperator{\red}{red}

\DeclareMathOperator{\ad}{ad}
\DeclareMathOperator{\End}{End}

\DeclareMathOperator{\Jac}{Jac}

\DeclareMathOperator{\Sym}{Sym}
\DeclareMathOperator{\coker}{coker}
\DeclareMathOperator{\Sel}{Sel}

\DeclareMathOperator{\cry}{cr}

\DeclareMathOperator{\ch}{ch}
\DeclareMathOperator{\Hom}{Hom}
\DeclareMathOperator{\can}{can}
\DeclareMathOperator{\ext}{Ext}

\DeclareMathOperator{\loc}{loc}

\DeclareMathOperator{\Gal}{Gal}
\DeclareMathOperator{\gr}{gr}

\DeclareMathOperator{\Mat}{Mat}

\DeclareMathOperator{\rk}{rk}
\DeclareMathOperator{\fil}{fil}
\DeclareMathOperator{\Aut}{Aut}

\DeclareMathOperator{\Ker}{Ker}

\DeclareMathOperator{\res}{Res}
\DeclareMathOperator{\Div}{Div}

\newcommand{\Q}{\mathbb{Q}}
\newcommand{\Z}{\mathbb{Z}}

\newcommand{\dr}{\textrm{dR}}
\newcommand{\F}{\mathbb{F}}

\DeclareMathOperator{\NS}{NS}
\DeclareMathOperator{\codim}{codim}
\DeclareMathOperator{\Ind}{Ind}

\begin{document}

\begin{abstract}
We give new instances where Chabauty--Kim sets can be proved to be finite, by developing a notion of ``generalised height functions'' on Selmer varieties. We also explain how to compute these generalised heights in terms of iterated integrals and give  the first explicit nonabelian Chabauty result for a curve $X/\Q$ whose Jacobian has Mordell-Weil rank \emph{larger than} its genus.
\end{abstract}

\title[Quadratic Chabauty and Rational Points II]{Quadratic Chabauty and Rational Points II: Generalised Height Functions on Selmer Varieties}

\author{Jennifer S. Balakrishnan}
\address{Jennifer S. Balakrishnan, Department of Mathematics and Statistics, Boston University, 111 Cummington Mall, Boston, MA 02215, USA}
\email{jbala@bu.edu}
\author{Netan Dogra}
\address{Netan Dogra, Department of Mathematics, Imperial College London, London SW7 2AZ, UK }
\email{n.dogra@imperial.ac.uk}

\date{\today}
\maketitle

\tableofcontents

% Enter details of editor communicating this article
%\communicated{A. Editor}

\date{\today}
\maketitle

%\tableofcontents

%%%%%%%%%%%%%%%%%%%%%%%%%%%%%%%%%%%%%%%%%
\section{Introduction}\label{sec:intro}
%%%%%%%%%%%%%%%%%%%%%%%%%%%%%%%%%%%%%%%%%

Given a smooth projective curve $X$ of genus $g\geq 2$ over a number field $K$, it is known by Faltings' theorem that the set $X(K)$ of its $K$-rational points is finite, but in general there is no known method to determine this set explicitly. When the Mordell--Weil rank of the Jacobian $J$ of $X$ is less than $g$, the method of Chabauty \cite{chabauty:1941}, made effective by Coleman \cite{coleman:1985}, can determine explicit finite sets of $\mathfrak{p}$-adic points containing the set $X(K)$. In many cases, this can give a computationally feasible approach to determine the set of rational points \cite{mccallum:2012}. 

In a series of papers  \cite{kim:2005, kim:2009, kim:2008}, Kim proposed a generalisation of the Chabauty--Coleman method, which gives a nested sequence 
$$X(K_{\mathfrak{p}} )_1 \supset X(K_{\mathfrak{p}} )_2 \supset \cdots \supset X(K)$$ of sets $X(K_{\mathfrak{p}})_n$ of $\mathfrak{p}$-adic points, each containing the set $X(K)$, such that the ``depth 1'' set $X(K_{\mathfrak{p}} )_1$ is exactly the one arising from the Chabauty--Coleman method. Here $\mathfrak{p}$ is a prime of $K$ lying above a prime $p$ which splits completely and for which $X$ has good reduction.  When $K=\Q $, Kim  \cite{kim:2009} showed that  the Bloch--Kato conjectures imply the finiteness of $X(\mathbb{Q}_p)_n$ for $n$ sufficiently large.  Coates and Kim \cite{kim:2010} proved this eventual finiteness (again for $K=\Q $) in the case when $J$ has complex multiplication.  Recently, Ellenberg and Hast \cite{EH:2017} extended this result to give a new proof of Faltings' theorem for curves $X/\Q $ which are solvable covers of $\mathbb{P}^1$.

In this paper, we consider two questions about the depth 2 set $X(K_{\mathfrak{p}})_2$, continuing our previous investigation \cite{balakrishnan:2016}:

\begin{question} When can $X(K_{\mathfrak{p}})_2$ be proved to be finite?\end{question}
\begin{question} When can $X(K_{\mathfrak{p}})_2$ be computed explicitly? \end{question}

The key technical construction which we use to study these question is presented in Section \ref{sec:ht}. We define the notion of \textit{equivariant generalised $p$-adic heights}, inspired by Nekov\'a\v r's construction of $p$-adic height functions \cite{Nek:1993}. We give a brief explanation of Nekov\'a\v r's construction for divisors on $X$.
Recall that the local height on $X$ is usually defined to be a pairing on divisors of degree zero with disjoint support, and the global height is given by the sum of local heights, which only depends on the class of the divisors in the Picard group of $X$. In Nekov\'a\v r's construction, local and global heights are constructed as functions on isomorphism classes of \textit{mixed extensions}. Recall that the $\Q _p $-Kummer map allows us to associate to a divisor $D$ in $\Div ^0 (X)$ a Galois cohomology class $\kappa (D)\in H^1 (G_K ,V)$, where $V:=H^1 _{\acute{e}t}(X_{\overline{\Q }},\Q _p (1))$. Equivalently, we may think of $\kappa (D)$ as an isomorphism class of Galois representations of the form
 \[
 \rho =\left( \begin{array}{cc}
 1 & 0 \\
 * & \rho _V \\
 \end{array} \right) ,
 \]
 where $\rho _V $ is the Galois representation associated to $V$. Nekov\'a\v r associates to a pair of divisors $D_1 ,D_2 $ with disjoint support a Galois representation of the form 
 \[
 \rho =\left( \begin{array}{ccc}
 1 & 0 & 0 \\
 * & \rho _V & 0 \\
 *  & * & \chi \\ \end{array}
 \right)
 \]
 where $\chi $ is the cyclotomic character. A Galois representation of this form is referred to as a \textit{mixed extension} with graded pieces $\Q _p ,V,\Q _p (1)$. Nekov\'a\v r's $p$-adic heights are functions on isomorphism classes of such mixed extensions (with some conditions at primes above $p$). For each prime $v$, Nekov\'a\v r defines a local height function $h_v$ on mixed extensions of $G_v$-representations. The global height is then the sum of the local heights, and class field theory implies this global height is bilinear in the two off-diagonal $H^1 (G_K ,V)$-classes.

From the point of view of the Chabauty--Kim method, the interesting feature of the $p$-adic height is that this bilinear structure gives an algebraic criterion for a collection $M_v $ of mixed extensions of $G_v$-representations to come from a global $G_K$-representations. More precisely, in our previous work, we showed that if the Picard number of the Jacobian is bigger than 1, then the Chabauty--Kim method can be used to associate to each point $x$ (over any extension $L|\mathbb{Q}$) a 
$G_L$-representation $A_Z (x)$ which is a mixed extension with graded pieces $\Q _p ,V,\Q _p (1)$. We then obtain an obstruction to an adelic point $(x_v )\in \prod X(K _v )$ coming from a global point in $X(K)$: the associated mixed extensions $A_Z (x_v )$ must come from a global mixed extension, and hence there must be a `bilinear relation' between the three $*$ entries (as the contributions from primes away from $p$ are small, this can essentially be thought of as an obstruction to an element of $\prod _{v|p}X(K_v )$ coming from $X(K)$). This obstruction defines a subset intermediate between $X(K_{\mathfrak{p}})_1 $ and $X(K_{\mathfrak{p}})_2 $. Furthermore, by relating the mixed extensions $A_Z (x)$ to the ones arising in Nekov\'a\v r's theory, we gave a formula for $h_v (A_Z (x))$ as a local height pairing $h_v (A_Z (x-b,D_Z (x-b))$ between divisors. This was inspired by earlier uses of $p$-adic heights to obtain quadratic Chabauty formulae for integral points on elliptic and hyperelliptic curves  in papers of Kim \cite{kimrank1:2010} and of the first author with Kedlaya and Kim \cite{BKK:2011} and Besser and M\"{u}ller \cite{balakrishnan:2013}.

To recover $X(K_{\mathfrak{p}})_2$, we need to consider more general mixed extensions (with graded pieces $\Q _p ,V$ and $W$ for $W$ a quotient of $\wedge ^2 V$). The key technical construction of this paper is the definition of a \textit{generalised height} for such mixed extensions. As in the classical case, generalised heights gives a simple algebraic criterion for a collection of local mixed extensions to come from a global mixed extensions (see Lemma \ref{heights_prove_global} for a precise formulation). Via a twisting construction explained in Section \ref{subsec:twist}, one may associate to each point $z$ of $X(K)$ a mixed extension $A(b,z)$. This gives an explicit equation for $X(K_{\mathfrak{p}})_2$ (see Lemma \label{lemma13}), and in particular gives a necessary condition for an adelic point to come from a rational point. 
The relation between the approach of this paper (which we refer to below as ``QC2'') and previous related papers (``QC0'' \cite{balakrishnan:2013} and ``QC1'' \cite{balakrishnan:2016}) may be summarised as follows:

\begin{center}
 \begin{tabular}{r c   c   c }
 \hline
    & QC0 & QC1 & QC2 \\
    \hline
\begin{tabular}{@{}r@{}}Scope (proof \\of finiteness\\ 
of a superset)\end{tabular}     &  \begin{tabular}{@{}c@{}} $X(\mathbb{Z})$ for $X/\mathbb{Q}$\\ hyperelliptic  \\
    with $r =g$\end{tabular}&  \begin{tabular}{@{}c@{}}$X(K)$ for $X/K$ with  \\  $r<g+\rho (J)-1$, \\$K=\mathbb{Q}$ or im. quad.\end{tabular} &  \begin{tabular}{@{}c@{}}$X(K)$ for $X/K$  \\ satisfying hypotheses \\ of Theorems \ref{thm1} or \ref{thm2} \end{tabular}\\
   \hline
\begin{tabular}{@{}r@{}}Bilinear\\ structure \\used \end{tabular} & \begin{tabular}{@{}c@{}}Coleman-Gross\\
    $p$-adic height \cite{coleman:1989}\end{tabular} & \begin{tabular}{@{}c@{}} Nekov\'a\v r $p$-adic  \\ height \cite{Nek:1993} on  \\ $M_{f,T_0 } (G_{K,T} ; \Q _p ,V,\Q _p (1))$  \end{tabular}  &  \begin{tabular}{@{}c@{}}Generalised height \\ functions (\S 3) on \\ $M_{f,T_0 } (G_{K,T}; \Q_p, V ,W) $,\\  inspired by Nekov\'a\v r \end{tabular}  \\
    \hline
           \begin{tabular}{@{}r@{}}Local \\computation \\  \end{tabular} & \begin{tabular}{@{}c@{}} $h_v (z-\infty ,z-\infty )$ \end{tabular} &  \begin{tabular}{@{}c@{}} $h_v (z-b,D(b,z))$  \end{tabular} & \begin{tabular}{@{}c@{}}$h_v (A(b,z))$   \end{tabular} \\ \hline
\\
    \end{tabular}
 \end{center}
 
 Here $M_{f,T_0 } (G_{K,T}; \mathbb{Q}_p ,V,W)$ denotes the set of isomorphism classes of mixed extensions of $G_{K,T}$ representations with graded pieces $\Q _p ,V,W$ which are crystalline at all primes above $p$. See Section \ref{sec:ht} for a precise definition.

\subsection{Main results}

To address Question 1, in Section 2, we begin by recalling when, for $K=\mathbb{Q}$, finiteness of $X(\mathbb{Q}_p )_2 $ is implied by the Bloch--Kato conjectures. We also note some elementary extensions of our previous results \cite{balakrishnan:2016} on finiteness of $X(\mathbb{Q}_p )_2$ when the N\'eron--Severi group of its Jacobian is large.  We then use generalised heights to prove new finiteness results when the curve $X$ is hyperelliptic and satisfies ``Manin--Demjanenko''-type conditions, i.e., that there are isogeny factors occurring in the Jacobian with large multiplicity. To state the first main theorem, let $K=\mathbb{Q}$ or an imaginary quadratic field. We introduce the notational convention that, for an abelian variety $A$ over $K$, $$
\rho _f (A):=\left\{ \begin{array}{cl} 
\rk \NS (A)+\rk (\NS (A_{\overline{\mathbb{Q}}})^{c=-1}) & \mathrm{if}\; K=\mathbb{Q}, \\
\rk \NS (A) & \mathrm{else}, \\ \end{array} \right. $$
and 
$$
e(A):= \left\{ \begin{array}{cl}
\rk \End ^0 (A) +\rk (\End (A_{\overline{\mathbb{Q}}})^{c=-1}) & \mathrm{if}\; K=\mathbb{Q}, \\
\rk \End ^0 (A) & \mathrm{else}. \\
\end{array} \right. $$ 
Here $\NS (A)$ denotes the N\'eron--Severi group of $A$ and $\NS (A_{\overline{\mathbb{Q}}})^{c=-1}$ the subspace of $\NS (A_{\overline{\mathbb{Q}}})\otimes \mathbb{Q}$ on which complex conjugation acts by $-1$. As usual, $\End^0(A):=\mathbb{Q}\otimes \End (A)$, where $\End (A)$ denotes endomorphisms of $A$ defined over $K$.
\begin{Theorem}\label{thm1}
Let $X/K$ be a hyperelliptic curve and suppose $J$ is isogenous to $A^d \times B$, where $A$ is an abelian variety of rank $r$. If 
$$
\rho _f (A)d +d(d-1)e(A)/2-1 >\min \{d(r-\dim (A)),r^2 -\dim (A)^2 \},
$$
then $X(K_{\mathfrak{p}} )_2 $ is finite.
\end{Theorem}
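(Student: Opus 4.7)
The plan is to apply the generalized height machinery of Section \ref{sec:ht} in the Manin--Demjanenko setting, using the enlarged endomorphism algebra of $J$ to manufacture enough independent generalized height functions so that the image of $\Sel_2$ in the local Selmer variety is cut down below the dimension of the image of $X(K_{\mathfrak{p}})$.

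First, I would use the decomposition $J \sim A^d \times B$ to observe that $\End^0(J) \supset \Mat_d(\End^0(A)) \times \End^0(B)$, and that, up to isogeny, $V := H^1_{\acute{e}t}(X_{\overline{K}}, \Q_p(1)) \cong V_A^{\oplus d} \oplus V_B$. I would then assemble a large family of Galois-equivariant maps out of $\wedge^2 V$: each Néron--Severi class of $A$, together with a choice of one of the $d$ copies, gives an equivariant map $\wedge^2 V_A \to \Q_p(1)$, contributing $\rho_f(A)\, d$ maps; each $\alpha \in \End^0(A)$, together with an unordered pair of distinct copies, gives an equivariant map $V_A \otimes V_A \to \Q_p(1)$ via the Weil pairing composed with $\alpha$, contributing $d(d-1)e(A)/2$ further maps. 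The $-1$ in the theorem statement accounts for one trivial relation among these (essentially the canonical polarization of $A^d$ already being realized among the diagonal contributions). Each of these maps yields, via the construction of Section \ref{sec:ht}, a generalized height function on the relevant mixed extensions.

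Next, by Lemma \ref{heights_prove_global} together with the depth-$2$ equation of Lemma \ref{lemma13}, each independent generalized height imposes a bilinear relation that must be satisfied by the image of $\Sel_2$ inside $H^1_f(G_{K_{\mathfrak{p}}}, U_2)$; hence this image has codimension at least $\rho_f(A)d + d(d-1)e(A)/2 - 1$. On the other hand, the image of $X(K_{\mathfrak{p}})$ in the relevant local cohomology is controlled by a Manin--Demjanenko-style count on the Selmer scheme of $A^d$: using only the diagonal $\End^0(A)$-action one obtains the bound $d(r-\dim A)$, while using the full $\Mat_d(\End^0(A))$-action one obtains $r^2-\dim(A)^2$, giving the minimum of the two as the sharp bound.

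The hypothesis of the theorem then says exactly that the codimension of the global image strictly exceeds the dimension of the local image, so their intersection inside the $1$-dimensional image of $X(K_{\mathfrak{p}})$ is $0$-dimensional, which yields the finiteness of $X(K_{\mathfrak{p}})_2$. The main obstacle, I expect, will be the precise linear-independence check for the generalized heights produced from the $\Mat_d(\End^0(A))$-structure (justifying the exact $-1$ correction), and arranging the target quotient $W$ of $\wedge^2 V$ so that all the heights can be packaged into a single depth-$2$ Selmer variety; the hyperelliptic hypothesis presumably enters here, both to control the fundamental group quotient underlying the depth-$2$ setup and to ensure that the local computations extend compatibly those of \cite{balakrishnan:2016}.
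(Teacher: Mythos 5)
Your first step reproduces the paper's construction: the quotient $W$ of $\overline{\wedge ^2 V}$ built from $(\NS (A_{\overline K})\otimes \Q _p (1))^{d}\oplus (\End ^0 (A_{\overline K})\otimes \Q _p (1))^{d(d-1)/2}$, with the codimension of $\loc _{\mathfrak p}H^1 _f (G_{K,T},W)$ in $H^1 _f (G_{\mathfrak p},W)$ equal to $\rho _f (A)d+d(d-1)e(A)/2-1$ (Lemma \ref{artin-tate-bound}, the $-1$ coming from the copy of $\Q _p (1)$ killed in $\overline{\wedge ^2 V}$), and the hyperelliptic splitting (Lemma \ref{hyperellipticsplitting}) is indeed what forces the classes $A(b,z)$ into the hyperelliptic subspace. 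The gap is in how you convert the height relations into the finiteness inequality. A generalised height relation does not directly cut down $\loc _{\mathfrak p}\Sel (U)$ by one dimension per height: it equates $\widetilde h_{\mathfrak p}$ of the local class (plus bounded contributions at $T_0$) with a bilinear expression in an \emph{unknown} global class $Q\in H^1 _f (G_{K,T},V)$ subject only to $\loc _{\mathfrak p}Q=\pi _* P$, so you must first eliminate the surplus global parameters before any condition on the local point survives. The two quantities inside the $\min$ are exactly that surplus, not bounds on ``the image of $X(K_{\mathfrak p})$ in local cohomology'' (that image is just a curve): $d(r-\dim A)=\dim H^1 _f (G_{K,T},V_A ^{\oplus d})-\dim H^1 _f (G_{\mathfrak p},V_A ^{\oplus d})$, and $r^2 -\dim (A)^2=\dim \bigl(H^1 _f (G_{K,T},V)\otimes _R H^1 _f (G_{K,T},V)\bigr)-\dim \bigl(H^1 _f (G_{\mathfrak p},V)\otimes _R H^1 _f (G_{\mathfrak p},V)\bigr)$ with $R=\Mat _d (\Q )$, after the Morita collapse of the $d$ copies. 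Your assertion that the image of $\Sel _2$ has codimension at least $\rho _f (A)d+d(d-1)e(A)/2-1$ ignores this elimination and is not correct as stated; the correct criterion (Proposition \ref{maintheorem}) is that the $W$-codimension must exceed the global-minus-local excess of the $R$-equivariant tensor square, and your final ``intersection inside the $1$-dimensional image of $X(K_{\mathfrak p})$'' step should instead be: non-density of $\loc _{\mathfrak p}\Sel (U)$ in $H^1 _f (G_{\mathfrak p},U)$ plus Kim's theorem.

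Relatedly, the two branches of the $\min$ are proved by two different arguments in the paper, and your single mechanism reproduces neither. When $\rho _f (A)d+d(d-1)e(A)/2-1>d(r-\dim A)$ one takes the Galois-stable quotient $1\to W\to U\to V_A ^{\oplus d}\to 1$ of $U_2$ (this is where Lemma \ref{hyperellipticsplitting} is invoked) and concludes by a plain dimension count, $\dim \Sel (U)\leq dr+\dim H^1 _f (G_{K,T},W)<d\dim A+\dim H^1 _f (G_{\mathfrak p},W)=\dim H^1 _f (G_{\mathfrak p},U)$, with no heights at all; your attribution of this bound to the ``diagonal $\End ^0 (A)$-action'' is off. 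When instead $\rho _f (A)d+d(d-1)e(A)/2-1>r^2 -\dim (A)^2$ one uses $\Mat _d (\Q )$-equivariant heights: by the hyperelliptic splitting the class $\pi _{1*}A(b)^{(P)}\otimes \pi _{2*}A(b)^{(P)}$ factors through $H^1 _f (G_{K,T},V)\otimes _R H^1 _f (G_{K,T},V)$, of dimension $r^2$, and Proposition \ref{maintheorem} performs precisely the elimination described above. To repair your proposal you would need to add this equivariant elimination step (or the case split), since the inequality you actually verify does not imply non-dominance of $\loc _{\mathfrak p}$.
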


In Section \ref{example_shaska}, we give an  example of a genus 5 curve $X/\Q(i)$ which satisfies the hypotheses of the theorem but does not satisfy the Chabauty--Coleman bound.

Theorem \ref{thm1} is somewhat reminiscent of the following result, due to Demjanenko when $A$ is an elliptic curve and Manin in general \cite{manin:1969}, \cite[\S 5.2]{serre:1997}.
\begin{theorem*}[Manin--Demjanenko]
Let $A$ be a simple abelian variety of rank $r$, with $\dim \End ^0 (A)=h$. If $J$ is isogenous to $A^d \times B$, with $d>r/h$, then $X(\mathbb{Q})$ is finite and may be computed effectively.
\end{theorem*}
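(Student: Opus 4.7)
\emph{Proof plan.} The strategy is to apply the generalised height machinery of Section~\ref{sec:ht} to manufacture enough algebraic relations on the Selmer scheme at depth $2$ to cut it down to a finite set. The isogeny $J \sim A^d \times B$ produces a Galois-equivariant decomposition $V \cong V_A^{\oplus d} \oplus V_B$, and the endomorphism structure of $A^d$ supplies a rich space of $G_K$-equivariant classes in quotients $W$ of $\wedge^2 V$. For each such class, Section~\ref{sec:ht} (together with the twisting construction of Section~\ref{subsec:twist}) produces a generalised height and hence, via the analogue of Lemma~\ref{heights_prove_global}, a nontrivial algebraic relation satisfied by the mixed extensions $A(b,z)$ attached to $K$-rational points.

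First, I would count the available heights. Decomposing $\End^0_{G_K}(V_{A^d}) \cong \Mat_d(\End^0_{G_K}(V_A))$ and using the K\"unneth-type identification $\NS(A^d)_\mathbb{Q} \cong \NS(A)^{\oplus d}_\mathbb{Q} \oplus \End^0(A)^{\oplus \binom{d}{2}}$, suitably refined by the $c = -1$ condition when $K = \mathbb{Q}$, one obtains $\rho_f(A) d + \binom{d}{2} e(A)$ independent Galois-equivariant alternating pairings on $V$ with values in $\mathbb{Q}_p(1)$; subtracting the one absorbed by the canonical polarisation (which provides the cyclotomic trivialisation already built into the depth-$1$ setup) yields the count $\rho_f(A) d + d(d-1) e(A)/2 - 1$ on the left-hand side of the hypothesis.

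Second, I would bound the dimension of the image of the localisation map on the depth-$2$ Selmer scheme modulo depth $1$. Applying the Chabauty--Coleman estimate factor by factor to the $d$ copies of $A$ furnishes the bound $d(r - \dim A)$, while using the full endomorphism algebra $\Mat_d(\End^0(A))$ to globally control the symmetric-square piece of the Mordell--Weil group gives the alternative bound $r^2 - (\dim A)^2$; taking the minimum yields the right-hand side. The main obstacle, and the place where the hyperelliptic hypothesis does the decisive work, is to show that the height relations produced by different Galois-equivariant pairings are \emph{algebraically independent} on the Selmer variety, rather than merely distinct as abstract functions. The hyperelliptic involution acts as $-1$ on $V$ and trivially on $\wedge^2 V$, which on the one hand forces the mixed extensions $A(b,z)$ to decompose cleanly under the involution, and on the other hand suppresses the Ceresa-type obstruction classes that would otherwise produce unwanted linear dependencies among the generalised heights. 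Once independence is established, the hypothesis delivers strictly more independent algebraic relations than the dimension of the Selmer image, so the simultaneous vanishing locus of the heights inside $\prod_{v \mid \mathfrak{p}} H^1_f(G_v, U_2)$ is zero-dimensional, whence $X(K_\mathfrak{p})_2$ is finite.
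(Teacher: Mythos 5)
The statement you were asked to prove is not established anywhere in this paper by the paper's methods: it is the classical Demjanenko--Manin theorem, quoted from \cite{manin:1969} and \cite[\S 5.2]{serre:1997} purely for comparison, and the introduction says only that Theorem \ref{thm1} is ``somewhat reminiscent'' of it. Your proposal reconstructs, in outline, the proof of Theorem \ref{thm1}, and that argument has neither the same hypotheses nor the same conclusion as the statement. Concretely: (i) the hypothesis here is $d>r/h$, and it does not imply the inequality your height count requires. Take $A$ a non-CM elliptic curve over $\Q$ of Mordell--Weil rank $r=4$ and $d=5$ (so $d>r/h$): then $\rho _f (A)=e(A)=1$, your left-hand side is $5+10-1=14$, while $\min \{d(r-\dim A),\,r^2-\dim (A)^2\}=\min \{15,15\}=15$, so the criterion fails; it fails for every large $r$ with $d=r+1$, which is exactly the regime Manin--Demjanenko is designed for. (ii) You let ``the hyperelliptic hypothesis'' do the decisive work, but the statement assumes nothing about $X$ beyond $J\sim A^d\times B$; hyperellipticity is an extra hypothesis of Theorem \ref{thm1}, needed there precisely because the splitting $[IA(b,z)]=\tau _* \kappa (b+z-Z)$ of Lemma \ref{hyperellipticsplitting} is unavailable in general (the Ceresa-type class of $IA(b)$ is generically nontrivial), so your independence step is unjustified for the curves the theorem actually covers. (iii) Even where quadratic Chabauty applies, it proves finiteness of the superset $X(K_{\mathfrak{p}})_2$; this gives finiteness of $X(\Q)$ but not the assertion that $X(\Q)$ ``may be computed effectively'', which is part of the statement and is not known to follow from Chabauty--Kim finiteness (one computes a $p$-adic superset to finite precision, one does not certify the list of rational points).

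The intended proof is elementary and archimedean, using N\'eron--Tate heights rather than Selmer varieties. The isogeny gives $d$ maps $f_1 ,\ldots ,f_d :X\to A$ whose classes in $\Hom (J,A)\otimes \Q$ are independent over the division algebra $\End ^0 (A)$; composing with a $\Q$-basis of $\End ^0 (A)$ gives $dh>r$ maps independent over $\Q$. For $x\in X(\Q )$ the resulting points lie in $A(\Q )\otimes \mathbb{R}$, of dimension $r$, so every principal $(r+1)\times (r+1)$ Gram minor of the matrix of N\'eron--Tate pairings $\langle f_i (x),f_j (x)\rangle$ vanishes identically. On the other hand each pairing is, up to error terms of strictly smaller order with effective constants, a Weil height on $X$ attached to an explicit divisor class, and positivity of the Rosati trace form on $\Hom (J,A)\otimes \Q$ makes the leading quadratic form positive definite, so the same minor grows like a nonzero constant times $h(x)^{r+1}$. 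Comparing the two yields an effective bound on $h(x)$, hence finiteness and effective determination of $X(\Q )$. If you want a nonabelian-Chabauty statement in this spirit, the correct target is Theorem \ref{thm1}, whose hypotheses (hyperelliptic $X$, the $\rho _f$--$e$ inequality) and conclusion (finiteness of $X(K_{\mathfrak{p}})_2$) are genuinely different from those of Manin--Demjanenko.
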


To address Question 2, we use generalised heights to obtain equations for Selmer varieties at depth 2, and hence for the set $X(K_{\mathfrak{p}} )_2$. The equations are given in terms of height functions on Selmer varieties in Proposition \ref{exactformula1}. To get from this proposition to an explicit computation, we need a way to compute the local generalised heights of the mixed extensions $A(b,z)$ arising from the twisting construction of section \ref{sec:ht}. In this paper we focuse on the problem of describing the local heights at primes above $p$. This is done in section \ref{sec:hodgefiltration} in three stages. The definition of the local heights is in terms of certain associated filtered $\phi $-modules $D_{\cry }(A(b,z))$. First, one uses a $p$-adic comparison theorem due to Olsson \cite{olsson:2011} to relate this to a more tractable filtered $\phi $-module $A^{\dr }(b,z)$, which is the fibre at $z$ of a flat connection $\mathcal{A}^{\dr }$. The filtration on $A^{\dr }(b,z)$ is then computed in section  \ref{computehodge} by computing a filtration by sub-bundles on $\mathcal{A}^{\dr }$. Finally, the $\phi $-action is computed in section \ref{frobenius_structure}, when $X$ is a hyperelliptic curve, in terms of iterated integrals. This is used to render the equations for $X(K_{\mathfrak{p}})_2$ in terms of $p$-adic heights explicit (see Proposition \ref{prop5} for a more general result). We use this to give the first explicit nonabelian Chabauty result for a curve $X/\Q$ which has Mordell--Weil rank \emph{larger than} its genus.

As an example, we  consider the family of genus 2 curves
\begin{equation}\label{Xa}
X=X_a :y^2 =x^6 +ax^4 +ax^2 +1,
\end{equation}
which was previously studied by Kulesz, Matera, and Schost \cite{Kulesz:2004}. We prove results controlling the set of $K$-rational points of $X_a $ for $a\in K_0 $, where $K_0 =\mathbb{Q}$ or a real quadratic field and $K$ is a totally real extension of $K_0$.
We consider the case where the Mordell--Weil rank over $K$ of the associated elliptic curve 
\begin{equation}\label{Ea}
E=E_a :y^2 =x^3 +ax^2 +ax+1
\end{equation}
is two. Consider the maps $X\to E$ given by $f_1: (x,y)\mapsto (x^2 ,y)$ and $f_2: (x,y) \mapsto (x^{-2},yx^{-3})$. As the rank of $E_t$ over the function field $\mathbb{Q}(t)$ is 1, generated by the point $b=(0,1)$ \cite[Prop. 1]{Kulesz:2004}, for all but finitely many values of $a$, the specialisation $E_a$ over $K_0 $ has the point $b$ of infinite order. By the conjectured equidistribution of parity, one expects to find many values of $a$ for which $E_a(K )$ has rank 2.

Note that the Jacobian of $X$ is isogenous to $E\times E$, and hence, when the rank of $E$ is 2, the Chabauty--Coleman method  does not apply.  When the rank of $E$ is 2, we show that $X(K_{\mathfrak{p}})_2$ is finite and give equations for a finite set containing it. 

To state the theorem, let  $\omega _i =\frac{x^i}{2y}dx$ and let $w$ denote the hyperelliptic involution. 
Following Liu, we say that a genus 2 curve has \emph{potential type V reduction} at $v$ if, in an extension $L_w |K_v $ over which the curve acquires stable reduction, the special fibre of its stable model is isomorphic to two genus 1 curves meeting at a point.  For simplicity, in the introduction we state a special case of the theorem, under a simplifying assumption on the reduction type of $X$. The general statement may be found in Section \ref{sec:algorithms}.

\begin{Theorem}[Special case]\label{thm2} Let $K_0$ be $\mathbb{Q}$ or a real quadratic field. Let $K|K_0 $ be a totally real extension. Let $X/K_0 $ be a genus 2 curve in the family $y^2 =x^6 +ax^4 +ax^2 +1$ whose Jacobian has Mordell--Weil rank 4 over $K$. Suppose $p$ is a prime of $\mathbb{Q}$ such that 
\begin{itemize}
\item The prime $p$ splits completely in $K|\mathbb{Q}.$
\item The curve $X$ has good reduction at all primes above $p$, and the action of $G_K$ on $E[p]$ is absolutely irreducible.
\item If $E$ has complex multiplication by a CM extension $L$, then $L$ is not contained in $K(\mu  _p )$.
\end{itemize}
Suppose that $X$ has no primes of potential type $V$ reduction.
Suppose $z_0 $ is a point in $X(K)$ such that $f_1 (z_0 )\wedge f_2 (z_0 )$ is of infinite order in $\wedge ^2 E(K)$. 
Then $X(K)$ is contained in the finite set
of $z$ in $X(K_{\mathfrak{p}})$ satisfying $G(z) = 0$, where
\begin{align*}
G(z) &= F_1 (z)F_2 (z_0 )-F_1 (z_0 )F_2 (z), 
\end{align*}
with  $b=(0,1)$ and 
\begin{align*}
F_1 (z) & = \int ^z _b (\omega _0 \omega _1 -\omega _1 \omega _0 )+\frac{1}{2}\int ^z _b \omega_0 \int ^b _{w(b)}\omega _1, \\
F_2 (z) &=2\int ^z _{b }(-\omega _0 \omega _3 +a \omega _1 \omega _2 +2\omega _1 \omega _4 )-\frac{1}{2}x(z) -\int ^Fz _{b }\omega _0 \int ^{b }_{w(b)}\omega _3.\end{align*}
\end{Theorem}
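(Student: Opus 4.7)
The plan is to invoke Proposition \ref{exactformula1} together with the explicit computation of local heights from Section \ref{sec:hodgefiltration}. The basic setup exploits that $J$ is isogenous to $E\times E$ via $(f_1,f_2)\colon X\to E\times E$, so that at the level of $p$-adic étale cohomology $V:=H^1_{\mathrm{et}}(X_{\overline{K}},\Q_p(1))$ decomposes (up to isogeny) as $V_E\oplus V_E$, where $V_E=H^1_{\mathrm{et}}(E_{\overline{K}},\Q_p(1))$. Accordingly I would take the target $W$ of the generalised height to be the quotient of $\wedge^2 V$ adapted to this splitting, namely the piece $V_E\otimes V_E$ together with the two $\Q_p(1)$-summands coming from $\wedge^2 V_E$.

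First I would verify finiteness of $X(K_{\mathfrak{p}})_2$ under the given hypotheses. The assumptions on $p$ (splitting in $K$, good reduction, absolute irreducibility of $E[p]$, and the CM-vs.-$K(\mu_p)$ condition) ensure the Bloch--Kato Selmer groups behave correctly: $H^1_f(G_K,V_E)$ has dimension $2$, $H^1_f(G_K,\Q_p(1))=0$, and the non-abelian Selmer scheme $\Sel(U_2)$ is cut out in its abelian part by the class of the mixed extension $A(b,z)$. A dimension count, comparing $\Sel(U_2)$ against the image of the local-at-$p$ unipotent Kummer map at depth~$2$, shows that the generalised height yields at least one non-trivial algebraic relation vanishing on $X(K)$, whence $X(K_{\mathfrak{p}})_2$ is finite.

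Next I would compute the local heights entering the global height-zero relation (Lemma \ref{heights_prove_global}). The no-potential-type-V-reduction hypothesis is precisely what is required to force the contribution $h_v(A(b,z))$ to vanish (or to be absorbed into already-accounted-for abelian data) for primes $v\nmid p$ of bad reduction, so that the global relation reduces to a relation purely in terms of $h_{\mathfrak{p}}(A(b,z))$. I would then apply the iterated-integral formula of Section \ref{frobenius_structure}: unfolding $h_{\mathfrak{p}}(A(b,z))$ via Olsson's comparison theorem and the explicit Hodge filtration on $\mathcal{A}^{\dr}$ of Section \ref{computehodge} presents it as a $\Q_p$-linear combination of depth-$2$ Coleman iterated integrals of the $\omega_i=x^i\,dx/(2y)$ plus correction terms. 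The two combinations $F_1(z)$ and $F_2(z)$ arise from two distinguished classes in $W$: one from the $\Q_p(1)$-piece inside $\wedge^2 V_E$ (giving the antisymmetric combination $\omega_0\omega_1-\omega_1\omega_0$), and one from the ``off-diagonal'' $V_E\otimes V_E$ (giving $-\omega_0\omega_3+a\omega_1\omega_2+2\omega_1\omega_4$). The correction $-\tfrac12 x(z)$ in $F_2$ originates from the rational-function term in the explicit Hodge filtration of $\mathcal{A}^{\dr}$ computed in Section \ref{computehodge}, while the summands $\tfrac12\int^z_b\omega_0\int^b_{w(b)}\omega_i$ encode the hyperelliptic symmetrisation required to descend from an Albanese-type function to a function on $X$ that extends across the base point.

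Since the generalised height on the non-abelian part of $\Sel(U_2)$ effectively produces relations of the form $\alpha F_1(z)+\beta F_2(z)=0$ for some $(\alpha,\beta)\in\Q_p^2\setminus\{0\}$ determined by the unknown global Selmer class of $A(b,z)$, the hypothesis that $f_1(z_0)\wedge f_2(z_0)$ is of infinite order in $\wedge^2 E(K)$ ensures that the image of $z_0$ in the non-abelian Selmer quotient is non-zero, i.e.\ that $(F_1(z_0),F_2(z_0))\neq (0,0)$. Solving for the ratio $\alpha:\beta$ using $z_0$ eliminates the unknowns and yields the explicit vanishing condition $G(z)=F_1(z)F_2(z_0)-F_1(z_0)F_2(z)=0$ on $X(K)$. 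The hardest step will be the precise identification of the correction terms — the $-\tfrac12 x(z)$ piece and the boundary contributions $\tfrac12\int^z_b\omega_0\int^b_{w(b)}\omega_i$ — which requires a coordinated choice of Hodge-filtered basis for $\mathcal{A}^{\dr}$ matched against the Frobenius structure computed in Section \ref{frobenius_structure}; a secondary difficulty is verifying that every prime of bad reduction not of potential type V contributes trivially to the global height relation.
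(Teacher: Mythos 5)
There is a genuine gap at the heart of your argument: the step where you claim that ``a dimension count, comparing $\Sel(U_2)$ against the image of the local-at-$p$ unipotent Kummer map at depth $2$,'' produces the needed relation. For this family no such count closes. The hypotheses on $p$ (absolute irreducibility of $E[p]$, the CM condition) together with modularity of $E$ over the totally real field $K$ are used precisely to prove the vanishing $H^1_f(G_{K,T},\Sym^2 V_E)=0$ (Flach when $K=\Q$, and Allen's modularity-lifting result in general) --- an input your proposal never invokes --- and with the paper's choice $W=\Sym^2 V_E$ this gives $\dim \Sel(U)=\rk J(K)=4$ while $\dim H^1_f(G_{\mathfrak{p}},U)=\dim D_{\dr}(V)/F^0+\dim W_{\dr}/F^0=2+2=4$; the paper states explicitly that non-density of $\loc_{\mathfrak{p}}$ cannot be proved by a dimension argument here. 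Enlarging $W$ by the $\Q_p(1)$-summands of $\wedge^2 V_E$, as you suggest, does not help over a general totally real $K$, since $H^1_f(G_{K,T},\Q_p(1))$ then contributes $[K:\Q]-1$ to the global side. What actually produces the single linear relation between $F_1$ and $F_2$ on rational points is the equivariant-height mechanism: one chooses a $\Mat_2(\Q_p)$-equivariant splitting of the Hodge filtration (Lemma \ref{equivtheight}, Proposition \ref{maintheorem}), builds the pair of heights $(h_{\chi_1},h_{\chi_2})$ from characters dual to a basis of $H^1_f(G_{\mathfrak{p}},\Sym^2 V_E)$ (this is where the vanishing of the global Selmer group enters), and shows that after subtracting the correction term $\tfrac12\,\widetilde h([E_1,E_2])$ the resulting function on $\Sel(U)$ factors through the one-dimensional space $\wedge^2 H^1_f(G_{K,T},V_E)$ (Lemma \ref{lemma21}, Lemma \ref{lemma22}). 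That factorisation, not a dimension comparison, is why the image of the Selmer variety in the two-dimensional $W_{\dr}/F^0$ lies on a line.

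Two secondary points. First, the hypothesis that $f_1(z_0)\wedge f_2(z_0)$ has infinite order is used to ensure that $\Gamma(A(b,z_0))$ generates $\wedge^2 H^1_f(G_{K,T},V_E)$, so that wedging the height vector of $z$ against that of $z_0$ detects the relation (Lemma \ref{lemma22}(2)); it is not, as you write, a statement that $(F_1(z_0),F_2(z_0))\neq(0,0)$ --- that would conflate a global Selmer condition with a local height value. Second, your attribution of the two components is off: $F_1$ comes from the $S_0$-component of $\Sym^2 V_E$ arising from the cross term $V_E\otimes V_E\subset\wedge^2(V_E\oplus V_E)$ under the map \eqref{eqn:explicit_wedge}, not from the $\Q_p(1)$-piece of $\wedge^2 V_E$, and the summands $\tfrac12\int^z_b\omega_0\int^b_{w(b)}\omega_i$ come from the hyperelliptic splitting (Lemma \ref{hyperellipticsplitting}) combined with the subtraction of $\tfrac12\,\widetilde h_{\mathfrak{p}}([E_1,E_2])$, rather than from a symmetrisation needed to extend across the base point. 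Your treatment of the primes away from $p$ (only potential type V primes can contribute, so the hypothesis kills them) agrees with the paper, and the outline of the local computation at $\mathfrak{p}$ via Olsson's comparison, the Hodge filtration of Section \ref{computehodge} and the Frobenius structure of Section \ref{frobenius_structure} is sound; but without the equivariant height and the vanishing of $H^1_f(G_{K,T},\Sym^2 V_E)$ the global step of the proof is missing.
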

We briefly indicate the techniques used in the proof of the theorem (precise definitions may be found in subsequent sections).
The isogeny gives an isomorphism $$V=T_p \Jac (X)\otimes \mathbb{Q}_p \simeq V_E \oplus V_E,$$ where $V_E =T_p E\otimes \mathbb{Q}_p $. The quotient of the fundamental group of $X$ used is an extension
$$
1\to \Sym ^2 V_E \to U \to V\to 1 .
$$
The first step of the proof is to prove non-density of the localisation map 
$$
\loc _{\mathfrak{p}}:\Sel (U)\to H^1 _f (G_{\mathfrak{p}} ,U).
$$
from the Selmer variety of $U$ to the local cohomology variety $H^1 _f (G_{\mathfrak{p}},U)$.
In the case $K=\Q$ and $p>3,$ we know $H^1 _f (G_{K,T} ,\Sym ^2 V_E )=0$ by Flach \cite{flach:1992}.
In general, by Freitas, Le Hung, and Siksek \cite{freitas:2015} we know that $E_a /K_0$ is modular. Under our assumptions, the vanishing of the Selmer group of $\Sym ^2 V_E $ follows from modularity lifting results \cite{allen:2016}. This implies that the dimension of the global Selmer variety is 4. By $p$-adic Hodge theory, the local Selmer variety has the same dimension. Hence non-density cannot be proved by a dimension argument. Instead, it is deduced using the notion of a generalised height function which is equivariant with respect to the the action of $\Mat _2 (\mathbb{Q}_p )$ on $V\simeq V_E \oplus V_E $. 

\subsection{Notation}\label{subsec:notation}
We follow slightly different notational conventions to those used in \cite{balakrishnan:2016}, to make our notation more compatible with standard references such as \cite{Bloch-Kato:1990}. $X$ is a smooth projective curve with good reduction outside a set of primes $T_0$, and $p$ is a rational prime that splits completely in $K$ and such that $X$ has good reduction at all primes above $p$. We fix a prime $\mathfrak{p}$ above $p$, and define $T:=T_0 \cup \{v|p \}$.

For $v$ a prime not above $p$, define \begin{align*}
H^1 _f (G_v ,W) &  :=\Ker (H^1  (G_v ,W)\to H^1  (I_v ,W)), \\
H^1 _g (G_v ,W) & := H^1  (G_v,W). \\
\end{align*}

For $\mathfrak{p}$ a prime above $p$, define \begin{align*}
H^1 _f (G_{\mathfrak{p}} ,W) &  :=\Ker (H^1  (G_{\mathfrak{p}} ,W)\to H^1  (G_{\mathfrak{p}} ,W\otimes B_{\cry })), \\
H^1 _g (G_{\mathfrak{p}} ,W) & := \Ker (H^1  (G_{\mathfrak{p}} ,W)\to H^1  (G_{\mathfrak{p}} ,W\otimes B_{\dr })), \\
\end{align*}

We define the global versions
\begin{align*}
H^1 _f (G_{K,T} ,W) & :=\{c \in H^1 (G_{K,T} ,W) : \prod _{v\in T} \loc _v (c)\in \prod _{v\in T} H^1 _f (G_v ,W)\}, \\
H^1 _g (G_{K,T} ,W) & :=\{c \in H^1 (G_{K,T} ,W) : \prod _{v\in T} \loc _v (c)\in \prod _{v\in T} H^1 _f (G_v ,W)\}. \\
\end{align*}
More generally, for $S\subset T$ we may define global Galois cohomology groups with conditions intermediate between $H^1 _f $ and $H^1 _g$: 
\[
H^1 _{f,S}(G_{K,T},W):=\{ c\in H^1 (G_{K,T},W): \prod _{v\in T}\loc _v (c) \in \prod _{v\in S}H^1 _g (G_v ,W)\times \prod _{v\in T-S}H^1 _f (G_v ,W)\}.
\]
The reason for introducing these different conditions is that in the theory of Selmer varieties we use cohomology classes which may be ramified at primes of bad reduction---and hence may not lie in $H^1 _f$---but the dimensions of the Selmer varieties (which are of central importance in proving finiteness results) will in some sense be governed by $H^1 _f$.

For finite-dimensional continuous $\mathbb{Q}_p $-representations $W_1 ,W_2 $ of a topological group $G$, we identify the vector spaces 
$H^1 (G,W_1 ^* \otimes W_2 )$ and $\ext ^1 (W_1 ,W_2 )$ in the usual way. Via this identification, we define subspaces such as $\ext ^1 _f (W_1 ,W_2 )$.

If $U$ is a unipotent group over $\mathbb{Q}_p $ with a continuous action of $G_{K,T}$ which is crystalline at all primes above $p$, we similarly define $H^1 _f (G_{K,T} ,U)$ as the set of isomorphism classes of $G_{K,T}$-equivariant $U$-torsors which are crystalline at all $v$ above $p$ and unramified at all $v$ prime to $p$ (and analogously for $H^1 _g $ and $H^1_{f,S}$).

We make repeated use of the twisting construction in nonabelian cohomology, as in \cite[I.5.3]{serregc:1997}. For topological groups $U$ and $W$, equipped with a continuous homomorphism $U\to \Aut (W)$, and a continuous left $U$-torsor 
$P$, we shall denote by $W^{(P)}$ the group obtained by twisting $W$ by the $U$-torsor $P$:
\[
W^{(P)}:=W\times _U P .
\]
Similarly if $P$ is a continuous right $U$-torsor we define
$
^{(P)}W:=P\times _U W .
$
For $U$ a group with a continuous action of a topological group $\Gamma $, $H^1 (\Gamma ,U)$ will denote the set of isomorphism classes of $\Gamma $-equivariant left $U$-torsors.
%In QC1, there are two main computations: the cycle $D(b,z)$, which is defined so that $h_v (z-b,D(b,z))$ will be equal to the local height of a mixed extension $A(b,z)$ coming from the fundamental group, and the reduction of $h_v (z-b,D(b,z))$ to simpler local heights. In Section \ref{sec:hodgefiltration}, we explain very general methods for computing $h_p (A(b,z))$, which even in the context of QC1 seems likely to be more practical. Examples of hyperelliptic curves satisfying the hypotheses of QC1 for which $X(\mathbb{Q}_p )_U$ is computed using the methods of this paper will be given in forthcoming work of the authors with J. Steffen M\"{u}ller. On the other hand, in this paper, we say nothing about how to compute $h_v (A(b,z))$ when $v$ is a prime of bad reduction in general.

%%%%%%%%%%%%%%%%%%%%%%%%%%%%%%%%%%%%
\section{The Chabauty--Kim method}\label{sec:chabautykim}
%%%%%%%%%%%%%%%%%%%%%%%%%%%%%%%%%%%%

%\subsection{The Chabauty--Kim method}
Let $K$ be a number field, and $X,T,T_0 $ as in section \ref{subsec:notation}.
Given a rational point $b$ in $X(K)$, let $\pi _1 ^{\acute{e}t,\mathbb{Q}_p}(\overline{X},b)$ denote the unipotent $\mathbb{Q}_p $-\'etale fundamental group of $\overline{X}:=X\times _{\Q }\overline{\Q }$ with basepoint $b$. Let 
$$
\pi _1 ^{\acute{e}t,\mathbb{Q}_p }(\overline{X},b) \supset U^{(1)}=[\pi _1 ^{\acute{e}t,\mathbb{Q}_p }(\overline{X},b) ,
\pi _1 ^{\acute{e}t,\mathbb{Q}_p }(\overline{X},b) ]\supset U^{(2)}=[U^{(1)},\pi _1 ^{\acute{e}t,\mathbb{Q}_p }(\overline{X},b)] \supset \cdots
$$
denote the central series filtration of $\pi _1 ^{\acute{e}t,\mathbb{Q}_p }(\overline{X},b)$. Associated to this filtration we have the groups 
$$U_n:=U_n (b):=\pi _1 ^{\acute{e}t,\mathbb{Q}_p }(\overline{X},b)/U^{(n)}, \qquad U[n] :=\Ker (U_n \to U_{n-1}),$$
and the $U_n $-torsor
$$
P_n (b,z):=\pi _1 ^{\acute{e}t}(\overline{X};b,z)\times _{\pi _1 ^{\acute{e}t}(\overline{X},b)}U_n (b).
$$
Then the assignment $z\mapsto [P_n (b,z)]$ defines a map
$$
j_n :X(K)\to H^1 (G_{K,T} ,U_n (b)).
$$
One of the fundamental insights of the theory of Selmer varieties is that the cohomology spaces $H^1 (G,U_n (b))$ carry a much 
richer structure than merely that of a pointed set, and that this extra structure has Diophantine applications.
For the following theorem we take $G$ to be either $G_v $ or $G_{K,T}$:
\begin{Theorem}[Kim \cite{kim:2005}]
Let $U$ be a finite-dimensional unipotent group over $\mathbb{Q}_p $, admitting a continuous action of $G$. Let 
\[
U=U^{(0)} \supset U^{(1)}\supset \ldots
\] 
denote the central series filtration of $U$.
Suppose 
$H^0 (G,U^{(i)} /U^{(i+1)})(\mathbb{Q}_p )=0$ for all $i$. Then the functors
\begin{equation}\nonumber
R\mapsto H^j (G,U(R)) , j=0,1,
\end{equation}
is represented by an affine algebraic variety over $\mathbb{Q}_p $, such that, for all $i$, the exact sequence
\[
H^1 (G,U^{(i)}/U^{(i+1)})\to H^1 (G,U/U^{(i+1)})\to H^1 (G,U/U^{(i)})
\]
is a diagram of schemes over $\mathbb{Q}_p $. 
\end{Theorem}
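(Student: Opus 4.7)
The plan is to induct on the nilpotence class of $U$, exploiting the central series $U = U^{(0)} \supset U^{(1)} \supset \cdots \supset U^{(N)} = 1$. At each stage one has a central extension
\[
1 \to A_n \to U/U^{(n+1)} \to U/U^{(n)} \to 1
\]
with $A_n := U^{(n)}/U^{(n+1)}$ a finite-dimensional $\mathbb{Q}_p$-vector space carrying a continuous $G$-action, and the inductive hypothesis asserts representability of $H^0$ and $H^1$ for $U/U^{(n)}$. The base case is when $U$ is abelian: then $U \cong V$ is a continuous $\mathbb{Q}_p[G]$-module, and both $H^0(G,V) = V^G$ and the continuous cohomology $H^1(G,V)$ are finite-dimensional $\mathbb{Q}_p$-vector spaces (using the standard finiteness of $H^i(G_{K,T},V)$ and $H^i(G_v,V)$ for continuous $p$-adic representations), represented by the corresponding affine spaces.

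For the inductive step on $H^0$, the nonabelian long exact sequence gives
\[
1 \to H^0(G,A_n) \to H^0(G,U/U^{(n+1)}) \to H^0(G,U/U^{(n)}) \xrightarrow{\delta_n} H^1(G,A_n).
\]
The hypothesis $H^0(G,A_n) = 0$ identifies $H^0(G,U/U^{(n+1)})$ with $\Ker(\delta_n)$. A continuous set-theoretic section of the central extension together with its 2-cocycle realise $\delta_n$ by an explicit polynomial formula in the coordinates, so $\delta_n$ is a morphism of affine schemes and its kernel is a closed affine subscheme. For $H^1$, the obstruction map $\Delta_n : H^1(G,U/U^{(n)}) \to H^2(G,A_n)$ is similarly algebraic, constructed by pairing the 1-cocycle with the extension class. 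Its kernel is closed affine in $H^1(G,U/U^{(n)})$, and the map
\[
H^1(G,U/U^{(n+1)}) \to \Ker(\Delta_n)
\]
is surjective. Over a class $c \in \Ker(\Delta_n)$, the fibre is a torsor under $H^1(G, A_n)$ modulo the image of the twisted connecting map $\delta_n^{(c)} : H^0(G,(U/U^{(n)})^{(c)}) \to H^1(G,A_n)$, where the centrality of $A_n$ ensures that its own $G$-action is unaffected by twisting.

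The crucial point is that the twisted group $(U/U^{(n)})^{(c)}$ still satisfies the vanishing hypothesis: its central series filtration is preserved by twisting, and by centrality the graded pieces $A_i^{(c)}$ have the same $G$-module structure as $A_i$, so $H^0(G,(U/U^{(n)})^{(c)}) = 0$ by descending induction. Consequently the fibres of $H^1(G,U/U^{(n+1)}) \to \Ker(\Delta_n)$ are genuine $H^1(G,A_n)$-torsors, which furnishes $H^1(G,U/U^{(n+1)})$ as an affine bundle over an affine scheme, hence affine. The compatibility of the constructions with base change $R \mapsto R$ gives the representability statement and the identification of the connecting sequence with a diagram of schemes.

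The main obstacle is the $H^1$ step: one must verify that the twisting construction behaves uniformly in families, so that the torsor structure of the fibres assembles into a genuine $H^1(G,A_n)$-torsor over $\Ker(\Delta_n)$ as a scheme rather than merely pointwise. This requires choosing the continuous section of the central extension, and the associated trivialisation of the twisted cocycles, in a way that depends algebraically on the class $c$; working directly with the Bar complex describing continuous cohomology and noting that all differentials are polynomial in the coordinates of the representation is what makes this work. The hypothesis $H^0(G,U^{(i)}/U^{(i+1)}) = 0$ is used precisely at this point to eliminate the twisted $H^0$ term and thereby secure the clean torsor description of the fibres.
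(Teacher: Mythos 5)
The paper does not prove this statement---it is quoted as background from Kim \cite{kim:2005}---so there is no internal proof to compare against; your argument is, in outline, exactly Kim's original one: induction along the central series, an algebraic obstruction map to $H^2(G,A_n)$, identification of the fibres over its kernel as torsors under $H^1(G,A_n)$ using the fact that twisting preserves the graded pieces (centrality) and hence the $H^0$-vanishing hypothesis, and triviality of vector-group torsors over an affine base to conclude affineness. The one point you rightly flag---that the cocycle-level constructions must be carried out functorially in $R$ so the pointwise torsor structure is scheme-theoretic---is precisely what Kim handles by working with representable schemes of continuous cocycles, so your proposal is correct and follows essentially the same route.
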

In this paper, we will never distinguish between a cohomology variety and its $\mathbb{Q}_p$-points. We henceforth let $U$ denote a Galois stable quotient of $U_n $, whose abelianisation equals $U_1 $.
Since the abelianisation of $U(\mathbb{Q}_p )$ has weight $-1$, it satisfies the hypotheses of the theorem, and hence 
$H^1 (G ,U)$ has the structure of the $\mathbb{Q}_p $-points of an algebraic variety over $\mathbb{Q}_p$.

To go from the cohomology varieties $H^1 (G_{K,T} ,U)$ to Selmer varieties, one must add local conditions.
 Let $P(z)$ denote the pushout of $\pi _1 (\overline{X};b,z)$ along $\pi _1 (\overline{X},b)\to U$.
Then for each $v$ prime to $p$, there is a 
\textit{local unipotent Kummer map}
\[
\begin{array}{cc}
j_v :X(K_v ) \to H^1 (G_v ,U) ; & x \mapsto [P(x)]
\end{array}
\]
which is trivial when $v$ is a prime of good reduction and has finite image in general, by work of Kim and Tamagawa \cite{kim:2008}.
For $v$ above $p$, and $x$ in $X(K_v)$, the torsor $P(x)$ is crystalline by Olsson \cite{olsson:2011}, and we define $j_v$ to be the map
\[
\begin{array}{cc}
j_v :X(K_v ) \to H^1 _f (G_v ,U) ; & x \mapsto [P(x)].
\end{array}
\]
There is then a commutative diagram
$$
\begin{tikzpicture}
\matrix (m) [matrix of math nodes, row sep=3em,
column sep=3em, text height=1.5ex, text depth=0.25ex]
{X(K) & H^1 (G_{K,T} ,U) \\
\prod _{v\in T} X(K_v ) & \prod _{v\in T} H^1 (G_v ,U) \\};
\path[->]
(m-1-1) edge[auto] node[auto]{} (m-2-1)
edge[auto] node[auto] { $j$ } (m-1-2)
(m-2-1) edge[auto] node[auto] {$\prod j_v $ } (m-2-2)
(m-1-2) edge[auto] node[auto] {$\prod \loc _v $} (m-2-2);
\end{tikzpicture}$$
Kim \cite{kim:2005} also showed that the
localisation morphisms are morphisms of varieties, and the set of crystalline cohomology classes has the structure of the $\mathbb{Q}_p$-points of a variety. Since, at any $v$ prime to $p$, the image of $X(K_v  )$ in $H^1 (G_v ,U)$ is finite \cite{kim:2008}, we may define a subvariety $\Sel (U)$ 
of $H^1 (G_{K,T} ,U)$ to be the set of cohomology classes $c$ satisfying the following conditions:
\begin{itemize}
 \item {$\loc _v (c)$ comes from an element of $X(K_v )$ for all $v$ prime to $p$},
 \item {$\loc _v (c)$ is crystalline for all $v$ above $p$}, and
 \item {the projection of $c$ to $H^1 (G_{K,T} ,V)$ lies in the image of $\Jac(X)(K)\otimes \mathbb{Q}_p $}.
\end{itemize}
For a prime $\mathfrak{p}$ above $p$, we define $X(K_{\mathfrak{p}})_U :=j_{\mathfrak{p}}^{-1}\loc _{\mathfrak{p}}\Sel (U)$.
We shall refer to this variety as the \textit{Selmer variety} associated to $U $. We include the last condition, which is somewhat non-standard and perhaps in conflict with the ``Selmer'' prefix, so as to be able to make statements about relations between the set of weakly global points $X(K_{\mathfrak{p}})_U $ and the rank of the Jacobian of $X$ which are not conditional on the finiteness of the $p$-part of the Shafarevich--Tate group. When $U=U_n $, we write $\Sel _n $ for $\Sel (U_n )$ and $X(K_{\mathfrak{p}})_n $ for $X(K_{\mathfrak{p}})_{U_n }$. 

It is often convenient to break up the Selmer variety by first fixing an element $\alpha =(\alpha _v )\in \prod _{v\in T_0 }j_v (X(K_v ))$, and defining $\Sel (U)_{\alpha }$ to be the subvariety of $\Sel (U)$ consisting of cohomology classes whose localisation at $v\in T_0 $ is equal to $\alpha _v $. We similarly write $X(K_{\mathfrak{p}})_\alpha $. We call the tuple $\alpha $ a \emph{collection of local conditions}.

\begin{Lemma}[{\cite[Lemma 2.6]{balakrishnan:2016}}]\label{old_lemma}
Let $\alpha _1 ,\ldots ,\alpha _N \in \Sel (U)$ be a set of representatives for the image of $\Sel (U)$ in $\prod _{v\in T_0 }j_v (X(K_v ))$. Then
\[
\Sel (U) \simeq H^1 _f (G_{K,T},U^{(\alpha _i )})' ,
\]
where $H^1 _f (G_{K,T}U^{(\alpha _i )})'$ denotes the subvariety of $H^1 _f (G_{K,T},U^{(\alpha _i )})$ consisting of crystalline torsors whose image in $H^1 _f (G_{K,T},V)$ lies in the image of $J(K)\otimes \Q _p $.
\end{Lemma}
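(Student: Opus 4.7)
The plan is to prove the lemma in two steps: decompose $\Sel(U)$ according to its image at the primes in $T_0$, then identify each piece with a twisted cohomology space via the nonabelian twisting construction recalled in Section~\ref{subsec:notation}. Since $j_v(X(K_v))$ is finite at each $v\in T_0$ by Kim--Tamagawa, the image of $\Sel(U)$ in $\prod_{v\in T_0} j_v(X(K_v))$ is finite, and by hypothesis the $\alpha_i$ form a set of representatives for it. This yields a decomposition
\[
\Sel(U) \;=\; \bigsqcup_{i=1}^N \Sel(U)_{\alpha_i},
\]
where $\Sel(U)_{\alpha_i}$ is the subvariety cut out by the additional conditions $\loc_v(c)=\loc_v(\alpha_i)$ for every $v\in T_0$. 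It therefore suffices to construct, for each $i$, an isomorphism of varieties $\Sel(U)_{\alpha_i}\cong H^1_f(G_{K,T},U^{(\alpha_i)})'$.

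Given $i$, I would pick a $G_{K,T}$-equivariant $U$-torsor $P_i$ representing $\alpha_i$ and apply the twisting bijection
\[
\tau_i : H^1(G_{K,T},U)\xrightarrow{\ \sim\ } H^1(G_{K,T},U^{(\alpha_i)}),
\]
which is an isomorphism of $\Q_p$-varieties, sends $\alpha_i$ to the trivial class, and commutes with localisation at every $v\in T$ once the local representations are twisted correspondingly. Because $\alpha_i$ is crystalline at each $v\mid p$, the twist $\tau_i$ restricts to an isomorphism between classes that are crystalline at $v\mid p$; because the image of $\alpha_i$ in $H^1(G_{K,T},V)$ lies in the $\Q_p$-subspace $J(K)\otimes\Q_p$, the translation induced on the abelianisation preserves the condition that the projection to $H^1(G_{K,T},V)$ lie in $J(K)\otimes\Q_p$.

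What remains, and is the main obstacle, is to match the local condition $\loc_v(c)=\loc_v(\alpha_i)$ for $v\in T_0$ with the unramified condition defining $H^1_f(G_v,U^{(\alpha_i)})$. One direction is immediate: after twisting, such a class is locally trivial at $v\in T_0$, hence a fortiori unramified. The reverse direction is a purely local statement at each $v\in T_0$, asserting that under $\tau_i$ the image of $j_v(X(K_v))$ coincides with $H^1_f(G_v,U^{(\alpha_i)})$; this should follow from the Kim--Tamagawa analysis of the local unipotent Kummer map at primes of bad reduction combined with the compatibility between twisting and restriction to inertia in nonabelian cohomology. Once this local matching is established, the piecewise isomorphisms $\tau_i$ assemble into the desired identification.
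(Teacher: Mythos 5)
Your skeleton --- decompose $\Sel(U)$ into the finitely many pieces $\Sel(U)_{\alpha_i}$, twist by a torsor representing $\alpha_i$, and check that crystallinity at $v\mid p$ and the Mordell--Weil condition on the abelianisation are preserved --- is exactly the argument underlying the cited result (the present paper does not reprove the lemma; it imports it from \cite{balakrishnan:2016}), and those parts of your write-up are correct. The genuine gap is the step you defer, and the local statement you propose to prove there is not the right one. For the twisting map to be \emph{onto} $H^1_f(G_{K,T},U^{(\alpha_i)})'$ you must show that a class which is merely \emph{unramified} at each $v\in T_0$ after twisting in fact localises to the trivial class there, i.e.\ that $H^1_f(G_v,U^{(\alpha_i)})$ reduces to the base point for $v\in T_0$. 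Your substitute claim --- that $\tau_i$ identifies $j_v(X(K_v))$ with $H^1_f(G_v,U^{(\alpha_i)})$ --- is in general false: the left-hand side is a finite set which typically has more than one element (this is the whole reason the $\alpha_i$ are needed; compare the primes of potential type V reduction in Section \ref{sec:algorithms}), while the right-hand side is, by the weight argument below, a single point, so the two can only agree when $j_v$ is constant. Moreover, even if it held, it would only give $\loc_v(c)\in j_v(X(K_v))$ for the untwisted class, not $\loc_v(c)=\alpha_{i,v}$, which is what membership in $\Sel(U)_{\alpha_i}$ rather than in some other $\Sel(U)_{\alpha_j}$ requires. Kim--Tamagawa finiteness is a red herring at this step: it is only used to know that finitely many $\alpha_i$ suffice.

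The missing local input is an analogue of Lemma \ref{locallemma} for the twisted group. The graded pieces of $U^{(\alpha_i)}$ are still $V$ and $W$ (conjugation acts trivially on them because $[U,U]\subset W\subset Z(U)$), and weight--monodromy for $V$ shows that the inertia invariants $V^{I_v}$ and $W^{I_v}$ carry Frobenius eigenvalues of strictly negative weight, in particular never equal to $1$; hence both the Frobenius invariants and coinvariants of $V^{I_v}$ and $W^{I_v}$ vanish, so $H^1(G_v/I_v,(U^{(\alpha_i)})^{I_v})$ is trivial, and by inflation--restriction in nonabelian cohomology (\cite{serregc:1997}, I.5.8) this set exhausts the unramified classes. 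Thus at $v\in T_0$ unramified does imply trivial, unramifiedness of the twisted class is equivalent to $\loc_v(c)=\alpha_{i,v}$, and the bijection follows. Without this (or the equivalent argument in \cite{balakrishnan:2016}), your proposal establishes only the closed immersion $\Sel(U)_{\alpha_i}\hookrightarrow H^1_f(G_{K,T},U^{(\alpha_i)})'$ --- which, incidentally, is the only direction the present paper ever uses, since Lemma \ref{old_lemma} is invoked solely to bound $\dim\Sel(U)$ from above --- but not the isomorphism asserted.
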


\begin{Lemma}
Suppose
\begin{equation}\label{eqn:key_inequality}
\dim D_{\dr}(W)/F^0 -\dim H^1 _f (G_{K,T},W) > r-g,
\end{equation}
then $X(K_{\mathfrak{p}})_2 $ is finite.
\end{Lemma}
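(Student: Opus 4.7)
The plan is to run a Chabauty--Kim dimension count for the central extension
$$1 \to W \to U \to V \to 1$$
arising from the chosen quotient $U$ of $U_2$, and to deduce finiteness from the resulting non-density of the localisation map $\loc_{\mathfrak p}: \Sel(U) \to H^1_f(G_{\mathfrak p}, U)$. Concretely, I will show
$$\dim \Sel(U) < \dim H^1_f(G_{\mathfrak p}, U),$$
whence $\loc_{\mathfrak p}(\Sel(U))$ is contained in a proper Zariski-closed subvariety $Z$ of $H^1_f(G_{\mathfrak p}, U)$. The set $X(K_{\mathfrak p})_2 = j_{\mathfrak p}^{-1}\loc_{\mathfrak p}(\Sel(U))$ is then contained in $j_{\mathfrak p}^{-1}(Z)$, which on each residue disk is the zero locus of a nonzero Coleman-analytic function (the components of $j_{\mathfrak p}$ are iterated $p$-adic integrals); this is the standard deduction of finiteness from non-density in the Chabauty--Kim method.

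For the target dimension, $V$ and $W$ have pure Hodge--Tate weights $-1$ and $-2$, respectively, so $H^0(G_{\mathfrak p}, V) = H^0(G_{\mathfrak p}, W) = 0$. The unipotent version of Bloch--Kato (as developed by Olsson and Kim in the crystalline setting) then gives
$$\dim H^1_f(G_{\mathfrak p}, U) = \dim D_{\dr}(\mathrm{Lie}\,U)/F^0 = g + \dim D_{\dr}(W)/F^0,$$
using $\dim D_{\dr}(V)/F^0 = g$.

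For the source, Lemma \ref{old_lemma} reduces the problem to bounding each piece $H^1_f(G_{K,T}, U^{(\alpha_i)})'$. Since $W$ is central in $U$, the twist $U^{(\alpha_i)}$ is still a central extension of $V$ by the same Galois module $W$; the associated long exact sequence, combined with the constraint that the image in $H^1_f(G_{K,T}, V)$ lie in the at-most-$r$-dimensional subspace $\im(J(K) \otimes \Q_p)$, yields
$$\dim \Sel(U) \leq r + \dim H^1_f(G_{K,T}, W).$$
Combining the two bounds, the hypothesis $\dim D_{\dr}(W)/F^0 - \dim H^1_f(G_{K,T}, W) > r - g$ translates exactly into the desired strict inequality. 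The main technical input I am relying on is the local unipotent Bloch--Kato dimension formula for the crystalline $U$ (from the Olsson/Kim theory), together with the vanishing of $H^0$ on both graded pieces; these are the only places where any subtlety beyond a direct dimension count enters.
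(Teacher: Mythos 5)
Your proposal is correct and follows essentially the same route as the paper: reduce via Kim's theorem to the dimension inequality $\dim D_{\dr}(U)/F^0 > \dim\Sel(U)$, compute the local side as $g+\dim D_{\dr}(W)/F^0$, and bound the global side by $r+\dim H^1_f(G_{K,T},W)$ using Lemma \ref{old_lemma} and the exact sequence coming from the central extension $1\to W\to U^{(\alpha_i)}\to V\to 1$. The only cosmetic difference is that you justify the shape of the twisted extension by centrality of $W$, while the paper phrases it via unipotence of the conjugation action; the dimension count is the same.
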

\begin{proof}
By \cite{kim:2009}, it is enough to prove that equation \eqref{eqn:key_inequality} implies
\[
\dim D_{\dr}(U)/F^0 > \dim \Sel (U).
\]
Since 
\[
\dim D_{\dr}(U)/F^0 =\dim D_{\dr}(V)/F^0 +\dim D_{\dr}(W)/F^0 ,
\]
and $\dim (D_{\dr}(V)/F^0  )=g$, to prove the lemma it will be enough to prove $\dim \Sel (U)\leq r+\dim H^1 _f (G_{K,T},W)$. By Lemma \ref{old_lemma}, it is enough to prove that, for all $i$,
\[
\dim H^1 _f (G_{K,T},U^{(\alpha _i )})' \leq r+\dim H^1 _f (G_{K,T},W).
\]
Since the action of $U$ on itself by conjugation is unipotent, we have a Galois-equivariant short exact sequence
\[
1\to W \to U^{(\alpha _i )}\to V \to 1,
\]
inducing an exact sequence of pointed varieties
\[
H^1 _f (G_{K,T},W)\to \Sel (U^{(\alpha _i )}) \to J(K)\otimes \Q _p .
\]
Hence $\dim \Sel (U^{(\alpha _i )})\leq \dim H^1 _f (G_{K,T},W)+\dim ( J(K)\otimes \Q _p )$, as required.
\end{proof}

\subsection{The context of the present work}
We will always take $U$ to be an intermediate quotient
$$
U_2 \to U \stackrel{\pi }{\longrightarrow} V.
$$
The group $U_2 $ is an extension of $V$ by 
$$
\overline{\wedge ^2 V}:=\coker (\Q _p (1) \stackrel{\cup ^* }{\longrightarrow }\wedge ^2 V ),
$$
hence such quotients are in correspondence with Galois-stable summands of $\wedge ^2 V /\mathbb{Q}_p (1)$.
This paper is concerned with the commutative diagram
$$
\begin{tikzpicture}
\matrix (m) [matrix of math nodes, row sep=3em,
column sep=3em, text height=1.5ex, text depth=0.25ex]
{X(K) & \Sel (U)  \\
 X(K_{\mathfrak{p}}) & H^1 _f (G_{\mathfrak{p}} ,U) & D_{\dr}(U)/F^0 \\ };
\path[->]
(m-1-1) edge[auto] node[auto] {$j$} (m-1-2)
edge[auto] node[auto] {} (m-2-1)
(m-1-2) edge[auto] node[auto] {$\loc _{\mathfrak{p}} $} (m-2-2)
edge[auto] node[auto] {$j_{\mathfrak{p}}$} (m-2-3)
(m-2-1) edge[auto] node[auto] { } (m-2-2)
(m-2-2) edge[auto] node[auto] {$\simeq $} (m-2-3);
\end{tikzpicture} $$
and in particular with identifying situations under which $\loc _p $ is not dominant and describing what $X(K_{\mathfrak{p}} )_U$ looks like in this case.

\subsection{Provable finiteness via the geometric N\'eron--Severi group}

One piece of the weight $-2$ representation $[U_2 ,U_2 ]$ whose Selmer group we can understand unconditionally is the Artin--Tate part, equivalently the part coming from the geometric N\'eron--Severi group of $J$. In this subsection we restrict to the case $K=\mathbb{Q}$.
\begin{Lemma}\label{artin-tate-bound}
For any representation of $G_{\Q ,T}$ on a finite-dimensional vector space $V$ over a field $F\subset \mathbb{Q}_p $, which factors through a finite quotient $\Gal (L|\mathbb{Q})$ of $\Gal (\mathbb{Q})$, where $L|\mathbb{Q} $ is unramified at $p$, we have an isomorphism
$$
H^1 _f (G_{\mathbb{Q},T} ,V\otimes _F \mathbb{Q}_p (1) )\simeq (V\otimes \mathbb{Q}_p )^{c=1}/ (V\otimes \mathbb{Q}_p )^{\Gal (\mathbb{Q})},
$$
where  $c\in \Gal (\mathbb{Q})$ denotes complex conjugation.
\end{Lemma}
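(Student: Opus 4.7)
The plan is to reduce the Selmer group computation to Kummer theory over $L$ and then to Dirichlet's unit theorem interpreted $G$-equivariantly, where $G := \Gal(L|\Q)$.

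First, I would exploit that $V$ is trivial as a $G_L$-representation. Since $L|\Q$ is unramified at $p$ and $(V\otimes \Q_p(1))^{G_L} = V\otimes \Q_p(1)^{G_L} = 0$, the inflation--restriction sequence collapses to an isomorphism
\[
H^1(G_{\Q,T}, V\otimes \Q_p(1)) \xrightarrow{\sim} H^1(G_{L,T'}, V\otimes \Q_p(1))^{G},
\]
where $T'$ denotes the primes of $L$ above $T$. One then checks that this respects the Bloch--Kato conditions on both sides, yielding the same isomorphism for $H^1_f$. The compatibility reduces to the fact that $I_w\subset I_v$ for $w\mid v$, $v\nmid p$, and to the stability of $B_{\cry}$ under unramified base extension for $v\mid p$.

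Next, Kummer theory together with the finiteness of the $T'$-class group gives $H^1(G_{L,T'}, \Q_p(1)) \cong \mathcal{O}_{L,T'}^{\times} \otimes_{\Z}\Q_p$. Bloch--Kato's explicit identification $H^1_f(G_w, \Q_p(1)) = \mathcal{O}_w^{\times}\otimes \Q_p$ for $w\mid p$, combined with the unramified condition at other $w\in T'$, cuts this down to the global units $\mathcal{O}_L^{\times}\otimes \Q_p$. Since $G_L$ acts trivially on $V$, this yields
\[
H^1_f(G_{\Q,T}, V\otimes \Q_p(1)) \cong (V\otimes \mathcal{O}_L^{\times}\otimes \Q_p)^{G}.
\]

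Finally, I would invoke the Galois-equivariant form of Dirichlet's unit theorem: the logarithm map $u\mapsto (\log|u|_v)_{v\mid\infty}$ gives a short exact sequence of $\Q[G]$-modules
\[
0 \to \mathcal{O}_L^{\times}\otimes \Q \to \Q[\Sigma_\infty(L)] \to \Q \to 0,
\]
in which the right-hand map is summation. Since $\Sigma_\infty(L)\cong G/\langle c\rangle$ as a $G$-set, with $\langle c\rangle$ the decomposition subgroup of a chosen archimedean place (trivial if real; cyclic of order two, generated by complex conjugation, if complex), we have $\Q_p[\Sigma_\infty(L)]\cong \Ind_{\langle c\rangle}^{G}\Q_p$. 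Frobenius reciprocity then gives $(V\otimes \Q_p[\Sigma_\infty(L)])^G \cong (V\otimes \Q_p)^{c=1}$. Since $G$-invariants are exact over $\Q_p[G]$, tensoring the displayed sequence with $V$ and taking $G$-invariants (using $V^{G}=V^{\Gal(\Q)}$) yields the claimed isomorphism. I expect the main technical step to be matching up the local Bloch--Kato conditions under restriction in the first paragraph; the remainder is a clean calculation with induced representations.
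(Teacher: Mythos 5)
Your proposal is correct and follows essentially the same route as the paper: inflation--restriction (valid since $(V\otimes\Q_p(1))^{G_L}=0$), compatibility of the local $f$-conditions, the identification $H^1_f(G_{L,T},\Q_p(1))\simeq \mathcal{O}_L^{\times}\otimes\Q_p$ via Kummer theory, and then the Galois-module structure of the units plus Frobenius reciprocity and semisimplicity of $\Q_p[\Gal(L|\Q)]$. The only cosmetic difference is that you derive $\mathcal{O}_L^{\times}\otimes\Q_p\simeq \Ind_{\langle c\rangle}^{\Gal(L|\Q)}\Q_p/\Q_p$ from the equivariant Dirichlet unit theorem, where the paper cites this statement directly from Neukirch--Schmidt--Wingberg.
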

\begin{proof}
The crucial point is that, since $H^0 (G_{L,T},V\otimes \Q _p (1))=0$, the inflation-restriction exact sequence induces an isomorphism
\[
H^1 (G_{\mathbb{Q},T} ,V\otimes _F \mathbb{Q}_p (1))\simeq H^1 (G_{L,T},V\otimes _F \mathbb{Q}_p (1))^{\Gal (L|\mathbb{Q})},
\]
and similarly we have isomorphisms
\[
H^1 (G_p ,V\otimes _F \Q _p (1))\simeq \oplus _{v|p}H^1 (G_v ,V\otimes _F \Q _p (1))^{\Gal (L|\Q )},
\]
which induce isomorphisms
\[
H^1 _f (G_p ,V\otimes _F \Q _p (1))\simeq \oplus _{v|p}H^1 _f (G_v ,V\otimes F\Q _p (1))^{\Gal (L|\Q )}.
\]
This induces an isomorphism
$$
H^1 _f (G_{\mathbb{Q},T} ,V\otimes _F \mathbb{Q}_p (1))\simeq H^1 _f (G_{L,T},V\otimes _F \mathbb{Q}_p (1))^{\Gal (L|\mathbb{Q})}.
$$
Given this, we observe
\begin{align*}
H^1 _f (G_{L,T},V\otimes _F \mathbb{Q}_p (1))^{\Gal (L|\mathbb{Q})} \simeq & (H^1 _f (G_{L,T}, \mathbb{Q}_p (1))\otimes _F V)^{\Gal (L|\mathbb{Q})} \\
\simeq & ((\mathcal{O}_L ^\times \otimes _{\mathbb{Z}}\mathbb{Q}_p )\otimes _F V)^{\Gal (L|\mathbb{Q})}.
\end{align*}
Now we use the description of $\mathcal{O}_L ^\times \otimes \mathbb{Q}_p $ as a Galois module \cite[$\S$8.7.2]{neukirch:2000}:
$$
\mathcal{O}_L ^\times \otimes _{\mathbb{Z}}\mathbb{Q}_p \simeq \Ind ^{\Gal (L|\mathbb{Q})}_{\langle c \rangle }\mathbb{Q}_p /\mathbb{Q}_p,
$$and finally, we have
\begin{align*}
((\mathcal{O}_L ^\times \otimes _{\mathbb{Z}}\mathbb{Q}_p )\otimes _F V)^{\Gal (L|\mathbb{Q})} 
\simeq & \Hom _{\Gal (L|\mathbb{Q})}((\mathcal{O}_L ^\times \otimes _{\mathbb{Z}}\mathbb{Q}_p ),V\otimes _F \mathbb{Q}_p ) \\
\simeq & \Ker(\Hom _{\Gal (L|\mathbb{Q})}(\Ind ^{\Gal (L|\mathbb{Q})}_{\langle c \rangle } \mathbb{Q}_p ),V\otimes _F \mathbb{Q}_p )\to \Hom _{\Gal (L|\mathbb{Q})}(\mathbb{Q}_p ),V\otimes _F \mathbb{Q}_p ))\\
\simeq & \Ker (\Hom _{\langle c \rangle }(\mathbb{Q}_p ,V\otimes _F \mathbb{Q}_p )\to \Hom _{\Gal (L|\mathbb{Q})}(\mathbb{Q}_p ,V\otimes \mathbb{Q}_p )) \\
\simeq & (V\otimes \mathbb{Q}_p )^{c=1}/ (V\otimes \mathbb{Q}_p )^{\Gal (\mathbb{Q})}.
\end{align*}\end{proof}
We deduce the following corollary:
\begin{Proposition}
Let $K=\mathbb{Q}$, and define $\rho _f (J)=\rk \NS(J)+\rk (\NS (J_{\overline{\mathbb{Q}}})^{c=-1})$ as in the introduction.
If $$
\rk J <g-1+\rho _f (J),$$ 
then $X(\mathbb{Q}_p )_2 $ is finite.
\end{Proposition}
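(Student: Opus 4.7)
The plan is to apply the finiteness criterion established above to a quotient $U$ of $U_2$ whose weight $-2$ kernel $W$ captures the Artin--Tate content of the geometric N\'eron--Severi group of $J$. I first let $[\theta] \in \NS(J)$ be the class of a polarisation and set
\[
M := \bigl( \NS(J_{\overline{\mathbb{Q}}}) \otimes \mathbb{Q}_p \bigr) / \mathbb{Q}_p \cdot [\theta],
\]
an Artin representation of $G_{\mathbb{Q},T}$ unramified at $p$ of dimension $\rho - 1$, where $\rho := \rk \NS(J_{\overline{\mathbb{Q}}})$. The $p$-adic cycle class map gives an injection $\NS(J_{\overline{\mathbb{Q}}}) \otimes \mathbb{Q}_p(1) \hookrightarrow \wedge^2 V$ taking $[\theta]$ to a nonzero multiple of $\cup^*(1)$, and so induces an injection $W := M \otimes \mathbb{Q}_p(1) \hookrightarrow \overline{\wedge^2 V}$ of $G_{\mathbb{Q},T}$-representations. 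I would then invoke Faltings' semi-simplicity theorem for $V$ to conclude that $\wedge^2 V$, and hence $\overline{\wedge^2 V}$, is semi-simple, so this injection splits $G_{\mathbb{Q},T}$-equivariantly. Taking the corresponding intermediate quotient $U_2 \twoheadrightarrow U \twoheadrightarrow V$ with $\ker(U \to V) = W$, the preceding lemma reduces finiteness of $X(\mathbb{Q}_p)_2$ to the inequality
\[
\dim D_{\dr}(W)/F^0 - \dim H^1_f(G_{\mathbb{Q},T}, W) > r - g.
\]

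For the de Rham side, since $M$ is an Artin representation unramified at $p$, $D_{\dr}(M)$ sits in filtration degree $0$; tensoring with $\mathbb{Q}_p(1)$ (which has $F^{-1} = \mathbb{Q}_p$ and $F^0 = 0$) then forces $F^0 D_{\dr}(W) = 0$, so $\dim D_{\dr}(W)/F^0 = \dim M = \rho - 1$. For the Selmer side, applying Lemma \ref{artin-tate-bound} to $M$ gives
\[
\dim H^1_f(G_{\mathbb{Q},T}, W) = \dim M^{c=1} - \dim M^{G_{\mathbb{Q}}},
\]
and since $[\theta]$ lies in both $\NS(J)$ and $\NS(J_{\overline{\mathbb{Q}}})^{c=1}$, this equals $\rk \NS(J_{\overline{\mathbb{Q}}})^{c=1} - \rk \NS(J)$.

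Subtracting, and using the decomposition $\rho = \rk \NS(J_{\overline{\mathbb{Q}}})^{c=1} + \rk \NS(J_{\overline{\mathbb{Q}}})^{c=-1}$, the left-hand side of the required inequality simplifies to
\[
(\rho - 1) - \bigl( \rk \NS(J_{\overline{\mathbb{Q}}})^{c=1} - \rk \NS(J) \bigr) = \rk \NS(J) + \rk \NS(J_{\overline{\mathbb{Q}}})^{c=-1} - 1 = \rho_f(J) - 1,
\]
so the hypothesis $\rk J < g - 1 + \rho_f(J)$ is exactly what is needed.

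The main obstacle would be the construction of $W$ as the kernel of an intermediate quotient of $U_2$: one needs the Artin--Tate subrepresentation of $\overline{\wedge^2 V}$ coming from geometric N\'eron--Severi classes to split off as a Galois-equivariant direct summand, which is precisely where Faltings' semi-simplicity theorem enters. Once $W$ is in hand, the rest is routine dimension bookkeeping via Lemma \ref{artin-tate-bound} and the Hodge filtration on the Tate twist.
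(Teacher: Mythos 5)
Your proof is correct and follows essentially the same route as the paper: you take for $W$ the N\'eron--Severi/Artin--Tate quotient $(\NS(J_{\overline{\mathbb{Q}}})\otimes\mathbb{Q}_p(1))/\mathbb{Q}_p(1)$ of $\overline{\wedge^2 V}$, reduce to the inequality $\dim D_{\dr}(W)/F^0-\dim H^1_f(G_{\mathbb{Q},T},W)>r-g$, and evaluate both sides via Lemma \ref{artin-tate-bound}, exactly as the paper does. The only differences are presentational: you invoke the earlier finiteness criterion \eqref{eqn:key_inequality} directly (the paper re-runs that dimension count inside the proof), and you make explicit, via Faltings semisimplicity, the splitting that realises $W$ as a quotient of $\overline{\wedge^2 V}$, a point the paper leaves implicit.
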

\begin{proof}
Let $$
W:=\varinjlim _L \bigg[ \mathbb{Q}_p (1) \otimes \Hom _{G_L }(\mathbb{Q}_p (1),\overline{\wedge ^2 V}) \bigg] \subset \overline{\wedge ^2 V}
$$ be the Artin--Tate part of $[U_2 ,U_2 ]$. Then we know that $W$ contains (and is equal to by Faltings) the Artin--Tate representation $(\NS(J_{\overline{\mathbb{Q}}})\otimes \mathbb{Q}_p )/\mathbb{Q}_p (1)$. The proof that $X(\mathbb{Q}_p )_U$ is finite is as in \cite[Lemma 2.6]{balakrishnan:2016}, with the only difference being a more general choice of $W$. To recall, we use the fact that it is enough to prove that $$\dim \Sel (U)<\dim H^1 _f (G_p ,U).$$ 
It is enough to prove that $\dim \Sel (U)_{\alpha }<\dim H^1 _f (G_p ,U)$ for any collection of local conditions. By Lemma \ref{old_lemma}, we have
\[
\dim \Sel (U)_{\alpha }\leq \dim H^1 _f (G_{K,T},W)+\dim H^1 _f (G_{K,T}(V).
\]
At $p$, we claim the sequence 
$$
1 \to H^1 _f (G_p ,W)\to H^1 _f (G_p ,U) \to H^1 _f (G_p ,V) \to 1 ,
$$
is exact. One way to see this is that the non-abelian Dieudonn\'e functor induces an isomorphism of schemes
\[
H^1 _f (G_p ,U) \simeq D_{\dr}(U)/F^0 .
\]
In \cite[\S 1]{kim:2009}, Kim proves that this map is algebraic. The map is given by sending a torsor $P$ to a $D_{\cry }(U)$-torsor object $D_{\cry }(P)$ in the category of filtered $\phi $-modules, and by proving that the set of isomorphism classes of such torsors is represented by $D_{\dr}(U)/F^0 $. Although it is not explicitly stated in loc. cit. that this map is bijective, one can deduce it from the fact that the map has an inverse given by sending a $D_{\cry}(U)$-torsor $P$ to the crystalline $U$-torsor 
$\spec (F^0 (\mathcal{O}(P)\otimes B_{\cry })^{\phi =1})$.
Hence exactness follows from exactness of 
\[
1\to D_{\dr} (W)/F^0 \to D_{\dr}(U)/F^0 \to D_{\dr} (V) /F^0 \to 1.
\]
We deduce that $X(\mathbb{Q}_p )_2 $ is finite whenever 
$$
\dim H^1 _f (G_{\mathbb{Q},T} ,W)+\rk J < \dim H^1 _f (G_p ,W) +g.
$$
The proposition now follows from Lemma \ref{artin-tate-bound}, since this implies
$$
\dim H^1 _f (G_p ,W)-\dim H^1 _f (G_{\mathbb{Q},T} ,W)=\dim \NS(J)+\dim \NS(J_{\overline{\mathbb{Q}}})^{c=-1}-1.
$$
\end{proof}
\subsection{Finiteness assuming the Bloch--Kato conjectures}
Here we describe situations when finiteness of $X(\mathbb{Q}_p )_2 $ is implied by the Bloch--Kato conjectures. The Bloch--Kato conjectures relate the dimension of $H^1 _f (G_{K,T},W)$ to the rank of certain graded pieces of $K$-groups of algebraic varieties. 
Let $Z$ be a smooth projective variety over $\mathbb{Q}$. For $i\in \mathbb{Z}$, let $K_i (Z)$ denote the $i$th algebraic $K$-group of $Z$ in the sense of Quillen. The only fact we will use about $K_i (Z)$ is that it is zero when $i<0$, and the action of Adams operators enables one to define a grading $K_i (Z)\otimes \mathbb{Q}=\oplus _{j\in \mathbb{Z}}K_i ^{(j)}(Z)$ on the group tensored with $\mathbb{Q}$. The following is a special case of their conjectures.
\begin{Conjecture}[Bloch--Kato {\cite[Conjecture 5.3 (i)]{Bloch-Kato:1990}}]\label{BKconj}
Let $Z$ be a smooth projective variety over $\mathbb{Q}$. Then for any $n>0$ and $2r-1\neq n$, the map
$$
\ch _{n,r}:K_{2r-1-n}^{(r)}(Z)\otimes \mathbb{Q}_p \to H^1 _f (G_{\mathbb{Q}},H^n (Z_{\overline{\mathbb{Q}}},\mathbb{Q}_p (r))
$$
is an isomorphism.
\end{Conjecture}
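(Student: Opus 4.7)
The stated result is Bloch--Kato's Conjecture 5.3(i), which remains an open problem in general, so any ``proof proposal'' here is really a description of the attack together with an honest identification of the essential obstruction. My plan is to construct the map $\ch_{n,r}$ via motivic cohomology, verify that it lands in $H^1_f$, and then separate the problem into injectivity and surjectivity. The starting point is the Bloch--Levine--Voevodsky identification
\[
K_{2r-1-n}^{(r)}(Z)\otimes \Q \simeq H^{n+1}_{\mathcal{M}}(Z,\Q(r)),
\]
under which $\ch_{n,r}$ is identified with the étale realisation of a motivic class composed with the Hochschild--Serre edge map $H^{n+1}_{\text{\'et}}(Z_{\overline{\Q}},\Q_p(r))\to H^1(G_{\Q},H^n(Z_{\overline{\Q}},\Q_p(r)))$. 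The first technical input is that the resulting extension class is crystalline at $p$ and unramified elsewhere, placing it in $H^1_f$; unramifiedness outside $p$ follows from good reduction spreading of the motivic class, and the crystalline condition at $p$ comes from the Fontaine--Messing/Niziol syntomic comparison theorem.

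For injectivity, the plan is to combine the weight filtration on $H^1_f(G_{\Q,T},H^n(Z_{\overline{\Q}},\Q_p(r)))$ (purity gives weight $n+1-2r$, which is the right weight for motivic classes precisely in the range $2r-1\neq n$) with finiteness results on motivic cohomology: Beilinson--Soul\'e vanishing, Borel's computation of $K$-groups when $Z$ is a point, and the general finite generation conjecture of Bass--Beilinson. In favourable contexts (Tate motives, CM elliptic curves, modular forms of low weight) injectivity is known via Soul\'e's regulator computation or Kato's Euler system argument. Surjectivity, however, is the content of the conjecture: it asserts that every crystalline Selmer class is realised by a $K$-theoretic class. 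The only available tool is the Euler system method of Kolyvagin, Kato, and Rubin, combined with the relevant Iwasawa main conjecture, and it applies only in those cases where one can construct sufficiently many norm-compatible classes (Beilinson--Kato elements for modular forms, Heegner points over CM fields, Beilinson--Flach classes for products of modular varieties). In each such case one obtains surjectivity by matching $p$-adic dimensions against the predictions from $L$-values.

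The main obstacle, and the reason a genuine proof is out of reach, is surjectivity in the generality stated: for an arbitrary smooth projective $Z/\Q$ there is no systematic construction of $K$-theory classes accounting for all crystalline extensions, and producing such classes is essentially tantamount to the Bloch--Kato conjecture itself. I would therefore not expect to produce a proof of Conjecture \ref{BKconj} as written; the conjecture is invoked in this paper purely as motivation for the unconditional finiteness results that follow. The concrete consequences actually needed later (notably the vanishing of specific groups $H^1_f(G_{K,T},W)$ for $W$ a quotient of $\wedge^2 V$) are established unconditionally in subsequent sections via modularity lifting results of Freitas--Le Hung--Siksek and Allen, or by the direct Galois-cohomological arguments of Flach.
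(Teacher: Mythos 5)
You have correctly identified that this statement is an open conjecture quoted verbatim from Bloch--Kato, and the paper supplies no proof of it: it is used only as a hypothesis in Lemmas \ref{BKimplies} and \ref{BKimpliesfinite} to derive conditional finiteness of $X(\mathbb{Q}_p)_2$, while the results actually needed later are proved unconditionally (Flach, Allen, etc.), exactly as you say. Your survey of how one would attack it (motivic construction of $\ch_{n,r}$, the $H^1_f$ condition via syntomic comparison, Euler systems for surjectivity) is reasonable but not required here; there is nothing to check against, and your honest refusal to claim a proof is the correct assessment.
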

Kim \cite{kim:2009} showed that this conjecture implies that $X(\mathbb{Q}_p )_n $ is finite for all $n$ sufficiently large, with no hypotheses on the rank of $J$. As we are interested in $X(\mathbb{Q}_p )_2 $, we now work out the exact conditions on $X$ for which Kim's argument can be used to show that Conjecture \ref{BKconj} implies finiteness of $X(\mathbb{Q}_p )_2 $.
\begin{Lemma}\label{BKimplies}
Conjecture \ref{BKconj} implies $H^1 _f (G_{\mathbb{Q},T} ,\overline{\wedge ^2 V}^*(1))=0$.
\end{Lemma}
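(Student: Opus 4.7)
The plan is to realize $\overline{\wedge^2 V}^*(1)$ as a sub-representation of $H^2(J_{\overline{\mathbb{Q}}}, \mathbb{Q}_p(1))$ and then apply Conjecture \ref{BKconj} directly to $Z = J$. Starting from the defining sequence $0 \to \mathbb{Q}_p(1) \xrightarrow{\cup^*} \wedge^2 V \to \overline{\wedge^2 V} \to 0$, I would dualize and twist by $\mathbb{Q}_p(1)$ to obtain
\[
0 \to \overline{\wedge^2 V}^*(1) \to \wedge^2 V^*(1) \to \mathbb{Q}_p \to 0.
\]
Using the Weil self-duality $V \cong V^*(1)$ induced by the principal polarization on $J$, together with the standard identification $\wedge^2 H^1(X_{\overline{\mathbb{Q}}}, \mathbb{Q}_p) \cong H^2(J_{\overline{\mathbb{Q}}}, \mathbb{Q}_p)$, the middle term becomes $\wedge^2 V^*(1) \cong H^2(J_{\overline{\mathbb{Q}}}, \mathbb{Q}_p(1))$, and the surjection onto $\mathbb{Q}_p$ becomes the cup product (equivalently, cap with the class of the principal polarization).

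Next, I would apply Conjecture \ref{BKconj} to $Z = J$ with $n = 2$, $r = 1$ (the hypothesis $2r-1 \neq n$ holds, since $1 \neq 2$). It asserts
\[
H^1_f(G_{\mathbb{Q}}, H^2(J_{\overline{\mathbb{Q}}}, \mathbb{Q}_p(1))) \cong K^{(1)}_{-1}(J) \otimes \mathbb{Q}_p,
\]
which vanishes because $K_i(Z) = 0$ for $i < 0$. Since $H^2(J)(1)$ is crystalline at $p$ and unramified outside $T$, the Bloch--Kato $H^1_f$ agrees here with the paper's $H^1_f(G_{\mathbb{Q},T}, -)$.

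Then I would take the long exact sequence in $H^1_f$ attached to the short exact sequence of the first paragraph. The boundary map lands in $H^1_f$ because the terms are all crystalline at $p$ and unramified outside $T$, giving
\[
H^0(G_{\mathbb{Q},T}, H^2(J)(1)) \to \mathbb{Q}_p \to H^1_f(G_{\mathbb{Q},T}, \overline{\wedge^2 V}^*(1)) \hookrightarrow H^1_f(G_{\mathbb{Q},T}, H^2(J)(1)) = 0.
\]
The first arrow is surjective: the class $[\Theta]$ of the canonical principal polarization lies in $\NS(J) \otimes \mathbb{Q}_p \subseteq H^0(G_{\mathbb{Q}}, H^2(J_{\overline{\mathbb{Q}}}, \mathbb{Q}_p(1)))$, and its image in $\mathbb{Q}_p$ is a nonzero scalar, since it realizes the composition $\cup \circ \cup^*$, which is nonzero by nondegeneracy of the Weil pairing. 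Hence the connecting map vanishes and $H^1_f(G_{\mathbb{Q},T}, \overline{\wedge^2 V}^*(1)) = 0$.

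The main obstacle, once these identifications are in place, is mostly bookkeeping: matching Tate twists and dualities so that Bloch--Kato's $K$-theoretic vanishing targets exactly the right Galois representation, verifying that the $H^1_f$-boundary behaves well on the sequence, and confirming surjectivity of the $H^0$-map via the explicit geometric class $[\Theta]$. The actual vanishing is then immediate from $K_{-1}(J) = 0$.
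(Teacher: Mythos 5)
Your proposal is correct and takes essentially the same route as the paper: both reduce the claim to the vanishing of $H^1_f(G_{\mathbb{Q},T},H^2(Z_{\overline{\mathbb{Q}}},\mathbb{Q}_p(1)))$ for a smooth projective $Z$, which follows from Conjecture \ref{BKconj} with $n=2$, $r=1$ and $K_{2r-1-n}(Z)=K_{-1}(Z)=0$. The only difference is cosmetic: the paper takes $Z=X\times X$ and simply notes that $\overline{\wedge^2 V}^*(1)$ is a direct summand of $H^2(X\times X_{\overline{\mathbb{Q}}},\mathbb{Q}_p(1))$, whereas you take $Z=J$ and check the needed injectivity by hand using the long exact sequence and the Galois-invariant class $[\Theta]$ (which in fact splits your short exact sequence, so the direct-summand argument applies verbatim).
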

\begin{proof}
As $\overline{\wedge ^2 V}^* (1)$ is a direct summand of $H^2 _{\acute{e}t}(X\times X_{\overline{\mathbb{Q}}},\mathbb{Q}_p (1))$,  it suffices to prove that 
$$
H^1 _f (G_{\mathbb{Q},T} ,H^2 _{\acute{e}t}(X\times X_{\overline{\mathbb{Q}}},\mathbb{Q}_p (1) ))=0.
$$
This follows from Conjecture \ref{BKconj}, since that implies
\[
\dim H^1 _f (G_{\mathbb{Q},T} ,H^2 _{\acute{e}t}(X\times X_{\overline{\mathbb{Q}}},\mathbb{Q}_p (1) ))
\leq \dim H^1 _g (G_{\mathbb{Q},T} ,H^2 _{\acute{e}t}(X\times X_{\overline{\mathbb{Q}}},\mathbb{Q}_p (1) )) 
\leq \dim K_{-1}(X\times X)\otimes \mathbb{Q}=0.
\]
\end{proof}
\begin{Lemma}
Conjecture \ref{BKconj} implies 
$$\dim H^1 _f (G_p ,\overline{\wedge ^2 V})-\dim H^1 _f (G_{\mathbb{Q},T} ,\overline{\wedge ^2 V})\geq g(g-1).$$
\end{Lemma}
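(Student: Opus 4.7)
The plan is to combine the Bloch--Kato global Euler--Poincar\'e formula with the local dimension formula at $p$, using Lemma \ref{BKimplies} to kill the dual Selmer contribution.

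Recall that for any $G_{\mathbb{Q},T}$-representation $W$ crystalline at $p$, the Fontaine--Perrin-Riou identity reads
$$\dim H^1_f(G_{\mathbb{Q},T}, W) - \dim H^1_f(G_{\mathbb{Q},T}, W^*(1)) = \dim H^0(G_{\mathbb{Q},T}, W) - \dim H^0(G_{\mathbb{Q},T}, W^*(1)) + \dim D_{\dr}(W)/F^0 - \dim W^{c=1},$$
while the local Bloch--Kato exponential sequence gives $\dim H^1_f(G_p, W) = \dim D_{\dr}(W)/F^0 + \dim H^0(G_p, W)$. I would apply both to $W = \overline{\wedge^2 V}$. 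Since $V$ is pure of weight $-1$, $W$ is pure of weight $-2$, so purity forces $H^0(G_p, W) = H^0(G_{\mathbb{Q},T}, W) = 0$, and by Lemma \ref{BKimplies} we have $H^1_f(G_{\mathbb{Q},T}, W^*(1)) = 0$. Subtracting the global formula from the local one then yields
$$\dim H^1_f(G_p, W) - \dim H^1_f(G_{\mathbb{Q},T}, W) = \dim W^{c=1} + \dim H^0(G_{\mathbb{Q},T}, W^*(1)) \geq \dim W^{c=1},$$
reducing the lemma to the purely linear-algebraic statement that $\dim (\overline{\wedge^2 V})^{c=1} = g(g-1)$.

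For the eigenspace computation, the Weil pairing $V \otimes V \to \mathbb{Q}_p(1)$ is Galois-equivariant, and since $c$ acts by $-1$ on $\mathbb{Q}_p(1)$, both $V^{c=1}$ and $V^{c=-1}$ are isotropic. Non-degeneracy then forces $\dim V^{c=1} = \dim V^{c=-1} = g$. Consequently $(\wedge^2 V)^{c=1} = \wedge^2(V^{c=1}) \oplus \wedge^2(V^{c=-1})$ has dimension $2\binom{g}{2} = g(g-1)$, and since the embedded $\mathbb{Q}_p(1) \hookrightarrow \wedge^2 V$ lies in the $c=-1$ part, the $+1$-eigenspace is unchanged on passing to the quotient $\overline{\wedge^2 V}$.

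The only substantive input beyond Lemma \ref{BKimplies} is the Euler--Poincar\'e formula, which is standard and not itself conjectural (it follows from Tate's global duality combined with the local Bloch--Kato dimensions at $p$ and $\infty$); the remaining eigenspace computation is elementary. The calculation also makes visible that equality holds exactly when $H^0(G_{\mathbb{Q},T}, \overline{\wedge^2 V}^*(1)) = 0$, i.e.\ when the only Tate class in $\wedge^2 V(-1)$ comes from the theta polarisation.
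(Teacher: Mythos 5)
Your argument is correct and is essentially the paper's proof: the same Fontaine--Perrin-Riou/Poitou--Tate Euler characteristic identity combined with the local Bloch--Kato dimension formula at $p$, with Lemma \ref{BKimplies} killing $H^1_f(G_{\mathbb{Q},T},W^*(1))$ and the term $\dim H^0(G_{\mathbb{R}},\overline{\wedge^2 V})=g(g-1)$ supplying the bound. The only (harmless) difference is that you discard $\dim H^0(G_{\mathbb{Q},T},W^*(1))$ as a nonnegative term, whereas the paper identifies it as $\rho(J_X)-1$; your explicit eigenspace computation just fills in the paper's unproved assertion that $\dim H^0(G_{\mathbb{R}},W)=g(g-1)$.
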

\begin{proof}
Recall the following corollary of Poitou--Tate duality  \cite[Remark 2.2.2]{fontaine:1994}:
\begin{align*}
& \dim _{\mathbb{Q}_p }(H^0 (G_{\mathbb{Q},T},W))-\dim _{\mathbb{Q}_p }(H^1 _f (G_{\mathbb{Q},T} ,W)) +\dim _{\mathbb{Q}_p }(H^1 _f (G_{\mathbb{Q},T},W^* (1))) \\
& -\dim _{\mathbb{Q}_p }(H^0 (G_{\mathbb{Q},T} ,W^* (1)))= -\dim _{\mathbb{Q}_p }(D_{\dr }(W)/F^0 )+\dim _{\mathbb{Q}_p }(H^0 (G_{\mathbb{R}},W)).
\end{align*} 
In this case of $W=\overline{\wedge ^2 V}$, we have 
\begin{align*}
& \dim _{\mathbb{Q}_p }(H^0 (G_{\mathbb{Q},T} ,W^* (1)))=\rho (J_X )-1, \;\qquad D_{\dr }(W)/F^0 =H^1 _f (G_p ,W), \\
& \dim _{\mathbb{Q}_p }(H^0 (G_{\mathbb{R}},W))=g(g-1), \qquad\qquad\qquad H^0 (G_{\mathbb{Q},T} ,W)=0,
\end{align*}
hence the claim follows from Lemma \ref{BKimplies}. 
\end{proof}
Hence we deduce the following simple criterion for conjectural finiteness of $X(\mathbb{Q}_p )_2 $.
\begin{Lemma}\label{BKimpliesfinite}
Suppose Conjecture \ref{BKconj}. Let $X/\Q$ be a curve of genus $g \geq 2$. If $r= \rk J(\mathbb{Q})<g^2  $, then $X(\mathbb{Q}_p )_2 $ is finite. \\
\end{Lemma}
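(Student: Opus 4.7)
The plan is to combine the abstract finiteness criterion (the unnumbered Lemma after Lemma \ref{old_lemma}) with the dimension bound provided by the previous Lemma in the excerpt. Specifically, that criterion says $X(K_{\mathfrak{p}})_2$ is finite whenever
\[
\dim D_{\dr}(W)/F^0 - \dim H^1_f(G_{K,T},W) > r-g,
\]
where $W$ is the appropriate Galois representation of weight $-2$. Taking $U$ to be the full depth-$2$ quotient $U_2$, we have $W = \overline{\wedge^2 V}$.

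First I would identify $\dim D_{\dr}(W)/F^0$ with $\dim H^1_f(G_p,W)$. This identification is already noted in the proof of the previous lemma (and in fact is used implicitly throughout Section \ref{sec:chabautykim}), following from the non-abelian Dieudonn\'e-type equivalence $H^1_f(G_p,U) \simeq D_{\dr}(U)/F^0$ combined with the exactness of $D_{\dr}(-)/F^0$ on the short exact sequence $1\to W\to U_2 \to V\to 1$.

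Next, I would invoke the preceding lemma directly: assuming Conjecture \ref{BKconj}, one has
\[
\dim H^1_f(G_p,\overline{\wedge^2 V}) - \dim H^1_f(G_{\mathbb{Q},T},\overline{\wedge^2 V}) \geq g(g-1).
\]
Substituting this into the finiteness criterion, finiteness of $X(\mathbb{Q}_p)_2$ is guaranteed as soon as
\[
g(g-1) > r - g, \qquad\text{i.e.,}\qquad r < g^2.
\]

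There is no real obstacle here, since both halves of the argument are already in place in the excerpt; the only thing to check is the arithmetic $g(g-1) + g = g^2$ and that one is allowed to apply the finiteness criterion with $W = \overline{\wedge^2 V}$, which is precisely the setting of Section \ref{sec:chabautykim} (the quotient $U = U_2$ satisfies $W = [U,U] = \overline{\wedge^2 V}$). This completes the proof.
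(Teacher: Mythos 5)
Your proposal is correct and follows essentially the same route as the paper: the paper's proof likewise combines the Bloch--Kato--conditional bound $\dim H^1_f(G_p,\overline{\wedge^2 V})-\dim H^1_f(G_{\mathbb{Q},T},\overline{\wedge^2 V})\geq g(g-1)$ from the preceding lemma with the dimension criterion $\dim H^1_f(G_{\mathfrak{p}},U_2)>\dim\Sel(U_2)$ (equivalently the inequality \eqref{eqn:key_inequality}, using $D_{\dr}(W)/F^0\simeq H^1_f(G_p,W)$), and concludes from $g(g-1)+g-g^2=0$ exactly as you do.
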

\begin{proof}
By the previous lemma, we have 
\begin{align*}
\dim H^1 _f (G_{\mathfrak{p}},U_2 )-\dim \Sel (U_2 ) & \geq \dim H^1 _f (G_{\mathfrak{p}},W)-\dim H^1 _f (G_{K,T},W)+g-\rk J(K) \\
& >g(g-1) +g -g^2 =0.
\end{align*}
\end{proof}

%%%%%%%%%%%%%%%%%%%%%%%%%%%%%%%%%%%%  
\section{Generalised height functions}\label{sec:ht}
%%%%%%%%%%%%%%%%%%%%%%%%%%%%%%%%%%%%  

We now return to considering a general $K|\mathbb{Q}$, with $p$ a prime splitting completely in $K$ and $\mathfrak{p}$ a prime of $K$ lying above $p$.
In \cite{balakrishnan:2016}, we used Nekov\'a\v r's formalism of $p$-adic height functions on mixed extensions to describe Chabauty--Kim sets in terms of $p$-adic height pairings of cycles on $X$. Given a choice of global character $\chi \in H^1 (G_{K,T} ,\mathbb{Q}_p )$, Nekov\'a\v r's $p$-adic height functions associate to certain filtered Galois representations with graded pieces $\mathbb{Q}_p $, $V$, and $\mathbb{Q}_p (1)$, a collection of local cohomology classes with values in $\mathbb{Q}_p (1)$. We obtain a $\mathbb{Q}_p$-valued function by summing the cup products of these local classes with $\chi $.

In this section, we describe a natural generalisation of Nekov\'a\v r's formulation of the $p$-adic height pairing, resulting in a notion of generalised $p$-adic height functions. To do this, we essentially mimic his construction at every step, occasionally rephrasing some constructions in terms of nonabelian cohomology. 
\subsection{Twisting the enveloping algebra}\label{subsec:twist}
We recall some core ideas from \cite{balakrishnan:2016} about mixed extensions and nonabelian cohomology. In what follows, $X$ is a smooth projective curve over $K$, $V=H^1 _{\acute{e}t}(\overline{X},\mathbb{Q}_p )^* $ and $W$ is a quotient of $\overline{\wedge ^2 V}$. Before describing generalities on mixed extensions, we first give the main examples of the filtered Galois representations in this paper.

Let $\mathbb{Z}_p [\! [\pi _1 ^{\acute{e}t}(\overline{X},b)]\! ]:=\varprojlim \mathbb{Z}_p [\pi _1 ^{\acute{e}t}(\overline{X},b)/N ]$, where the limit is over normal subgroups $N$ such that $\pi _1 ^{\acute{e}t}(\overline{X},b)/N$ is a finite $p$-group. Let 
$I$ denote the kernel of the natural map
\[
\Q _p \otimes \mathbb{Z}_p [\! [\pi _1 ^{\acute{e}t}(\overline{X},b)]\! ] \to \Q _p .
\]
Then define $A_2 (b):=\Q _p\otimes \mathbb{Z}_p [\! [\pi _1 ^{\acute{e}t}(\overline{X},b)]\! ]/I^3 $ (see \cite[\S 2]{kim:2010}).

The Galois representation $\mathbb{Q}_p [\pi _1 (\overline{X};b,z)]$ has the structure of an equivariant \\
$(\mathbb{Q}_p [\pi _1 (\overline{X},b)],\mathbb{Q}_p [\pi _1 (\overline{X},z)])$-bimodule, allowing one to define a finite-dimensional $(A_2 (b),A_2 (z))$-bimodule
\begin{align*}
A_2 (b,z) & :=\mathbb{Q}_p [\pi _1 ^{\acute{e}t}(\overline{X};b,z)]\otimes _{\mathbb{Q}_p [\pi _1 ^{\acute{e}t}(\overline{X},b)]}A_2 (b) \\
& \simeq A_2 (z) \otimes _{\mathbb{Q}_p [\pi _1 ^{\acute{e}t}(\overline{X},z)]}\mathbb{Q}_p [\pi _1 ^{\acute{e}t}(\overline{X};b,z)].
\end{align*}
%An equivalent definition is that $A_n (b,z)$ is the fibre at $z$ of the $\Q _p $-local system corresponding to the $\pi _1 ^{\acute{e}t}(\overline{X},b)$-representation $A_n (b)$.

As in \cite{balakrishnan:2016}, we define $A_2 (b)\to A(b)$ to be the quotient of $A_2 $ by the kernel of the composite 
$$
I^2 /I^3 \simeq \overline{\wedge ^2 V}\oplus \Sym ^2 V \to \overline{\wedge ^2 V} \to W. 
$$
$A(b)$ is then an algebra with a faithful left action of $U(b)\subset A(b)^\times $. 
Given a $U$-torsor $P$, the induced twist of $A$ by $P$, denoted $A(b)^{(P)}$, is an element of $\mathcal{M} (G_{K,T} ;\mathbb{Q}_p ,V,W)$. If $P$ is crystalline above $p$ and unramified outside $T$, then $A(b)^{(P)}$ will also have these properties, inducing a morphism of varieties
$$
H^1 _{f,T_0 } (G_{K,T} ,U)\to M_{f,T_0 } (G_{K,T} ;\mathbb{Q}_p ,V,W).
$$
The multiplication map $\wedge :IA(b)\times IA(b)\to I^2 A(b)$ factors through $V\times V$, which enables us to make the following definition.
\begin{Definition}\label{defn:tau} 
$\tau :V\to \Hom (V,W)$ denote the map $v_1 \mapsto ( v_2 \mapsto v_1 \wedge v_2 )$. 
\end{Definition}
\begin{Lemma}\label{symmbit}
$A^{(P)}$ is a mixed extension of $\pi _* P$ by $[IA(b)]+\tau _* \pi _* P$.
\end{Lemma}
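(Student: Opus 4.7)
The plan is to read off $A^{(P)} = A(b)^{(P)}$ from the $I$-adic filtration on $A(b)$, matching the three graded pieces $\Q_p$, $V$, $W$ in turn. Since $U \subset 1 + IA(b)$, left multiplication by $u \in U$ satisfies $(u-1)\cdot I^k A(b) \subset I^{k+1} A(b)$, so the filtration $A(b) \supset IA(b) \supset I^2 A(b) = W$ is $U$-stable and $U$ acts trivially on each graded piece (using $I^3 A(b) = 0$). Consequently $A^{(P)}$ inherits a Galois-stable three-step filtration with graded pieces $\Q_p$, $V$, $W$, and the lemma amounts to identifying the top quotient $A^{(P)}/W$ and the subobject $(IA(b))^{(P)}$.

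For the top quotient: because $[U,U] \subset I^2 A(b) = W$, the $U$-action on $A_1(b) := A(b)/W$ factors through the abelianisation $V = U^{\ab}$. An explicit computation shows that $v \in V$ acts on $A_1(b) \simeq \Q_p \oplus V$ by $(a, v') \mapsto (a, v' + av)$; this is precisely the affine action whose twist by a $V$-torsor $Q$ realises $Q$ as the extension of $\Q_p$ by $V$ associated to its class in $H^1(G_{K,T}, V)$. Hence $A^{(P)}/W \simeq A_1(b)^{(\pi_* P)} \simeq \pi_* P$.

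For the subobject, I would analyse the $U$-equivariant exact sequence $0 \to W \to IA(b) \to V \to 0$. Since $U$ acts trivially on both outer terms, it acts by automorphisms of this extension fixing $V$ and $W$, which gives a homomorphism $U \to \Hom(V, W)$. Unwinding the multiplication, $u = 1 + v + w$ sends $v' \in V$ to $v' + vv'$, and the product $V \otimes V \to I^2 A(b)/I^3 = W$ factors through the projection $\overline{\wedge^2 V} \to W$ by construction of $A(b)$; comparing with Definition \ref{defn:tau}, the induced map factors as $U \twoheadrightarrow V \xrightarrow{\tau} \Hom(V, W)$. The standard formula for how twisting shifts extension classes then yields $[(IA(b))^{(P)}] = [IA(b)] + \tau_*(\pi_* P)$, which combined with the previous paragraph completes the identification.

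The main obstacle will be the twisting formula in the final step: one must verify by an explicit cocycle calculation that if $U$ acts on an extension $0 \to N \to E \to M \to 0$ trivially on $M$ and $N$ through a map $\phi \colon U \to \Hom(M,N)$, then twisting $E$ by a $U$-torsor $P$ adds $\phi_*(\pi_*P)$ to the extension class in $H^1(G_{K,T}, \Hom(M,N))$. One also has to keep track of sign and orientation conventions for $\tau$ versus the algebra multiplication in $A(b)$; the rest of the argument is a direct unpacking of the definitions from Section \ref{subsec:twist}.
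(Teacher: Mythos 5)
Your argument is correct, and it is essentially the paper's own (implicit) justification: Lemma \ref{symmbit} is stated without proof, being treated as immediate from the twisting construction, and the same shift formula $[IA^{(Q)}]=[IA]+\tau_*[Q]$ is invoked as ``by definition of the twisting construction'' in the proof of Lemma \ref{leftrighttwist}. Your unpacking --- the $U$-stable $I$-adic filtration with trivial graded action, the identification of the induced map $U\to \Hom(V,W)$ with $\tau\circ\pi$ via left multiplication in $A(b)$, and the standard cocycle computation showing that twisting shifts the extension class by $\tau_*\pi_*P$ --- is exactly the routine verification the authors leave to the reader.
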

In the case when $P$ is the $U$-torsor of paths from $b$ to $z$, we shall denote the corresponding element of $\mathcal{M}_{f,T_0 } (G_{K,T} ;\mathbb{Q}_p ,V,W)$ by $A(b,z)$.
We obtain a map 
$$
H^1 _{f,T_0 } (G_{K,T} ,U)\to M_{f,T_0 } (G_{K,T} ;\mathbb{Q}_p ,V,W).
$$
We define $A(b,z):=A(b)^{(P(b,z))}$; we have an isomorphism of mixed extensions
$$
A(b)^{(P(b,z))}=^{(P(z,b))}A(z).
$$
\begin{Lemma}\label{leftrighttwist}
For any $x_1 ,x_2 ,z_1 ,z_2$ in $X(K)$,
\begin{equation}
[IA(x_1 ,x_2 )]=[IA(z_1 ,z_2 )]+\tau _* (\kappa (x_1 +x_2 -z_1 -z_2 )) \in \ext ^1 (V,W).
\end{equation}
\end{Lemma}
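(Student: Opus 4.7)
The plan is to reduce the identity, via Lemma \ref{symmbit}, to a single-basepoint variation formula for $[IA(b)]$, and then to prove that variation formula by computing $[IA(b,c)]$ in two different ways.

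First, I would apply Lemma \ref{symmbit} directly to the $U$-torsors $P(x_1, x_2)$ and $P(z_1, z_2)$, giving
\[
[IA(x_1,x_2)] = [IA(x_1)] + \tau_*(\kappa(x_2 - x_1)), \qquad [IA(z_1,z_2)] = [IA(z_1)] + \tau_*(\kappa(z_2 - z_1)).
\]
Subtracting these and comparing with the target identity, the lemma reduces to proving the basepoint-variation formula
\[
[IA(b)] - [IA(c)] = 2\,\tau_*(\kappa(b-c)) \in \ext^1(V,W)
\]
for arbitrary $b,c \in X(K)$, since $\kappa(x_1 + x_2 - z_1 - z_2) - \kappa((x_2 - x_1) - (z_2 - z_1)) = 2\kappa(x_1 - z_1)$.

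To prove the variation formula, I would exploit the canonical identification $A(b,c) \cong A(b)^{(P(b,c))} \cong {}^{(P(c,b))}A(c)$ recalled just before the lemma. The left-twist realisation, together with Lemma \ref{symmbit}, gives
\[
[IA(b,c)] = [IA(b)] + \tau_*(\kappa(c - b)).
\]
Running the analogous analysis for the right-twist realisation ${}^{(P(c,b))}A(c)$ yields
\[
[IA(b,c)] = [IA(c)] + \tau_*(\kappa(b - c)),
\]
the correction now being controlled by $\pi_* P(c,b) = \kappa(b - c)$. Equating the two expressions gives precisely the desired variation formula.

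The main obstacle will be establishing the right-twist analogue of Lemma \ref{symmbit} with its correct sign. Two sign conventions must be reconciled: the antisymmetry $v_1 \wedge v_2 = -v_2 \wedge v_1$ of the wedge product used to define $\tau$ in Definition \ref{defn:tau}, which governs the difference between left- and right-multiplication on the associated graded $I/I^2 \cong V$, and the relation $\pi_* P(c,b) = -\pi_* P(b,c)$ between the Kummer classes of reversed path torsors. These two signs cancel, so right-twisting contributes $+\tau_*(\pi_* P(c,b))$ to the lower extension class, matching the form above. Once this sign bookkeeping is verified, the lemma follows formally from comparison of left- and right-twists.
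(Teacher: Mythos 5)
Your proposal is correct and in substance the same as the paper's argument: the paper's proof likewise rests on the two descriptions $A(b)^{(P(b,z))}={}^{(P(z,b))}A(z)$, varying the second argument (left multiplication, i.e.\ Lemma \ref{symmbit}) and the first argument (right multiplication, where the wedge antisymmetry in $\tau$ cancels the path-reversal sign $\kappa(b-c)=-\kappa(c-b)$), each variation contributing $+\tau_*\kappa$ of the difference of divisors. Your only reorganisation is the detour through the basepoint-change identity $[IA(b)]-[IA(c)]=2\tau_*\kappa(b-c)$ (itself the case $x_1=x_2$, $z_1=z_2$ of the lemma, and the sign bookkeeping you flag does check out) rather than composing the two one-argument variations directly, as the paper does.
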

\begin{proof}
First suppose $x_1 =x_2$. Then
$
[IA(x_1 ,z_1 )]=[IA(x_1 ,z_2 )^{\kappa (z_1 -z_2 )}].
$
By definition of the twisting construction,
$
[IA(x_1 ,z_2 )^{\kappa (z_1 -z_2 )}]=[IA(x_1 ,z_2 )]+\tau _* \kappa (z_1 -z_2 ).
$
Similarly,
$
[IA(x_1 ,z_1 )]=[IA(x_2 ,z_1 )^{\kappa (x_1 -x_2 )}]=[IA(x_2 ,z_1 )]+\tau _* \kappa (x_1 -x_2 ).
$
\end{proof}

Note that, in general, the extension class of $IA(b)$ in $H^1 (G,V^* \otimes W)$ will not lie in the image of $\tau _* $. More specifically, we know that its class in  $H^1 (G,V^* \otimes W)/H^1 (G,V)$ is related to 
the Abel--Jacobi class of the Gross--Kudla--Schoen cycle in $X\times X \times X$ corresponding to $b$ by \cite[Theorem 1]{darmon:2012}, which is generically nontrivial.
One situation where the class of $IA(b)$ does lie in the image of $\tau _*$, and furthermore 
can be described explicitly in terms of $b$, is when $X$ is hyperelliptic (the argument, given below, is a straightforward generalisation of Lemma 1.1 of \cite{kimrank1:2010}). This may be viewed as a special case of a slightly more general phenomenon where one reduces computations on $IA(b,z)$ to the case where $b=z$ is a Weierstrass point, at which point the computation becomes trivial. We refer to this as a \emph{hyperelliptic splitting principle}.  
\begin{Lemma}\label{hyperellipticsplitting}
Let $X$ be a hyperelliptic curve of genus $g$, with equation $y^2 =f(x),$
for $f$ a degree $2g+2$ polynomial.
Let $\alpha _1 ,\ldots ,\alpha _{2g+2}$ be the roots of $f$. Let $Z$ denote the $\mathbb{Q}$-divisor $\frac{1}{g+1}\sum _i (\alpha _i ,0)$. 
Then 
$$[IA(b,z)]=\tau _* (\kappa (b+z-Z)).$$ 
\end{Lemma}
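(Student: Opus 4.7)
The plan is to exploit the hyperelliptic involution $w$, which is defined over $K$ and acts as $[-1]$ on the Jacobian; consequently $w$ acts as $-1$ on $V$ and, since $W$ is a quotient of $\overline{\wedge^2 V}$, as $+1$ on $W$.

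The first step is to use functoriality of the construction $(b,z)\mapsto A(b,z)$ applied to the automorphism $w\colon X\to X$. This produces a $G_K$-equivariant isomorphism of filtered Galois representations $A(b,z) \xrightarrow{\sim} A(w(b),w(z))$ inducing $-1$ on the graded piece $V$ and $+1$ on $W$. Using the elementary fact that an isomorphism between two extensions of $V$ by $W$ inducing scalars $\beta$ on $V$ and $\alpha$ on $W$ transforms the extension class by $\alpha\beta^{-1}$, I obtain
\[
[IA(w(b), w(z))] = -[IA(b,z)] \quad\text{in } \ext^1(V,W).
\]
Next, I would apply Lemma \ref{leftrighttwist} to $(x_1,x_2,z_1,z_2) = (b, z, w(b), w(z))$, giving
\[
[IA(b,z)] - [IA(w(b),w(z))] = \tau_*\bigl(\kappa(b + z - w(b) - w(z))\bigr),
\]
and combine with the sign identity to conclude
\[
[IA(b,z)] = \tfrac{1}{2}\,\tau_*\bigl(\kappa(b + z - w(b) - w(z))\bigr).
\]

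The final step is a divisor-class computation showing
\[
\tfrac12(b + z - w(b) - w(z)) \;\equiv\; b + z - Z \qquad \text{in } \Pic(X)\otimes\Q.
\]
This rests on two familiar identities on the hyperelliptic model $y^2=f(x)$ with two points $\infty^{\pm}$ at infinity: the function $x - x(P)$ has divisor $P + w(P) - \infty^+ - \infty^-$, so $P + w(P) \sim \infty^+ + \infty^-$; and $y$ has divisor $\sum_i (\alpha_i,0) - (g+1)(\infty^+ + \infty^-)$, whence $Z = \tfrac{1}{g+1}\sum_i (\alpha_i,0) \sim \infty^+ + \infty^-$ in $\Pic(X)\otimes\Q$. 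Combining these yields the required identity, and applying $\tau_*\circ\kappa$ finishes the proof.

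The only step requiring genuine care is the first: one must verify that the functorial $w$-action on the truncated group algebra $A(b)$ (and the bimodule $A(b,z)$) really preserves the augmentation filtration, the bimodule structure, and the $G_K$-action, and induces the claimed scalars on the associated graded. Given that $w$ acts by $-1$ on $H^1_{\acute{e}t}(\overline{X},\Q_p)$, this is a routine diagram chase, but it is where the hyperelliptic hypothesis is used in an essential way. Everything else is formal manipulation of mixed extension classes plus the classical divisor relations on a hyperelliptic curve.
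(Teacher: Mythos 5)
Your proof is correct, but it takes a genuinely different route from the paper's. The paper first restricts to the splitting field $L$ of $f$ (using injectivity of restriction), where all differences of Weierstrass points are torsion and so have trivial Kummer classes; it then argues in three stages: at a Weierstrass basepoint $b=z=(\alpha_i,0)$ the involution acts on $A_2(b)$ by $-1$ on $V$ and $+1$ on $W$ and hence splits it, and two applications of the twisting identity of Lemma \ref{leftrighttwist} (changing the basepoint, then the endpoint) produce the class of $z+b-2(\alpha_i,0)$ over $L$. You instead stay over $K$ throughout: the $K$-rational involution gives a Galois-equivariant isomorphism $A(b,z)\simeq A(w(b),w(z))$ inducing $(-1,+1)$ on the graded pieces $V,W$, whence $[IA(w(b),w(z))]=-[IA(b,z)]$; one application of Lemma \ref{leftrighttwist} then gives $2[IA(b,z)]=\tau_*\kappa\bigl(b+z-w(b)-w(z)\bigr)$, and the classical relations $P+w(P)\sim \infty^+ +\infty^-$ and $\div (y)=\sum_i(\alpha_i,0)-(g+1)(\infty^+ +\infty^-)$ identify the right-hand side with $2\tau_*\kappa(b+z-Z)$, since $\kappa$ factors through $J(K)\otimes\Q_p$ and extends $\Q$-linearly. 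The essential input is the same in both arguments, namely the sign of $w$ on the graded pieces of the truncated group algebra; the paper exploits it only where $w$ fixes the basepoint (yielding an honest splitting, in line with its ``hyperelliptic splitting principle''), while you use it for arbitrary $(b,z)$, which lets you avoid the base change to $L$ and the torsion argument at the cost of an explicit divisor-class computation. The one step you defer --- that $w_*$ respects the augmentation filtration, the bimodule structure and the Galois action, and induces the stated scalars on the graded pieces --- is precisely the verification the paper also relies on in its step (i), so treating it as a routine check is reasonable.
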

\begin{proof}
First note that it will be enough to prove that the two classes are equal in $H^1 (G_{L,T},V^* \otimes W)$, 
for $L$ some finite extension of $K$, since the restriction map is injective. Let 
$L$ be an extension containing all roots of $f$. For any $i, j$, the divisor $(\alpha _i ,0)-(\alpha _j ,0)$ is torsion, and so in particular 
$$\kappa _L ((\alpha _i ,0) -(\alpha _j ,0) )=0.$$
Hence it is enough to show that the $H^1 (G_{L,T},V^* \otimes W)$ class obtained from $A_2 (b,z )$ agrees with that of $z+b -2(\alpha _i ,0)$ for some $i$. 
We prove this in three stages:\\
(i) Suppose $z=b =(\alpha _i ,0)$. Then the hyperelliptic involution gives an action of $\mathbb{Z}/2\mathbb{Z}$ on $A_2 (b )$. This acts on the $V$-graded piece as $-1$ and 
on the $W$-graded piece as the identity. Hence we obtain a splitting of $A_2 (b)$. \\
(ii) Now let $b $ be arbitrary. By Lemma \ref{leftrighttwist}, $IA_2 (b )$ is just the twist of $IA_2(\alpha _i )$ by the $H^1 (G_{K,T} ,V)$ class of 
$b -(\alpha _i ,0)$. Since this twist is via the conjugation action of $U$ on $A$, the corresponding extension class is $2\kappa (b -(\alpha _i ,0))$. \\
(iii) Now we consider the general case. Consider the right action of $V$ on $IA_2 (b )$. The representation $IA_2 (b ,z)$ is simply obtained by twisting 
by the $H^1 (G_{K,T} ,V)$-torsor associated to $z-b $. Hence the class is $z+b -2(\alpha _i ,0)$.
\end{proof}

\subsubsection{Recap of local Galois cohomology}
Let $v$ be a prime of $K$ not lying above $p$.
Let $I_v \subset G_v $ be the inertia subgroup and $F_v \in G_v /I_v $ a generator.
For any finite-dimensional $\mathbb{Q}_p$-representation of $W$, let $H^1 _f (G_v ,W):=W^{I_v }/(F_v -1)W^{I_v }$. Then for any such $W$, by Tate duality, there is a short exact sequence 
$$
0\to H^1 _f (G_v ,W)\to H^1 (G_v ,W)\to H^1 _f (G_v ,W^* (1))^* \to 0
$$
(see e.g., \cite[Lemma 1 and Theorem 1]{washington:1997}).
\begin{Lemma}\label{locallemma}
Let $V=H^1 (\overline{X},\mathbb{Q}_p )^*$, let $n\geq 0$, and let $W$ be a direct summand of $V^{\otimes (2n+1)}(-n)$. Then $H^1 (G_v ,W)=0$.
\end{Lemma}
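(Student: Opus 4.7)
The plan is to use local Tate duality together with a weight-monodromy argument. The Tate duality exact sequence
\[
0 \to H^1_f(G_v, W) \to H^1(G_v, W) \to H^1_f(G_v, W^*(1))^* \to 0
\]
reduces the statement to showing both $H^1_f(G_v, W) = 0$ and $H^1_f(G_v, W^*(1)) = 0$. By Poincar\'e duality on the curve $X$ one has $V^* \cong V(-1)$, from which a direct calculation gives $(V^{\otimes(2n+1)}(-n))^*(1) \cong V^{\otimes(2n+1)}(-n)$. Hence $W^*(1)$ is again a direct summand of $V^{\otimes(2n+1)}(-n)$, and it suffices to prove $H^1_f(G_v, V^{\otimes(2n+1)}(-n)) = 0$.

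Using $H^1_f(G_v, M) = M^{I_v}/(F_v - 1)M^{I_v}$, this amounts to showing that $F_v - 1$ is invertible on the finite-dimensional space $(V^{\otimes(2n+1)}(-n))^{I_v}$, equivalently that $1$ is not a Frobenius eigenvalue there. By Grothendieck's monodromy-weight theorem for curves (unconditional via semistable reduction of the Jacobian), the Frobenius eigenvalues on $V^{I_v} = \ker N$ are Weil numbers of weight at most $-1$. Since the monodromy filtration on $V^{\otimes(2n+1)}$ is the convolution of the monodromy filtrations on each factor, the kernel of the total monodromy $N_{\mathrm{tot}} = N_1 + \cdots + N_{2n+1}$ sits inside the part of the filtration of weight at most $-(2n+1)$; twisting by $(-n)$ shifts weights by $2n$, so the Frobenius eigenvalues on $(V^{\otimes(2n+1)}(-n))^{I_v}$ are Weil numbers of weight at most $-1$, and in particular have complex absolute value strictly less than $1$. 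Hence none of them equals $1$, so $F_v - 1$ is invertible and the required vanishing follows.

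The main obstacle is the weight bound for $\ker N_{\mathrm{tot}}$ on the tensor power: at primes of good reduction it is immediate from purity of $V$ (Frobenius weights are exactly $-1$), but at primes of bad reduction $V$ has mixed weights $\{-2,-1,0\}$ and one has to track the monodromy filtration on the tensor product to see that the ``top graded piece'' of $\ker N_{\mathrm{tot}}$ still has weight at most $-(2n+1)$ before the Tate twist. This is a standard but slightly delicate bookkeeping argument using the convolution formalism for monodromy filtrations and the fact that weight-monodromy is known for curves.
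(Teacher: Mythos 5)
Your proof is correct and follows essentially the same route as the paper's: reduce to the full tensor power $V^{\otimes(2n+1)}(-n)$, use its self-Tate-duality (via $V^{*}\simeq V(-1)$) to reduce $H^1(G_v,W)=0$ to the vanishing of $H^1_f$, i.e.\ to the absence of Frobenius eigenvalue $1$ on inertia invariants, and then invoke the weight-monodromy theorem for curves. Your convolution-of-monodromy-filtrations bookkeeping is just a more explicit form of the paper's observation that $(1-I_L)$ carries the top weight graded piece onto the bottom one, so the weight-zero part of the inertia invariants of $W$ vanishes.
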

\begin{proof}
As $W$ is a direct summand, it is enough to prove this for $W=V^{\otimes (2n+1)}(-n)$. Since this representation is its own Tate dual, it is enough to prove that $H^1 _f (G_v ,W)=0$, or equivalently 
$W^{G_v }=0$. This follows directly from the weight-monodromy conjecture for curves \cite{grothendieck:1972}: let $L$ be a finite extension of $\mathbb{Q}_v $ such that $I_L$ acts unipotently on $V$ (and hence $W$).
If $V[i]$ and $W[i]$ denote the graded pieces of $V$ and $W$ of weight $i$, resp., then weight-monodromy implies that we have an equality $(1-I_L )V[0]=V[-2]$, (and we know it is trivial on $V[-1]$), hence $(1-I_L )V^{\otimes (2n+1)}[2n]=V^{\otimes (2n+1)}[2n-2]$. Hence the kernel of $(1-I_L)$ on the weight zero part of $W$ is trivial, so $H^1 _f (G_v ,W)=H^1 (G_v ,W)=0$.
\end{proof}

\begin{Lemma}
The extension $[IA_2 (b)]$ in $\ext ^1 (V,W)$ is crystalline at all primes above $p$ and splits at all $v$ away from $p$.
\end{Lemma}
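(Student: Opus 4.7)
The plan is to treat the two assertions of the lemma independently, each via an input already available in the paper.

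First, for the crystallinity at primes $v \mid p$, the key point is that $A_2(b) = \Q_p \otimes \Z_p[\![\pi_1^{\text{\'et}}(\overline{X},b)]\!]/I^3$ is the degree-$\le 2$ truncation of the $\Q_p$-group algebra of the unipotent \'etale fundamental group of $\overline{X}$ at $b$. Because $X$ has good reduction at $v$, Olsson's $p$-adic comparison theorem \cite{olsson:2011} (invoked elsewhere in the paper for the path torsors $P(x)$) shows that this representation is crystalline at $v$. Passing to the quotient $A(b)$ and then to the subquotient $IA(b)$ preserves this property, so the associated extension class automatically lies in $\ext^1_f(V,W) \subset \ext^1(V,W)$.

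Second, for the vanishing at $v \nmid p$, the idea is to show outright that $H^1(G_v, V^* \otimes W) = 0$, which forces any extension class to split. Poincar\'e duality on $X$ supplies a Galois-equivariant identification $V^* \simeq V(-1)$, so $V^* \otimes W$ embeds into $V(-1) \otimes \overline{\wedge^2 V}$. In characteristic zero $V^{\otimes 2}$ decomposes canonically as $\wedge^2 V \oplus \Sym^2 V$, and the cup product sequence
\[
0 \to \Q_p(1) \to \wedge^2 V \to \overline{\wedge^2 V} \to 0
\]
splits as a $G_v$-representation by semisimplicity of pure weight $-2$ pieces (weight-monodromy for curves). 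Since $W$ is by definition a Galois-stable summand of $\overline{\wedge^2 V}$, one concludes that $V^* \otimes W$ is a direct summand of $V^{\otimes 3}(-1)$. Applying Lemma \ref{locallemma} with $n=1$ then gives $H^1(G_v, V^{\otimes 3}(-1)) = 0$, hence $H^1(G_v, V^* \otimes W) = 0$.

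The main obstacle is precisely the summand identification in the second step: Lemma \ref{locallemma} is formulated for \emph{direct summands} of $V^{\otimes (2n+1)}(-n)$, not for arbitrary subquotients, so one must justify that the various splittings (of $\wedge^2 V$ from $\Sym^2 V$, of $\Q_p(1)$ from $\wedge^2 V$, and of $W$ from $\overline{\wedge^2 V}$) all hold locally as $G_v$-representations. This reduces to the same semisimplicity / weight-monodromy input that already powers the proof of Lemma \ref{locallemma}. Once this is granted, both parts of the present lemma are immediate consequences of results already established in the preceding sections.
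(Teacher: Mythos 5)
Your proposal is correct and follows essentially the same route as the paper: crystallinity at $v\mid p$ via Olsson's comparison theorem applied to (a truncation of) the group algebra of the unipotent fundamental group, with crystallinity inherited by the subquotient $IA(b)$, and splitting at $v\nmid p$ by exhibiting $V^*\otimes W$ as a direct summand of $V^{\otimes 3}(-1)$ so that Lemma \ref{locallemma} (with $n=1$) applies. The only remark is that the splittings you flag as the main obstacle need no local semisimplicity or extra weight--monodromy input beyond Lemma \ref{locallemma} itself: the composite $\Q_p(1)\stackrel{\cup^*}{\longrightarrow}\wedge^2 V\stackrel{\cup}{\longrightarrow}\Q_p(1)$ is a nonzero scalar, so $\overline{\wedge^2 V}$ is canonically a global summand of $\wedge^2 V$, and $W$ is a Galois-stable global summand of $\overline{\wedge^2 V}$ by the paper's standing choice of $U$, so all the summand identifications are defined over $G_{K,T}$ and simply restrict to each $G_v$.
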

\begin{proof}
For $v \in T_0$, this follows from Lemma \ref{locallemma}.
For $v|p$, it follows from Olsson \cite{olsson:2011}. As the statement there is slightly different, we explain how to deduce the Lemma from it. In fact, we will explain how to deduce the more general result that $A_n (b,z)$ is crystalline for all $z$ in $X(\Q _p )$ and all $n\geq 1$. Let $\mathcal{O}(\pi _1 ^{\acute{e}t,\Q _p }(\overline{X};b,z))$ denote the co-ordinate ring of the $\Q _p $-unipotent \'etale torsor of paths from $b$ to $z$. By \cite[Theorem 1.11]{olsson:2011}, this is an ind-crystalline representation, and moreover
\[
D_{\cry }(\mathcal{O}(\pi _1 ^{\acute{e}t,\Q _p }(\overline{X};b,z)))\simeq \mathcal{O}(\pi _1 ^{\dr}(X;b,z)).
\]
To prove that $A_n (b,z)$ is crystalline it is enough to prove that $\varinjlim A_n (b,z)^* $ is ind-crystalline. This follows from Olsson's theorem via the Galois equivariant isomorphism 
\[
\varinjlim A_n (b,z)^* \simeq \mathcal{O}(\pi _1 ^{\acute{e}t,\Q _p }(\overline{X};b,z)),
\]
(see for example Hadian \cite[2.12]{hadian:2011} or Kim \cite[\S 2]{kim:2009}). 
\end{proof}
In the notation of \S \ref{sec:hodgefiltration}, we deduce furthermore that
\begin{equation}\label{eqn:olsn}
D_{\cry }(A_n (b,z))=A_n ^{\dr }(b,z).
\end{equation}
\subsection{Mixed extensions}

Following Nekov\'a\v r, we construct generalised height functions as functions on equivalence classes of mixed Galois representations with fixed graded pieces. The examples to bear in mind are the mixed extensions $A(b,z)$ constructed above.
\begin{Definition}
Define $\mathcal{M}_{f,T_0 } (G_{K,T} ;\mathbb{Q}_p ,V,W)$ to be the category whose objects are tuples $(M,(M_ i)_{i=0,1,2,3},(\psi _i  )_{i=0,1,2})$ where $M$ is a $G_{K,T}$ representation which is crystalline at all primes above $p$, $(M_i )$ is a Galois-stable filtration 
$$
M=M_0 \supset M_1 \supset M_2  \supset M_3 =0
$$
and the $\psi _i $ are isomorphisms 
$$\psi _0 :\mathbb{Q}_p \to M_0 /M_1, \qquad \psi _1 :V \to M_1 /M_2, \qquad \psi _2 :W \to M_2 /M_3$$
and whose morphisms are isomorphisms of Galois representations respecting the filtration and commuting with the $\psi _i $.
Define $M_{f,T_0 } (G_{K,T} ;\mathbb{Q}_p ,V,W)$ \index{$M_{f,T_0 } (G_{K,T} ;\mathbb{Q}_p ,V,W)$} to be  the set $\pi _0 ( \mathcal{M}_{f,T_0 } (G_{K,T} ;\mathbb{Q}_p ,V,W))$ of isomorphism classes of mixed extensions.
Similarly define $M(G_v ;\mathbb{Q}_p ,V,W)$ (resp. $M_f (G_v ;\mathbb{Q}_p , V,W)$ for $v$ above $p$) to be the set of isomorphism classes of corresponding categories of $G_v $ representations (resp. crystalline representations).
\end{Definition}
Given a mixed extension $M$ in $\mathcal{M}_{f,T_0 }(G_{K,T};\Q _p ,V,W)$, we obtain extensions $M/M_2 $ and $\Ker(M\to M_0 )$ of $\Q _p $ by  $V$ and of $V$ by $W$ respectively. By Lemma \ref{locallemma} these extensions are automatically unramified at all primes of $T_0 $, and hence lie in $H^1 _f (G_{K,T},V)$ and $\ext ^1 _f (V,W)$.
We denote by $\pi _{1*}$ and $\pi _{2*}$ the natural maps 
$$
\pi _{1*}: M_{f,T_0 } (G_{K,T} ;\mathbb{Q}_p ,V,W)\to H^1 _{f} (G_{K,T} ,V), \qquad \pi _{2*}: M_{f,T_0} (G_{K,T} ;\mathbb{Q}_p ,V,W)\to \ext ^1 _f (V,W).
$$
Define $U(\mathbb{Q}_p ,V,W)$ to be the unipotent group of vector space isomorphisms of unipotent isomorphisms of $\mathbb{Q}_p \oplus V  \oplus W$, i.e., those which respect the filtration
$$
\mathbb{Q}_p \oplus V \oplus W \supset V \oplus  W \supset W
$$
and are the identity on the associated graded. Recall from \cite[Lemma 4.7]{balakrishnan:2016} that we have an isomorphism
$$
M_{f,T_0 } (G_{K,T} ;\mathbb{Q}_p ,V,W)\simeq H^1 _{f,T_0 } (G_{K,T} ,U(\mathbb{Q}_p ,V,W)).
$$
The maps $\pi _{1*}$ and $\pi _{2*}$ are induced from the exact sequence
\begin{equation}\label{induced_exact}
0\to W \to U(\mathbb{Q}_p ,V,W)\stackrel{(\pi _1 ,\pi _2 )}{\longrightarrow } V\oplus V^* \otimes W \to 0.
\end{equation}

We now outline in broad strokes our generalisation of Nekov\'a\v r's formulation of the $p$-adic height pairing. Although we could work in somewhat greater generality, we restrict attention to our case of interest. We take as input a tuple $(V,W,j,s, \chi )$, where
$V$ and $W$ are as before. Let $s:D_{\dr }(V)\to F^0 D_{\dr }(V)$ be a splitting of the Hodge filtration. Finally $\chi $ is a non-crystalline element of $H^1 (G_{K,T},W^* (1))$.

Associated to this data, we will define, for each $v$ prime to $p$, a local \textit{pre-height} function
$$
\widetilde{h}_v :M(G_v ;\mathbb{Q}_p ,V,W)\to H^1 (G_v ,W),
$$
\index{$\widetilde{h}_v$}
as well as a local pre-height at primes $\mathfrak{p}$
$$
\widetilde{h}_{\mathfrak{p}} :M_f (G_{\mathfrak{p}} ;\mathbb{Q}_p ,V,W)\to H^1 _f (G_{\mathfrak{p}} ,W).
$$
Using $\chi$, we then define a global height 
$$
h :M_{f,T_0 } (G_{K,T} ;\mathbb{Q}_p ,V,W)\to \mathbb{Q}_p ,
$$
such that $h(M)$ only depends on the image of $M$ under
$$
M_{f,T_0 } (G_{K,T} ;\mathbb{Q}_p ,V,W)\to H^1 _f (G_{K,T} ,V)\otimes H^1 _f (G_{K,T} ,V^* \otimes W).
$$
As in the classical set-up, the global height will be a sum of local heights $h_v$ which are compositions of the map $\widetilde{h}_v $ with the character $\loc _v \chi $, thought of as an element of $H^1 (G_v ,W)^* $ via Tate duality. For the applications in this paper, we will only be interested in characters $\chi $ for which $h_{\mathfrak{q}}$ is trivial at all $\mathfrak{q}$ above $p$ except 
$\mathfrak{p}$.
\subsection{Definition of the local pre-height}\label{localheightdefn}
When $v$ is not in $T$, $\widetilde{h}_v$ is trivial. When $v$ is in $T_0 $, then by Lemma \ref{locallemma} we have $$H^0 (G_v ,V) =H^0 (G_v ,V^* \otimes W)=H^1 (G_v ,V)=H^1 (G_v ,V^* \otimes W)=0.$$ Hence via the exact sequence \eqref{induced_exact}, we get an isomorphism  
$$
M(G_v ;\mathbb{Q}_p ,V,W)\stackrel{\simeq }{\longrightarrow }H^1 (G_v ,W).
$$
We defined $\widetilde{h}_v $ to be this isomorphism and define $h_v $ as the composite 
$$
M(G_v ;\mathbb{Q}_p ,V,W) \stackrel{\widetilde{h}_v \cup \chi }{\longrightarrow} H^2 (G_v ,\mathbb{Q}_p (1))\simeq \mathbb{Q}_p .
$$
Finally for $v|p$, $\widetilde{h}_v $ and $h_v$ are defined following \cite[\S\S 3-4]{Nek:1993}. As we restrict to global heights for which the only contribution from primes $v|p$ is at $\mathfrak{p}$, we will only describe $h_{\mathfrak{p}}$, but the description carries over verbatim to other primes above $p$.

The local height above $p$ is described using Fontaine's functor $D_{\cry}$, which gives an equivalence of categories between $\mathcal{M}_f (G_{\mathfrak{p}};\Q _p ,V,W)$ and the category $\mathcal{M}_{\fil ,\phi }(\Q _p,D_{\cry }(V),D_{\cry }(W))$ of mixed extensions of filtered $\phi $-modules with graded pieces $\Q _p $,$D_{\cry }(V)$ ,$D_{\cry }(W)$. Similarly this induces a bijection between sets of isomorphism classes 
\[
M_f (G_{\mathfrak{p}};\Q _p ,V,W) \simeq M_{\fil ,\phi }(\Q _p ,D_{\cry }(V),D_{\cry }(W)).
\]
To ease notation we henceforth write $D_{\dr }(V)$ and $D_{\dr}(W)$ as $V_{\dr }$ and $W_{\dr }$ respectively. As $K_{\mathfrak{p}}$ is an unramified extension of $\Q _p $, and $V$ and $W$ are crystalline, we also identify these with $D_{\cry }(V)$ and $D_{\cry }(W)$.

We identify $M_{\fil ,\phi }(\Q _p ,V_{\dr},W_{\dr })$ with $F^0 \backslash U(\Q _p ,V_{\dr} ,W_{\dr })$ as follows. Given a mixed extension $M$, let $s^\phi ,s^H :\Q _p \oplus V_{\dr} \oplus W_{\dr }\stackrel{\simeq }{\longrightarrow }M$ be unipotent isomorphisms of filtered vector spaces which respect the Frobenius structure and Hodge filtration respectively. Then $(s^H )^{-1}\circ s^\phi $ defines an element  of $U(\Q _p ,V_{\dr} ,W_{\dr })$. The element $s^\phi $ is uniquely determined, and any different choice of the other isomorphism is of the form $s^H \circ u^H$, for some $u^H \in F^0 U(\Q _p ,V_{\dr} ,W_{\dr })$. This gives a bijective correspondence 
\[
M_{\fil ,\phi }(\Q _p ,V_{\dr},W_{\dr }) \to F^0 \backslash U(\Q _p ,V_{\dr} ,W_{\dr }),
\]
which is furthermore an isomorphism of algebraic varieties.

We first define a section $t$ of 
$$
M_f (G_{\mathfrak{p}};\mathbb{Q}_p ,V,W)\to H^1 _f (G_{\mathfrak{p}},V)\times H^1 _f (G_{\mathfrak{p}},V^* \otimes W)
$$
as follows: given exact sequences of crystalline $G_{\mathfrak{p}}$-representations
\begin{align*}
& 0 \to V\to E_1 \to \Q _p \to 0 \\
& 0 \to W \to E_2 \to V\to 0 ,\\
\end{align*}
we have a commutative diagram with exact rows
$$
\begin{tikzpicture}
\matrix (m) [matrix of math nodes, row sep=3em,
column sep=3em, text height=1.5ex, text depth=0.25ex]
{  0 & H^1 _f (G_{\mathfrak{p}},W) & H^1 _f (G_{\mathfrak{p}  },E_2 ) & H^1 _f (G_{\mathfrak{p} },V) & 0 \\
0 & D_{\dr }(W)/F^0 &  D_{\dr }(E_2 )/F^0  &  D_{\dr }(V)/F^0  & 0\\};
\path[->]
(m-1-1) edge[auto] node[auto]{ }(m-1-2)
(m-1-2) edge[auto] node[auto]{ }(m-1-3)
edge[auto] node[auto]{$\simeq $ }(m-2-2)
(m-1-3) edge[auto] node[auto]{ }(m-1-4)
edge[auto] node[auto]{ $\simeq $ }(m-2-3)
(m-1-4) edge[auto] node[auto]{ }(m-1-5)
edge[auto] node[auto]{ $\simeq $ }(m-2-4)
(m-2-1) edge[auto] node[auto]{ }(m-2-2)
(m-2-2) edge[auto] node[auto]{ }(m-2-3)
(m-2-3) edge[auto] node[auto]{ }(m-2-4)
(m-2-4) edge[auto] node[auto]{ }(m-2-5);
\end{tikzpicture} 
$$
(exactness of the top row follows from the isomorphism with the bottom row).
Define $\tau _{E_2 }: D_{\dr }(V)/F^0 \to D_{\dr }(E_2 )/F^0 $ to be the composite 
$$
D_{\dr }(V)/F^0 \stackrel{s}{\longrightarrow } D_{\cry }(V)\stackrel{r}{\longrightarrow} D_{\cry }(E_2 )\to D_{\dr }(E_2 )/F^0 
$$
where $s$ is the chosen splitting of the Hodge filtration, $r$ is the Frobenius-equivariant section of $D_{\cry }(E_2 )\to D_{\cry }(V)$, which exists and is unique by our assumptions on the Frobenius eigenvalues of $V$, and the third map is the projection. Then we define $$t(E_1 ,E_2 ):=\tau _{E_2 }(E_1 ).$$ 

%We now define this in terms of $U(\Q _p ,D_{\dr}(V),D_{\dr}(W))$. Let $E_2 $ be an extension of $D_{\cry }(V)$ by $D_{\cry }(W)$. First we describe the map $E_2 /F^0 \to U(\Q _p ,D_{\dr}(V),D_{\dr}(W))/F^0 $. Let $M$ be an extension of $\Q _p $ by $E_2$, and let
%\[
%e^\phi ,e^H:M \stackrel{\simeq}{\longrightarrow }\Q _p \oplus E_2 
%\]
%be vector space isomorphisms of extensions of $\Q _p $ by $E_2$ which respect the Frobenius action and Hodge filtration respectively. Let $g^\phi ,g^H$ be vector space isomorphisms of mixed extensions
%\[
%\Q _p \oplus E_2 \stackrel{\simeq }\Q _p \oplus D_{\cry }(V) \oplus D_{\cry }(W)
%\]
%which respect the Frobenius action and Hodge filtration respectively. Then the mixed extension $M$ is given by the image of 
%$
%g^\phi \circ e^\phi \circ (e^H)^{-1} \circ (g^H )^{-1}
%$
%in $U(\Q _p ,D_{\cry }(V),D_{\cry }(W))/F^0 $. Let $s([E_1 ])$ denote the lift of $[E_1 ]\in D_{\dr}/F^0$ to $D_{\dr}(V)$. Let 
%$\widetilde{[E_2 ]}$ denote any lift of $[E_2 ]\in D_{\dr}(V^* \otimes W)/F^0 $ to $D_{\dr}(V^* \otimes W)$. Then a direct calculation shows that the mixed extension $t(E_1 ,E_2 )$ is given by the class of $1+s([E_1 ])+\widetilde{[E_2 ]}$.

For $M$ in $\mathcal{M}_f (G_{\mathfrak{p}} ;\mathbb{Q}_p ,V,W)$, let $E_1 $ and $E_2 $ be $M/M_2 $ and $\ker (M\to M_0 )$ respectively. Let $[M]$ denote the image of $M$ in $H^1 _f (G_{\mathfrak{p}},E_2 )$. Then we find that $[M]$ and $t(E_1 ,E_2 )$ have the same image in $H^1 _f (G_{\mathfrak{p}} ,V)$, hence by 
the diagram above, $[M]-t(E_1 ,E_2 )$ defines an element of $H^1 _f (G_{\mathfrak{p}},W)$, and we define 
$$
\widetilde{h}_{\mathfrak{p}} (M):=[M]-t(E_1 ,E_2 )\in H^1 _f (G_{\mathfrak{p}},W).
$$
The pre-height can be described explicitly as an algebraic function 
\[
F^0 \backslash U(\Q _p ,V_{\dr },W_{\dr })\to W_{\dr }/F^0 .
\]
\begin{Lemma}\label{explicit_formula}
Let $M$ be a mixed extension in $M_{\fil ,\phi }(\Q _p ,V_{\dr}, W_{\dr })$ given by 
$1+\alpha +\beta +\gamma \in U(\Q _p ,V_{\dr} ,W_{\dr} )$, where $\alpha \in V_{\dr} ,\beta \in V_{\dr }^* \otimes W_{\dr }$, $\gamma \in W_{\dr} $. In block matrix notation, $M$ is represented by 
\[
\left( \begin{array}{ccc}
1 & 0 & 0 \\
\alpha & 1 & 0 \\
\gamma & \beta & 1 \\
\end{array} \right) .
\]
Then 
\[
\widetilde{h}_p (M)=\gamma -\beta (s_1 (\alpha )),
\]
where
\[
s_1 :v\mapsto v-\iota \circ s(v)
\]
is the projection onto the $V_{\dr }/F^0 $ summand induces by the splitting $s$.
\end{Lemma}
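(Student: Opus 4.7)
The plan is to compute both $[M]$ and $t(E_1,E_2)$ explicitly in the block-matrix model $M_{\fil,\phi}(\Q_p,V_{\dr},W_{\dr}) \simeq F^0\backslash U(\Q_p,V_{\dr},W_{\dr})$ recalled just above the statement, and then read off their difference as an element of the subspace $W_{\dr}/F^0 \hookrightarrow E_{2,\dr}/F^0$. The whole argument is a careful unwinding of what the matrix data mean once we trivialise $M$ using the Hodge splitting.

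I would first fix notation by using $s^H$ to identify $M$ as a filtered vector space with $\Q_p \oplus V_{\dr} \oplus W_{\dr}$ equipped with the direct-sum Hodge filtration; the given data $u = 1+\alpha+\beta+\gamma = (s^H)^{-1}\circ s^\phi$ then records the Frobenius-respecting splitting in this trivialisation, so that $s^\phi$ sends $1 \in \Q_p$ to the column $(1,\alpha,\gamma)$. Restricting $s^H$ and $s^\phi$ to the subobject $E_2 = \ker(M \to \Q_p)$ gives valid Hodge and Frobenius splittings of $E_2$, since both structures are compatible with subobjects, so the matrix representative of $E_2$ is the bottom-right $2\times 2$ block $\begin{pmatrix}1 & 0 \\ \beta & 1\end{pmatrix}$. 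In particular, the Frobenius-equivariant section $r:V_{\dr} \to E_{2,\dr}$ of the projection $E_2 \to V$ is given in these coordinates by $v \mapsto (v,\beta(v))$.

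With this setup in place, $[M] \in E_{2,\dr}/F^0$ is represented by the difference $s^\phi(1,0,0) - s^H(1,0,0) = (\alpha,\gamma) \in E_{2,\dr}$, while $t(E_1,E_2) = \tau_{E_2}(\alpha)$ is computed by lifting the class $\alpha\in V_{\dr}/F^0$ of $E_1$ to its canonical representative $s_1(\alpha) \in \ker(s)\subset V_{\dr}$, applying $r$ to obtain $(s_1(\alpha),\beta(s_1(\alpha)))$, and reducing modulo $F^0$. Subtracting,
\[
\widetilde{h}_{\mathfrak{p}}(M) = \bigl(\alpha - s_1(\alpha),\;\gamma - \beta(s_1(\alpha))\bigr) \bmod F^0 E_{2,\dr}.
\]
The first coordinate equals $\iota\circ s(\alpha) \in F^0 V_{\dr}$ by the very definition of $s_1$, hence vanishes in the quotient; the remaining component gives $\gamma - \beta(s_1(\alpha)) \in W_{\dr}/F^0$, which is the claimed formula.

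There is no serious obstacle: the only point requiring care is verifying that restricting $s^H$ and $s^\phi$ to $E_2$ really produces the bottom-right block $u_{E_2} = 1+\beta$ (so that $r$ has the stated form), which is automatic from compatibility of both structures with subobjects. Beyond this bookkeeping, the lemma is just a matter of tracking the identifications in Fontaine's equivalence as applied to two-step and three-step mixed extensions.
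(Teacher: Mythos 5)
Your proposal is correct and follows essentially the same route as the paper's proof: trivialise $M$ via $s^H$, read off $[M]$ as $s^H(\alpha+\gamma)$ and $t(E_1,E_2)$ as $s^H(s_1(\alpha)+\beta(s_1(\alpha)))$, and subtract. The only difference is that you spell out more explicitly why the $V_{\dr}$-component $\alpha-s_1(\alpha)$ dies in the quotient and why the Frobenius section of $E_2\to V$ is $v\mapsto(v,\beta(v))$, both of which the paper leaves implicit.
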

\begin{proof}
The class of the extension $M$ in $M_1 /F^0 $ is given by $t^\phi -t^H$, where $t^\phi ,t^H$ are isomorphisms of filtered vector spaces 
\[
\Q _p \oplus M_1 \stackrel{\simeq }{\longrightarrow }M
\]
respecting the Frobenius action and Hodge filtration respectively. Hence, in terms of $s^\phi $ and $s^H$, this class is given by $s^H (\alpha +\gamma )$. Then the extension class $t([\alpha ])$ is given by $s^H (s_1 (\alpha )+\beta (s_1 (\alpha )))$. Hence 
the local height is given explicitly by 
\[
\widetilde{h}_p (M)=\gamma -\beta (s_1 (\alpha )).
\]
\end{proof}
\begin{Lemma}\label{itsonto}
 For any choice of splitting of the Hodge filtration, the composite map
$$
\pi _* \times \widetilde{h}_{\mathfrak{p}} :H^1 _f (G_{\mathfrak{p}} ,U)\to H^1 (G_{\mathfrak{p}} ,V)\times H^1 (G_{\mathfrak{p}} ,W)
$$
is an isomorphism of algebraic varieties.
\end{Lemma}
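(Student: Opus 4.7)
The plan is to reduce the assertion to a statement about filtered vector spaces via Fontaine's functor $D_{\cry}$, where it becomes a transparent consequence of the explicit formula of Lemma~\ref{explicit_formula}.

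First, I would use Kim's non-abelian Bloch--Kato comparison to identify $H^1_f(G_{\mathfrak{p}}, U)$ with $D_{\dr}(U)/F^0$, and then apply the equivalence of categories between crystalline representations and filtered $\phi$-modules to identify the larger ambient space $M_f(G_{\mathfrak{p}};\mathbb{Q}_p,V,W)$ with $F^0 \backslash U(\mathbb{Q}_p, V_{\dr}, W_{\dr})$, as recalled immediately before Lemma~\ref{explicit_formula}. Since $U$ is a Chabauty--Kim quotient of $U_n$ with $W = [U,U]$, the functor $P \mapsto A^{(P)}$ embeds $H^1_f(G_{\mathfrak{p}}, U)$ into the larger space as the locus where the $V^* \otimes W$ extension class is $\tau_*$ of the $V$-class (Lemma~\ref{symmbit} and Definition~\ref{defn:tau}), i.e. the constraint $\beta = \tau(\alpha)$ on matrix representatives. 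Parallel identifications give $H^1_f(G_{\mathfrak{p}}, V) \cong V_{\dr}/F^0$ and $H^1_f(G_{\mathfrak{p}}, W) \cong W_{\dr}/F^0$ (the image of $\widetilde{h}_{\mathfrak{p}}$ lies in $H^1_f$ by construction).

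Second, in these coordinates Lemma~\ref{explicit_formula} expresses the combined map as
$$
(1,\alpha,\tau(\alpha),\gamma) \bmod F^0 U \;\longmapsto\; \bigl(\bar\alpha,\; \overline{\gamma - \tau(\alpha)(s_1(\alpha))}\bigr).
$$
Using the multiplication law
$(\alpha_0,\beta_0,\gamma_0)\cdot(\alpha,\beta,\gamma) = (\alpha_0+\alpha,\;\beta_0+\beta,\;\gamma_0+\gamma+\beta_0\alpha)$
on $U(\mathbb{Q}_p, V_{\dr}, W_{\dr})$, I would check that this descends to $F^0$-cosets using the two elementary facts that $s_1$ vanishes on $F^0 V_{\dr}$ and that any element of $F^0(V_{\dr}^* \otimes W_{\dr})$ carries $F^0 V_{\dr}$ into $F^0 W_{\dr}$. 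Together these imply that the cross-term $\beta_0\alpha$ produced by left-multiplication by $(\alpha_0,\tau(\alpha_0),\gamma_0) \in F^0 U$ only modifies $\gamma - \tau(\alpha)(s_1(\alpha))$ by an element of $F^0 W_{\dr}$.

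Third, I would construct an explicit algebraic inverse. Given $(\bar\alpha, \bar\eta) \in V_{\dr}/F^0 \times W_{\dr}/F^0$, pick any lifts $\alpha \in V_{\dr}$, $\eta \in W_{\dr}$ and set $\gamma := \eta + \tau(\alpha)(s_1(\alpha))$; the class of $(1,\alpha,\tau(\alpha),\gamma)$ in $F^0 \backslash U$ is independent of the lifts by the same well-definedness calculation, and maps to $(\bar\alpha, \bar\eta)$. Both the forward map and this inverse are polynomial in the natural coordinates and hence define morphisms of affine $\mathbb{Q}_p$-varieties, giving the claimed isomorphism. The main obstacle is the well-definedness check, which is the one place where the precise interaction between the Hodge splitting $s$ and the multiplicative structure of $U(\mathbb{Q}_p, V_{\dr}, W_{\dr})$ is used; everything else is formal bookkeeping parallel to the calculations already performed in the proof of Lemma~\ref{explicit_formula}.
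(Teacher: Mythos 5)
Your argument is correct and is essentially the paper's own proof: the paper likewise reduces via Kim's isomorphism to showing that $D_{\dr}(U)/F^0 \to D_{\dr}(V)/F^0 \times D_{\dr}(W)/F^0$ is an isomorphism, embeds it into $D_{\dr}(U(\mathbb{Q}_p,V,W))/F^0$, and inverts by sending $(v,\alpha,w)$ to the twist $t(v,\alpha)^{(w)}$ of the section with vanishing pre-height, which in your matrix coordinates is exactly the assignment $\gamma=\eta+\beta(s_1(\alpha))$. One bookkeeping slip: by Lemma \ref{symmbit} the image of $P\mapsto A(b)^{(P)}$ is the locus $\beta=[IA(b)]+\tau(\alpha)$, not $\beta=\tau(\alpha)$ (the paper's diagram sends $(v,w)$ to $(v,[IA]+\tau_* v,w)$ for precisely this reason); since $[IA(b)]$ is a fixed class, this constant affine shift passes unchanged through your well-definedness check and inverse construction, so the rest of your argument is unaffected.
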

\begin{proof}
The fact that the pre-height is algebraic follows from the explicit formula in Lemma \ref{explicit_formula}.
It is enough to prove that the corresponding map
$$
D_{\dr }(U)/F^0 \to D_{\dr }(V)/F^0 \times D_{\dr }(W)/F^0
$$
is an isomorphism. 
We have a commutative diagram
$$
\begin{tikzpicture}
\matrix (m) [matrix of math nodes, row sep=3em,
column sep=3em, text height=1.5ex, text depth=0.25ex]
{ D_{\dr }(U)/F^0 & D_{\dr }(V)/F^0 \times D_{\dr }(W)/F^0 \\
 D_{\dr }(U(\mathbb{Q}_p ,V,W))/F^0 & D_{\dr }(V)/F^0 \times D_{\dr }(V^* \otimes W)/F^0 \times D_{\dr }(W)/F^0 \\};
\path[->]
(m-1-1) edge[auto] node[auto]{}(m-2-1)
edge[auto] node[auto]{ }(m-1-2)
(m-2-1) edge[auto] node[auto] {} (m-2-2)
(m-1-2) edge[auto] node[auto] {} (m-2-2);
\end{tikzpicture} $$
where the righthand map sends $(v,w)$ to $(v,[IA]+\tau _* v ,w)$, and the lefthand map sends $P$ to $A_{\dr }^{(P)}$. Both maps are closed immersions.
We first construct an inverse to the bottom map. Given $(v,\alpha ,w)$ in $D_{\dr }(V)/F^0 \times D_{\dr }(V^* \otimes W)/F^0 \times D_{\dr }(W)/F^0$, the mixed extension $t(v,\alpha )$ defines 
an element of $D_{\dr }(U(\mathbb{Q}_p ,V,W))/F^0 $, and we define $t(v,\alpha )^{(w)}$ to be the twist of $t(v,\alpha )$ by $w$. The map $(v,\alpha ,w)\mapsto t(v,\alpha )^{(w)}$ gives the desired inverse. When we restrict this map to $D_{\dr }(W)/F^0 $, it induces an inverse to the top map, as required.
\end{proof}
One may view the above lemma as saying that the fact that $H^1 _f (G_{\mathfrak{p}} ,U)$ is non-canonically isomorphic to $H^1 _f (G_{\mathfrak{p}} ,V)\times H^1 _f (G_{\mathfrak{p}} ,W)$ is an analogue of the fact that the $p$-adic height pairing depends on a choice of splitting of the Hodge filtration.

\subsection{Global height: definition and basic properties}\label{defnglobalheight}
Define $H^1 _s (G_{K,T},W ^* (1)):=H^1 (G_{K,T} ,W^* (1))/H^1 _f (G_{K,T} ,W^* (1))$.
Let $\chi $ be a nonzero element of $H^1 _s (G_{K,T} ,W^* (1))$, which is non-crystalline at $\mathfrak{p}$.
Given $\chi $ and a collection of local pre-heights $(\widetilde{h}_v )$ as above we define the associated
local height to be 
$$
h_v :=\chi _v \cup \widetilde{h}_v :M(G_v ;\mathbb{Q}_p ,V,W)\to H^2 (G_v ,\mathbb{Q}_p (1))\simeq \mathbb{Q}_p
$$
and the global height to be
$$
h=\sum _v h_v :M_{f,T_0 } (G_{K,T}; \mathbb{Q}_p ,V,W)\to \mathbb{Q}_p,
$$
where $v \in T_0 \cup \{ \mathfrak{p} \}$.  When we want to indicate the dependence on $\chi $, we write $h_{v,\chi }$ and $h_{\chi }$. Since $h$ and $h_v$ are linear in $\chi $, we may define a \textit{universal} height
$$
\widetilde{h}:M_{f,T_0 } (G_{K,T}; \mathbb{Q}_p ,V,W)\to (H^1 _s (G_{K,T} ,W^* (1)))^*
$$
\index{$\widetilde{h}$}
by setting $\widetilde{h}(M)$ to be the functional $\chi \mapsto h_{\chi }(M)$.

Note that by construction, $\widetilde{h}_v $ is bi-additive in the same way that usual local heights are bi-additive (see e.g. \cite[\S 4]{balakrishnan:2016}). Namely, for $i=1$ or 2, if $M$ and $N$ satisfy $\pi_{i*}(M)=\pi _{i*}(N)$, then we can form the sum $M+_{i,i}N$ in $M(G_v ;\mathbb{Q}_p ,V,W)$ (for example, when $i=1$, this is the Baer sum of the extensions $[M],[N]$ in $\ext ^1 (M/M_2 ,W)$), and its local pre-height will be equal to the sum of the local pre-heights of $M$ and $N$. If $P=(P_v )_{v\in T}\in \prod _{v|p} M_f (G_{v} ;\mathbb{Q}_p ,V,W)$ $ \times \prod _{v\in T_0 }M(G_v ;\mathbb{Q}_p ,V,W)$, then we similarly define $h(P)$ to be the sum of the local heights.
\begin{Lemma}\label{pairing}
The global height $h$ factors as 
$$
M_{f,T_0 } (G_{K,T};\mathbb{Q}_p ,V,W)\to H^1 _f (G_{K,T},V)\times H^1 _f (G_{K,T},V^* \otimes W) \to \mathbb{Q}_p,
$$
where the first map is the projection and the second is bilinear.
\end{Lemma}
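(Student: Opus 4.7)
The plan is to reduce the statement to Poitou--Tate reciprocity. The bi-additivity part of the claim will fall out almost for free from the bi-additivity of the local pre-heights $\widetilde{h}_v$ noted in the paragraph preceding the lemma: each local height $h_v=\loc_v(\chi)\cup\widetilde{h}_v$ inherits bi-additivity from $\widetilde{h}_v$ and the bilinearity of cup product, and $h=\sum_v h_v$ is bi-additive as a sum. Since $\pi_{1*}$ and $\pi_{2*}$ send the Baer sums $+_{1,1}$ and $+_{2,2}$ respectively to addition in the corresponding component while fixing the other, once the factorization through $H^1_f(G_{K,T},V)\times H^1_f(G_{K,T},V^*\otimes W)$ is established, bilinearity of the resulting map follows automatically. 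So the substantive content is to prove that $h(M)$ depends only on the pair $(\pi_{1*}(M),\pi_{2*}(M))$.

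To do this, I would fix mixed extensions $M,N$ with $\pi_{1*}(M)=\pi_{1*}(N)$ and $\pi_{2*}(M)=\pi_{2*}(N)$, and use the short exact sequence
\[
0\to W\to U(\Q_p,V,W)\to V\oplus(V^*\otimes W)\to 0
\]
together with the identification $M_{f,T_0}(G_{K,T};\Q_p,V,W)\simeq H^1_{f,T_0}(G_{K,T},U(\Q_p,V,W))$ to produce a global class $w\in H^1_{f,T_0}(G_{K,T},W)$ such that $N$ is the twist of $M$ by $w$. The bi-additivity of the local pre-heights applied to this twisting gives $\widetilde{h}_v(N)-\widetilde{h}_v(M)=\loc_v(w)$ at every place of $T$, hence
\[
h(N)-h(M)=\sum_{v\in T}\loc_v(w)\cup\loc_v(\chi).
\]

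The main obstacle -- the place where globality actually enters -- is showing that this sum vanishes. I expect this to come directly from Poitou--Tate global duality: the cup product $w\cup\chi$ defines a global class in $H^2(G_{K,T},\Q_p(1))$, whose sum of local invariants under $\bigoplus_v H^2(G_v,\Q_p(1))\to\Q_p$ is zero. The care required is in checking that each local pairing is well-defined despite $\chi$ being specified only modulo $H^1_f$: at $v\mid p$ the class $\loc_v(w)$ is crystalline, so the cup product factors through $H^1(G_v,W^*(1))/H^1_f$, and the ambiguity in $\chi$ is killed. For the characters $\chi$ relevant in this paper, only primes in $T_0\cup\{\mathfrak{p}\}$ contribute nontrivially to the sum, which matches the range of summation in the definition of $h$. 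Combining these observations, $h(N)=h(M)$, proving the desired factorization and finishing the argument.
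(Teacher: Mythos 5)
Your argument is correct and essentially the same as the paper's: you reduce the factorization to invariance of $h$ under twisting by a global class $c\in H^1(G_{K,T},W)$ (crystalline at $p$), note that such a twist changes each local height by $\chi_v\cup\loc_v c$, and conclude by Poitou--Tate reciprocity that the sum vanishes, with bilinearity then following from the bi-additivity of the pre-heights. The extra details you supply (why only $v\in T_0\cup\{\mathfrak{p}\}$ contribute, and why the twisting class is crystalline above $p$) are exactly the points the paper leaves implicit.
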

\begin{proof}
As remarked above, the global height is additive, so it is enough to show that it is invariant under the action of $H^1 _f (G_{K,T} ,W)$ on $M_{f,T_0 } (G_{K,T} ;\mathbb{Q}_p,V,W)$. Invariance follows from Poitou--Tate duality: if a mixed extension $M$ is twisted by $c \in H^1 _f (G_{K,T} ,W)$, then this will change $h_v $ by $\chi _v \cup \loc _v c$, and $\sum \chi _v \cup \loc _v c =0$.
\end{proof}

\begin{Remark}
Note that, unlike classical $p$-adic heights, it is not clear that this construction defines a pairing 
$
H^1 _f (G_{K,T} ,V)\times H^1 _f (G_{K,T} ,V^* \otimes W)\to \mathbb{Q}_p, 
$
as we do not know that given $[E_1 ]$ in $H^1 _f (G_{K,T}, W)$ and $[E_2 ]$ in $\ext ^1 _f (V,W)$, $E_2 $ lifts to an element of $H^1 _f (G_{K,T} ,E_2 )$. The existence of such a lifting is equivalent to the vanishing of $[E_1 ]\cup [E_2 ]$ in $H^2 (G_{K,T} ,W)$, and hence would be implied by injectivity of the localisation map 
$
H^2 (G_{K,T} ,W)\to \oplus _{v\in T}H^2 (G_v ,W).
$
By Poitou-Tate duality, this would be implied by injectivity of $H^1 (G_{K,T} ,W^* (1))\to \oplus _{v\in T}H^1 (G_v ,W^* (1))$, and 
hence by Conjecture \ref{BKconj}, as in Lemma \ref{BKimpliesfinite}.
\end{Remark}

Given two different choices of splitting of the Hodge filtration $s^{(1)}$ and $s^{(2)}$, we obtain two different pre-heights $\widetilde{h}_p ^{(1)}$ and $\widetilde{h}_p ^{(2)}$. Their difference $\widetilde{h}_p ^{(1)}-\widetilde{h}_P ^{(2)}$ defines a map
$
M_f (G_{\mathfrak{p}},\mathbb{Q}_p ,V,W)\to D_{\dr}(W)/F^0 ,
$
which may easily be seen to factor as
$$
M_{f} (G_{\mathfrak{p}},\mathbb{Q}_p ,V,W)\to D_{\dr}(V)/F^0 \times D_{\dr}(V^* \otimes W)/F^0 \to D_{\dr}(W)/F^0 .
$$
The latter map may be defined as follows. The difference $s^{(1)}-s^{(2)}$ gives a homomorphism
$
\overline{s}:D_{\dr}(V)/F^0 \times  \to F^0 D_{\dr }(V).
$
Given $v\in D_{\dr}(V)/F^0 $ and $\alpha \in D_{\dr}(V^* \otimes W)/F^0 $, choose a lift of $\alpha $ to $\widetilde{\alpha }$ in $D_{\dr}(V^* \otimes W)$. The lift $\widetilde{\alpha }(\overline{s}(v))$ gives an element of $D_{\dr}(W)$, which is independent of the choice of $\widetilde{\alpha }$ modulo $F^0 D_{\dr}(W)$.

\begin{Lemma}\label{heights_prove_global}
Suppose $[P]=([P_v ])\in \prod _{v\in T_0 \cup \{ \mathfrak{p} \} }M (G_v ;\mathbb{Q}_p ,V,W)$ satisfies 
\begin{itemize}
\item $P_{\mathfrak{p}} $ is crystalline.
\item $\pi _{1*} P\in H^1 _f (G_{\mathfrak{p}} ,V)$ is in the image of $H^1 _f (G_{K,T} ,V)$,
\item $\pi _{2*} P\in H^1 _f (G_{\mathfrak{p}} ,V^* \otimes W)$ is in the image of $H^1 _f (G_{K,T} ,V^* \otimes W)$,
\item there exist $P_1 ,\ldots P_n $ in $M_{f,T_0 } (G_{K,T} ;\mathbb{Q}_p ,V,W)$, and $\lambda _i $ in $\mathbb{Q}_p $ such that 
$$
\pi _{1*}P \otimes \pi _{2*}P=\sum \lambda _i \pi _{1*}P_i \otimes \pi _{2*}P_i
$$
in $H^1 _f (G_{K,T} ,V)\otimes H^1 _f (G_{K,T} ,V^* \otimes W)$ and for all $\varphi $ in $H^1 _s (G_{K,T} ,\overline{\wedge ^2 V}^* (1))$, 
$$
h_{\varphi }(P)=\sum \lambda _i h_{\varphi }(P_i ).
$$
\end{itemize}
Then $P$ is in the image of $M_{f,T_0 } (G_{K,T} ;\mathbb{Q}_p ,V,W)$.
\end{Lemma}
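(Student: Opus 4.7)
My plan has two stages: first, construct a global mixed extension $\tilde{P}$ whose graded projections match $\pi_{1*}P$ and $\pi_{2*}P$; second, correct $\tilde{P}$ by a global $W$-twist to match $P$ itself at every place.

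For stage one, I would use the second and third hypotheses to pick global classes $[E_1] \in H^1_f(G_{K,T}, V)$ and $[E_2] \in H^1_f(G_{K,T}, V^* \otimes W)$ lifting $\pi_{1*}P_\mathfrak{p}$ and $\pi_{2*}P_\mathfrak{p}$ (the components at $v \in T_0$ are matched automatically, since by Lemma \ref{locallemma} the relevant local cohomology groups vanish there). The obstruction to $([E_1], [E_2])$ being realized by a global mixed extension in $M_{f, T_0}(G_{K,T}; \mathbb{Q}_p, V, W)$ is the Yoneda cup product $[E_1] \cup [E_2] \in H^2(G_{K,T}, W)$. The fourth (bilinear) hypothesis gives
$$[E_1] \otimes [E_2] = \sum_i \lambda_i\, [\pi_{1*}P_i] \otimes [\pi_{2*}P_i]$$
in $H^1_f(G_{K,T}, V) \otimes H^1_f(G_{K,T}, V^* \otimes W)$, so by bilinearity of cup product $[E_1] \cup [E_2] = \sum_i \lambda_i\, [\pi_{1*}P_i] \cup [\pi_{2*}P_i]$; each summand vanishes because the global mixed extension $P_i$ itself witnesses the triviality of the Yoneda obstruction for its own graded pieces. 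Since crystalline representations form an abelian subcategory closed under extensions, one may take $\tilde{P} \in M_{f, T_0}(G_{K,T}; \mathbb{Q}_p, V, W)$.

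For stage two, since $\tilde{P}$ and $P$ agree in $\pi_{1*}$ and $\pi_{2*}$ at every $v$, each $P_v$ differs from $\loc_v \tilde{P}$ by a unique twist class $c_v \in H^1(G_v, W)$, with $c_\mathfrak{p} \in H^1_f(G_\mathfrak{p}, W)$. I would invoke Poitou--Tate duality to reduce globalizability of $(c_v)$ in $H^1_{f, T_0}(G_{K,T}, W)$ to the vanishing of $\sum_v \chi_v \cup c_v$ for every $\chi \in H^1_s(G_{K,T}, W^*(1))$. Using bi-additivity of the local pre-heights (a $W$-twist of a mixed extension changes $\tilde{h}_v$ by the twisting class, paired with $\chi_v$) together with Lemma \ref{pairing} (which makes $h_\chi$ factor through the tensor product $\pi_{1*} \otimes \pi_{2*}$ on globals),
$$\sum_v \chi_v \cup c_v = h_\chi(P) - h_\chi(\tilde{P}) = h_\chi(P) - \sum_i \lambda_i\, h_\chi(P_i),$$
which vanishes by the fourth hypothesis. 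Twisting $\tilde{P}$ by the global class in $H^1_{f, T_0}(G_{K,T}, W)$ lifting $(c_v)$ then gives the desired global mixed extension in $M_{f, T_0}(G_{K,T}; \mathbb{Q}_p, V, W)$ localizing to $P$.

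The hard part will be making the Poitou--Tate step precise: I need to verify that the dual Selmer group with respect to our local conditions (arbitrary at $T_0$, crystalline at primes above $p$) is exactly $H^1_s(G_{K,T}, W^*(1))$ as defined in the paper, so that the height hypothesis really exhausts all obstructions to globalizing $(c_v)$. A secondary subtlety is preserving crystallinity in the Yoneda construction of stage one, which is routine since the crystalline subcategory is closed under Yoneda extensions, but must be tracked carefully.
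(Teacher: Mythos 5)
Your proposal is correct and follows essentially the same route as the paper: the paper likewise kills the cup-product obstruction in $H^2(G_{K,T},W)$ by bilinearity (transporting it to the global mixed extensions $P_i$, where it vanishes), phrased via the exact sequence $1\to W\to U(\mathbb{Q}_p,V,W)\to V\oplus V^*\otimes W\to 1$ rather than Yoneda extensions, and then identifies the discrepancy with a twist class $c$ in $H^1(G_v,W)$ which is globalized by Poitou--Tate duality using $h_{v,\varphi}(P)=h_{v,\varphi}(P')+\varphi_v\cup\loc_v(c)$ and the height hypothesis. The Poitou--Tate bookkeeping you flag is treated at exactly the same level of brevity in the paper, so there is no substantive divergence.
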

\begin{proof}
We have an exact sequence of unipotent groups with $G_{K,T}$-action
$$
1\to W \to U(\mathbb{Q}_p ,V,W)\stackrel{\pi _1 \oplus \pi _2 }{\longrightarrow} V\oplus V^* \otimes W \to 1.
$$
The image of $H^1 (G_{K,T} ,U(\mathbb{Q}_p ,V,W))$ in $H^1 (G_{K,T} ,V\oplus V^* \otimes W)$ is  precisely equal to the kernel of the cup product map to $H^2 (G_{K,T} ,W)$. Note that
$$
\pi _{1*}P \cup \pi _{2*}P=\sum \lambda _i \pi _{1*}P_i \cup \pi _{2*}P_i =0,
$$
and thus we conclude that there is a mixed extension $P'$ whose image in $H^1 _f (G_{\mathfrak{p}} ,V) \otimes H^1 _f (G_{\mathfrak{p}} ,V^* \otimes W)$ is equal to that of $P$. Hence $P$ is the twist of $\loc _{\mathfrak{p}} P'$ by some $c$ in $H^1 (G_{\mathfrak{p}} ,W)$, and the claim of the lemma is exactly that this $c$ is in the image of $H^1 _f (G_{K,T} ,W)$. By Poitou--Tate duality this is true if and only if for all $\varphi $ in $H^1 (G_{K,T} ,W^* (1))$ which are crystalline at all primes above $p$ other than $\mathfrak{p}$, we have $\sum _v \varphi _v \loc _v c=0$. But, as in the proof of Lemma \ref{pairing},
$$h_{v,\varphi }(P)=h_{v,\varphi }(P')+\varphi _v \loc _v (c).$$
\end{proof}

%%%%%%%%%%%%%%%%%%%%%%%%%%%%%%%%%%%%  
\section{Equations for Selmer varieties}
%%%%%%%%%%%%%%%%%%%%%%%%%%%%%%%%%%%%  

In this section we use the bilinear structure of generalised heights to obtain formulas for $X(\mathbb{Q}_p )_U$. More precisely, generalised heights allow us to describe explicit trivialisations
\[
M_* (G_v ;\Q _p ,V,W)\simeq H^1 _* (G_v ,V)\times H^1 _* (G_v ,V^* \otimes W) \times H^1 _* (G_v ,W) 
\]
(where $*$ is $f$ or $g$ depending on whether or not $v|p$), and to describe the image of $M_{f,T_0 } (G_{K,T};\Q _p ,V,W)$ under the map
\[
M_{f,T_0 } (G_{K,T};\Q _p ,V,W)\stackrel{(\pi _{1*},\pi _{2*}, \loc )}{\longrightarrow } 
\begin{array}{c}H^1 _f (G_{K,T} ,V)\times H^1 _f (G_{K,T} ,V^* \otimes W) \\
\times \prod _{v|p}M_{f} (G_v ;\Q _p ,V,W) \times \prod _{v\in T_0 }M(G_v ;\Q _p ,V,W).
\end{array}
\]
In Lemma \ref{lemma13}, this is used to describe $X(K_{\mathfrak{p}})_{\alpha }$, by giving explicit quadratic relations between $\widetilde{h}_{\mathfrak{p}}(A(b,z))$ and $\kappa (b-z)$.

Fix a prime $\mathfrak{p}$ above $p$ and a set of local conditions $$\alpha \in \prod _{v\in T_0 }j_v (X(\mathbb{Q}_v ))\subset \prod _{v\in T_0 }H^1 (G_v ,U).$$ 
For $\alpha =(\alpha _v )_{v\in T_0 }$ in $\prod _{v\in T_0 }H^1 (G_v ,W)$, let $M_f (G_{K,T} ; \mathbb{Q}_p ,V,W)_{\alpha }$ denote the set of isomorphism classes of mixed extensions which are crystalline at $p$, and such that the localisation at $v\in T_0 $ corresponds to $\alpha _v \in H^1 (G_v ,W)$ via the isomorphism 
$
M(G_v ;\mathbb{Q}_p ,V,W) \simeq H^1 (G_v ,W).
$
Then the twisting construction defines a map 
$$
\Sel (U)_{\alpha }\to M_f (G_{K,T} ;\mathbb{Q}_p ,V,W)_{\alpha }.
$$

Let $m$ denote the codimension of $H^1 _f (G_{K,T},W)$ in $H^1 _f (G_{\mathfrak{p}},W)$.
Suppose $P\in H^1 _f (G_{\mathfrak{p}} ,U)$ comes from some $P'$ in $\Sel (U)_{\alpha }$, and let $Q$ denote the image of $P'$ in $H^1 _f (G_{K,T} ,V)$. Knowing $\pi _* P$ gives $g$ linear conditions on $Q$, and knowing $\widetilde{h}_p (P)$ gives $m$ quadratic conditions on $Q$. Finding exact formulas for the subspace of $H^1 _f (G_{\mathfrak{p}} ,U)$ where these $g+m$ equations have a solution is then a matter of elimination theory. Concretely, let $H_\alpha $ be the image of 
$\mathbb{Q}_p [\Sel (U)_{\alpha }]$ in $H^1 _f (G_{K,T} ,V)\otimes H^1 _f (G_{K,T} ,V^* \otimes W)$ under the map $P\mapsto \pi _{1*}(P)\otimes \pi _{2*}(P)$. Let
$S$ be a section of 
$
\mathbb{Q}_p [\Sel (U)_{\alpha }]\to H_\alpha .
$
Let $H$ be the image of $\Sel (U)_\alpha $ in $H^1 _f (G_{K,T} ,V)$, and let $T$ denote the map 
$
H\to H_\alpha 
$
sending $P$ to $P\otimes ([IA(b)]+\tau _* (P))$. Then by the multilinearity of generalised heights we have 
\begin{equation}\label{eqn:multilin}
\widetilde{h}(P)=\widetilde{h}(S \circ T ( \pi _* P))
\end{equation}
for all $P\in \Sel (U)_{\alpha }$.
To use this to write down equations for $\loc _{\mathfrak{p}} (\Sel (U)_{\alpha })$, we introduce some notation for resultants (see e.g. \cite[\S IX.3]{lang:2002}). Given finite-dimensional vector spaces $V_1 ,V_2 ,V_3 $ over a field $K$ and a morphism of algebraic varieties
$
F:V_1 \times V_2 \to V_3,
$
we define the resultant $R_{V_2 }(F)\subset \mathcal{O}(V_1 )$ to be the ideal defining the maximal subvariety of $V_1$ for which $F|_{R_{V_2 }(F)\times V_2 }$ is identically zero. By the fundamental theorem of elimination theory, this is of finite type over $K$. If $(\lambda _1 ,\ldots \lambda _n )$ is a basis for $V_2 $, we may also write this as $R_{\lambda _1 ,\ldots ,\lambda _n }(F)$, to indicate that the variables $\lambda _1 ,\ldots ,\lambda _n$ have been eliminated. In our case of interest, 
$$V_1=H^1 _f (G_{\mathfrak{p}} ,V)\oplus H^1 _f (G_{\mathfrak{p}} ,W), \qquad V_2 =H, \qquad V_3 =H^1 _f (G_\mathfrak{p},V)\oplus H^1 _s (G_{K,T} ,W^* (1)),$$ and the map
$$
F:V_1 \times V_2 \to V_3
$$
sends $(v_1 ,v_2 ,v_3 )$ in 
$H^1 _f (G_{\mathfrak{p}} ,V )\times H^1 _f (G_{\mathfrak{p}} ,W)\times H $ to \\
$$
(\loc _{\mathfrak{p}} (v_3 )-v_1 ,\widetilde{h}_{\mathfrak{p}} (v_2 )+\sum _{v\in T_0 }\widetilde{h}_v  ( \alpha _v )-\widetilde{h}(S\circ T(v_3 )) \in H^1 _f (G_{\mathfrak{p}} ,V)\oplus H^1 _s (G_{K,T} ,W^* (1)).
$$
\begin{Lemma}\label{lemma13}
The image of $\Sel (U)_{\alpha }$ in $H^1 _f (G_\mathfrak{p} ,V)\times H^1 _f (G_{\mathfrak{p}}, W)$ under the composite map $(\pi _* ,\widetilde{h}_p )\circ \loc _{\mathfrak{p}} $ is equal to the zero set of $R_H (F)$.
In particular 
$$
X(K_{\mathfrak{p}} )_{\alpha }=\{ z\in X(K_{\mathfrak{p}} ):\;\textrm{for all}\; G\in R_H (F),\; G(\kappa _p (z),\widetilde{h}_p (j_{\mathfrak{p}} (z)))=0  \}.
$$
\end{Lemma}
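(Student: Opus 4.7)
The plan is to prove equality of two subsets of $V_1 = H^1_f(G_\mathfrak{p},V)\oplus H^1_f(G_\mathfrak{p},W)$: the image of $\Sel(U)_\alpha$ under $(\pi_*,\widetilde{h}_\mathfrak{p})\circ\loc_\mathfrak{p}$ (using the trivialisation of Lemma \ref{itsonto}), and the zero locus $V(R_H(F))$, which I interpret as the standard resultant image, namely $\{(v_1,v_2)\in V_1 : \exists\, v_3\in H \text{ with } F(v_1,v_2,v_3)=0\}$. The final assertion about $X(K_\mathfrak{p})_\alpha$ then falls out by unwinding definitions: $z\in X(K_\mathfrak{p})_\alpha$ iff $j_\mathfrak{p}(z)\in\loc_\mathfrak{p}(\Sel(U)_\alpha)$, iff the pair $(\kappa_p(z),\widetilde{h}_\mathfrak{p}(j_\mathfrak{p}(z)))$ lies in $V(R_H(F))$, iff every $G\in R_H(F)$ vanishes on this pair.

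For the forward inclusion, take any $P'\in\Sel(U)_\alpha$ and set $v_3:=\pi_*(P')\in H$. The first component of $F(\pi_*\loc_\mathfrak{p} P',\widetilde{h}_\mathfrak{p}\loc_\mathfrak{p} P',v_3)$ vanishes because $\loc_\mathfrak{p}$ commutes with $\pi_*$. For the second component, equation \eqref{eqn:multilin} identifies $\widetilde{h}(A^{(P')}) = \widetilde{h}(S\circ T(v_3))$, while Lemma \ref{pairing} writes $\widetilde{h}(A^{(P')})$ as the sum of local universal heights. By construction the admissible characters $\chi\in H^1_s(G_{K,T},W^*(1))$ are crystalline at all primes above $p$ other than $\mathfrak{p}$, so the only contributing local terms are at $\mathfrak{p}$ and at $T_0$; there the local class of $A^{(P')}$ is precisely $\widetilde{h}_\mathfrak{p}(\loc_\mathfrak{p} P')$ respectively $\widetilde{h}_v(\alpha_v)$. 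This is exactly the second component of $F=0$.

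For the reverse inclusion, suppose $(v_1,v_2)\in V(R_H(F))$, witnessed by some $v_3\in H$, and lift $v_3$ to $P'\in \Sel(U)_\alpha$. Then $\pi_*\loc_\mathfrak{p} P' = \loc_\mathfrak{p} v_3 = v_1$ by the first component, so $\loc_\mathfrak{p} P'$ and the mixed extension corresponding to $(v_1,v_2)$ differ by a class $d := v_2 - \widetilde{h}_\mathfrak{p}(\loc_\mathfrak{p} P')\in H^1_f(G_\mathfrak{p},W)$. Applying the forward direction to $P'$ and subtracting the hypothesis $F(v_1,v_2,v_3)=0$ gives $\chi_\mathfrak{p}\cup d = 0$ for every $\chi\in H^1_s(G_{K,T},W^*(1))$. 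Now Lemma \ref{heights_prove_global} (in its Poitou--Tate guise) implies that $d$ lifts to a class in $H^1_f(G_{K,T},W)$ whose localisations at $T_0$ are trivial. Twisting $P'$ by this lift produces $P''\in\Sel(U)_\alpha$ with $\loc_\mathfrak{p} P''$ realising $(v_1,v_2)$.

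The main technical obstacle will be the backward direction: verifying that the condition extracted from $F=0$ on $d$ is precisely strong enough to globalise $d$ while preserving the local conditions $\alpha$ at $T_0$. This is where the precise choice of the intermediate cohomology group $H^1_s(G_{K,T},W^*(1))$ (non-crystalline at $\mathfrak{p}$ but crystalline at the remaining primes above $p$) becomes essential, since it is this choice that makes the Poitou--Tate pairing sensitive only to the $\mathfrak{p}$-component of the obstruction class, and hence reduces the globalisation to the single equation produced by the resultant.
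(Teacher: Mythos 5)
Your forward direction is the paper's own: equation \eqref{eqn:multilin} together with the fact that the admissible characters are crystalline at the primes above $p$ other than $\mathfrak{p}$ (so only $\mathfrak{p}$ and $T_0$ contribute) is precisely how the paper disposes of that inclusion. In the backward direction you take a genuinely different, and arguably leaner, route: the paper converts the question into one about mixed extensions, cites Lemma \ref{heights_prove_global} as a black box to produce a global mixed extension in $M_f(G_{K,T};\mathbb{Q}_p,V,W)_\alpha$, and then needs Lemma \ref{mixed_injective} to pass back from mixed extensions to a global $U$-torsor; you instead stay at the level of torsors, lift the witness $v_3\in H$ to some $P'\in\Sel(U)_\alpha$ (possible exactly because $H$ is defined as the image of $\Sel(U)_\alpha$), and repair the discrepancy $d$ at $\mathfrak{p}$ by a global twist, in effect inlining the Poitou--Tate step inside the proof of Lemma \ref{heights_prove_global} rather than invoking its statement. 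This makes Lemma \ref{mixed_injective} unnecessary, at the cost of redoing the duality bookkeeping yourself, and that bookkeeping is the one point to be careful about (you flag it correctly): the statement of Lemma \ref{heights_prove_global} only yields a class in $H^1_f(G_{K,T},W)$, whose localisations at $v\in T_0$ are unramified but need not vanish, and an unramified twist at such a $v$ could a priori move $\loc_v P'$ away from $\alpha_v$, or even out of $j_v(X(K_v))$. What makes your stronger claim legitimate is the duality argument itself rather than the quoted lemma: because the characters $\varphi\in H^1_s(G_{K,T},W^*(1))$ carry no local condition at $T_0$, the dual Selmer condition at those places is the strict one, so orthogonality of $d$ to all $\loc_\mathfrak{p}\varphi$ produces a global class that is crystalline at every $v\mid p$ and has zero localisation at every $v\in T_0$; twisting $P'$ by it then stays inside $\Sel(U)_\alpha$, which is exactly what you need. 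With that verification written out, your argument is complete and is a sound alternative to the paper's proof.
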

\begin{proof}
Whenever $P$ is in the image of $\Sel (U_2 )_{\alpha }$, it satisfies the equations above. Conversely, by Lemma \ref{mixed_injective}, 
there is a global $U$-torsor in $\Sel (U_2 )_{\alpha }$ whose localisation at $\mathfrak{p}$ is given by $P$ if and only if there is a mixed extension in $M_f (G_{K,T} ;\mathbb{Q}_p ,V,W)_{\alpha }$ whose localisation at $\mathfrak{p}$ is given by $A^{(P)}$. By Lemma \ref{heights_prove_global}, this happens if and only if there is an element $Q$ of $H^1 _f (G_{K,T} ,V)$ which is a simultaneous solution to 
\[ \left\{
\begin{array}{ll}
\loc _{\mathfrak{p}} (Q )=\pi _* P \\
h_{\varphi ,\mathfrak{p}}(P)+\sum _{v\in T_0 }h_{\varphi ,v}(\alpha _v )=h_{\varphi }(S\circ T(Q )).
\end{array}
\right.\]
\end{proof}

\begin{Lemma}\label{mixed_injective}
The map
\[
H^1 _{f,T_0 } (G_{K,T} ,U)\to M_{f,T_0 } (G_{K,T} ;\mathbb{Q}_p ,V,W); \qquad
P \mapsto A(b)^{(P)}
\]
is injective.
\end{Lemma}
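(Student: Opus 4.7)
The plan is to reduce injectivity of $P \mapsto A(b)^{(P)}$ to a fiber-wise analysis via the central extension structure on $U$ and on $U' := U(\mathbb{Q}_{p},V,W)$.

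First, the faithful left action of $U \subset A(b)^{\times}$ on $A(b)$ preserves the $I$-adic filtration and acts trivially on each graded piece, hence embeds $U$ as a $G_{K,T}$-stable subgroup of $U'$, and the embedding $\phi$ is the identity on $W$. Since both $U$ and $U'$ are two-step nilpotent with derived subgroup contained in $W$, the subgroup $W$ is central in each, and we obtain a morphism of central extensions of $G_{K,T}$-groups
\[
\begin{array}{ccccccccc}
1 & \to & W & \to & U & \to & V & \to & 1 \\
& & \| & & \downarrow & & \downarrow & & \\
1 & \to & W & \to & U' & \to & V \oplus V^{*} \otimes W & \to & 1.
\end{array}
\]
By Lemma \ref{symmbit}, the induced map on $H^{1}$ of the right-hand column sends $[\bar P] \mapsto ([\bar P],\,[IA(b)] + \tau_{*}[\bar P])$, the graph of an affine map; in particular it is injective.

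The crucial observation is that $P \mapsto A(b)^{(P)}$ is $H^{1}_{f,T_{0}}(G_{K,T},W)$-equivariant, where $H^{1}(G,W)$ acts on both source and target by Baer sum through their common central subgroup $W$: twisting $A(b)$ first by a $U$-torsor $P$ and then by a $W$-cocycle $c$ (via the central inclusion $W \hookrightarrow U'$) produces the same mixed extension as twisting by the combined $U$-cocycle $P \cdot c$. Given $A(b)^{(P_{1})} \cong A(b)^{(P_{2})}$, injectivity of the right-hand column forces $\bar P_{1} = \bar P_{2}$ in $H^{1}_{f}(G_{K,T}, V)$; the standard twisting trick for central extensions then produces $c \in H^{1}_{f,T_{0}}(G_{K,T}, W)$ with $[P_{2}] = [P_{1}] \cdot c$, and equivariance gives that $c$ stabilizes $[A(b)^{(P_{1})}]$ in $M_{f,T_{0}}(G_{K,T};\mathbb{Q}_{p},V,W)$.

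It remains to show this stabilizer is trivial. For a central extension the stabilizer of any point under the Baer-sum action equals the image of a connecting map $H^{0}(G_{K,T},\, V \oplus V^{*} \otimes W) \to H^{1}(G_{K,T}, W)$ arising from the appropriately twisted extension. Since $V$ has weight $-1$ and $W$, as a quotient of $\overline{\wedge^{2} V}$, has weight $-2$, the representations $V$ and $V^{*} \otimes W$ are both pure of weight $-1$, so by the Weil conjectures for curves and Chebotarev density they admit no global Galois invariants. Hence the stabilizer vanishes, $c = 0$, and $[P_{1}] = [P_{2}]$. The subtlest step in this outline is the equivariance claim---that Baer-sum twisting on $H^{1}_{f,T_{0}}(G_{K,T},U)$ and $M_{f,T_{0}}(G_{K,T};\mathbb{Q}_{p},V,W)$ is compatible under $\phi_{*}$---but it is immediate once one unwinds the definitions, using that $\phi$ identifies $W \subset U$ with $W \subset U'$.
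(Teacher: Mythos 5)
Your argument is correct in substance, but it follows a different route from the paper's. The paper factors the map as $H^1_{f,T_0}(G_{K,T},U)\to H^1_{f,T_0}(G_{K,T},\Aut(A(b)))\stackrel{\simeq}{\to}M_{f,T_0}(G_{K,T};\mathbb{Q}_p,V,W)$, observes that the second map is a bijection via the bitorsor structure of $A(b)$, and deduces injectivity of the first from the nonabelian cohomology sequence for $U\subset\Aut(A(b))$, using that the pointed set $\Aut(A(b))/U$ has no Galois fixed points for weight reasons. You instead argue fiberwise over the abelianisation: you pin down $\pi_*P$ from $\pi_{1*}A(b)^{(P)}$ (and the affine formula of Lemma \ref{symmbit}), use transitivity of the central $H^1(G_{K,T},W)$-twisting action on the fibers of $H^1(G_{K,T},U)\to H^1(G_{K,T},V)$, check that $P\mapsto A(b)^{(P)}$ is equivariant for this action (true, since $w\in W\subset U$ acts on $A(b)$ by $1\mapsto 1+w$ and trivially on $IA(b)$, which is exactly the standard central action on the mixed extension), and then kill the stabiliser via the twisted coboundary $H^0(G_{K,T},V\oplus V^*\otimes W)\to H^1(G_{K,T},W)$, which vanishes because $V$ and $V^*\otimes W$ are pure of nonzero weight. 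Both proofs ultimately rest on the same weight-based $H^0$-vanishing, but yours is more explicit about the fibre structure (and in particular handles the twisting needed to treat stabilisers at arbitrary classes, a point the paper passes over quickly), while the paper's is shorter because it treats the whole quotient $\Aut(A(b))/U$ at once.

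One inaccuracy worth fixing: the Galois-equivariant ambient group is $\Aut(A(b))$, the automorphism group of the mixed extension $A(b)$, which is an \emph{inner form} of $U(\mathbb{Q}_p,V,W)$, not $U(\mathbb{Q}_p,V,W)$ itself; the identification of $M_{f,T_0}(G_{K,T};\mathbb{Q}_p,V,W)$ with $H^1_{f,T_0}(G_{K,T},U(\mathbb{Q}_p,V,W))$ is through the bitorsor, not through a Galois-equivariant inclusion $U\hookrightarrow U(\mathbb{Q}_p,V,W)$. Indeed, if your displayed diagram were literally a morphism of central extensions of $G_{K,T}$-groups into $U(\mathbb{Q}_p,V,W)$, the induced map on $H^1$ would send $[\bar P]$ to $([\bar P],\tau_*[\bar P])$ with no $[IA(b)]$ shift, contradicting the formula you correctly quote from Lemma \ref{symmbit}. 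This does not damage the proof, since the argument only uses that formula together with the central $W$-equivariance and the stabiliser computation, but the diagram should be restated with $\Aut(A(b))$ in place of $U(\mathbb{Q}_p,V,W)$ (or the shift attributed to the bitorsor identification). A further harmless imprecision: the class $c$ produced by the twisting trick a priori lies only in $H^1(G_{K,T},W)$, not in $H^1_{f,T_0}$, but since you conclude $c=0$ this is irrelevant.
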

\begin{proof}
As explained in \cite[\S 5.1]{balakrishnan:2016}, this map may be described as the composite 
\begin{align*}
H^1 _{f,T_0 } (G_{K,T} ,U)\to H^1 _{f,T_0 } (G_{K,T} ,\Aut (A(b))) \stackrel{\simeq }{\longrightarrow }M_{f,T_0 } (G_{K,T} ;\mathbb{Q}_p ,V,W),
\end{align*}
where the first map is induced from the group homomorphism $U\to \Aut (A(b))$ and the second map is induced from the isomorphism $$
M_{f,T_0 } (G_{K,T} ;\mathbb{Q}_p ,V,W)\simeq H^1 _{f,T_0 } (G_{K,T} ;U(\mathbb{Q}_p ,V,W))
$$
together with the structure of $A(b)$ as an $(\Aut (A(b)),U(\mathbb{Q}_p ,V,W))$-bitorsor.
The upshot is that it suffices to check the first map is injective. By definition of the map, this is implied by injectivity of 
$$
H^1 (G_{K,T} ,U)\to H^1 (G_{K,T} ,\Aut (A(b))).
$$
By the exact sequence 
$$
H^0 (G_{K,T} ,\Aut(A(b))/U) \to H^1 (G_{K,T} ,U)\to H^1 (G_{K,T} ,\Aut (A(b)))
$$ 
(see e.g., \cite[Proposition 36]{serregc:1997})
it is enough to show that the pointed $G_{K,T} $-set 
$\Aut (A(b))/U$ has no fixed points, which can be seen by noting that it is an extension of a weight $-1$ $G_{K,T}$-representation by a weight $-2$ $G_{K,T}$-representation.
\end{proof}

Let $P_1 ,\ldots ,P_n$ be elements of $\Sel (U)_{\alpha }$ in $H^1 _f (G_{K,T} ,V)$ spanning $H$ (recall that this is the image of $\mathbb{Q}_p [\Sel (U)_{\alpha }]$ in $H^1 _f (G_{K,T} ,V)\otimes H^1 _f (G_{K,T} ,V^* \otimes W)$), and such that $P_1 ,\ldots ,P_r $ span the image of $\Sel (U)_{\alpha }$ in $H^1 _f (G_{K,T} ,V)$ under $\pi _*$.
Suppose $m_{ijk},m_{ik}\in \Q _p $ satisfy
\begin{align}
(0 ,\tau _* \pi _* P_j \otimes \pi _* P_i ) & =\sum m_{ijk}(\pi _* P_k ,\tau _* \pi _* P_k \otimes \pi _* P_k ) \label{consts} \\
(\pi _* P_i  ,0) & =\sum m_{ik}(\pi _* P_k ,\tau _* \pi _* P_k \otimes \pi _* P_k ) \notag \\
\end{align}
Let $z\in X(K_{\mathfrak{p}})$. Then if $j_{\mathfrak{p}} (z)$ is in the image of $\loc _{\mathfrak{p}} (\Sel (U)_{\alpha })$, there are $\lambda _1 ,\ldots ,\lambda _r $ such that
$$
 \kappa _p (z)=\sum \lambda _i \loc _p \pi _* P_i \\
$$
and for all $\varphi \in H^1 _s (G_{K,T} ,W^* (1))$,
$$
h_{\mathfrak{p},\varphi }(j_{\mathfrak{p}} (z))+\sum _{v\in T_0 }h_{\mathfrak{p},\varphi }(\alpha _v )=\sum \lambda _i m_{ik}h _{\varphi }(P_k )+\sum \lambda _i \lambda _j m_{ijk}h_{\varphi }(P_k )
$$
since if $j_p (z)$ comes from some $P\in \Sel (U)_\alpha $, then we must have
$$
\pi _{1*} (A(b)^{(P)})\otimes \pi _{2*}(A(b)^{(P)})=\sum _i \lambda _i \pi _{*}(P_i )\otimes ([IA(b)]+\sum _j \lambda _j (P_j )),
$$
in $H^1 _f (G_{K,T} ,V)\otimes H^1 _f (G_{K,T} ,V^* \otimes W) $,
which is equal to the 
class of 
$$
\sum \lambda _i m_{ik}\pi _{1*}(A(b)^{(P_k  )})\otimes \pi _{2*}(A(b)^{(P_k  ) })+\sum \lambda _i \lambda _j m_{ijk}\pi _{1*}(A(b)^{(P_k  ) })\otimes \pi _{2*}(A(b)^{(P_k  ) })
$$
by assumption.  This gives the following explicit version of Lemma \ref{lemma13}.
\begin{Proposition}\label{exactformula1}
Suppose the kernel of $\Div ^0 (X(\mathbb{Q}))/P \otimes \mathbb{Q}_p \to J(K_{\mathfrak{p}} )$ has rank $k_1 $, and that the codimension of $H^1 _f (G_{K,T} ,W)$ in $H^1 _f (G_{\mathfrak{p}}, W)$ is $k_2 $. Let $k=k_2 -k_1 $. Then 
$$
X(K_{\mathfrak{p}})_{\alpha }=\cap _{1\leq i\leq k}\{ R_{ \lambda_1 , \ldots , \lambda_{k_1 }}(\mathcal{F}_z (\lambda _1 ,\ldots ,\lambda _{k_1 }) =0 \},
$$
where 
$$
\mathcal{F}_z (\lambda _1 ,\ldots ,\lambda _{k_1 })= h_{p,\varphi }(j_p (z))+\sum _{v\in T_0 }h_{p,\varphi }(\alpha _v )-\sum \lambda _i m_{ik}h_{\varphi }(P_k )-\sum \lambda _i \lambda _j m_{ijk}h_{\varphi }(P_k ),
$$
and$m_{ijk}$ and $m_{ik}$ are as in equation \eqref{consts}.
\end{Proposition}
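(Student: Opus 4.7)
The plan is to rewrite the abstract resultant description of Lemma \ref{lemma13} in coordinates adapted to the Selmer variety, and to read off the claimed formula. I would proceed in four steps, of which the last is the main source of bookkeeping.

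First, I would use the basis $P_1,\ldots,P_n$ of $H$ introduced just before the statement, with $P_1,\ldots,P_r$ already spanning the image of $\Sel(U)_\alpha$ in $H^1_f(G_{K,T},V)$ under $\pi_*$, and parameterise the unknown global class $Q$ in Lemma \ref{lemma13} as $Q=\sum_{i=1}^{r}\lambda_i\pi_* P_i$. Imposing $\loc_{\mathfrak{p}}(Q)=\kappa_{\mathfrak{p}}(z)$ is linear in $\lambda$ and, since the kernel of $J(K)\otimes\Q_p\to J(K_{\mathfrak{p}})$ has rank $k_1$, defines a $k_1$-dimensional affine space inside the $r$-dimensional space of $\lambda$'s, which I would coordinatise by $\lambda_1,\ldots,\lambda_{k_1}$.

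Second, I would substitute this parameterisation into the height equation $h_{\mathfrak{p},\varphi}(j_{\mathfrak{p}}(z))+\sum_{v\in T_0}h_{v,\varphi}(\alpha_v)=h_\varphi(S\circ T(Q))$ produced by Lemma \ref{lemma13}. By \eqref{consts}, the structure constants $m_{ik}$ and $m_{ijk}$ are designed precisely to expand $T(Q)=Q\otimes([IA(b)]+\tau_* Q)$ as a linear combination of $\pi_{1*}(A(b)^{(P_k)})\otimes\pi_{2*}(A(b)^{(P_k)})$, with linear contributions coming from the relations for $(\pi_* P_i,0)$ and quadratic ones from $(0,\tau_*\pi_* P_j\otimes\pi_* P_i)$. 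Bilinearity of the global height (Lemma \ref{pairing} combined with \eqref{eqn:multilin}) then rewrites $h_\varphi(S\circ T(Q))$ as exactly the polynomial $\sum \lambda_i m_{ik}h_\varphi(P_k)+\sum\lambda_i\lambda_j m_{ijk}h_\varphi(P_k)$, identifying the height equation with $\mathcal{F}_z(\lambda_1,\ldots,\lambda_{k_1})=0$.

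Third, letting $\varphi$ range over a basis of the space of characters in $H^1_s(G_{K,T},W^*(1))$ that are crystalline at all primes above $p$ other than $\mathfrak{p}$, the vector-valued equation $\mathcal{F}_z=0$ becomes a system of $k_2$ scalar polynomial equations in the $k_1$ variables $\lambda_j$. Lemma \ref{lemma13} identifies $X(K_{\mathfrak{p}})_\alpha$ with the locus on which this system is solvable, and by the fundamental theorem of elimination theory (as recalled just before the statement) solvability is cut out by the simultaneous vanishing of the $k=k_2-k_1$ independent scalar components of the resultant ideal $R_{\lambda_1,\ldots,\lambda_{k_1}}(\mathcal{F}_z)$, giving the intersection in the conclusion.

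The step I expect to be the main obstacle is the dimension count in the third step: verifying that exactly $k=k_2-k_1$ of the $k_2$ scalar resultant conditions are genuinely independent. This requires checking that the image of $\Sel(U)_\alpha \to H^1_f(G_{K,T},V)$ attains the expected rank $r$, so that the parameterisation has $k_1$ free variables, and that no additional coincidental relations among the $\mathcal{F}_z^\varphi$ appear beyond those already absorbed by the Poitou--Tate step in the proof of Lemma \ref{heights_prove_global}. Once this accounting is in place, the proposition reduces to Lemma \ref{lemma13} transported through the adapted basis.
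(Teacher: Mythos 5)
Your proposal is correct and follows essentially the same route as the paper: the paper's own derivation is exactly the paragraph preceding the statement, where the global class is expanded in the basis $P_1,\ldots,P_n$ of $H$, the structure constants of \eqref{consts} convert $Q\otimes([IA(b)]+\tau_*Q)$ into the linear-plus-quadratic expression in the $\lambda_i$, bilinearity of $h_\varphi$ gives $\mathcal{F}_z=0$, and the proposition is then read off as the ``explicit version'' of Lemma \ref{lemma13} via elimination of the $\lambda$'s. The independence count $k=k_2-k_1$ that you flag as the main obstacle is not argued in any more detail in the paper either, so your account matches the paper's level of justification.
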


In particular, if the Mordell--Weil rank of the Jacobian of $X$ is less than or equal to $g$, and the map 
$\Div ^0 (X(K))/P \otimes \mathbb{Q}_p \to J(K)\otimes \Q _p $ is injective, then 
$$
X(K_{\mathfrak{p}} )_{\alpha }=\cap _i \{h_{\mathfrak{p},\varphi _i}(z)+\sum h_{v,\varphi _i }(\alpha _v )-h_{v,\varphi _i }(S\circ T(j_{\mathfrak{p}} (z))) =0 \} ,
$$
where $\varphi _1 , \ldots ,\varphi _r $  is a basis for $H^1 _s (G_{K,T} ,\overline{\wedge ^2 V}^* (1))$.
\subsection{Equivariant height pairings}
For the Manin--Demjanenko type results in the next section, it will be crucial to consider the subset of height functions which are equivariant with respect to extra endomorphisms of $J$. 

\begin{Definition}\label{gamma_action}
Let $\gamma \in H^0 (G_{K,T} ,GL(V))$. Then $\gamma $ acts on $M_{f,T_0 } (G_{K,T};\mathbb{Q}_p ,V,W)$ by sending $(M,(M_i ),(\psi )_i )$ to $(M,(M_i ),(\psi _i ' ))$ where $\psi _1 ' =\psi _1 \circ \gamma $ and for $i=0,2$ $\psi _i ' =\psi _i $. We denote this action by $\gamma ^* $.
\end{Definition}
Let $\red _{F^\bullet }$ denote the quotient map $W_{\dr }\to W_{\dr }/F^0 $. The splitting $s$ induces sections of $W_{\dr }\to W_{\dr }/F^0 $ and 
$V^* \otimes W\to (V^* \otimes W)/F^0 $ as follows: the isomorphism $V_{\dr }\simeq \gr _F V_{\dr }$ induces an isomorphism $\gr _F (V_{\dr }\otimes V_{\dr })$, and hence together with the surjection $V_{\dr }\otimes V_{\dr }\to W_{\dr }$, we get an isomorphism $W_{\dr }\simeq \gr _F W_{\dr }$. We denote the induced section of $V^* \otimes W \to (V^* \otimes W)/F^0 $ by $s$.
\begin{Lemma}\label{equivtheight}We have the following:
\begin{enumerate}
\item For $v\neq p$, and $\gamma $ in $R^\times $, we have 
$h_v (M)=h_v (\gamma ^* M)$.
\item
The map 
$
M\mapsto \widetilde{h}_{\mathfrak{p}} (M) -\widetilde{h}_{\mathfrak{p}} (\gamma M).
$
factors as 
$$
M_f (G_{\mathfrak{p}} ;\mathbb{Q}_p ,V,W) \to V_\dr /F^0 \times (V_{\dr }^* \otimes W_{\dr })/F^0 \stackrel{\overline{h}_\gamma }{\longrightarrow}W_{\dr }/F^0
$$
where $\overline{h}_\gamma $ is the bilinear map
$$
(v,w)\mapsto \red_{F^\bullet }(s(w) \circ s)(v)-\red _{F^\bullet }(\gamma ^* s(w) ) \circ s(\gamma _* v).
$$
\item If $\gamma $ commutes with the splitting of the Hodge filtration, then $h_{\mathfrak{p}} (M)=h_{\mathfrak{p}} (\gamma M)$.
\end{enumerate}
\end{Lemma}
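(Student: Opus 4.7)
The plan is to treat the three parts in order, all by explicit computation with the definition of the local pre-height from Section \ref{localheightdefn}. For part (1), I would argue that at primes $v \nmid p$, Lemma \ref{locallemma} gives the vanishing of $H^1(G_v,V)$ and $H^1(G_v,V^* \otimes W)$, so the sub-extension $M_1$ of $V$ by $W$ and the quotient extension $M/M_2$ of $\Q_p$ by $V$ associated to any $M \in \mathcal{M}(G_v;\Q_p,V,W)$ both split Galois-equivariantly, \emph{canonically}. These canonical splittings depend only on $\psi_0$ and $\psi_2$, so the resulting identification $M(G_v;\Q_p,V,W) \simeq H^1(G_v,W)$ is insensitive to changes of $\psi_1$; thus $\widetilde{h}_v(\gamma^* M) = \widetilde{h}_v(M)$, and cupping with $\chi_v$ gives $h_v(\gamma^* M) = h_v(M)$.

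For part (2), my plan is to invoke the explicit matrix formula of Lemma \ref{explicit_formula}. Writing $M$ in the form $(\alpha,\beta,\gamma_0) \in V_\dr \oplus (V_\dr^* \otimes W_\dr) \oplus W_\dr$, the action of $\gamma^*$ is exactly a change of basis on the middle block, sending the triple to $(\gamma^{-1}\alpha,\ \beta \circ \gamma,\ \gamma_0)$. Substituting into $\widetilde{h}_\mathfrak{p}(M) = \gamma_0 - \beta(s_1(\alpha))$, with $s_1 = \mathrm{id} - \iota \circ s$, a short manipulation yields
\[
\widetilde{h}_\mathfrak{p}(M) - \widetilde{h}_\mathfrak{p}(\gamma^* M) \;=\; \beta\bigl(\gamma \cdot s_1(\gamma^{-1}\alpha)\bigr) - \beta\bigl(s_1(\alpha)\bigr).
\]
Because $s_1$ is a projection onto the $s$-complement of $F^0 V_\dr$, this difference depends on $\alpha$ and $\beta$ only via $[\alpha] \in V_\dr/F^0$ and $[\beta] \in (V_\dr^* \otimes W_\dr)/F^0$, producing the claimed factorisation through $(\pi_{1*}M, \pi_{2*}M)$, and bilinearity in those arguments is manifest. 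Rewriting the projection $s_1$ in terms of the sections $s:V_\dr/F^0 \to V_\dr$ and $s:(V_\dr^* \otimes W_\dr)/F^0 \to V_\dr^* \otimes W_\dr$ introduced in Section \ref{localheightdefn} should then reproduce the displayed formula for $\overline{h}_\gamma$ after a routine identification.

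Part (3) will fall out of the formula of part (2): if $\gamma$ commutes with the splitting of the Hodge filtration, then $\gamma$ preserves both $F^0 V_\dr$ and $\ker s$, so $\gamma$ commutes with $\iota \circ s$ and hence with $s_1$. Then $\gamma \cdot s_1(\gamma^{-1}\alpha) = s_1(\alpha)$, the displayed difference vanishes, and $\widetilde{h}_\mathfrak{p}$ (hence $h_\mathfrak{p} = \chi_\mathfrak{p} \cup \widetilde{h}_\mathfrak{p}$) is $\gamma^*$-invariant.

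The main obstacle I expect is not conceptual but notational: carefully tracking how the change of basis by $\gamma$ on the middle block transforms each entry of the matrix in Lemma \ref{explicit_formula}, and reconciling the several uses of the symbol $s$---the retraction $V_\dr \to F^0 V_\dr$, and the induced sections of $V_\dr \to V_\dr/F^0$ and of $V_\dr^* \otimes W_\dr \to (V_\dr^* \otimes W_\dr)/F^0$---so that the computed difference matches the stated form of $\overline{h}_\gamma$ exactly, modulo $F^0 W_\dr$.
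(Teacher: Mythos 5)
Your proposal is correct and follows essentially the same route as the paper: part (1) by independence of the local identification $M(G_v;\Q_p,V,W)\simeq H^1(G_v,W)$ from $\psi_1$, part (2) by computing the difference of pre-heights from the explicit coordinate formula of Lemma \ref{explicit_formula} (the paper writes the same computation directly as $t(M_1,M_2)-t(\gamma^*M_1,\gamma_*M_2)$, of which that lemma is the coordinate form), and part (3) by noting that $\gamma$ commuting with $s$ forces the bilinear difference to vanish. The only point to spell out in the write-up is the small verification that $\gamma s_1(\gamma^{-1}\alpha)-s_1(\alpha)\in F^0 V_{\dr}$ (since $\gamma$ preserves $F^0$), which is what makes the difference depend on $\beta$ only modulo $F^0(V_{\dr}^*\otimes W_{\dr})$.
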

\begin{proof}
First note that for $v\neq p$, $h_v (M)=h_v (\gamma M)$, since the definition of $h_v $ does not depend on a choice of isomorphism $M_2 /W \simeq V$.
For the second claim, note that by definition of $\widetilde{h}_{\mathfrak{p}} $ we have
$$
h_{\mathfrak{p}} (M)-h_{\mathfrak{p}} (\gamma M)=t(M_1 ,M_2 )-t(\gamma ^* M_1 ,\gamma _* M_2 ),
$$
as required.
For the last part, note that if $\gamma $ commutes with $s$ we have 
$$
\red_{F^\bullet }(\gamma ^* s(w) ) \circ s\circ \gamma _* )=\red _{F^\bullet }((\gamma ^* s(w) ) \circ \gamma _* \circ s )=\red _{F^\bullet }(s(w)\circ s)
$$
which by the above implies $h_{\mathfrak{p}} (M)=h_{\mathfrak{p}} (\gamma M)$.
\end{proof}
As a result, the height $h$ is $R$-equivariant if and only if for all $\gamma $ in $R^\times $, $s(w) \circ s =(\gamma ^* s(w) ) \circ s\circ \gamma _*$ modulo $F^0 W$.

%%%%%%%%%%%%%%%%%%%%%%%%%%%%%%%%%%%%  
\section{Generalised heights on hyperelliptic curves}
%%%%%%%%%%%%%%%%%%%%%%%%%%%%%%%%%%%%  

In this section we prove Theorem \ref{thm1} and the finiteness part of Theorem \ref{thm2}, using equivariant heights. In brief, the previous section explained how generalised heights provided non-trivial quadratic relations between $\widetilde{h}(A(b,z))$ and $\kappa (z-b)$. To prove finiteness of $X(K_{\mathfrak{p}})_U$, one would like to find non-trivial polynomial relations between $\widetilde{h}(A(b,z))$ and $\loc _{\mathfrak{p}}(\kappa (z-b))$. In general, the obstruction to doing this lives in $H^1 _f (G_{K,T},V)\otimes H^1 _f (G_{K,T},V^* \otimes W)$, in some sense. The idea of using equivariant heights on hyperelliptic curves is to try and replace this with a smaller obstruction space.
\begin{Definition}
Define the \textit{hyperelliptic subspace} of $H^1 _f (G_{K,T} ,V)\otimes H^1 _f (G_{K,T} ,V^* \otimes W)$ to be the image of $H^1 _f (G_{K,T} ,V)^{\otimes 2}$ under the map $1\otimes \tau _* $, where $\tau $ is as defined in \ref{defn:tau}.
Define the \textit{hyperelliptic subspace} of $M_{f,T_0 } (G_{K,T} ;\mathbb{Q}_p,V,W)$, denoted $M_{f,T_0 } ^h (G_{K,T} ;\mathbb{Q}_p ,V,W)$, to be the subvariety of classes whose associated $H^1 _f (G_{K,T} ,V^* \otimes W)$ class is in the image of $\tau _* $.
\end{Definition}
The reason for the name is that, by Lemma \ref{hyperellipticsplitting}, the image of the Selmer variety of a hyperelliptic curve lies in the hyperelliptic subspace.
\begin{Lemma}\label{hyperheight}
Let $X$ be a hyperelliptic curve, $b$ a rational point and $U$ any quotient of $U_2 (b)$. Then the natural map
\[
H^1 _{f,T_0 } (G_{K,T} ,U) \to M_{f,T_0 } (G_{K,T};\mathbb{Q}_p ,V ,W); 
P \mapsto A(b)^{(P)}
\]
lands in the hyperelliptic subspace.
\end{Lemma}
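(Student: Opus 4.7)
The plan is to reduce the claim to the two lemmas already proved: the description of $A^{(P)}$ as a mixed extension (Lemma \ref{symmbit}) and the hyperelliptic splitting principle (Lemma \ref{hyperellipticsplitting}). By Lemma \ref{symmbit}, the $H^1_f(G_{K,T},V^*\otimes W)$-component $\pi_{2*}(A(b)^{(P)})$ of the mixed extension $A(b)^{(P)}$ is the class of $[IA(b)]+\tau_*\pi_*P$. So to show that $A(b)^{(P)}$ lies in the hyperelliptic subspace, I need only show that $[IA(b)]$ itself lies in the image of $\tau_*:H^1_f(G_{K,T},V)\to H^1_f(G_{K,T},V^*\otimes W)$; the second summand $\tau_*\pi_*P$ is visibly in the image.

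For the first step, I would first reduce to the case $U = U_2(b)$ and $W = \overline{\wedge^2 V}$: any quotient $W$ of $\overline{\wedge^2 V}$ carries an induced $\tau$, and the formation of $A(b)^{(P)}$ and of $\tau_*$ is compatible with this quotient. Then, noting that by construction $A(b,b) = A(b)$ (taking the constant path at $b$), Lemma \ref{hyperellipticsplitting} applied with $z = b$ gives
\[
[IA(b)] \;=\; [IA(b,b)] \;=\; \tau_*(\kappa(2b - Z)) \;\in\; \tau_*(H^1_f(G_{K,T},V)),
\]
where $Z = \frac{1}{g+1}\sum_i (\alpha_i,0)$ is the degree-$2$ $\mathbb{Q}$-divisor from Lemma \ref{hyperellipticsplitting}, so $2b - Z$ has degree zero and its Kummer class lives in $H^1_f(G_{K,T},V)$.

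Combining these observations yields
\[
\pi_{2*}(A(b)^{(P)}) \;=\; [IA(b)] + \tau_*\pi_*P \;=\; \tau_*\bigl(\kappa(2b-Z) + \pi_*P\bigr),
\]
which lies in $\tau_*(H^1_f(G_{K,T},V))$. Hence the extension class of $A(b)^{(P)}$ lies in the hyperelliptic subspace of $M_{f,T_0}(G_{K,T};\mathbb{Q}_p,V,W)$, as required. There is no real obstacle here; the only minor point to verify carefully is the reduction from a general quotient $U$ of $U_2(b)$ to the universal case, which is automatic from the functoriality of the twisting construction and of the multiplication map defining $\tau$.
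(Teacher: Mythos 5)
Your proposal is correct and is essentially the paper's own (implicit) argument: the paper treats the lemma as an immediate consequence of Lemma \ref{symmbit}, which identifies $\pi_{2*}(A(b)^{(P)})$ with $[IA(b)]+\tau_*\pi_*P$, and of the hyperelliptic splitting principle (Lemma \ref{hyperellipticsplitting}) applied at $z=b$, giving $[IA(b)]=\tau_*(\kappa(2b-Z))$. Your remarks on the reduction to the universal quotient and on $2b-Z$ having degree zero are exactly the right minor checks, so nothing is missing.
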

One may straightforwardly extend this to equivariant heights.
\begin{Lemma}
Suppose $s$ is an $R$-equivariant splitting. Then 
\begin{enumerate}
\item The generalised height 
$$
h:M_{f,T_0 } (G_{K,T} ;\mathbb{Q}_p ,V,W)\to \mathbb{Q}_p
$$
factors through $H^1  (G_{K,T} ,V)\otimes _R H^1 _f (G_{K,T} ,V^* \otimes W)$.
\item The generalised height function, restricted to the hyperelliptic subspace, factors through
$H^1 _f (G_{K,T} ,V)\otimes _{R,\tau }H^1 _f (G_{K,T} ,V)$.
\end{enumerate}
\end{Lemma}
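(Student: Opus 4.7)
The plan is to derive both parts from the $R$-invariance of the global height guaranteed by $R$-equivariance of $s$, combined with the bilinear factorisation of Lemma \ref{pairing} and the manifest $R$-equivariance of $\tau$.

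First I would record that under the hypothesis, $h(\gamma^* M) = h(M)$ for every $\gamma \in R^\times$ and every mixed extension $M$. This follows from Lemma \ref{equivtheight}: part (1) handles the primes $v \nmid p$ unconditionally, while part (3), together with the equivalent characterisation given in the paragraph following Lemma \ref{equivtheight}, handles the prime $\mathfrak{p}$ using precisely the $R$-equivariance of $s$. By Lemma \ref{pairing}, the global height already factors through a bilinear pairing
$$B \colon H^1_f(G_{K,T},V) \times H^1_f(G_{K,T}, V^* \otimes W) \to \mathbb{Q}_p.$$
Unwinding Definition \ref{gamma_action}, the action $\gamma^*$ only modifies the marking $\psi_1$ of the $V$-graded piece by precomposition with $\gamma$, leaving the underlying Galois representation and the other markings unchanged. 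Consequently $\pi_{1*}(\gamma^* M) = \gamma^{-1} \cdot \pi_{1*}(M)$ in $H^1_f(G_{K,T},V)$, while $\pi_{2*}(\gamma^* M) = \gamma \cdot \pi_{2*}(M)$ with $\gamma$ acting on $V^* \otimes W$ contragrediently on $V^*$ and trivially on $W$. The invariance $h(\gamma^* M) = h(M)$ therefore translates into $B(\gamma x, y) = B(x, \gamma y)$ for all $\gamma \in R^\times$, which is exactly the condition for $B$ to descend to $H^1_f(G_{K,T},V) \otimes_R H^1_f(G_{K,T}, V^* \otimes W)$. This gives (1).

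For (2), the map $\tau \colon V \to V^* \otimes W$, $v_1 \mapsto (v_2 \mapsto v_1 \wedge v_2)$, is $R$-equivariant: a direct computation with the $R$-action on $V$ (extended to $W$ via the induced action on $\wedge^2 V$) gives $\tau(\gamma v) = \gamma \cdot \tau(v)$. Hence $\tau_* \colon H^1_f(G_{K,T},V) \to H^1_f(G_{K,T}, V^* \otimes W)$ is $R$-linear. On the hyperelliptic subspace $\pi_{2*}(M)$ lies in the image of $\tau_*$, so the restriction of $B$ pulls back to a bilinear map
$$B'(x, \alpha) := B(x, \tau_*\alpha) \colon H^1_f(G_{K,T},V) \times H^1_f(G_{K,T},V) \to \mathbb{Q}_p,$$
and the $R$-balancing of $B$ combined with $R$-linearity of $\tau_*$ gives $B'(\gamma x, \alpha) = B'(x, \gamma \alpha)$, which is precisely the condition for $B'$ to factor through $H^1_f(G_{K,T},V) \otimes_{R,\tau} H^1_f(G_{K,T},V)$.

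The main point that will require care is the bookkeeping in the first paragraph: verifying that the action $\gamma^*$ at the level of mixed extensions induces the stated (dual) actions of $\gamma$ on $\pi_{1*}$ and $\pi_{2*}$, so that invariance of $h$ genuinely translates into $R$-balancing of the pairing rather than a weaker identity. Everything else is formal from Lemmas \ref{pairing} and \ref{equivtheight} and the evident equivariance of $\tau$.
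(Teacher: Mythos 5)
Your proposal is correct and takes essentially the same route as the paper: both reduce the statement, via the bilinear factorisation of Lemma \ref{pairing}, to the $R$-balancing identity $h(\gamma ^* E_1 ,E_2 )=h(E_1 ,\gamma ^* E_2 )$, establish it by the local invariance $h_v (M)=h_v (\gamma ^* M)$ supplied by Lemma \ref{equivtheight} under an $R$-equivariant splitting, and treat the hyperelliptic subspace through $\tau _*$ (the paper cites Lemma \ref{hyperheight} where you invoke the definition of the hyperelliptic subspace directly). The only substantive difference is that you spell out the bookkeeping of how $\gamma ^*$ moves $\pi _{1*}$ and $\pi _{2*}$, which the paper's very terse proof leaves implicit.
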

\begin{proof}
By Lemma \ref{pairing} and Lemma \ref{hyperheight}, we only need to prove $R$-equivariance. To prove this, it will be enough to prove that, for all $\gamma \in R^\times $, $h(\gamma ^* E_1 ,E_2 )=h(E_1 ,\gamma ^* E_2 )$. It is enough to prove this locally, i.e. to prove that for all mixed extensions $M$,
\[
h_v (M)=h_v (\gamma ^* M).
\]
This follows from Lemma \ref{equivtheight}. \\
\end{proof}

We now explain the application to finiteness of Chabauty--Kim sets.
\begin{Proposition}\label{maintheorem}
Let $X$ be a hyperelliptic curve. Let $R=\End _{K} ^0 (J)$. Suppose 
\begin{align*}
& \dim (H^1 _f (G_{\mathfrak{p}} ,W)/\loc _{\mathfrak{p}} H^1 _{f} (G_{K,T} ,W))-\dim (H^1 _f (G_{K,T} ,V)\otimes _R H^1 _f (G_{K,T} ,V)) \\
& \quad +\dim (H^1 _f (G_{\mathfrak{p}} ,V)\otimes _R H^1 _f (G_{\mathfrak{p}} ,V))>0.
\end{align*}
Then $X(K_{\mathfrak{p}} )_U $ is finite.
\end{Proposition}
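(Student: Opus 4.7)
The plan is to show, for each fixed collection of local conditions $\alpha$, that $\loc_{\mathfrak{p}}(\Sel(U)_{\alpha})$ has positive codimension in $H^1_f(G_{\mathfrak{p}}, U)$; finiteness of $X(K_{\mathfrak{p}})_U$ then follows by the standard Chabauty--Kim argument \cite{kim:2005, kim:2009}. By Lemma \ref{itsonto}, after choosing an $R$-equivariant splitting of the Hodge filtration (which exists in the hyperelliptic case, e.g.\ from the splitting provided by Weierstrass data), we identify $H^1_f(G_{\mathfrak{p}}, U) \simeq H^1_f(G_{\mathfrak{p}}, V) \times H^1_f(G_{\mathfrak{p}}, W)$ via $(\pi_{\mathfrak{p}, *}, \widetilde{h}_{\mathfrak{p}})$. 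Writing $Q := H^1_f(G_{\mathfrak{p}}, W)/\loc_{\mathfrak{p}} H^1_f(G_{K,T}, W)$, of dimension $m$, the projection $H^1_f(G_{\mathfrak{p}}, U) \to H^1_f(G_{\mathfrak{p}}, V) \times Q$ has fibres of dimension $\dim H^1_f(G_{\mathfrak{p}}, W) - m$.

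By Lemmas \ref{lemma13} and \ref{heights_prove_global}, the image of $\Sel(U)_{\alpha}$ in $H^1_f(G_{\mathfrak{p}}, V) \times Q$ is contained in the image of the map
\[
\Psi : H^1_f(G_{K,T}, V) \longrightarrow H^1_f(G_{\mathfrak{p}}, V) \times Q, \qquad v \mapsto \bigl(\loc_{\mathfrak{p}} v,\; \Phi(v \otimes v) + c_{\alpha}\bigr),
\]
where $c_{\alpha}$ is a constant absorbing contributions from the local conditions $\alpha$ and from $[IA(b)]$, and $\Phi$ is the bilinear pairing induced by the generalised heights. Lemma \ref{hyperheight} (which restricts the Selmer image to the hyperelliptic subspace, so that the $\pi_{2*}$-component is governed by $\tau_*$ of the $\pi_{1*}$-component) and Lemma \ref{equivtheight}(3) (applied with the equivariant splitting) together ensure that $\Phi$ factors through $H^1_f(G_{K,T}, V) \otimes_R H^1_f(G_{K,T}, V)$.

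The key dimension estimate is
\[
\dim \im(\Psi) \;\leq\; \dim H^1_f(G_{\mathfrak{p}}, V) + \dim\!\bigl(H^1_f(G_{K,T}, V) \otimes_R H^1_f(G_{K,T}, V)\bigr) - \dim\!\bigl(H^1_f(G_{\mathfrak{p}}, V) \otimes_R H^1_f(G_{\mathfrak{p}}, V)\bigr).
\]
Indeed, $\pi_V \circ \Psi$ factors through $\loc_{\mathfrak{p}}$, so its image lies in $H^1_f(G_{\mathfrak{p}}, V)$; and for a generic $v_{\mathfrak{p}}$ in this image, choosing a lift $v_0$ and writing $v = v_0 + k$ with $\loc_{\mathfrak{p}} k = 0$, the fibre in $Q$ is a translate of the image under $\Phi$ of $\{v_0\otimes k + k\otimes v_0 + k\otimes k : k \in \ker \loc_{\mathfrak{p}}\}$, which lies in $\ker\bigl(H^1_f(G_{K,T}, V) \otimes_R H^1_f(G_{K,T}, V) \to H^1_f(G_{\mathfrak{p}}, V) \otimes_R H^1_f(G_{\mathfrak{p}}, V)\bigr)$, of dimension equal to the difference of the two tensor dimensions. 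Combining with the fibre contribution $\dim H^1_f(G_{\mathfrak{p}}, W) - m$ of the projection to $H^1_f(G_{\mathfrak{p}}, V) \times Q$ yields a total bound on $\dim \loc_{\mathfrak{p}}(\Sel(U)_{\alpha})$ that is strictly less than $\dim H^1_f(G_{\mathfrak{p}}, U)$ precisely under the stated hypothesis.

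The principal obstacle is verifying that the fibre-dimension bound is actually attained as a genuine codimension in $Q$; equivalently, that $\Phi$ is non-degenerate on the relevant kernel so that the "extra" global bilinear forms do produce independent constraints. This amounts to a Poitou--Tate duality check: such bilinear forms correspond to characters $\chi \in H^1_s(G_{K,T}, W^*(1))$ which are crystalline at all primes above $p$ other than $\mathfrak{p}$, and these are precisely the characters that enter the definition of the global height and so drive the equations cutting out the image in $Q$.
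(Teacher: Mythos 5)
Your proposal is essentially the paper's own argument: reduce to non-dominance of $\loc_{\mathfrak{p}}$ on each $\Sel(U)_{\alpha}$ and invoke Kim, use the $R$-equivariant generalised height together with the hyperelliptic subspace (Lemmas \ref{itsonto}, \ref{hyperheight}, \ref{equivtheight}) so that the global height constraints factor through $H^1_f(G_{K,T},V)\otimes_R H^1_f(G_{K,T},V)$, and then count dimensions against $m$ and the local $R$-tensor square; your packaging via the quotient $Q$ and the quadratic map $\Psi$ is a fibre-by-fibre version of the paper's two vanishing maps on $H^1_f(G_{\mathfrak{p}},U)\times H^1_f(G_{K,T},V)\otimes_R H^1_f(G_{K,T},V)$ followed by projection. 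The two points you leave implicit --- the Poitou--Tate exactness needed to get the full $m$ conditions in $Q$, and your assertion that the kernel has dimension \emph{equal to} (rather than at least) the difference of the two tensor dimensions, which amounts to dominance of $\loc_{\mathfrak{p}}\otimes\loc_{\mathfrak{p}}$ on $R$-tensor squares --- are precisely the independence statements the paper's proof also asserts without verification, so this is the same argument at the same level of rigour.
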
\begin{Remark}
Note that, given \cite[Lemma 3.2]{balakrishnan:2016}, this result is only new when 
\begin{align*}
& \dim (H^1 _f (G_{K,T} ,V)\otimes _R H^1 _f (G_{K,T} ,V))-\dim (H^1 _f (G_{\mathfrak{p}} ,V)\otimes _R H^1 _f (G_{\mathfrak{p}} ,V)) \\
& \quad <  \dim H^1 _f (G_{K,T} ,V)-\dim H^1 _f (G_{\mathfrak{p}} ,V).
\end{align*}
This can only happen when there are simple abelian varieties which occur as isogeny factors of $J$ with multiplicity greater than 1 (see the example below).
\end{Remark}
\begin{proof}
By \cite[Theorem 1]{kim:2009}, it is enough to prove that the localisation map 
$$
\Sel (U)\to H^1 _f (G_{\mathfrak{p}} ,U)
$$
is not dominant. Writing $\Sel (U)$ as a disjoint union of $\Sel (U)_{\alpha }$, for $\alpha $ a collection of local conditions, we reduce to proving that, for all $\alpha $, the localisation map
\[
\Sel (U)_{\alpha }\to H^1 _f (G_{\mathfrak{p}},U)
\]
is not dominant.
Let $$r=\dim (H^1 _f (G_{\mathfrak{p}} ,W)/\loc _{\mathfrak{p}} H^1 _f (G_{K,T} ,W))+\dim (H^1 _f (G_{\mathfrak{p}} ,V)\otimes _R H^1 _f (G_{\mathfrak{p}} ,V)).$$
We show that the codimension of 
$$
(\loc _{\mathfrak{p}} ,\pi _{1*}\otimes \pi _{2*}) :\Sel (U)_{\alpha }\to H^1 _f (G_p ,U)\times H^1 _f (G_{K,T} ,V)\otimes _R H^1 _f (G_{K,T} ,V)
$$
is greater than $r$, which proves the non-dominance of the localisation map by projecting.  We first choose a (vector space) section $t$ of the map
$$
\mathbb{Q}_p [M_{f,T_0 } (G_{K,T} ;\mathbb{Q}_p ,V,W)^h ]\to H^1 _f (G_{K,T} ,V)\otimes _R H^1 _f (G_{K,T} ,V).
$$
Define a map 
$$
H^1 _f (G_{\mathfrak{p}} ,U)\times H^1 _f (G_{K,T} ,V)\otimes _R H^1 _f (G_{K,T} ,V)\to H^1 _s (G_{K,T},W^* (1))
$$
by sending $(c,d)$ to $\widetilde{h}_{\mathfrak{p}} (c)+\sum _{v\in T_0 }\widetilde{h}_v (\alpha _v )-\widetilde{h}(t(d))$.
Then by equation \eqref{eqn:multilin}, the composite map 
$$
\Sel (U)_{\alpha }\to H^1 _f (G_{\mathfrak{p}} ,U)\times H^1 _f (G_{K,T} ,V)\otimes _R H^1 _f (G_{K,T} ,V)\to H^1 _s (G_{K,T},W^* (1))
$$
is identically zero. Similarly the composite map
\begin{align*}
\Sel (U)_{\alpha } & \to H^1 _f (G_{\mathfrak{p}} ,U)\times H^1 _f (G_{K,T} ,V)\otimes _R H^1 _f (G_{K,T} ,V) \\ 
& \stackrel{\pi _* \otimes \pi _* -\loc _{\mathfrak{p}} \otimes \loc _{\mathfrak{p}} }{\longrightarrow } H^1 _f (G_{\mathfrak{p}} ,V)\otimes _R H^1 _f (G_{\mathfrak{p}} ,V)
\end{align*}
is identically zero.
\end{proof}
\begin{Lemma} We have the following:
\begin{enumerate}
\item There is an $R$-equivariant pre-height.
\item The set of $R$-equivariant pre-heights is a $\Hom _{R\otimes \mathbb{Q}_p }(V/F^0 ,F^0 V)$-torsor.
\end{enumerate}
\end{Lemma}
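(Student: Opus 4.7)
The plan is to reduce both parts of the lemma to the fact that $R \otimes \mathbb{Q}_p$ is a semisimple $\mathbb{Q}_p$-algebra, combined with the $R$-stability of the Hodge filtration on $V_{\dr}$.

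For part (1), I will first note that $R = \End^0_K(J)$ is a semisimple $\mathbb{Q}$-algebra by Poincar\'e complete reducibility (together with Albert's classification of endomorphism algebras of simple factors), and hence $R \otimes \mathbb{Q}_p$ is a semisimple $\mathbb{Q}_p$-algebra. Since any endomorphism of $J$ preserves the Hodge filtration on $H^1_{\dr}(X/K_{\mathfrak{p}}) \simeq V_{\dr}$, the short exact sequence
$$0 \to F^0 V_{\dr} \to V_{\dr} \to V_{\dr}/F^0 \to 0$$
is one of $R \otimes \mathbb{Q}_p$-modules, and by semisimplicity it splits in this category. I will choose any such $R$-equivariant splitting $s \colon V_{\dr} \to F^0 V_{\dr}$ and invoke Lemma \ref{equivtheight}(3): since $s$ commutes with every $\gamma \in R^{\times}$, the induced local pre-height at $\mathfrak{p}$ is $R$-equivariant. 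The induced sections on $W_{\dr}$ and $V_{\dr}^{*} \otimes W_{\dr}$, which are built functorially from $s$ and the $R$-equivariant surjection $V_{\dr} \otimes V_{\dr} \to W_{\dr}$, inherit $R$-equivariance, so the displayed identity in the remark after Lemma \ref{equivtheight} is immediate. Combined with the fact that pre-heights at $v \in T_0$ are automatically $R$-equivariant by Lemma \ref{equivtheight}(1), this produces an $R$-equivariant global pre-height.

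For part (2), I will use that the set of all splittings of the Hodge filtration is a torsor under $\Hom(V_{\dr}/F^0, F^0 V_{\dr})$ in the standard way, since any two splittings differ by a homomorphism that vanishes on $F^0 V_{\dr}$ and hence factors through $V_{\dr}/F^0$. The difference of two $R$-equivariant splittings is itself $R$-equivariant, so the $R$-equivariant splittings form a sub-torsor under $\Hom_{R \otimes \mathbb{Q}_p}(V_{\dr}/F^0, F^0 V_{\dr})$. The explicit formula of Lemma \ref{explicit_formula} shows the pre-height depends injectively on the splitting via the projection $s_1 = \id - \iota \circ s$, so this torsor structure transfers to the set of $R$-equivariant pre-heights.

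The main (mild) obstacle is the input from the structure theory of endomorphism algebras needed to conclude that $R \otimes \mathbb{Q}_p$ is semisimple; once this is in hand, all remaining steps are formal, involving nothing more than unwinding the functoriality of the constructions in Section \ref{localheightdefn}.
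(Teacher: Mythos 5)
Your proof is correct and follows essentially the same route as the paper: by Lemma \ref{equivtheight} one reduces $R$-equivariant pre-heights to $R$-equivariant splittings of the Hodge filtration, notes that $F^0 V_{\dr}$ is an $R\otimes\mathbb{Q}_p$-submodule by functoriality, and concludes by semisimplicity of $R\otimes\mathbb{Q}_p$. Your extra details (Poincar\'e reducibility for semisimplicity of $R$, and the explicit torsor argument for part (2)) only flesh out what the paper leaves implicit.
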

\begin{proof}
By Lemma \ref{equivtheight}, $R$-equivariant pre-heights correspond to $R$-equivariant splittings of the Hodge filtration. By functoriality, $F^0 V$ is an $R\otimes \mathbb{Q}_p $-submodule of $V$. Since $R$ is semisimple, we deduce the existence of an $R$-equivariant splitting.  
\end{proof}

We now consider the setup of Theorem \ref{thm1}: $K=\mathbb{Q}$ or an imaginary quadratic field, the curve $X/K$ is hyperelliptic, with Jacobian $J$ isogenous to $A^d \times B$. Hence $R=\Mat _d (\mathbb{Q})$ is naturally a (non-unital) subalgebra of $\End ^0 (J)$. Let $V_A =T_p (A)\otimes \mathbb{Q}_p $ and $V_B =T_p (B)\otimes \mathbb{Q}_p $. Then $V\simeq V_A ^{\oplus d}\oplus V_B $. To apply Proposition \ref{maintheorem}, note that 
\[ H^1 _f (G_{K,T} ,V_A )^{\oplus d}\otimes _R H^1 _f (G_{K,T} ,V_A )^{\oplus d} \simeq H^1 _f (G_{K,T} ,V_A )\otimes _{\Q _p }H^1 _f (G_{K,T} ,V_A).
\]
We are now ready to give the proof of Theorem \ref{thm1}.
\begin{proof}[Proof of Theorem \ref{thm1}]
Let $\overline{\wedge ^2 (V_A ^{\oplus d} ) }$ be the quotient of $\wedge ^2 (V_A ^d ) $ by the image of $$\ker (\wedge ^2 V\to \overline{\wedge ^2 V})$$ under the projection from 
$\wedge ^2 V$ to $\wedge ^2 (V_A ^d ) $. 
First we prove that there is a quotient $W$ of $\overline{\wedge ^2 V}$ such that the quotient map factors through $\overline{\wedge ^2 (V_A ^{\oplus d} ) }$ and such that 
$$
\codim (\loc _{\mathfrak{p}} :H^1 _f (G_{K,T} ,W)\to H^1 _f (G_{\mathfrak{p}} ,W))=\rho _f (A)d+d(d-1)e(A)/2-1.
$$
We have $\wedge ^2 (V_A ^d )\simeq (\wedge ^2 V_A )^d \oplus (V_A ^{\otimes 2})^{d(d-1)/2}$.
Since $\NS (A_{\overline{K}})\otimes \mathbb{Q}_p (1)$ is a direct summand of $\wedge ^2 V_A$ and $\End ^0 (A_{\overline{K}})\otimes \mathbb{Q}_p (1)$ is a direct summand of $V_A ^{\otimes 2}$, we have a Galois-equivariant surjection
$$
\wedge ^2 V\to (\NS (A_{\overline{K}})\otimes \mathbb{Q}_p (1) ))^d \oplus (\End ^0 (A_{\overline{K}}\otimes \mathbb{Q}_p (1)))^{d(d-1)/2}.
$$
First suppose $K=\mathbb{Q}$. We take $W$ to be the quotient of $\overline{\wedge ^2 V}$ corresponding to  $(\NS (A_{\overline{\mathbb{Q}}})\otimes \mathbb{Q}_p (1) ))^d$ $\oplus (\End ^0 (A_{\overline{\mathbb{Q}}}\otimes \mathbb{Q}_p (1)))^{d(d-1)/2}$.
 Then it is enough to prove 
$$
\dim H^1 _f (G_{\mathfrak{p}},\NS (A_{\overline{\mathbb{Q}}})\otimes \mathbb{Q}_p (1)) -\dim H^1 _f (G_{\mathbb{Q},T} ,\NS (A_{\overline{\mathbb{Q}}})\otimes \mathbb{Q}_p (1))\geq \rho _f (A)
$$
and 
$$
\dim H^1 _f (G_{\mathfrak{p}},\End ^0 (A_{\overline{\mathbb{Q}}})\otimes \mathbb{Q}_p (1)) -\dim H^1 _f (G_{\mathbb{Q},T} ,\End ^0 (A_{\overline{\mathbb{Q}}})\otimes \mathbb{Q}_p (1))\geq e (A),
$$
which follows from Lemma \ref{artin-tate-bound}. If $K$ is imaginary quadratic, we have a surjection
$$
\wedge ^2 V\to (\NS (A_K )\otimes \mathbb{Q}_p (1) ))^d \oplus (\End ^0 (A_{K}\otimes \mathbb{Q}_p (1)))^{d(d-1)/2},
$$
and we take $W$ to be the corresponding quotient of $\overline{\wedge ^2 V}$.
The result now follows from the fact that $H^1 (G_{K,T},\mathbb{Q}_p (1))=0$.

We are now ready to complete the proof of Theorem \ref{thm1}. First suppose $\rho _f (A)d+d(d-1)e(A)/2-1 >d(r-\dim (A))$. By Lemma \ref{hyperellipticsplitting}, we have a Galois-stable quotient of $U_2$ which is an extension
$$
1\to W\to U \to V_A ^d \to 1
$$
and we have 
$$
\dim H^1 _f (G_{\mathfrak{p}},U) -\dim H^1 _f (G_{K,T} ,U) \geq \rho _f (A)d+d(d-1)e(A)/2-1 -d(r-\dim (A)).
$$
Finally, if $\rho _f (A)d+d(d-1)e(A)/2-1 >r^2 -\dim (A)^2$, then we use Proposition \ref{maintheorem}.
We take $R$, as above, to be $\Mat _d (\mathbb{Q} )$, acting trivially on $B$ and in the obvious way on $A^d $. Then 
$$
\rk (J(K)\otimes \mathbb{Q})\otimes _R (J(K)\otimes \mathbb{Q}) =r^2 \quad\textrm{and}
$$
$$
\dim H^1 _f (G_{\mathfrak{p}},V)\otimes _{R\otimes \mathbb{Q}_p }\dim H^1 _f (G_{\mathfrak{p}},V)=(\dim A)^2.
$$
\end{proof}
\subsection{An example}\label{example_shaska}
Given the restrictive hypotheses of  Theorem \ref{thm1}, it is perhaps worth demonstrating the existence of a hyperelliptic curve satisfying them which does not satisfy the Chabauty--Coleman bound. We use work of Paulhus \cite[Table 2]{paulhus:2013} and Shaska \cite[$\S 4$]{shaska:2004} on a family of hyperelliptic curves $X_t $ defined over $K=\mathbb{Q}(i)$ with Jacobian isogenous to $E_t ^3 \times A_t $. Let $X_t$ denote the genus 5 curve 
$$
y^2 = x^{12} - tx^{10} - 33x^8 + 2tx^6 - 33x^4 - tx^2 + 1.
$$
For all but a finite number of $t$, we have a subgroup of $\Aut X_t $ isomorphic to $A_4 $, generated by the automorphisms of order 2 and 3, respectively:
$$ \tau :(x,y)\mapsto (-x,y), \qquad \sigma :(x,y) \mapsto \left(  \frac{x-i}{x+i} ,\frac{y}{(x+i)^6}\right).
$$
Together with the hyperelliptic automorphism, this means that all but a finite number of curves in the family has $\mathbb{Z}/2\mathbb{Z}\times A_4 $ as a subgroup of its automorphism group.
The normalisation of the quotient of $X$ by $\sigma $ is the genus 1 curve 
$$
C: y^2 =x^4+ (-t + 12i)x^2 + ((2i - 2)t + 20i + 20)x + 2it + 21,
$$
which has Jacobian 
$$
E_t : y^2 =x^3-\frac{1}{4}(3t^2 - 70it - 411)x-\frac{1}{4}(t^3 - 30it^2 - 317t + 1180i).
$$
Fix a prime $p$ with $(p)= \mathfrak{p}\overline{\mathfrak{p}}$ in $K$ and such that $X_t$ has good reduction at $\mathfrak{p}$ and $\overline{\mathfrak{p}}$.
\begin{Corollary}
For all $t$ such that $\rk _{K}E_t \leq 2$, and $\mathfrak{p}$ as above,
$
X(K_{\mathfrak{p}})_2 
$ is finite.
\end{Corollary}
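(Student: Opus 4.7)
The plan is to apply Theorem \ref{thm1} directly, with data $(A,d,B) = (E_t, 3, A_t)$ over $K = \mathbb{Q}(i)$. The required isogeny hypothesis $J(X_t) \sim E_t^3 \times A_t$ is precisely the content of the Paulhus--Shaska analysis of this family, so no additional geometric input is needed beyond what is already cited.

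Next I would identify the invariants appearing in Theorem \ref{thm1} under the imaginary quadratic convention. For the elliptic curve $A = E_t$ one has $\dim A = 1$, $\rho_f(A) = \rk \NS(E_t) = 1$, and $e(A) = \rk \End^0(E_t) \geq 1$, with equality unless $E_t$ has CM by an order in $\mathbb{Q}(i)$ (in which case $e(A)=2$, only strengthening the inequality). Writing $r := \rk_K E_t \leq 2$, the left-hand side of the inequality in Theorem \ref{thm1} satisfies
$$
\rho_f(A)\,d + \frac{d(d-1)}{2}\,e(A) - 1 \;\geq\; 1\cdot 3 + 3\cdot 1 - 1 = 5,
$$
while the right-hand side is
$$
\min\{\,d(r-\dim A),\ r^2 - (\dim A)^2\,\} \;=\; \min\{3(r-1),\ r^2 - 1\} \;\leq\; 3,
$$
with the maximum attained at $r=2$. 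Hence $5 > 3$ and the hypothesis of Theorem \ref{thm1} is met for every $r \in \{0,1,2\}$, with room to spare.

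Finally I would record the running assumptions on $\mathfrak{p}$: the splitting $p\mathcal{O}_K = \mathfrak{p}\overline{\mathfrak{p}}$ is exactly the condition that $p$ splits completely in $K = \mathbb{Q}(i)$, and good reduction at $\mathfrak{p}$ and $\overline{\mathfrak{p}}$ is stipulated. I do not anticipate any real obstacle here: once Theorem \ref{thm1} and the isogeny decomposition are granted, the corollary reduces to the short numerical check above. The only mildly subtle point is that one must keep track of the correct imaginary quadratic definitions of $\rho_f$ and $e$, so as not to mistakenly include the complex-conjugation contributions that appear in the $K = \mathbb{Q}$ case; here they are absent, but the bound $5>3$ is already enough.
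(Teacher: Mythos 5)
Your proposal is correct and is in substance the same argument as the paper's: the paper proves the corollary by feeding the explicit Artin--Tate quotient $W\cong \Q_p(1)^{\oplus 4}$ of $\overline{\wedge^2 V}$ (cut out of $\wedge^2(V_E^{\oplus 3})$, using $H^1_f(G_{K,T},\Q_p(1))=0$ over the imaginary quadratic field) into Proposition \ref{maintheorem}, which is exactly the engine behind Theorem \ref{thm1}, so your route of invoking Theorem \ref{thm1} with $(A,d,B)=(E_t,3,A_t)$ and checking $\rho_f(A)d+\tfrac{d(d-1)}{2}e(A)-1\geq 5>3\geq\min\{3(r-1),r^2-1\}$ is the same proof packaged one level up. Your handling of the imaginary quadratic conventions for $\rho_f$ and $e$, and of the standing assumptions on $\mathfrak{p}$, is accurate.
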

\begin{proof}
Let $V_E :=T_p (E_t )\otimes \mathbb{Q}_p $, $V_A :=T_p (A)\otimes \mathbb{Q}_p $.
We have an isomorphism
$$
\wedge ^2 V \simeq \wedge ^2 (V_E ^{\oplus 3})\oplus V_E \otimes V_A \oplus \wedge ^2 V_A .
$$
Let $\overline{\wedge ^2 V}\to \mathbb{Q}_p (1)^{\oplus 4}$ be the composite
$
\overline{\wedge ^2 V}\to (\wedge ^2 (V_E ^{\oplus 3}))/\mathbb{Q}_p (1)\to \mathbb{Q}_p (1)^{\oplus 4}.
$
Let $U$ be the corresponding quotient of $U_2 $. The result follows from Proposition \ref{maintheorem}.
\end{proof}
\begin{Remark}
Note that the dimension of the Selmer variety equals that of $H^1 _f (G_{\mathfrak{p}} ,U)$, so the multiplicities of isogeny factors are really used in an essential way.
\end{Remark}
An explicit example of a value of $t$ for which $E_t$ has rank 2 is $t=1$:
the elliptic curve $y^2 = x^3 + (35/2i+102)x + (-575/2i+79)$ has two independent points $P_1 = (4i - 3, 14i + 4), P_2 = (-12i + 1, 11i + 9)$. Using Sage \cite{sage:2017}, we verified linear independence (and a lower bound of 2 for the rank) by computing that the associated regulator of height pairings is approximately 6.501, and in particular, is nonzero.
An upper bound of 2 on the rank was found by using Magma \cite{magma:1997} to compute the rank of the 2-Selmer group to be 2.

\subsection{The Kulesz--Matera--Schost family}\label{subsec:KMS}
Here we return to the family of genus 2 curves mentioned in the introduction. We show that for this family, one can use equivariant heights to prove stronger finiteness results than the ones above.
Recall that $X$ is a hyperelliptic  curve of the form $y^2 =x^6 +ax^4 +ax^2 +1$, and let $E$ be the elliptic curve $y^2 =x^3 +ax^2 +ax+1$. We assume $E$ has rank 2. Define $V_E $ to be $H^1 (E_{\overline{\mathbb{Q}}},\mathbb{Q}_p )^* $. The morphisms $f_1 ,f_2 $ from $X$ to $E$ induce an isomorphism
and hence an isomorphism $V\simeq V_E \oplus V_E $, which induces a Galois-stable quotient $U$ of $U_2 $ with $W$ taken to be $\Sym ^2 V_E$, via the map 
\begin{equation}\label{eqn:explicit_wedge}
\wedge ^2 V \to \Sym ^2 V_E ;
(v_1 ,v_2 )\wedge (v_3 ,v_4 ) \mapsto v_1 v_4 -v_2 v_3.
\end{equation}
The aim of this subsection is to prove the following lemma:
\begin{Lemma}\label{lemma20}
The localisation map $\loc _{\mathfrak{p}} :\Sel (U)\to H^1 _f (G_{\mathfrak{p}},U)$ is not dense.
\end{Lemma}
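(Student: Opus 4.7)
The plan is to exploit $R = \Mat_2(\Q_p)$-equivariance of generalised heights, combined with the hyperelliptic splitting principle, to extract nontrivial obstructions beyond what a naïve dimension count yields. Under the hypotheses, modularity of $E$ (Freitas--Le Hung--Siksek) plus the modularity lifting of Allen forces $H^1_f(G_{K,T}, \Sym^2 V_E) = 0$, so $\dim \Sel(U) \leq \rk_K J + 0 = 4$. On the local side, $W = \Sym^2 V_E$ has Hodge types $(-2,0),(-1,-1),(0,-2)$ each of dimension $1$, giving $\dim D_{\dr}(W)/F^0 = 2$ and hence $\dim H^1_f(G_\mathfrak{p}, U) = g + 2 = 4$. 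Since the dimensions of source and target coincide, and Proposition \ref{maintheorem} reads $2 - 4 + 1 < 0$, a refined argument is required.

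Since $R$ is semisimple, first choose an $R$-equivariant splitting $s$ of the Hodge filtration on $V_{\dr} = V_{E,\dr}^{\oplus 2}$, producing an $R$-equivariant pre-height by Lemma \ref{equivtheight}. By the hyperelliptic splitting principle (Lemma \ref{hyperellipticsplitting}), the twisting map $\Sel(U) \to M_{f,T_0}(G_{K,T};\Q_p,V,W)$ lands in the hyperelliptic subspace. Equivariant bilinearity of the height then shows that, restricted to this image, $h_\chi$ factors through $H^1_f(G_{K,T},V) \otimes_R H^1_f(G_{K,T},V)$; Morita equivalence identifies this with $H^1_f(G_{K,T}, V_E)^{\otimes 2}$ of dimension $4$, and under this identification $\pi_*P \otimes_R \pi_*P$ becomes the symmetric tensor $c_1 \otimes c_1 + c_2 \otimes c_2$ for $\pi_*P = (c_1,c_2)$.

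Applying the multilinearity identity \eqref{eqn:multilin} then yields, for each $\chi \in H^1_s(G_{K,T}, W^*(1))$ non-crystalline at $\mathfrak{p}$ and each $P \in \Sel(U)_\alpha$, a relation
\[
h_{\mathfrak{p},\chi}(\loc_\mathfrak{p} P) + \sum_{v \in T_0}h_{v,\chi}(\alpha_v) = Q_\chi(c_1, c_1) + Q_\chi(c_2, c_2)
\]
for a symmetric bilinear form $Q_\chi$ on $H^1_f(G_{K,T}, V_E)$. Imitating the elimination procedure of Lemma \ref{lemma13}, I remove the $c_i$ (each of which has $1$-dimensional fibre over its image in $H^1_f(G_\mathfrak{p}, V_E)$) by taking resultants; the resulting ideal in $\mathcal{O}(H^1_f(G_\mathfrak{p}, U))$, once shown to be non-zero, cuts out a proper subvariety containing $\loc_\mathfrak{p}(\Sel(U)_\alpha)$. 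Taking a union over the finite set of local-condition tuples $\alpha$ yields the lemma.

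The main obstacle is verifying that this resultant ideal is actually non-zero, equivalently that the family $\{Q_\chi\}$ does not entirely factor through the double localisation $(\loc_\mathfrak{p})^{\otimes 2}$. The two key inputs are: (i) by Poitou--Tate and $H^1_f(G_{K,T}, W) = 0$, the space of height-characters $H^1_s(G_{K,T}, W^*(1))$ non-crystalline at $\mathfrak{p}$ has dimension exactly $2$, enough to probe the whole $2$-dimensional obstruction $H^1_f(G_\mathfrak{p}, W)/\loc_\mathfrak{p} H^1_f(G_{K,T}, W)$; and (ii) the hypothesis that $f_1(z_0) \wedge f_2(z_0)$ is of infinite order in $\wedge^2 E(K)$ produces a concrete point whose image in $\Sym^2 H^1_f(G_{K,T}, V_E)$ certifies that at least one $Q_\chi$ detects directions outside the kernel of localisation.
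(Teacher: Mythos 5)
Your overall strategy (use the $R=\Mat_2(\Q_p)$-equivariant height, the hyperelliptic splitting, and the vanishing of $H^1_f(G_{K,T},\Sym^2 V_E)$ to beat the failed dimension count) is the right one, and your dimension bookkeeping ($\dim\Sel(U)\leq 4=\dim H^1_f(G_{\mathfrak{p}},U)$, two independent characters $\chi$) is correct. The gap is in the key structural claim that under Morita equivalence $\pi_*P\otimes_R\pi_*P$ becomes the \emph{symmetric} tensor $c_1\otimes c_1+c_2\otimes c_2$. The identification of $H^1_f(G_{K,T},V)\otimes_R H^1_f(G_{K,T},V)$ with $H^1_f(G_{K,T},V_E)^{\otimes 2}$ is mediated by $\tau$, which is built from the \emph{antisymmetric} pairing $(v_1,v_2)\wedge(v_3,v_4)\mapsto v_1v_4-v_2v_3$ of \eqref{eqn:explicit_wedge}; the class of $A(b)^{(P)}$ is $\pi_*P\otimes([IA(b)]+\tau_*\pi_*P)$, and the $\tau_*\pi_*P$ part contributes the alternating tensor $c_1\otimes c_2-c_2\otimes c_1$, not $c_1\otimes c_1+c_2\otimes c_2$ (this is why $\Gamma(P(b,z))=2\kappa_E(f_1(z))\wedge\kappa_E(f_2(z))$). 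This is not cosmetic: with a genuinely symmetric quadratic form $Q_\chi(c_1,c_1)+Q_\chi(c_2,c_2)$, your elimination step fails, because you are imposing $2$ linear conditions ($\loc_{\mathfrak{p}}c_i=\bar c_i$) and $2$ quadratic conditions on the $4$-dimensional space of $(c_1,c_2)\in (E(K)\otimes\Q_p)^2$, and such a system is solvable for generic target data; the resultant ideal you hope is nonzero would in fact be zero. Neither of your two "key inputs" repairs this: (i) only guarantees enough characters, and (ii) the hypothesis on $f_1(z_0)\wedge f_2(z_0)$ plays no role in non-density (it is only needed later to write down an explicit equation, as in part (2) of Lemma \ref{lemma22}).

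The point you are missing is that the quadratic form is \emph{degenerate in a specific way}: after subtracting the explicit auxiliary mixed extension $\tfrac{1}{2}[E_1,E_2]$ with $[E_1]=f_{2*}\pi_*P$ and $[E_2]=-f_{1*}\kappa(2b-D)$ (which cancels the symmetric/linear contribution coming from the base point, cf.\ Lemma \ref{lemma21}), the corrected height $\widetilde h'$ factors through $\wedge^2 H^1_f(G_{K,T},V_E)$, which is \emph{one}-dimensional since $E(K)\otimes\Q_p$ is two-dimensional and $H^1_f(G_{K,T},\Sym^2V_E)=0$. Hence the image of $\Sel(U)_\alpha$ in the two-dimensional $H^1_f(G_{\mathfrak{p}},\Sym^2V_E)$ under $\widetilde h'$ lies on a line, and since $\widetilde h_{\mathfrak{p}}:H^1_f(G_{\mathfrak{p}},U)\to H^1_f(G_{\mathfrak{p}},W)$ is surjective (Lemma \ref{itsonto}) this cuts out a proper closed subvariety of $H^1_f(G_{\mathfrak{p}},U)$ containing $\loc_{\mathfrak{p}}\Sel(U)_\alpha$. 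Your proof needs to be rewritten around this alternating factorisation; as it stands the argument does not establish that the image is contained in a proper subvariety.
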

In fact we will prove an explicit form of this.
The deep result underlying this non-density is the fact that $H^1 _f (G_{K,T} ,\Sym ^2 V_E )=0$. In the case when $K=\mathbb{Q}$, $p\geq 5$, and the map \[
\rho _{E[p]}:\Gal (\overline{\mathbb{Q}}|\mathbb{Q})\to \Aut (E[p])
\] 
is surjective, this is due to Flach \cite{flach:1992}. In general, the only known proof is via a Galois deformation argument, following Taylor--Wiles and Kisin. Namely, again using Fontaine--Perrin-Riou's Euler characteristic formula, we know that 
\[
\dim H^1 _f (G_{K,T},\Sym ^2 V_E )=\dim H^1 _f (G_{K,T},\ad ^0 V_E ).
\] 
Under the assumptions above, it is known that $H^1 _g (G_{K,T},\ad ^0 V_E )=0$ (see Allen \cite[Theorem A]{allen:2016} for a more general result).

Let $R:= \Mat _2 (\Q _p )$. Then $V$ has the structure of an $R$-module via the isomorphism $V\simeq V_E \oplus V_E $.
Let $\Gamma :\mathbb{Q}_p [H^1 _{f,T_0 } (G_{K,T} ,U)]\to \wedge ^2 H^1 _f (G_{K,T} ,V_E )$
be the composite map
\begin{align*}
& \mathbb{Q}_p [H^1 _{f,T_0 } (G_{K,T} ,U) ] \to H^1 _f (G_{K,T} ,V)\otimes _R H^1 _f (G_{K,T} ,V) \\
\stackrel{\simeq }{\longrightarrow } & H^1 _f (G_{K,T} ,V_E )\otimes _{\mathbb{Q}_p }H^1 _f (G_{K,T} ,V_E )\to \wedge ^2 H^1 _f (G_{K,T} ,V_E ),
\end{align*}
where the first map sends $P$ to the $H^1 _f (G_{K,T} ,V)\otimes _R H^1 _f (G_{K,T} ,V)$ class of $A(b)^{(P)}$, the second is the isomorphism
\begin{equation}\label{matrixmovearound}
H^1 _f (G_{K,T} ,V_E ^{\oplus 2})\otimes _{\Mat _2 (\Q _p )}H^1 _f (G_{K,T} ,V_E ^{\oplus 2}) \stackrel{\simeq }{\longrightarrow }H^1 _f (G_{K,T} ,V_E)\otimes _{\Q _p }H^1 _f (G_{K,T} ,V_E ),
\end{equation}
and the third is the usual projection of the tensor square onto the alternating product.
By Lemma \ref{hyperellipticsplitting}, and equation \ref{eqn:explicit_wedge} when $P=P(b,z)$, $\Gamma (P)$ is given by 
\[
f_{1*}\kappa (z-b)\wedge f_{2*}\kappa (z+b-D)-f_{2*}\kappa (z-b)f_{1*}(z+b-D), 
\]
which is equal to $2\kappa _E (f_1 (z)-O)\wedge \kappa _E (f_2 (z))$.
\begin{Definition}
Given $[E_1 ],[E_2 ]$ in $H^1 _f (G_{K,T} ,V)$, define $[E_1 ,E_2 ]\in M_{f,T_0 } (G_{K,T} ;\mathbb{Q}_p ,V,W)$ to be the quotient of $E_1 \otimes E_2 $ by $\wedge ^2 V\subset V\otimes V\subset E_1 \otimes E_2 $, viewed as a mixed extension with graded pieces $\mathbb{Q}_p ,V$ and $W$ via the isomorphism $V\simeq V_E ^{\oplus 2}$.
\end{Definition}
\begin{Lemma}\label{lemma21}Let $\mathfrak{p}$ be a prime above $p$.
\begin{enumerate}
\item 
The mixed extension $[E_1 ,E_2 ]$ lies in the hyperelliptic subspace, and its  image  in $H^1 _f (G_{K,T} ,V_E )^{\otimes 2}$ is given by $[E_1 ]\otimes [E_2 ]+[E_2 ]\otimes [E_1 ]$.
\item 
Let $[E_1 ]=f_{2*}\pi _* P$ and $[E_2 ]=-f_{1*}\kappa (2b-D)$. 
Let $h$ be an $R$-equivariant height. Let $\alpha \in \prod _{v\in T_0 }H^1 (G_v ,U)$ be a collection of local conditions.
Then the map
\begin{align*}
\widetilde{h}': H^1 _f (G_{K,T} ,U) & \to H^1 _s (G_{K,T},\Sym ^2 (V)(1)) \\
P &\mapsto \widetilde{h}_{\mathfrak{p}} (A(b)^{(P)})+\sum _{v\in T_0 }\widetilde{h}_v (\alpha _v )-\frac{1}{2}\widetilde{h} ([E_1 ,E_2 ])
\end{align*}
factors through $\wedge ^2 H^1 _f (G_{K,T} ,V_E )$.
\end{enumerate}
\end{Lemma}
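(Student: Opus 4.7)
The plan is to compute the image of $A(b)^{(P)}$ in $H^1_f(G_{K,T}, V_E)^{\otimes 2}$ via the $R$-equivariant identification \eqref{matrixmovearound}, isolate its symmetric and antisymmetric parts, and verify that $\tfrac{1}{2}\widetilde{h}([E_1, E_2])$ cancels exactly the symmetric contribution to $\widetilde{h}_\mathfrak{p}(A(b)^{(P)})$.

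For part (1), the sub-extension $\ker([E_1, E_2]\to \mathbb{Q}_p)$ is obtained from $V\otimes E_2 + E_1\otimes V$ by quotienting by $\wedge^2 V$, so its class in $\ext^1_f(V, W)$ is $\tau_*$ of the Baer sum $[E_1] + [E_2]$, using Definition \ref{defn:tau} together with \eqref{eqn:explicit_wedge}. Hence $[E_1, E_2]$ lies in the hyperelliptic subspace. The map to $H^1_f(V_E)^{\otimes 2}$ records the $R$-equivariant tensor $\pi_{1*}\otimes (\tau_*)^{-1}\pi_{2*}$; direct expansion of $E_1\otimes E_2$ under \eqref{matrixmovearound} (which imposes $e_i\otimes e_j \sim -e_j\otimes e_i$ on the $\mathbb{Q}_p^2$-coefficients, since \eqref{eqn:explicit_wedge} is $\det$-twisted $R$-equivariant) then produces the symmetric expression $[E_1]\otimes[E_2] + [E_2]\otimes[E_1]$.

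For part (2), by the previous Lemma the $R$-equivariant height $\widetilde{h}$ on the hyperelliptic subspace factors through $H^1_f(V_E)^{\otimes 2} = \Sym^2 H^1_f(V_E)\oplus \wedge^2 H^1_f(V_E)$. Using Lemma \ref{hyperellipticsplitting} (which gives $[IA(b)] = \tau_*\kappa(2b-D)$) together with \eqref{eqn:explicit_wedge}, the image of $A(b)^{(P)}$ in $H^1_f(V_E)\otimes H^1_f(V_E)$ expands as
\[
f_{1*}\pi_*P\otimes f_{2*}\bigl(\pi_*P + \kappa(2b-D)\bigr) - f_{2*}\pi_*P\otimes f_{1*}\bigl(\pi_*P + \kappa(2b-D)\bigr).
\]
The essential arithmetic input is that for $b = (0,1)$ on $X$ one has $f_2(b) = O$ on $E$, while the reciprocal symmetry of $x^3 + ax^2 + ax + 1$ together with $f_{1*}D, f_{2*}D$ being rational multiples of the sum of the 2-torsion of $E$ force $f_{1*}\kappa(D) = f_{2*}\kappa(D) = 0$; consequently $f_{2*}\kappa(2b - D) = 0$. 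After this cancellation the symmetric part of the image reduces to
\[
\tfrac{1}{2}\bigl([E_1]\otimes[E_2] + [E_2]\otimes[E_1]\bigr)
\]
with $E_1 = f_{2*}\pi_*P$ and $E_2 = -f_{1*}\kappa(2b-D)$, which by part (1) equals $\tfrac{1}{2}$ times the image of $[E_1, E_2]$. Therefore $P\mapsto \widetilde{h}_\mathfrak{p}(A(b)^{(P)}) - \tfrac{1}{2}\widetilde{h}([E_1, E_2])$ depends only on the antisymmetric part of the image of $A(b)^{(P)}$, which lies in $\wedge^2 H^1_f(G_{K,T}, V_E)$; the constant $\sum_{v\in T_0}\widetilde{h}_v(\alpha_v)$ does not affect the factorisation.

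The main obstacle is to carry out the tensor computation cleanly while correctly tracking the identification \eqref{matrixmovearound} and the $\det$-twisted $R$-equivariance of \eqref{eqn:explicit_wedge}, since the matching of the factor $\tfrac{1}{2}$ relies delicately on the vanishings $f_2(b) = O$ and $f_{i*}\kappa(D) = 0$ that are specific to the Kulesz--Matera--Schost basepoint $b = (0,1)$. Once these identifications are pinned down, the decomposition $H^1_f(V_E)^{\otimes 2} = \Sym^2 \oplus \wedge^2$ yields the factorisation of $\widetilde{h}'$ through $\wedge^2 H^1_f(G_{K,T}, V_E)$ immediately.
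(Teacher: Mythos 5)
Your argument is essentially the paper's own proof: part (1) is handled by identifying the image of $[E_1,E_2]$ under the isomorphism \eqref{matrixmovearound}, and part (2) by combining Lemma \ref{hyperellipticsplitting} with the vanishing $f_{2*}\kappa (2b-D)=0$ to see that the symmetric component of the image of $A(b)^{(P)}$ in $H^1_f(G_{K,T},V_E)^{\otimes 2}$ is exactly $\tfrac{1}{2}\bigl([E_1]\otimes [E_2]+[E_2]\otimes [E_1]\bigr)$; you even justify $f_{2*}\kappa (2b-D)=0$ (the Weierstrass points map to $2$-torsion, which dies in $E(K)\otimes \Q_p$, and $f_2(b)=O$), which the paper merely asserts. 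One caveat on your closing step: the local pre-height $\widetilde{h}_{\mathfrak{p}}$ on its own does \emph{not} depend only on the two edge classes of $A(b)^{(P)}$ (by Lemma \ref{itsonto} it detects the full $H^1_f(G_{\mathfrak{p}},W)$-component), so $\sum_{v\in T_0}\widetilde{h}_v(\alpha_v)$ is not an inert constant: it is what completes $\widetilde{h}_{\mathfrak{p}}(A(b)^{(P)})$ to the global height of $A(b)^{(P)}$ on classes with local data $\alpha$, and only this global sum is bilinear in the edge classes (Lemma \ref{pairing}, via Poitou--Tate) and $R$-equivariant, whence it factors through $H^1_f(G_{K,T},V_E)^{\otimes 2}$ and the symmetric-part cancellation gives the factorisation through $\wedge^2 H^1_f(G_{K,T},V_E)$. (Likewise, in part (1) the class $\pi_{2*}[E_1,E_2]$ is $\tau_*$ of an element of $H^1_f(G_{K,T},V)\simeq H^1_f(G_{K,T},V_E)^{\oplus 2}$ assembled from $[E_2]$ and $[E_1]$ in the two coordinates, not of a Baer sum formed inside $H^1_f(G_{K,T},V_E)$; this is a wording slip rather than a gap.)
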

\begin{proof}
For the first part, the image of $[E_1 ,E_2 ]$ in $H^1 (G_{K,T} ,V)\otimes H^1 (G_{K,T} ,V^* \otimes W)$ is equal to 
$([E_1 ],[E_2 ])\otimes ([E_2 ],[E_1 ])$, hence the claim follows from the explicit description of the isomorphism \ref{matrixmovearound}.
For the second part, note by Lemma \ref{hyperellipticsplitting}, $A(b)^{(P)}$ is a mixed extension of $\pi _* P$ and $\tau _* (\kappa (b-D)+\pi _* P)$. So under the decomposition 
$$
H^1 _f (G_{K,T} ,V_E )\otimes H^1 _f (G_{K,T} ,V_E )=\wedge ^2 H^1 _f (G_{K,T} ,V_E )\oplus \Sym ^2 H^1 _f (G_{K,T} ,V_E ), 
$$
the image of $A(b)^{(P)}$ in $\Sym ^2 H^1 _f (G_{K,T} ,V_E )$ is given by 
\begin{align*}
 &(f_{1*}\pi _* P)( f_{2*}(\kappa (2b-D)+\pi _* P )-(f_{2*}\pi _* P )( f_{1*}(\kappa (2b-D)+\pi _* P) \\
= &(f_{1*}\pi _* P)f_{2*}\kappa (2b-D)-(f_{2*}\pi _* P)f_{1*}(\kappa (2b-D) \\
= & -(f_{2*}\pi _* P)(f_{1*}\kappa (2b-D)),
\end{align*}
since $f_{2*}\kappa (2b-D)$ is zero. 
The image of $[E_1 ,E_2 ]$ in $\Sym ^2 H^1 _f (G_{K,T},V_E )$ is given by 
\[
-(f_{2* }\pi _* P )(f_{1*}\kappa (2b-D))-(f_{1*}\kappa (b-D))(2f_{2* }\pi _* P ).
\]
Hence the class of $A(b)^{(P)}-\frac{1}{2}[E_1, E_2 ]$ in $H^1 _f (G_{K,T},V_E )^{\otimes 2}$ lies in $\wedge ^2 H^1 _f (G_{K,T},V_E )$.
\end{proof}

We are now ready to prove an explicit form of the non-dominance result for the localisation map.

\begin{Lemma}\label{lemma22}Let $\alpha \in \prod _{v\in T_0 }H^1 (G_v ,\Sym ^2 _E )$ be a collection of local conditions.

\begin{enumerate}
\item Let $$t:\wedge ^2 H^1 _f (G_{K,T} ,V_E) \to \mathbb{Q}_p [H^1 _f (G_{K,T} ,U)]$$
be a section of $\Gamma $. Let $w $ be a generator of $\wedge ^2 H^1 _f (G_{K,T} ,V_E )$. Let $P_0 $ be any element of $\Sel (U)_{\alpha }$. Then $\loc _{\mathfrak{p}} \Sel (U)_{\alpha }$ is contained within the kernel of
\begin{align*}
 H^1 _f (G_{\mathfrak{p}} ,U) &\to \wedge ^2 H^1 _f (G_{\mathfrak{p}} ,\Sym ^2 V_E ) \\
P&\mapsto (\widetilde{h}_{\mathfrak{p}} ' (P)-\widetilde{h}_{\mathfrak{p}} ' (P_0 )) \wedge \widetilde{h}_{\mathfrak{p}} ' (t(w)).
\end{align*}
\item Let $b$ and $z_0 $ be points of $X(K)$ satisfying $\Gamma (A(b,z_0 ))\neq 0$. Then $X(K_{\mathfrak{p}})_U$ is in the kernel of 
\begin{align*}
X(K_{\mathfrak{p}} ) & \to \wedge ^2 (W_{\dr }/F^0 ) \\
z & \mapsto \widetilde{h}_{\mathfrak{p}} ' (A(b,z))\wedge \widetilde{h}_{\mathfrak{p}} ' (A(b,z_0 )).
\end{align*}
\end{enumerate}
\end{Lemma}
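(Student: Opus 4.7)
Plan. The strategy is to exploit the one-dimensionality of $\wedge^2 H^1_f(G_{K,T}, V_E)$, which follows from the hypothesis $\rk_K E = 2$, together with the factorization of $\widetilde{h}'$ through this space established in Lemma \ref{lemma21}(2).

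For (1), I would argue as follows. Write $\wedge^2 H^1_f(G_{K,T}, V_E) = \mathbb{Q}_p \cdot w$, and for each $P \in \Sel(U)_{\alpha}$ let $\lambda(P) \in \mathbb{Q}_p$ be the unique scalar with $\Gamma(A(b)^{(P)}) = \lambda(P)\, w$. The factorization in Lemma \ref{lemma21}(2) produces a linear map $g \colon \wedge^2 H^1_f(G_{K,T}, V_E) \to H^1_s(G_{K,T}, \Sym^2 V_E(1))$ with $\widetilde{h}'(P) = g(\Gamma(A(b)^{(P)})) + \sum_{v \in T_0} \widetilde{h}_v(\alpha_v)$; on differences the constant cancels and, using $\Gamma(t(w)) = w$ because $t$ is a section, one obtains $\widetilde{h}'(P) - \widetilde{h}'(P_0) = (\lambda(P) - \lambda(P_0))\, \widetilde{h}'(t(w))$. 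This linear dependence forces the wedge in (1) to vanish. To descend this global identity to the local statement in terms of $\widetilde{h}_\mathfrak{p}'$, I would invoke the vanishing $H^1_f(G_{K,T}, \Sym^2 V_E) = 0$ (Flach \cite{flach:1992} when $K = \mathbb{Q}$, and modularity lifting \cite{allen:2016} in general), which via Poitou--Tate duality ensures that the passage from the global height to the local pre-height at $\mathfrak{p}$ preserves the proportionality relation.

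For (2), I would reduce to (1) as follows. By Lemma \ref{hyperellipticsplitting} and \eqref{eqn:explicit_wedge}, $\Gamma(A(b, z_0)) = 2\,\kappa_E(f_1(z_0) - O) \wedge \kappa_E(f_2(z_0))$ is nonzero by the hypothesis on $z_0$, hence a generator of $\wedge^2 H^1_f(G_{K,T}, V_E)$. Choose a section $t$ with $t(w) = \mu \cdot A(b, z_0)$ for a suitable $\mu \in \mathbb{Q}_p^{\times}$, so that $\widetilde{h}_\mathfrak{p}'(t(w)) = \mu\, \widetilde{h}_\mathfrak{p}'(A(b, z_0))$. Applying (1) with $P = j_\mathfrak{p}(z) = A(b,z)$ and $P_0 = A(b, z_0)$, and expanding using bilinearity of the wedge and $v \wedge v = 0$, gives $\mu \cdot \widetilde{h}_\mathfrak{p}'(A(b, z)) \wedge \widetilde{h}_\mathfrak{p}'(A(b, z_0)) = 0$; since $\mu \neq 0$ the claim follows.

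The chief technical obstacle is the local-to-global descent in (1): Lemma \ref{lemma21}(2) is a statement about global mixed extensions and their global height, whereas the conclusion is stated in terms of the purely local $\widetilde{h}_\mathfrak{p}'$ on $H^1_f(G_\mathfrak{p}, U)$. The vanishing $H^1_f(G_{K,T}, \Sym^2 V_E) = 0$ is exactly what is needed to guarantee that global linear dependence relations among heights are not lost when passing to the local cohomology at $\mathfrak{p}$; without it, one would only control the dependence modulo classes coming from the (now trivial) global Selmer group of $\Sym^2 V_E$.
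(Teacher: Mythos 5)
Your proposal is correct and follows essentially the same route as the paper: the factorisation of $\widetilde{h}'$ through the one-dimensional space $\wedge ^2 H^1 _f (G_{K,T},V_E )$ from Lemma \ref{lemma21}(2) forces all the relevant values into a line, so the wedge of the difference with $\widetilde{h}_{\mathfrak{p}}'(t(w))$ vanishes, and part (2) reduces to part (1) because $\Gamma (A(b,z_0 ))$ is a generator. The one step you state but do not carry out—how the vanishing of $H^1 _f (G_{K,T},\Sym ^2 V_E )$ transfers the global relation to the purely local $\widetilde{h}_{\mathfrak{p}}'$—is exactly what the paper makes concrete, by using that vanishing (via Poitou--Tate) to produce classes $\chi _1 ,\chi _2 \in H^1 (G_{K,T},\ad ^0 V_E )$, crystalline at all primes above $p$ other than $\mathfrak{p}$, whose localisations at $\mathfrak{p}$ give a dual basis of $H^1 _f (G_{\mathfrak{p}},\Sym ^2 V_E )$ under Tate duality, so that the pair of global heights $(h_{\chi _1},h_{\chi _2})$ recovers $\widetilde{h}_{\mathfrak{p}}'$ on $\loc _{\mathfrak{p}}\Sel (U)_{\alpha }$ up to constants.
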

Note that part (1) of Lemma \ref{lemma22} implies Lemma \ref{lemma20}, since by Lemma \ref{itsonto}, the map $\widetilde{h}_{\mathfrak{p}} :H^1 _f (G_{\mathfrak{p}} ,U)\to H^1 _f (G_{\mathfrak{p}}, W)$ is onto and hence the map in part (1) of the lemma is surjective.

\begin{proof}[Proof of Lemma \ref{lemma22}]
Choose a basis $e_1 ,e_2 $ of $H^1 _f (G_{\mathfrak{p}},\Sym ^2 V_E )$. Since we have $H^1 _f (G_{K,T} ,\Sym ^2 V_E )=0$, we can define cohomology classes $\chi _1 ,\chi _2 $ in $H^1 (G_{K,T} ,\ad ^0 V_E)$ which are crystalline at all primes above $p$ other than $\mathfrak{p}$, and such that the image of $\loc _{\mathfrak{p}}(\chi _i )$ in $H^1 (G_{\mathfrak{p}},\ad ^0 V_E )/H^1 _f (G_{\mathfrak{p}},\ad ^0 V_E )$ is isomorphic to $e_i ^* $ via Tate duality. Let $$
h=(h_{\chi _1 },h_{\chi _2 }):M_{f,T_0 } (G_{K,T} ;\mathbb{Q}_p ,V,\Sym ^2 V_E )\to \Sym ^2 V_{\dr} /F^0 
$$
be the corresponding sum of heights. Let $h'$ denote the map 
$$
P\mapsto h(P)-\frac{1}{2}h([E_1 ,E_2 ])
$$
as before.
Part (1) follows from Lemma \ref{lemma21}, since that implies that the image of $\Sel (U)$ in $H^1 _f (G_{\mathfrak{p}} ,\Sym^2 V_E )$ under $h'$ has dimension at most 1. For  part (2), by assumption, $\Gamma (A(b,z_0 ))$ is a generator of $\wedge ^2 H^1 _f (G_{K,T} ,V_E )$, hence the result follows from part (1).
\end{proof}

%%%%%%%%%%%%%%%%%%%%%%%%%%%%%%%%%%%%%%%%%
\section{Explicit local methods}\label{sec:hodgefiltration}
%%%%%%%%%%%%%%%%%%%%%%%%%%%%%%%%%%%%%%%%%

The goal of this section is to provide an explicit, algorithmic description of the composite map 
$$
X(K_{\mathfrak{p}} ) \stackrel{j^{\cry } }{\longrightarrow } U^{\dr }/F^0 \stackrel{\widetilde{h}_p }{\longrightarrow } W_{\dr }/F^0 
$$
which sends a $K_{\mathfrak{p}}$-point to the generalised pre-height of $A(b,z)$. As $p$ splits completely in $K|\mathbb{Q}$, via a choice of embedding $K\hookrightarrow \mathbb{Q}_p $, $K_{\mathfrak{p}}$ is isomorphic to $\mathbb{Q}_p $, and we henceforth write $\mathbb{Q}_p $ instead of $K_{\mathfrak{p}}$.
Describing the map $j^{\cry }$ explicitly amounts to giving an explicit description of the structure of $D_{\cry }(A(b,z))$ as a filtered $\phi $-module.
By Olsson's comparison theorem \cite[Theorem 1.4]{olsson:2011}, this may be reduced to computing the Hodge filtration and Frobenius action on a de Rham path space $A^{\dr }(b,z)$ (see \cite[\S 3]{kim:2005}). The specific relation is stated in section \ref{subsec:reln}.

The filtered $\phi $-module $A^{\dr }(b,z)$ will be the pullback of this filtered $F$-isocrystal $\mathcal{A}^{\dr}$ at the $\mathbb{Z}_p $-point $z$ (in particular we choose $Y$ such that the reduction mod $p$ of $X-Y$ does not contain the reduction mod $p$ of $b$ or $z$). In this section we describe how to carry this process out explicitly: i.e., how to explicitly compute the connection $\mathcal{A}^{\dr}$ on an affine open $Y_{\mathbb{Q}_p } \subset X_{\mathbb{Q}_p }$ and enrich it with the structure of a filtered $F$-isocrystal on a smooth model of $Y$ over $\mathbb{Z}_p $. For hyperelliptic curves, using Coleman integration, we have a simple a priori description of the Frobenius structure (see Section \ref{frobenius_structure}), and hence most of this section explains the definition and computation of the connection and how to compute the filtration structure it carries.

Roughly, there are two reasons why it easier to work with the affine $Y$ than with the projective $X$. Firstly, as every extension of vector bundles on $Y$ splits, the underlying vector bundle of any unipotent connection will admit a trivialisation. This makes it much easier to write down a unipotent connection. Secondly, the de Rham fundamental group is a free pro-unipotent group, which makes it easier to write down elements of the fundamental group, or its enveloping algebra.
\subsection{The universal connection}
First we recall some properties of the de Rham fundamental group and associated objects, as developed by Chen, Deligne, Hain and Wojtkowiak (see \cite{deligne:1989, wojtkowiak:1993, hain:1987}). 

Let $Y\subset X$ be a non-empty open subset of $X$, with $X-Y$ of order $r$. Define 
$
V_{\dr }(Y)=H^1 _{\dr }(Y)^* .
$
When $Y=X$, we will write this simply as $V_{\dr }$.
Denote by $\mathcal{C}^{\dr}(Y)$ the category of unipotent flat connections on $Y$, and $\mathcal{C}^{\dr }(X)$ the category of unipotent flat connections on $X$. Given a connection $\mathcal{V}$ and a $K$-vector space $W$, we shall often refer to $W\otimes \mathcal{V}$ as a connection, in the natural way: the $\mathcal{U}$-sections of the vector bundle are just $W\otimes \mathcal{V}(\mathcal{U})$, and 
the connection morphism is $1_{W}\otimes \nabla $. Alternatively, if $\pi :Y\to \spec (K)$ denotes the structure morphism, we 
can think of $W\otimes \mathcal{V}$ as being a tensor product of connections:
$$
W\otimes \mathcal{V}:=(\pi ^* W ,d)\otimes (\mathcal{V},\nabla ).
$$

Let $b$ be a $K$-point of $Y$. Then taking the fibre of the underlying bundle at $b$ defines a fibre functor 
$
b^* 
$
from $\mathcal{C}^{\dr }(X)$ to $K$-vector spaces,
giving $(\mathcal{C}^{\dr }(X),b^* )$ the structure of a neutral Tannakian category.
Define $\pi _1 ^{\dr }(X,b)$ to be the corresponding $K$-group scheme. This group is pro-unipotent and is the inverse limit of the $n$-step unipotent quotients $U_n ^{\dr } (b)=U_n ^{\dr } (X)(b)$.  Moreover,
$$
P_n ^{\dr }(X) (b,z)=P_n ^{\dr } (b,z)=\pi _1 ^{\dr }(X;b,z)\times _{\pi _1 ^{\dr }(X,b)}U_n ^{\dr }(b).
$$ 
For example we have 
$U_1 (b)=:V_{\dr }\simeq H^1 _{\dr }(X)^* $ and an sequences
\[
1 \to \overline{\wedge ^2 V}_{\dr }\to U_2 ^{\dr }\to V_{\dr }\to 1,
\]
where $\overline{\wedge ^2 V}_{\dr }:=\coker (H^2 _{\dr }(X)\stackrel{\cup ^* }{\longrightarrow }\wedge ^2 V_{\dr }$.
Similarly define $\pi _1 ^{\dr }(Y,b)$, $U_n ^{\dr } (Y)(b)$, $P_n ^{\dr } (Y)(b,z)$. Finally as in the \'etale case, define 
$A_n ^{\dr }(X)(b)$ to be the quotient of the universal enveloping algebra of $\pi _1 ^{\dr }(X,b)$ by the $(n+1)$th power of the kernel of the co-unit map. Then $A_n ^{\dr }(X)(b)$ is a faithful $U_n ^{\dr }(b)$-representation, and we define
$$A_n ^{\dr }(X)(b,z)=A_n ^{\dr }(X)(b)\times _{U_n (b)}P_n (b,z). $$
Similarly we define $\pi _1 ^{\dr }(Y,b):=\pi _1 (\mathcal{C}^{\dr }(Y),b)$, and define associated objects $\mathcal{A}_n ^{\dr }(Y),A_n ^{\dr }(b,z)$, etc.
% As in the \'etale case, an equivalent definition of $A_n ^{\dr }(X)(b,z)$ is that it is the fibre at $z$ of the flat connection $\mathcal{A}_n ^{\dr }(X)$ corresponding to the 
% left-$\pi _1 ^{\dr }(X,b)$-module $A_n ^{\dr }(X)(b)$ under Tannaka equivalence.
\subsection{The relation to $A(b,z)$}\label{subsec:reln}
Recall that the main goal of this section is to compute the generalised pre-height $\widetilde{h}_{\mathfrak{p}}(A(b,z))$.
This is related to $A^{\dr }(b,z)$, via Olsson's theorem which gives the isomorphism \eqref{eqn:olsn}:
\[
D_{\cry }(A_n (b,z)) \simeq A_n ^{\dr }(b,z).
\]
On graded pieces, this is induced by the isomorphism 
\[
H^1 _{\dr }(X_{\mathbb{Q}_p })\simeq D_{\cry }(H^1 _{\acute{e}t}(\overline{X},\mathbb{Q}_p )).
\]
Hence if we define $W_{\dr }:=D_{\dr }(W)$, and 
\[
A^{\dr }(b,z):=A_2 ^{\dr }(b,z)/\Ker (\overline{\wedge ^2 V}_{\dr }\to W_{\dr }),
\]
then we obtain an isomorphism of filtered $\phi $-modules 
\[
D_{\cry }(A(b,z))\simeq A^{\dr }(b,z).
\]

\subsection{The Hodge filtration}
The vector spaces $A_n ^{\dr }(X)(b)$ have canonical Hodge filtrations, which we now explain.
\begin{Definition}
By a filtered connection $\mathcal{V}=(\mathcal{V},\nabla ,F^{\bullet })$ we shall mean a vector bundle $\mathcal{V}$ together with a flat connection $\nabla $ and a decreasing, exhaustive, separating filtration $(F^i \mathcal{V})$ by sub-bundles, satisfying the \textit{Griffiths transversality} condition
$$
\nabla (F^i \mathcal{V})\subset \Omega ^1 \otimes F^{i-1}\mathcal{V}
$$
for all $i$. We similarly define a filtered connection with log singularities. We sometimes write a filtered connection as $(\mathcal{V},F^\bullet )$ and sometimes 
simply as $\mathcal{V}$.
\end{Definition}
There are various characterisations of the Hodge filtration. The one which seems to be the most useful computationally is Hadian's characterisation of the canonical extension of $\mathcal{A}^{\dr }_n (Y)$. 
\begin{Definition}\label{canonical}
Given a unipotent connection $\mathcal{V}$ on $Y$, we shall denote by $\mathcal{V}^{\can} $ the canonical extension of $\mathcal{V}$ to a connection on $X$ with log singularities along $D$, which exists and is functorial in $\mathcal{V}$ by Deligne \cite{deligne:1970}.
\end{Definition}

\begin{Proposition}[Hadian {\cite[Proposition 3.3]{hadian:2011}}]
Let $\mathcal{E}$ and $\mathcal{F}$ be filtered connections on $X$ with logarithmic singularities along $D$. Then the group of isomorphism
classes of extensions of $\mathcal{E}$ by $\mathcal{F}$ (in the category of filtered flat connections on $X$ with 
logarithmic singularities along $D$) is isomorphic to the first hypercohomology group of the complex
$$
F^0 (\mathcal{E}^* \otimes \mathcal{F}) \stackrel{\nabla }{\longrightarrow }\Omega ^1 \otimes F^{-1} (\mathcal{E}^* \otimes \mathcal{F})
$$
where $\nabla $ denotes the associated connection on the internal Hom bundle $\mathcal{E}^* \otimes \mathcal{F}$.
\end{Proposition}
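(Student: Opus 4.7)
The plan is to prove this by a standard Čech-hypercohomology argument, constructing an explicit isomorphism between the Baer group of extension classes and $\mathbb{H}^1$ of the displayed two-term complex. The starting input is the local fact that on a sufficiently small affine open $U \subset X$ every short exact sequence of filtered vector bundles with logarithmic singularities along $D$ splits as filtered bundles (forgetting the connection); this follows from local freeness of the graded pieces together with a filtered lifting argument, applied in local coordinates at points of $D$.

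First, fix such an open cover $\{U_i\}$ of $X$. Given an extension $0 \to \mathcal{F} \to \mathcal{G} \to \mathcal{E} \to 0$ of filtered flat connections with log singularities, choose on each $U_i$ a filtered (but not necessarily horizontal) splitting $s_i : \mathcal{E}|_{U_i} \to \mathcal{G}|_{U_i}$. On overlaps, the differences $g_{ij} := s_j - s_i$ land in $F^0(\mathcal{E}^* \otimes \mathcal{F})(U_{ij})$ because each $s_k$ preserves the filtration, and they form a Čech $1$-cocycle. Next, measure the failure of each $s_i$ to commute with the connection: the expression
$$\omega_i := \nabla_{\mathcal{G}} \circ s_i - (1\otimes s_i) \circ \nabla_{\mathcal{E}}$$
is a section of $\Omega^1(\log D) \otimes \mathcal{E}^* \otimes \mathcal{F}$ on $U_i$, and Griffiths transversality of $\nabla_{\mathcal{G}}$ forces $\omega_i \in \Omega^1 \otimes F^{-1}(\mathcal{E}^* \otimes \mathcal{F})(U_i)$. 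A direct computation on overlaps gives $\omega_j - \omega_i = \nabla(g_{ij})$, which is precisely the cocycle condition for the pair $((\omega_i),(g_{ij}))$ to define a class in $\mathbb{H}^1$ of the complex
$$F^0(\mathcal{E}^* \otimes \mathcal{F}) \xrightarrow{\nabla} \Omega^1 \otimes F^{-1}(\mathcal{E}^* \otimes \mathcal{F}).$$
Replacing the splittings $s_i$ by $s_i + h_i$ with $h_i \in F^0(\mathcal{E}^* \otimes \mathcal{F})(U_i)$ modifies the pair by the hypercoboundary of $(h_i)$, so the resulting class is independent of the choice of local splittings and of an isomorphism representative of the extension.

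Finally, I would verify that this map is a group isomorphism by writing down its inverse. Given a hypercocycle $((\omega_i),(g_{ij}))$, glue the trivial extensions $(\mathcal{E} \oplus \mathcal{F})|_{U_i}$, each equipped with the connection whose off-diagonal block is $\omega_i$, via the transition maps $\mathrm{id} + g_{ij}$; the $F^0$-condition on $g_{ij}$ ensures the result is filtered, the $F^{-1}$-condition on $\omega_i$ is precisely Griffiths transversality for the glued connection, and the cocycle identity guarantees that the transition data is consistent and the glued connection is flat. Compatibility with Baer sum on the one side and addition in $\mathbb{H}^1$ on the other is a direct diagram-chase, completing the proof. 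The main technical step where I would be most careful is the existence of the local filtered splittings $s_i$ as morphisms of bundles with log singularities near points of $D$; once that local input is in place, the rest of the argument is the standard dictionary between extension classes and $\mathbb{H}^1$ of a two-term complex.
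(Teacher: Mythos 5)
Your Čech-hypercohomology argument is correct and is essentially the standard proof of this statement; the paper itself does not reprove the proposition but imports it from Hadian, whose argument is of the same kind (local filtered splittings, the pair $((\omega_i),(g_{ij}))$ as a hypercocycle for the two-term complex, and the gluing construction as inverse). One minor correction: in your inverse construction, flatness of the glued connection is not a consequence of the cocycle identity (which only ensures the transition data is consistent) but is automatic here because $X$ is a curve, so $\Omega^2_X(\log D)=0$; relatedly, the existence of local filtered splittings is a statement about the underlying filtered bundles alone and does not involve $D$, since the log structure lives entirely in the connection you are forgetting.
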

By computing these hypercohomology groups in the case $\mathcal{E}=\mathcal{A}_{n-1}^{\dr }(Y)^{\can }$ and $\mathcal{F}=V_{\dr }(Y) ^{\otimes n}\otimes \mathcal{O}_X$, Hadian
 proves the following lemma (note that in \cite{hadian:2011}, $(X,Y,\mathcal{A}_n ^{\dr }(Y)^{\can },V_{\dr }(Y))$ is written as $(C,X,\mathcal{P}_n ^{\dr },T_{\dr }))$.
\begin{Lemma}[Hadian {\cite[Lemma $2.2.6$]{hadian:2011}}]\label{hadianhodge}
 There exists a filtration $(F^i \mathcal{A}^{\dr}_n (Y)^{\can }$ of vector bundles such that \\
 (i) For all $n$, the sequence of connections
 \begin{equation}\nonumber
  0\to \mathcal{O}_X \otimes V_{\dr }(Y) ^{\otimes n } \to \mathcal{A}_n ^{\dr }(Y)^{\can } \to \mathcal{A}_n ^{\dr}(Y)^{\can } \to 0
 \end{equation}
respects the filtrations, where $\mathcal{O}_X \otimes V_{\dr }(Y) ^{\otimes n }$ is given the filtration induced by the Hodge filtration on $V_{\dr }(Y) ^{\otimes n}$. \\
(ii) For all $n$, the filtration $F^i $ satisfies Griffiths transversality, and hence gives $\mathcal{A}_n ^{\dr }(Y)^{\can }$ the structure of a filtered connection for all $n$. \\
(iii) The filtration $F^i $ is unique up to isomorphism of filtered connections.
\end{Lemma}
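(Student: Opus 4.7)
The plan is to proceed by induction on $n$, using the hypercohomological description of filtered extensions provided by the preceding proposition. The base case $n=0$ is trivial, since $\mathcal{A}_0^{\dr}(Y)^{\can}=\mathcal{O}_X$ carries the unique filtration with $F^0=\mathcal{O}_X$ and $F^1=0$. For the inductive step, assume a filtration satisfying (i) and (ii) has been constructed on $\mathcal{A}_{n-1}^{\dr}(Y)^{\can}$. The enveloping-algebra structure of the de Rham fundamental group gives a canonical short exact sequence of (unfiltered) connections with logarithmic singularities along $D$,
\[
0 \to \mathcal{O}_X \otimes V_{\dr}(Y)^{\otimes n} \to \mathcal{A}_n^{\dr}(Y)^{\can} \to \mathcal{A}_{n-1}^{\dr}(Y)^{\can} \to 0,
\]
whose class in $\ext^1$ determines $\mathcal{A}_n^{\dr}(Y)^{\can}$ up to isomorphism. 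Endow the subbundle with the Hodge filtration on $V_{\dr}(Y)^{\otimes n}$ and the quotient with the filtration from the induction hypothesis.

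By the preceding proposition, the set of filtered lifts of this sequence is classified by the first hypercohomology $\mathbb{H}^1(C^{\bullet})$ of the two-term complex
\[
C^{\bullet}:\; F^0(\mathcal{H}) \xrightarrow{\nabla} F^{-1}(\mathcal{H}) \otimes \Omega^1_X(\log D),
\]
where $\mathcal{H} = (\mathcal{A}_{n-1}^{\dr}(Y)^{\can})^* \otimes V_{\dr}(Y)^{\otimes n} \otimes \mathcal{O}_X$. The set of all (unfiltered) extensions is classified by the corresponding hypercohomology $\mathbb{H}^1(D^{\bullet})$ of the complex with no filtration condition, and forgetting filtrations gives a comparison map $\mathbb{H}^1(C^{\bullet}) \to \mathbb{H}^1(D^{\bullet})$. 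Parts (i) and (ii) amount to showing that the class of $\mathcal{A}_n^{\dr}(Y)^{\can}$ in $\mathbb{H}^1(D^{\bullet})$ lies in the image of this forgetful map, while part (iii) amounts to analyzing its fibre modulo filtration-preserving automorphisms of $\mathcal{A}_n^{\dr}(Y)^{\can}$.

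For existence, the conceptual input is that the unipotent de Rham fundamental group of a smooth quasi-projective variety carries a canonical mixed Hodge structure by work of Morgan, Hain and Wojtkowiak; this supplies the required filtration on the nose and guarantees its compatibility with the inductive data. Alternatively, one unwinds the hypercohomology spectral sequence for $\mathbb{H}^1(C^{\bullet})$ and checks surjectivity of the forgetful map by pairing it with the Hodge filtration on $H^1_{\dr}(X,\mathcal{H})$, using degeneration at $E_1$ of the Hodge-to-de-Rham spectral sequence for $X$ (with log poles along $D$). For uniqueness, the difference of two filtered lifts of the same unfiltered extension lies in the kernel of the forgetful map, which (after a short diagram chase using Griffiths transversality of $\mathcal{A}_{n-1}^{\dr}(Y)^{\can}$) is seen to be exactly the image of the group of filtration-preserving automorphisms of $\mathcal{A}_n^{\dr}(Y)^{\can}$; hence the two lifts are isomorphic as filtered connections.

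The main obstacle, and the part that genuinely requires Hodge-theoretic input rather than formal manipulation, is the existence step: producing a filtration whose $F^0$-part accounts correctly for the connection form of $\mathcal{A}_n^{\dr}(Y)^{\can}$. The cleanest route is to invoke Hain's mixed Hodge structure on $\pi_1^{\dr}(Y,b)$ and transfer its Hodge filtration to $\mathcal{A}_n^{\dr}(Y)^{\can}$ via the canonical extension functor; the remaining verifications (Griffiths transversality, compatibility in the exact sequence, and uniqueness) are then formal consequences of the hypercohomological description together with the inductive hypothesis.
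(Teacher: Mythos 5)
Your outline follows the same route the paper (deferring to Hadian) takes: induct on $n$ and use the preceding proposition, which identifies extensions in the filtered category with the first hypercohomology of $F^0(\mathcal{E}^*\otimes\mathcal{F})\stackrel{\nabla}{\to}\Omega^1\otimes F^{-1}(\mathcal{E}^*\otimes\mathcal{F})$, applied with $\mathcal{E}=\mathcal{A}_{n-1}^{\dr}(Y)^{\can}$ and $\mathcal{F}=V_{\dr}(Y)^{\otimes n}\otimes\mathcal{O}_X$; existence and uniqueness are then read off from comparing filtered with unfiltered extension classes. The differences are worth noting. First, where you propose to get existence by invoking Hain's mixed Hodge structure on $\pi_1^{\dr}$, Hadian (and hence the paper) gets both existence and uniqueness directly from computing the two hypercohomology groups in this specific case; your route is legitimate but imports a heavier result (that the path-torsor bundles underlie admissible variations whose Hodge filtration is by sub-bundles of the Deligne canonical extension, compatible with the inductive sequence and Griffiths-transversal), which is essentially the content being proved, so it buys brevity at the cost of outsourcing the statement, while your alternative ``$E_1$-degeneration'' variant is too vague to check. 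Second, your uniqueness step is exactly where the real work sits: the ``short diagram chase'' you allude to is the injectivity of the comparison map between filtered and unfiltered $\ext^1$-classes, which is the hypercohomology computation itself, and your phrasing that the kernel of the forgetful map ``is the image of the group of filtration-preserving automorphisms'' does not typecheck --- the clean statement is that injectivity forces two filtered lifts of the same unfiltered extension to be isomorphic as filtered connections. Note also the caveat the paper records in the remark following the lemma: this argument yields uniqueness only of the filtered \emph{extension class}, i.e.\ uniqueness of the filtration up to isomorphism as in part (iii); to pin down the filtration itself one must rigidify by a condition in the fibre at the basepoint $b$, as in Corollary \ref{correspondingquotient}.
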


\begin{Remark}
It is easy to see that the analogous theorem for the bundle $\mathcal{A}_n ^{\dr} (Y)$ on $Y$ (when $Y\neq X$) is false: since the category of unipotent vector bundles on $Y$ is trivial, 
there will be many ways to lift the Hodge filtration on the graded pieces and satisfy Griffiths transversality.
Hence the content of computing the Hodge filtration on the $\mathcal{A}_n ^{\dr }(Y)$ is contained in computing its canonical extension to $X$.
\end{Remark}
\begin{Remark}
The statement of the lemma is somewhat weaker than the statement given in \cite{hadian:2011}. In loc. cit. the author states that the filtration is unique (without allowing
for automorphisms). This is deduced by inductively determining from the computation that the map
$$
\ext ^1 _{\dr }(\mathcal{A}_{n-1}^{\dr }(Y)^{\can },V_{\dr }(Y) ^{\otimes n}\otimes \mathcal{O}_X) \to \ext ^1 _{\dr ,\fil }((\mathcal{A}_{n-1}^{\dr }(Y)^{\can },F^\bullet \mathcal{A}_{n-1}^{\dr }(Y)^{\can }),(V_{\dr }(Y) ^{\otimes n}\otimes \mathcal{O}_X ,F^\bullet V_{\dr }(Y) ^{\otimes n}\otimes \mathcal{O}_X ))
$$
is injective. However, this only implies that there is a unique \textit{extension class} of filtered connections $[(\mathcal{A}_n ^{\dr }(Y) ^{\can } ,F^\bullet )]$ corresponding to the extension class $[\mathcal{A}_n ^{\dr }(Y)^{\can }  ]$. 
To obtain uniqueness of the filtration itself, one must rigidify by imposing conditions on the filtration at the basepoint (this is already true when $n=1$). Needless to say this distinction is not important in the context of Hadian's paper and does not affect the main results.
\end{Remark}
For our purposes, we will be interested in a mild generalisation of this, where instead of considering $\mathcal{A}_n ^{\dr}(Y)$ we consider sheaves coming from other quotients of the universal enveloping algebra. In the following corollary, we let $W$ be any filtered quotient of $V_{\dr }(Y) ^{\otimes n}$, and let $\mathcal{B}$ be the corresponding quotient of the connection $\mathcal{A}_n ^{\dr}(Y)$. 
Hence the map $
\mathcal{A}_n ^{\dr}(Y)\to \mathcal{A}_{n-1}^{\dr }(Y)
$
factors through 
$
\mathcal{A}_n ^{\dr }(Y) \to \mathcal{B},
$
and $\mathcal{B}$ is an extension
$$
0 \to W \otimes \mathcal{O}_Y \to \mathcal{B}\to \mathcal{A}_{n-1} ^{\dr }(Y).
$$
\begin{Corollary}\label{correspondingquotient}
There is a unique lift of the filtrations on $\mathcal{A}_{n-1}^{\dr }(Y)^{\can } $ and $W\otimes \mathcal{O}_X $ to a filtered connection structure on $\mathcal{B}^{\can } $ such that in the fibre 
at $b$, $1$ lies in $b^* F^0 \mathcal{B}$.
\end{Corollary}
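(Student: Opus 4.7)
The plan is to deduce the corollary from Hadian's Lemma \ref{hadianhodge} by a functoriality argument, with the basepoint condition rigidifying an otherwise non-canonical choice.

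\textbf{Existence.} By functoriality of Deligne's canonical extension, the quotient map $\pi : \mathcal{A}_n^{\dr}(Y) \to \mathcal{B}$ extends to a surjection $\pi^{\can} : \mathcal{A}_n^{\dr}(Y)^{\can} \to \mathcal{B}^{\can}$ of connections on $X$ with log singularities along $D$. Hadian's Lemma \ref{hadianhodge} equips $\mathcal{A}_n^{\dr}(Y)^{\can}$ with a Griffiths-transverse subbundle filtration $F^\bullet$. I define
$$F^i\mathcal{B}^{\can} := \pi^{\can}(F^i\mathcal{A}_n^{\dr}(Y)^{\can}).$$
Because $W$ is a \emph{filtered} quotient of $V_{\dr}(Y)^{\otimes n}$, the kernel of $\pi^{\can}$ is itself a filtered subbundle, and hence the push-forward is a subbundle filtration on $\mathcal{B}^{\can}$ restricting to the given Hodge filtration on $W\otimes \mathcal{O}_X$ and inducing Hadian's filtration on $\mathcal{A}_{n-1}^{\dr}(Y)^{\can}$ on the quotient. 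Griffiths transversality for $F^\bullet\mathcal{B}^{\can}$ is inherited from $\mathcal{A}_n^{\dr}(Y)^{\can}$ since $\pi^{\can}$ intertwines the two connections. Finally, $1\in b^* F^0\mathcal{A}_n^{\dr}(Y)^{\can}$ (part of Hadian's construction) maps to $1\in b^* F^0\mathcal{B}^{\can}$.

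\textbf{Uniqueness.} Suppose $F^\bullet$ and $F'^\bullet$ are two filtrations on $\mathcal{B}^{\can}$ satisfying all the conditions. Via Hadian's Proposition applied to the extension
$$0\to W\otimes \mathcal{O}_X\to \mathcal{B}^{\can}\to \mathcal{A}_{n-1}^{\dr}(Y)^{\can}\to 0,$$
they define two classes in the filtered extension hypercohomology group that share the underlying unfiltered class $[\mathcal{B}^{\can}]$. The weight/degree computation at the heart of Hadian's proof of Lemma \ref{hadianhodge} (which goes through verbatim for the coefficient $W$, since $W$ is a Hodge-theoretic quotient of $V_{\dr}(Y)^{\otimes n}$) shows that the forgetful map from filtered extensions to unfiltered extensions is injective. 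Hence $F^\bullet$ and $F'^\bullet$ are related by an automorphism of the extension of filtered connections, i.e.\ by a horizontal map $\psi:\mathcal{A}_{n-1}^{\dr}(Y)^{\can}\to W\otimes \mathcal{O}_X$ strictly compatible with the Hodge filtration. The basepoint condition applied to both filtrations gives $\psi_b(1)=0$ in $b^* W$; since $1\in b^*\mathcal{A}_{n-1}^{\dr}(Y)^{\can}$ generates this bundle as a bundle with connection (the defining property of the de Rham path torsor), horizontality of $\psi$ propagates this vanishing everywhere, so $\psi\equiv 0$ and $F^\bullet = F'^\bullet$.

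The step I expect to be the main obstacle is the injectivity at the heart of the uniqueness argument: one must rerun Hadian's hypercohomology computation with $V_{\dr}(Y)^{\otimes n}\otimes \mathcal{O}_X$ replaced by $W\otimes \mathcal{O}_X$, checking that the relevant $F^0/F^{-1}$-vanishings on the Hom complex $F^\bullet(\mathcal{A}_{n-1}^{\dr}(Y)^{\can,*}\otimes W\otimes \mathcal{O}_X)$ still hold. This is purely a matter of tracking weights through Hadian's argument, but it is where the hypothesis that $W$ comes from a filtered quotient of $V_{\dr}(Y)^{\otimes n}$ (rather than being some auxiliary vector space) does real work.
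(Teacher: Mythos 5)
Your existence step (pushing the filtration of Lemma \ref{hadianhodge} forward along $\pi^{\can}$) is fine, and your overall strategy for uniqueness --- injectivity of the forgetful map from filtered to unfiltered extensions, plus rigidification at $b$ --- is the right shape. But the step you yourself flag as ``the main obstacle'' is precisely the step you have not done, and it is exactly the step the paper's proof is designed to avoid. You assert that Hadian's hypercohomology computation ``goes through verbatim'' with $W\otimes\mathcal{O}_X$ in place of $V_{\dr}(Y)^{\otimes n}\otimes\mathcal{O}_X$; the paper never reruns that computation. Instead it observes that, since every filtered quotient of a filtered $K$-vector space admits a filtered splitting, $W$ is a filtered direct summand of $V_{\dr}(Y)^{\otimes n}$, so both $\ext^1_{\dr,\fil}(-\otimes\mathcal{O}_X,\mathcal{A}_{n-1}^{\dr}(Y)^{\can})$ and $\ext^1_{\dr}(-\otimes\mathcal{O}_X,\mathcal{A}_{n-1}^{\dr}(Y)^{\can})$ decompose as direct sums over $W\oplus W'$, and the uniqueness statement for $\mathcal{B}^{\can}$ is inherited from the already-established statement for $\mathcal{A}_n^{\dr}(Y)^{\can}$ with no new weight-tracking at all. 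As written, your proposal leaves the injectivity for $W$-coefficients as an unproven assertion, so there is a genuine gap --- albeit one that can be closed either by the summand trick or, presumably, by redoing the computation as you suggest.

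There is also a slip in your rigidification step. If $\theta=\mathrm{id}+\iota\circ\psi\circ\mathrm{pr}$ is a filtered isomorphism $(\mathcal{B}^{\can},F^\bullet)\to(\mathcal{B}^{\can},F'^\bullet)$, the two basepoint conditions give $1\in b^*F'^0$ and $\theta_b(1)\in b^*F'^0$, hence only $\psi_b(1)=\theta_b(1)-1\in F^0W_b$, not $\psi_b(1)=0$; horizontal maps $\mathcal{A}_{n-1}^{\dr}(Y)^{\can}\to W\otimes\mathcal{O}_X$ are exactly $w\cdot\varepsilon$ for $w\in W$ ($\varepsilon$ the augmentation), and those with $w\in F^0W$ need not vanish. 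The conclusion $F^\bullet=F'^\bullet$ still holds, because such a $\theta$ preserves any admissible filtration: $F^0W\otimes\mathcal{O}_X\subseteq F^i\mathcal{B}^{\can}$ for $i\leq 0$ and $F^i\mathcal{B}^{\can}=0$ for $i\geq 1$, so $\theta(F^i)=F^i$ and hence $F'^i=F^i$. So the final step should be ``automorphisms allowed by the basepoint condition do not move the filtration,'' rather than ``$\psi\equiv 0$.''
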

\begin{proof}
The category of filtered $K$-vector spaces is semisimple, so $W$ admits a filtered complement
$
V_{\dr }(Y)^{\otimes n} \simeq W \oplus W' .
$
Hence 
$$
\ext ^1 _{\dr , \fil }(V_{\dr }(Y) ^{\otimes n}\otimes \mathcal{O}_X ,\mathcal{A}_{n-1}^{\dr}(Y)^{\can }) \simeq \ext ^1 _{\dr , \fil }(W\otimes \mathcal{O}_X ,\mathcal{A}_{n-1}^{\dr }(Y)^{\can })  
\oplus \ext ^1 _{ \dr , \fil }(W' \otimes \mathcal{O}_X ,\mathcal{A}_{n-1}^{\dr}(Y)^{\can }) 
$$
and 
$$
\ext ^1 _{\dr }(V_{\dr }(Y) ^{\otimes n}\otimes \mathcal{O}_X ,\mathcal{A}_{n-1}^{\dr }(Y)^{\can }) \simeq \ext ^1 _{\dr }(W\otimes \mathcal{O}_X ,\mathcal{A}_{n-1}^{\dr }(Y)^{\can })  
\oplus \ext ^1 _{\dr }(W' \otimes \mathcal{O}_X ,\mathcal{A}_{n-1}^{\dr }(Y)^{\can }).
$$
Therefore uniqueness of the lift of the filtration on $\mathcal{A}_{n-1}^{\dr }(Y)^{\can }$ to $\mathcal{A}_n ^{\dr }(Y) ^{\can }$ given conditions on $b^* \mathcal{A}_n ^{\dr }(Y)$ implies uniqueness 
of the lift of the filtration on $\mathcal{A}_{n-1}^{\dr }(Y){\can }$ to $\mathcal{B}^{\can }$ given conditions on $b^* \mathcal{B}$.
\end{proof}

To compute the Hodge filtration on $\mathcal{A}_n ^{\dr} (X)$ (i.e. to carry the above out for a projective curve), we may compute the Hodge filtration on the universal connection of an open affine $Y$, and 
then take the quotient to get the Hodge filtration on the universal connection on the projective curve $X$. This will be explained in more detail in the next section.

\subsection{Universal pointed objects}
To describe the universal connection $\mathcal{A}_n ^{\dr}(X)$,  we recall a ubiquitous construction in the study of unipotent fundamental groups (see e.g.  Andreatta, Iovita, and Kim \cite[\S 3.5]{andreatta:2015}).
Given a neutral Tannakian category $(\mathcal{C},\omega )$ over a field $K$, define the \textit{pointed category} to be the category whose objects are pairs $(V,v)$, where 
$v$ is an element of $\omega (V)$, and whose morphisms are $(V,v)\to (W,w)$,  i.e., morphisms $f:V\to W$ in $\mathcal{C}$ such that $\omega (f)(v)=w$. We say that an inverse limit of pointed objects $(V_n ,v_n )$ is \textit{universal} if for every pointed object $(W,w)$, there is an $n_0 >0$ such that for all $n>n_0$, there is a unique homomorphism $(V_n ,v_n )\to (W,w)$. Note that if $((V_n ,v_n ))$ and $((W_n ,w_n ))$ are two universal pointed objects, then there is a unique pro-morphism between them.

Now suppose that $(\mathcal{C},\omega )$ is a unipotent neutral Tannakian category over $K$ (i.e. one for which every object $V$ admits an inclusion of the trivial object). Without loss of generality we may simply assume that $\mathcal{C}$ is the category of representations of a pro-unipotent group $\mathcal{U}$ over $K$. Suppose furthermore that $\mathcal{C}$ has finite-dimensional ext groups (i.e., the abelianisation of $\mathcal{U}$ is finite-dimensional) . Let $A_\infty (\mathcal{C},\omega )$ denote the pro-universal enveloping algebra of the Lie algebra of $\mathcal{U}$. 

Let $I\subset A_\infty (\mathcal{C},\omega )$ denote the kernel of the co-unit map $A_\infty (\mathcal{C},\omega ) \to K$, and define $A_n (\mathcal{C},\omega )=A_\infty (\mathcal{C},\omega )/I^{n+1}$. Then each $A_n (\mathcal{C},\omega )$ is an object of $\mathcal{C}$, and $((A_n (\mathcal{C},\omega ),1))_n$ is a universal pointed object. In particular, we see that $(\mathcal{A}_n ^{\dr }(X),1)$ and $(\mathcal{A}_n ^{\dr }(Y),1)$ are universal $n$-unipotent objects in $(\mathcal{C}^{\dr }(X),b^* )$ and $(\mathcal{C}^{\dr }(Y),b^* )$ respectively (here $1$ denotes the identity element in the algebras $A_n ^{\dr }(X)(b)$ and $A_n ^{\dr }(Y)(b)$ respectively. Going the other way, this means that in order to compute the enveloping algebra, it is enough to construct a universal pointed object. In particular, to compute $\mathcal{A}_n ^{\dr }(X)$ or $\mathcal{A}_n ^{\dr }(Y)$, it is enough to compute a universal pointed object.
\begin{Definition}\label{defn:YsubX}
Let $Y\subset X$ be an affine open over $K$. For simplicity we assume that all the points of $X-Y$ are defined over $K$. Choose $\eta _0 ,\ldots ,\eta _{2g+r-2} \in H^0 (Y,\Omega ^1 )$ a set of differentials whose image in $H^1 _{\dr }(Y)$ forms a basis. We will henceforth assume that this basis is chosen such that $\eta _0 ,\ldots ,\eta _{g-1}$ is a basis of $H^0 (X,\Omega ^1 )$, and $\eta _g ,\ldots ,\eta _{2g-1}$ form a basis of $H^1 _{\dr }(X)$. Let $R=\oplus _{i \geq 0}V_{\dr }(Y)^{\otimes i}$ be the tensor algebra of $V_{\dr }(Y)$. Hence $R$ may also be thought of as the free associative $K$ algebra on $2g+r-1$ generators 
$T_0 ,\ldots ,T_{2g+r-2}$, where the $T_i $ are the dual basis to the $\eta _i $. 
Define $R_n $ to be the quotient of $R$ by the 2-sided ideal generated by $V_{\dr }(Y)^{\otimes (n+1)}$. Let $\mathcal{A}_n (Y) :=R_n \otimes \mathcal{O}_Y $ be the corresponding trivial vector bundle, and define a connection $\nabla _n $ on $\mathcal{A}_n (Y)$:
\[\nabla :  R_n \otimes \mathcal{O}_Y \to R_n \otimes \Omega ^1 _Y ;
 w\otimes 1 \mapsto -\sum _{i=0}^{2g+r-2} T_i w \otimes \eta _i.\]\end{Definition}
The following theorem of Kim says that $(\mathcal{A}_n (Y),1)$ is a universal pointed object in $(\mathcal{C}^{\dr }(Y),1)$, and hence
$
(\mathcal{A}_n (Y),1)\simeq (\mathcal{A}_n ^{\dr }(Y),1).
$
\begin{Theorem}[Kim {\cite[Lemma 3]{kim:2009}}]\label{universal_connection}
For every $n$-unipotent pointed connection $(\mathcal{V},v)$ there is a unique map $(\mathcal{A}_n (Y),1)\mapsto (\mathcal{V},v)$.
\end{Theorem}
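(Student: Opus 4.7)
The plan is to induct on $n$, using the short exact sequence of flat connections on $Y$
\[
0 \to V_{\dr}(Y)^{\otimes n}\otimes \mathcal{O}_Y \to \mathcal{A}_n(Y) \to \mathcal{A}_{n-1}(Y) \to 0.
\]
The subbundle here indeed carries the trivial connection, since for a pure tensor $w \in V_{\dr}(Y)^{\otimes n}$ we have $\nabla(w\otimes 1) = -\sum_i T_i w\otimes \eta_i$, which vanishes in $\mathcal{A}_n(Y)$ because $T_i w \in V_{\dr}(Y)^{\otimes (n+1)}$ is killed in $R_n$. Iterating this observation yields a length-$n$ filtration of $\mathcal{A}_n(Y)$ by sub-connections with trivial graded pieces, so $(\mathcal{A}_n(Y),1)$ is itself an $n$-unipotent pointed connection, which is the first thing we need to know.

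The base case $n=0$ is immediate: $\mathcal{A}_0(Y) = \mathcal{O}_Y$ with trivial connection, and for $(\mathcal{V},v)$ $0$-unipotent the unique morphism is $f \mapsto f\tilde{v}$, where $\tilde{v}$ is the flat section with $\tilde{v}(b)=v$. For the inductive step, given $(\mathcal{V},v)$ with $\mathcal{V}$ $n$-unipotent, I would take $\mathcal{V}^{(0)}\subset \mathcal{V}$ to be the maximal trivial sub-connection; the quotient $\mathcal{V}/\mathcal{V}^{(0)}$ is $(n-1)$-unipotent with induced basepoint $\bar{v}$, and the inductive hypothesis supplies a unique morphism $\bar{\phi}\colon (\mathcal{A}_{n-1}(Y),1)\to (\mathcal{V}/\mathcal{V}^{(0)},\bar{v})$. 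The problem then reduces to producing and uniquely characterising a lift $\phi\colon (\mathcal{A}_n(Y),1)\to (\mathcal{V},v)$ along $\mathcal{V}\to \mathcal{V}/\mathcal{V}^{(0)}$.

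For the lift, I would trivialise $\mathcal{V}\simeq W\otimes \mathcal{O}_Y$ (possible because $Y$ is affine and $\mathcal{V}$ is unipotent, so the underlying bundle has no cohomology) and write the connection as $d+\omega$ with $\omega=\sum_i N_i\eta_i+dB$, where $N_i\in \End(W)$ are nilpotent and $B\in \End(W)\otimes \mathcal{O}(Y)$. The intertwining condition $\nabla_{\mathcal{V}}\circ \phi = (\phi\otimes 1)\circ \nabla$ forces
\[
\nabla_{\mathcal{V}}\bigl(\phi(T_{i_1}\cdots T_{i_k}\otimes 1)\bigr)=-\sum_i \phi(T_iT_{i_1}\cdots T_{i_k}\otimes 1)\otimes \eta_i,
\]
which, read in the trivialisation and combined with the condition $\phi_b(1)=v$, lets one recursively determine $\phi$ on the $\mathcal{O}_Y$-basis $\{T_{i_1}\cdots T_{i_k}\}$ in terms of the operators $N_i$ acting on $v$, up to a correction absorbing $dB$. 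Existence then reduces to solving this triangular system level by level in the degree filtration; uniqueness is the assertion that any two lifts differ by a morphism into the trivial piece $V_{\dr}(Y)^{\otimes n}\otimes \mathcal{O}_Y$ which vanishes at $b$, and any such morphism must be zero since its image lies in the flat sections, which are rigidly determined by their fibre at $b$.

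The main obstacle is that the $\eta_i$ are not $\mathcal{O}(Y)$-linearly independent in $\Omega^1(Y)$ (the latter has rank one as an $\mathcal{O}(Y)$-module), so one cannot naively ``read off'' $\phi(T_i w\otimes 1)$ from the identity above coefficient by coefficient. I expect to resolve this by working in the formal neighbourhood of $b$, where every unipotent connection admits a canonical flat frame, and exploiting the fact that the de Rham cohomology class $[\omega]\in H^1_{\dr}(Y)\otimes \End(W)$ has unique coordinates in the basis $\{[\eta_i]\}$; this canonically pins down the $N_i$ and hence $\phi$.
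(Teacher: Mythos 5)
Your overall strategy (induct using the degree filtration, trivialise over the affine $Y$, reduce to a lifting problem) is reasonable, but the two steps you leave open are exactly where the content of the theorem lies, and your proposed fixes do not close them. For existence: the recursion runs the wrong way. Flatness only determines the sum $\sum_i \phi(T_i w)\otimes \eta_i = -\nabla_{\mathcal{V}}(\phi(w))$, and since $\Omega^1(Y)$ has rank one over $\mathcal{O}(Y)$ you cannot extract the individual $\phi(T_i w)$; you concede this, but passing to the formal neighbourhood of $b$ does not repair it, because it discards precisely the global input the theorem is about. Over $K[\![t]\!]$ every unipotent connection admits a full flat frame, so formally the ``universal property'' would hold with the $\eta_i$ replaced by any forms whatsoever, and a formal flat morphism need not algebraize: the extension of $(\mathcal{O}_Y,d)$ by $(\mathcal{O}_Y,d)$ with connection form $\eta$, $[\eta]\neq 0$ in $H^1_{\dr}(Y)$, has a two-dimensional space of formal flat sections at $b$ but only a one-dimensional space of algebraic ones. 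What is actually needed is the global fact that $H^1_{\dr}(Y)=\Omega^1(Y)/d\mathcal{O}(Y)$ with basis $\{[\eta_i]\}$, used either cohomologically (compute $\Hom$ and $\ext^1$ in the category of unipotent connections and show the obstruction to lifting along $\mathcal{V}\to\mathcal{V}/\mathcal{V}^{(0)}$ vanishes --- this is Kim's route; note the paper only cites \cite[Lemma 3]{kim:2009} and contains no proof of its own) or, closer to your set-up, as a gauge lemma: every unipotent connection on $Y$ is gauge-equivalent to one with \emph{constant} form $d+\sum_i N_i\eta_i$. Your phrase ``up to a correction absorbing $dB$'' is exactly this lemma; it requires an inductive gauge argument up the unipotent filtration (conjugating by a unipotent $g$ does not simply subtract $dB$), and once it is proved, existence is the direct check that $T_{i_1}\cdots T_{i_k}\mapsto(-1)^k N_{i_1}\cdots N_{i_k}\tilde v$ defines a flat pointed morphism.

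Uniqueness has the same missing ingredient. Two pointed lifts differ by a flat morphism $\psi\colon\mathcal{A}_n(Y)\to\mathcal{V}^{(0)}$ (into the trivial subobject of the \emph{target}, not ``the trivial piece $V_{\dr}(Y)^{\otimes n}\otimes\mathcal{O}_Y$'' of the source) with $\psi_b(1)=0$. A flat morphism is rigid in the sense of being determined by its \emph{entire} fibre map at $b$, not by its value on $1$, so ``flat sections are determined by their fibre at $b$'' does not yield $\psi=0$. The correct argument is a downward induction on word length: writing $\psi(T_{i_1}\cdots T_{i_k})=f_{i_1\cdots i_k}\in\mathcal{O}(Y)\otimes W_0$, flatness gives $df_w=-\sum_j f_{jw}\,\eta_j$, so once the $f_{jw}$ are known to be constant one gets $\sum_j f_{jw}[\eta_j]=0$ in $H^1_{\dr}(Y)$ and hence $f_{jw}=0$ by linear independence of the $[\eta_i]$; iterating kills everything except a constant $f_\emptyset$, which the pointing condition sets to zero. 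Finally, your induction is organised on $n$, but to know that every pointed map $(\mathcal{A}_n(Y),1)\to(\mathcal{V},v)$ lifts the \emph{same} map to $\mathcal{V}/\mathcal{V}^{(0)}$ you need uniqueness of pointed maps from $(\mathcal{A}_n(Y),1)$ --- not from $(\mathcal{A}_{n-1}(Y),1)$ --- to the $(n-1)$-unipotent quotient, which is part of what is being proved; the standard repair is to fix $n$ and induct on the unipotency length of the target.
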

We shall refer to the bundle isomorphism $\mathcal{A}_n ^{\dr }(Y)\simeq \oplus _{i=0}^n V_{\dr }(Y)^{\otimes i}\otimes \mathcal{O}_Y$, and the induced vector space isomorphism $A_n ^{\dr }(Y)(b,z)$ $\simeq \oplus _{i=0}^n V_{\dr }(Y)^{\otimes i}$ as the \textit{affine trivialisation} of $\mathcal{A}_n $ (relative to the basis $(\eta _i )$). 
\subsection{Computation of the Hodge filtration}\label{computehodge}
We now explain how to use this to algorithmically determine the Hodge filtration on the $\mathbb{Q}_p$-vector spaces $A(b,z)$. Unlike the computation of the Frobenius structure, this requires no particular ingenuity, as results of Kim and Hadian reduce the problem to elementary calculations in computational algebraic geometry. 

As in the \'etale case, $U_2 ^{\dr }$ is an extension of $V_{\dr }:= H^1 _{\dr }(X)^* $ by 
$
\overline{\wedge ^2 V}_{\dr }:=\coker ( H^2 _{\dr }(X)^*  \stackrel{\cup ^* }{\longrightarrow }  \wedge ^2 V_{\dr } ).
$
We take as input a filtered quotient $W_{\dr }$ of $\overline{\wedge ^2 V}_{\dr }$, giving a quotient $U_{\dr }$ of $U_2 ^{\dr }$ sitting in an exact sequence 
$$
1\to W_{\dr }\to U_{\dr }\to V_{\dr }\to 1.
$$
Recall that in practice we take $W_{\dr }:=D_{\dr }(W)$, where $W$ is as in section \ref{subsec:twist}.
We also fix an open affine $Y\subset X$ such that $D=X-Y$ has $|D(K)|=r$. 
\begin{Definition}
\ref{universal_connection}
We denote by $\tau $ the multiplication map
$$
V_{\dr }(Y) \otimes V_{\dr }(Y)\to W_{\dr }.
$$
Let $S_1 ,\ldots ,S_{d}$ be a basis of $W_{\dr }$, and define $\tau _{ijk}, 0\leq i,j \leq 2g+r-2,1\leq k\leq d,$ by 
$$
\tau (T_i \otimes T_j )=\sum _{k=1}^d \tau _{ijk}S_k .
$$
By definition this map factors through $V_{\dr }\otimes V_{\dr }$, and hence by our choice of basis differentials, $\tau _{ijk}$ is zero whenever $i$ or $j$ are greater than $2g-1$. Note that the condition that the map factors through $\overline{\wedge ^2 V }_{\dr }$ is equivalent to the equations
\begin{align}
\tau _{ijk}+\tau _{jik}=0, & 0\leq i,j\leq 2g-1,1\leq k \leq  d. \notag \\
\sum _{0\leq i<j\leq 2g-1 }[\eta _i ]\cup [\eta _j ]\tau _{ijk}=0, & 1\leq k\leq d. \label{eq:cupzero}
\end{align}
\end{Definition}
Corresponding to $W_{\dr }$, we have a quotient $\mathcal{A}^{\dr }(Y)$ of $\mathcal{A}_2 ^{\dr }(Y)$ which is an extension
$$
0 \to W_{\dr }\otimes \mathcal{O}_Y \to \mathcal{A}^{\dr }(Y)\to \mathcal{A}_1 ^{\dr }(Y)\to 0.
$$
By Theorem \ref{universal_connection}, the connection on $\mathcal{A}^{\dr }(Y)$ is given as follows:
$$1 \mapsto -\sum _{i=0}^{2g+r-2} \eta _i \otimes T_i, \qquad T_i  \mapsto +\sum _{j=0}^{2g+r-2}\sum _{k=1}^d \tau _{ijk}\eta _j \otimes S_k, \qquad S_k \mapsto 0.$$
The Hodge filtration on $\mathcal{A}^{\dr }(X)$ is computed in two stages:
\begin{enumerate}
\item Compute the maximal quotient $\mathcal{A}^{\dr }(X)|_Y$ of $\mathcal{A}^{\dr }(Y)$ whose canonical extension to $X$ defines a connection without singularities.
\item Compute the Hodge filtration on $\mathcal{A}^{\dr }(X)$.
\end{enumerate}
\subsubsection{Computing $\mathcal{A}^{\dr }(X)$}\label{following_properties}
\begin{Lemma}
The connection $\mathcal{A}^{\dr }(X)|_Y$ is the maximal quotient of $\mathcal{A}^{\dr }(Y)$ which extends to a connection on $X$ without log singularities.
\end{Lemma}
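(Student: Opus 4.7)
The plan is to verify the two characterising properties of the ``maximal quotient'': (a) that $\mathcal{A}^{\dr}(X)|_Y$ is itself a quotient of $\mathcal{A}^{\dr}(Y)$ whose extension to $X$ is nonsingular, and (b) that any other such quotient factors through it. Both directions are a chase of universal pointed objects, using Theorem~\ref{universal_connection} on $Y$ and its analogue on $X$.

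For (a), the restriction $\mathcal{A}^{\dr}(X)|_Y$ is a pointed $2$-unipotent flat connection on $Y$ with graded pieces $\mathbb{Q}_p,V_{\dr},W_{\dr}$. By the universality of $(\mathcal{A}_2^{\dr}(Y),1)$, there is a unique pointed morphism $\mathcal{A}_2^{\dr}(Y)\to\mathcal{A}^{\dr}(X)|_Y$. On the weight $-2$ graded piece the induced map $V_{\dr}(Y)^{\otimes 2}\to W_{\dr}$ factors through the same map $\tau$ used to define $\mathcal{A}^{\dr}(Y)$ as a quotient of $\mathcal{A}_2^{\dr}(Y)$ (because the $W_{\dr}$ on both sides is constructed from the same quotient of $\overline{\wedge^2 V}_{\dr}$), so the morphism descends to $\phi:\mathcal{A}^{\dr}(Y)\to\mathcal{A}^{\dr}(X)|_Y$. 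Surjectivity of $\phi$ follows from surjectivity on the associated graded: $V_{\dr}(Y)\twoheadrightarrow V_{\dr}$ is dual to $H^1_{\dr}(X)\hookrightarrow H^1_{\dr}(Y)$, and the $W_{\dr}$-piece maps identically. Nonsingular extensibility is tautological, since $\mathcal{A}^{\dr}(X)$ itself is a regular connection on $X$ restricting to $\mathcal{A}^{\dr}(X)|_Y$ on $Y$.

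For (b), let $\pi:\mathcal{A}^{\dr}(Y)\twoheadrightarrow\mathcal{Q}$ be a quotient and $\widetilde{\mathcal{Q}}$ a regular extension of $\mathcal{Q}$ to $X$. Then $\widetilde{\mathcal{Q}}$ is a pointed $2$-unipotent object of $\mathcal{C}^{\dr}(X)$, so the universal property applied on $X$ produces a unique pointed morphism $\mathcal{A}_2^{\dr}(X)\to\widetilde{\mathcal{Q}}$; by the same graded argument it descends to $\psi:\mathcal{A}^{\dr}(X)\to\widetilde{\mathcal{Q}}$. Restricting $\psi$ to $Y$ yields $\psi|_Y:\mathcal{A}^{\dr}(X)|_Y\to\mathcal{Q}$, and the composition $\psi|_Y\circ\phi$ is a pointed morphism $\mathcal{A}^{\dr}(Y)\to\mathcal{Q}$ which, by the uniqueness clause of Theorem~\ref{universal_connection}, must equal $\pi$. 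Hence $\pi$ factors through $\phi$, as required.

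The only subtlety, rather than an obstacle, is the implicit assumption that the regular extension $\widetilde{\mathcal{Q}}$ is essentially unique---so that ``extends to a connection on $X$ without singularities'' is an unambiguous condition on $\mathcal{Q}$. This follows from Deligne's characterisation of the canonical extension recalled in Definition~\ref{canonical}: the canonical extension is the unique regular extension with nilpotent residues along $D$, and for a genuinely nonsingular extension the residues vanish outright. With this in hand the lemma is a purely formal consequence of the universal property of the enveloping algebras $\mathcal{A}^{\dr}(Y)$ and $\mathcal{A}^{\dr}(X)$.
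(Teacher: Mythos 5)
Your proof is correct in substance and is, at bottom, the same purely formal argument as the paper's, only packaged differently: the paper dispatches the lemma in one line via Tannaka duality, reducing it to the statement that on the maximal such quotient the action of $A_2^{\dr }(Y)(b)$ factors through $A_2^{\dr }(X)(b)$, whereas you verify the universal mapping property directly, on $Y$ and on $X$, using the universal pointed objects $(\mathcal{A}_2^{\dr }(Y),1)$ and $(\mathcal{A}_2^{\dr }(X),1)$; since the universal property is itself the Tannakian statement in disguise, the two routes buy the same thing, yours being more explicit and the paper's shorter. Two points in your part (b) deserve a sentence each. First, to invoke the universal property in $\mathcal{C}^{\dr }(X)$ you need $\widetilde{\mathcal{Q}}$ to be $2$-unipotent as a connection on $X$, not merely on $Y$: this is true (a horizontal section over $Y$ of a connection on $X$ without singularities extends across $D$ -- a pole of order $n\geq 1$ in $s$ would force a pole of order $n+1$ in $\nabla s$ -- so the unipotent filtration of $\mathcal{Q}$ extends to $\widetilde{\mathcal{Q}}$), and the paper's appeal to Tannaka duality quietly uses the same fact, so you are at the paper's level of rigour, but it should be stated. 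Second, the descent of the pointed map $\mathcal{A}_2^{\dr }(X)\to \widetilde{\mathcal{Q}}$ through $\mathcal{A}^{\dr }(X)$ is not literally ``the same graded argument'' as in your part (a), because the graded pieces of $\widetilde{\mathcal{Q}}$ are not given a priori; one gets it by restricting to $Y$ and precomposing with $\mathcal{A}_2^{\dr }(Y)\to \mathcal{A}_2^{\dr }(X)|_Y$: by uniqueness of pointed maps out of $\mathcal{A}_2^{\dr }(Y)$ this composite equals $\pi$ composed with $\mathcal{A}_2^{\dr }(Y)\to \mathcal{A}^{\dr }(Y)$, hence kills $\ker (V_{\dr }(Y)^{\otimes 2}\to W_{\dr })$, and since that kernel surjects onto the kernel of the weight $-2$ graded piece of $\mathcal{A}_2^{\dr }(X)\to W_{\dr }$, the map does factor as you claim. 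Finally, the closing remark on uniqueness of the nonsingular extension is decorative: your factorisation only uses the existence of some extension, so the appeal to Definition \ref{canonical} can be dropped (and in any case uniqueness follows from the extension of horizontal sections applied to the internal Hom, rather than from nilpotence of residues per se).
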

\begin{proof}
By definition $\mathcal{A}^{\dr }(X)|_Y$ extends to $X$. By Tannaka duality, the claim is equivalent to the saying that $A^{\dr }(b)$ is the maximal quotient of $A^{\dr }(Y)(b)$ for which the action of $\pi _1 ^{\dr }(Y,b)$ factors through $\pi _1 ^{\dr }(X,b)$.
 Passing to enveloping algebras, this is equivalent to the action of $A^{\dr }_2 (Y)(b)$ factoring through $A^{\dr }_2 (X)(b)$, which implies the Lemma.
\end{proof}
We deduce that $\mathcal{A}^{\dr }(X)|_Y $ is the unique quotient of $\mathcal{A}^{\dr }(Y)$ which extends to a connection on the whole of $X$ without log singularities and fits in a commutative diagram with exact rows
$$
\begin{tikzpicture}
\matrix (m) [matrix of math nodes, row sep=3em,
column sep=3em, text height=1.5ex, text depth=0.25ex]
{0 & W_{\dr }\otimes \mathcal{O}_Y & \mathcal{A}^{\dr }(Y) & \mathcal{A}_1 ^{\dr }(Y) & 0 \\
0 & W_{\dr }\otimes \mathcal{O}_Y & \mathcal{A}^{\dr }(X)|_Y & \mathcal{A}_1 ^{\dr }(X)|_Y & 0. \\};
\path[->]
(m-1-1) edge[auto] node[auto]{} (m-1-2)
(m-1-2) edge[auto] node[auto] {$=$} (m-2-2)
edge[auto] node[auto] {} (m-1-3)
(m-1-3) edge[auto] node[auto] {} (m-2-3)
edge[auto] node[auto] {} (m-1-4)
(m-1-4) edge[auto] node[auto] {} (m-2-4)
edge[auto] node[auto] {} (m-1-5)
(m-2-1) edge[auto] node[auto]{} (m-2-2)
(m-2-2) edge[auto] node[auto]{} (m-2-3)
(m-2-3) edge[auto] node[auto]{} (m-2-4)
(m-2-4) edge[auto] node[auto]{} (m-2-5);
\end{tikzpicture}
$$
Let $C(Y/X)\subset \langle \eta _0 ,\ldots ,\eta _{2g+r-1} \rangle $ be the subspace of $H^0 (Y,\Omega )$ spanned by the differentials $\eta _{2g},\ldots ,\eta _{2g+r-2}$. We will show that there are unique $\xi _1 ,\ldots ,\xi _d $ in $C(Y/X)$ such that 
\begin{equation}\label{eqn:AdR}
1  \mapsto -\sum _{i=0}^{2g-1} \eta _i T_i -\sum _{k=1}^d \xi _k S_k, \qquad
T_i  \mapsto +\sum _{0\leq j\leq 2g-1,1\leq k\leq d} \tau _{ijk}\eta _j \otimes S_k, \qquad
S_i  \mapsto 0  
\end{equation}
defines a flat connection on $X$, and give an algorithm for finding them. 
We solve for the $\xi _k $ by computing the canonical extension for a general choice of $\xi _k $, and working out the condition for this extension to have no singularities.

For each $x\in D(K)$, let $t_x \in K(X)$ be a parameter at $x$. Let $U_x$ be a Zariski neighbourhood of $x$ such that $t_x$ has no poles on $U_x $ and $U_x \cap D=x$.
To compute the canonical extension of $\mathcal{A}^{\dr }(Y)$, one has to find, for each $x\in (X-Y)(K)$, connections $(\mathcal{A}_x ,\nabla _x )$ on $U_x$, with log singularities along $x$, and isomorphisms of connections 
$$
\psi _x :\mathcal{A}_x |_{U_x \cap Y}\stackrel{\simeq }{\longrightarrow }\mathcal{A}^{\dr }|_{U_x \cap Y}.
$$
We take $\mathcal{A}_x $ to be a trivial bundle with sections $1_x ,T_{i,x} (0\leq i\leq 2g+r-2),S_{k,x} (1\leq k\leq d)$, and solve for isomorphisms $\psi _x $ of the form 
\begin{equation}\label{eqn:fix}
1_x \mapsto 1+\sum _{i=0}^{2g+r-2} f_{i,x}T_i+\sum _{k=1}^d g_{k,x}S_k, \qquad T_{i,x} \mapsto T_i +\sum _{k=1}^d  h_{ik,x}S_k, \qquad S_{k,x}  \mapsto S_k.
\end{equation}
Since $\psi _x $ is an isomorphism of connections we deduce that the connection $\nabla _x $ is given by 
\begin{align*}
1_x & \mapsto -\sum _{i=0}^{2g-1} (\eta _i -df_{i,x})T_i,x +\sum _{k=1}^d (dg_k -\sum _{i=0}^{2g-1} (df_i -\eta _i )h_{ik,x}-\sum _{j=0}^{2g-1} \tau _{ijk}f _i \eta _j ) \otimes S_{k,x}\\
T_{i,x} & \mapsto \sum _{k=1}^d (dh_{ik,x}+\sum _{j=0}^{2g-1} \tau _{ijk}\omega _j )S_k.
\end{align*}
Hence the condition on the $f_{i,x},g_{k,x}$ and $h_{ik,x}$ is exactly that this connection has no poles of order bigger than one on $K[\! [t_x ]\! ]$.
This reduces the problem of finding $f_{i,x},g_{k,x}$ and $h_{ik,x}$ by computing the $t_x $-adic expansion of the $\omega _i $ to sufficient accuracy. Specifically let $S$ be the section of 
$$
K(\! (t)\! )\to K(\! (t)\! )/t^{-1}K[\! [t]\! ]
$$
defined by sending the equivalence class of $\sum _i a_i t^i $ to $\sum _{i\leq -2}a_i t^i $.
Let $I$ be the formal integration function 
$$
\begin{array}{ccc}
 I: \oplus _{i<-1}K.t^{i} \to \oplus _{i<0}K.t^{i}; & & \sum a_i t^i \mapsto \sum \frac{a_i}{i+1}t^{i+1}.
\end{array}
$$ 
For a global function $f\in K(X)$ or differential $\omega \in \Omega ^1 _{K(X)|K}$, let $\loc _{x}(f)$ or $\loc _{x}(\omega )$ denote its image in $K(\! (t_x )\! )$ or $K(\! (t_x )\! )dt_x $ respectively. Then the $f_{i,x},g_{k,x}$ and $h_{ik,x}$ defined in \eqref{eqn:fix} satisfy
\begin{align*}
f_{i,x} &=I\circ S (\loc _x (\eta _i ) ),\; h_{ik,x} = \sum _{0\leq j\leq 2g-1} \tau _{ijk}f_{j,x} . \\
g_{k,x} &= -I\circ S\left(\sum _i (df_{i,x} -\loc _x (\eta _i ) )h_{ik,x}-\sum _{0\leq j\leq 2g-1} \tau _{ijk}f _{i,x} \loc _x (\eta _j ) -\loc _x (\xi _k ) \right).
\end{align*}
This determines the connection on $X$ with log singularities $\mathcal{A}^{\dr }(Y)^{\can }$. We now determine the connection without log singularities $(\mathcal{A}^{\dr }(X),d+\Lambda )$. Since we are looking for a quotient of $\mathcal{A}^{\dr}(Y)^{\can }$ of the form \eqref{eqn:AdR}, the condition that $d+\Lambda $ extends to a connection without log singularities is exactly the condition that one can choose $\xi _k $ such that, for all $x$,
\[
\res \left(\sum _{i=0}^{2g-1} (df_{i,x} -\loc _x (\eta _i ) h_{ik,x}-\sum _{j=0}^{2g-1} \tau _{ijk}f _{i,x} \loc _x (\eta _j ) -\loc _x (\xi _k ) \right) =0.
\]
By the exact sequence $$0\to C(Y/X)\stackrel{\oplus _x \res _x}{\longrightarrow }\oplus _{x\in (X-Y)(K)} K \stackrel{\sum }{\longrightarrow }K \to 0,$$
such $\xi _k $ exist if and only if 
\[
\sum _{x\in D(K)} \res \left(\sum _{i=0}^{2g-1} (df_{i,x} -\loc _x (\eta _i ) )h_{ik,x}+\sum _{j=0}^{2g-1} \tau _{ijk}f _{i,x} \loc _x (\eta _j ) \right) =0.
\]
Since $\sum _{x\in D(K)} \res _x (f_{i,x}\loc _x (\eta _j ))=[\eta _i ]\cup [\eta _j ]$ by Serre's cup product formula, we can solve for $\xi _k $ by \eqref{eq:cupzero}. Explicitly, the residue of $\xi _k$ is given by
\begin{equation}\label{eqn:explicit_residue}
dg_{k,x} -\sum _{i=0}^{2g-1} (df_{i,x} -\eta _i )h_{ik,x}-\sum _{j=0}^{2g-1} \tau _{ijk}f_{j,x} \eta _k .
\end{equation}

By inspection, in order to compute these functions in practice, one simply needs to determine constants 
$\beta (i,j,x)\in K$ $(0\leq i\leq 2g+r-1,-m\leq m)$  having the property that 
$$
\loc _x (\eta _i ) -\sum _{j=-m}^{m-1} \beta (i,j,x)t_x ^i dt_x \in t_x ^m K[t_x ]dt_x ,
$$
where $m$ is the maximum over all $i$ and $x$ of the order of the pole of $\eta _i $ at $x$.
\subsubsection{Computing the Hodge filtration}
To explain how to compute the Hodge filtration, we recall some elementary properties of differentials on curves. 
\begin{Lemma}
Suppose there is a function $g\in H^0 (Y,\mathcal{O}_Y )$ and constants $\mu _i $, $g\leq i<2g$, such that for all $x\in D(K)$, $g-\sum \mu _i f_{i,x}$ has no pole at $x$. Then $g$ is constant and all the $\mu _i $ are zero.
\end{Lemma}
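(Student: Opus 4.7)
The plan is to package the hypothesis into a statement about a single global meromorphic differential on $X$ and then to read off the conclusion from the chosen basis of $H^1_{\dr}(X)$. Set
\[
\omega := dg - \sum_{i=g}^{2g-1} \mu_i \eta_i,
\]
a meromorphic differential on $X$ which is regular on $Y$ by construction. I claim $\omega$ is in fact regular on all of $X$. Since $I$ is a formal antiderivative, $df_{i,x} = S(\loc_x(\eta_i))$ is exactly the sum of the terms of $t_x$-degree $\le -2$ in the expansion of $\eta_i$ at $x$. By the choice of basis, the classes $[\eta_g], \ldots, [\eta_{2g-1}]$ descend to a basis of $H^1_{\dr}(X)/F^1$, so each such $\eta_i$ is a differential of the second kind on $X$ and has vanishing residue at every $x \in D(K)$; hence $\loc_x(\eta_i) - df_{i,x}$ is regular at $x$. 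Combining this with the fact that $\loc_x(d(g - \sum_i \mu_i f_{i,x})) = \loc_x(dg) - \sum_i \mu_i df_{i,x}$ is regular at $x$ (differentiate the hypothesis), one gets $\loc_x(\omega) \in K[[t_x]]\, dt_x$ for every $x \in D(K)$.

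Thus $\omega$ is a holomorphic differential on $X$, so $[\omega] \in F^1 H^1_{\dr}(X) = H^0(X, \Omega^1_X)$. On the other hand, $g$ is a rational function on $X$, so $dg$ is exact and $[dg] = 0$ in $H^1_{\dr}(X)$; hence $[\omega] = -\sum_i \mu_i [\eta_i]$ lies in $F^1$. But by construction $[\eta_g], \ldots, [\eta_{2g-1}]$ project to a basis of $H^1_{\dr}(X)/F^1$, so this relation forces $\mu_g = \cdots = \mu_{2g-1} = 0$. Feeding this back into the hypothesis shows $g$ is regular at each $x \in D(K)$, hence $g \in H^0(X, \mathcal{O}_X)$, which equals $K$ since $X$ is projective (and geometrically integral). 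So $g$ is constant, as required.

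The only delicate point is the polar-part bookkeeping at each $x$: one must exploit the fact that the $\eta_i$ with $g \le i < 2g$ are of the second kind, so that the hypothesis (which only directly controls the polar part of $g$ of order $\ge 2$) upgrades to full regularity of $\omega$ at $x$. Once $\omega$ is known to be holomorphic on $X$, the rest follows formally from the two defining properties of the chosen basis---that $\eta_0, \ldots, \eta_{g-1}$ spans $F^1$ and that $[\eta_g], \ldots, [\eta_{2g-1}]$ is a basis modulo $F^1$---together with vanishing of $[dg]$ in $H^1_{\dr}(X)$.
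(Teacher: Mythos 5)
Your proof is correct and follows essentially the same route as the paper: both form the differential $dg-\sum_{i=g}^{2g-1}\mu_i\eta_i$, show it is regular on all of $X$ (your residue/second-kind bookkeeping at the points of $D$ just makes explicit what the paper leaves implicit), and then use that $[\eta_0],\ldots,[\eta_{2g-1}]$ is a basis of $H^1_{\dr}(X)$ with $[\eta_0],\ldots,[\eta_{g-1}]$ spanning $F^1$ to force $\mu_i=0$ and $g$ constant.
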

\begin{proof}
For $g$ and $\mu _i $ as in the lemma, we have that $dg-\sum _{i=g}^{2g-1} \mu _i \eta _i $ has no poles, hence defines an element $\sum _{i=0}^{g-1}\mu _i \eta _i $ of $H^0 (X,\Omega ^1 )$. Since 
$[\eta _0 ],\ldots ,[\eta _{2g-1}]$ is a basis of $H^1 _{\dr }(X)$, the lemma follows.
\end{proof}
It follows that given any tuple $(w_x )_{x\in D(K)}\in \prod _{x\in D(K)}K(\! (t_x )\! )$, there is a unique choice of $g\in H^0 (Y,\mathcal{O})$ and $\mu _i \in K$ ($ g\leq i<2g$) such that $g(b)=0$ and for all $x$ in $D(K)$, $w_x -\loc  _x (g) -\sum _{i=g}^{2g-1} \mu _i f_{i,x}$  does not have a pole at $x$.
\begin{Definition}
As above, let $r$ denote the degree of $D$ over $K$, and let $m$ denote the maximum over all $x\in D(K)$ and $0\leq i<2g$ of the order of the pole of $\eta _i $ at $x$. Denote by $\Pi$ the $rm$-dimensional $K$-vector space $$\prod _x t_x ^{-m}K[t_x ]dt_x /K[t_x ]dt_x .$$ Define functions 
$
r:\Pi \to H^0 (Y, \mathcal{O})
$
and 
$
\underline{c}=(c_0 ,\ldots c_{2g+r-1}):\Pi \to K^{\oplus (2g+r)}
$
by the property that for all $\pi $ in $\Pi $,
\begin{equation}\label{defofc&r}
\pi \equiv \loc _x (r(\pi ) )+\sum c_i (\pi )\loc _x (\omega _i ) \mod \prod t_x ^{-1} K[t_x ^{-1}]
\end{equation}
and $r(\pi )(b)=0$.
\end{Definition}
By Lemma \ref{hadianhodge}, $F^0 \mathcal{A}^{\dr }$ is uniquely determined by the following properties:
\begin{itemize}
\item 
There is a commutative diagram of bundles
$$
\begin{tikzpicture}
\matrix (m) [matrix of math nodes, row sep=3em,
column sep=3em, text height=1.5ex, text depth=0.25ex]
{ F^0 \mathcal{I}^2 \mathcal{A}^{\dr }(X) & F^0 \mathcal{I}\mathcal{A}^{\dr }(X) & F^0 \mathcal{A}^{\dr }(X)  \\
W_{\dr }\otimes \mathcal{O}_X & \mathcal{IA}^{\dr }(X) & \mathcal{A}^{\dr }(X) \\ };
\path[right hook->]
(m-1-1) edge (m-1-2)
(m-1-1) edge (m-2-1)
(m-1-2) edge (m-1-3)
(m-2-1) edge (m-2-2)
(m-1-2) edge (m-2-2)
(m-1-3) edge (m-2-3)
(m-2-2) edge (m-2-3)
;
\end{tikzpicture} 
$$ 
where $\mathcal{IA}^{\dr }(X)$ is the kernel of the surjective map of connections \[
\mathcal{A}^{\dr }(X)\to (\mathcal{O}_X ,d).
\]
Passing to the associated map of gradeds defines an isomorphism 
$$
\gr F^0 \mathcal{A}^{\dr } \simeq \mathcal{O}_X \oplus F^0 V_{\dr }\otimes \mathcal{O}_X \oplus F^0 W_{\dr }\otimes \mathcal{O}_X .  
$$
\item In the fibre at $b$, $1\in A_{\dr }(b)$ is in the image of $b^* F^0 \mathcal{A}^{\dr }$.
\end{itemize}
An elementary calculation shows us that $H^0 (Y ,F^0 \mathcal{A}_1 ^{\dr }(X) )$ has basis of sections $1,T_g ,\ldots ,T_{2g-1}$. To compute $F^0 \mathcal{A}^{\dr }$, we need to lift these to determine the bundle $F^0 \mathcal{A}^{\dr }$. Suppose they lift to sections $1+\sum _{k=1}^d r_k ^H \otimes S_k$ and $T_i +\sum _{k=1}^d c_{ik}\otimes S_k $. Then by the above computation of the charts defining the bundle $\mathcal{A}^{\dr }$, we find
\begin{Lemma}\label{extM1}
The functions $r_k ^H$ are given by 
$r_k ^H =r((g_{k,x})_x ) $.
The functions $c_{ik}^H$ are constant and are given by
$c_{ik}^H =c_i ((g_{k,x})_x )$.
\end{Lemma}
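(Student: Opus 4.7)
The plan is to exploit the uniqueness clause of Corollary~\ref{correspondingquotient}: $F^0\mathcal{A}^{\dr}|_Y$ is determined by its graded (which gives $F^0\mathcal{A}_1^{\dr}(X)|_Y$ with basis $1,T_g,\ldots,T_{2g-1}$ and the sub $F^0W_{\dr}\otimes\mathcal{O}_X$) together with the fibre condition $1\in b^*F^0\mathcal{A}^{\dr}$, and to match this against the chart data $\psi_x$ from equation \eqref{eqn:fix}. The strategy is to compute the pullback of the proposed lifts to the charts $\mathcal{A}_x$, extract the resulting polar-part conditions at each $x\in D(K)$, and recognise them as the defining equations of $r$ and $\underline{c}$.

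First, I would apply $\psi_x^{-1}$ to the candidate lift $\sigma:=1+\sum_k r_k^H S_k$ on $U_x\cap Y$. Using equation \eqref{eqn:fix} and the relation $h_{ik,x}=\sum_j\tau_{ijk}f_{j,x}$, a direct manipulation gives
$$\psi_x^{-1}(\sigma)=1_x-\sum_{i}f_{i,x}T_{i,x}+\sum_k\Bigl(\loc_x(r_k^H)-g_{k,x}+\sum_i f_{i,x}h_{ik,x}\Bigr)S_{k,x}.$$
An entirely analogous calculation for $\sigma_i:=T_i+\sum_k c_{ik}S_k$ ($g\le i<2g$) yields $\psi_x^{-1}(\sigma_i)=T_{i,x}+\sum_k(\loc_x(c_{ik})-h_{ik,x})S_{k,x}$. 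The joint demand that $\sigma,\sigma_g,\ldots,\sigma_{2g-1}$ span the sub-bundle $F^0\mathcal{A}^{\dr}$ on each $U_x$ (with $F^0\mathcal{A}^{\dr}|_{U_x}$ generated by $1_x,T_{g,x},\ldots,T_{2g-1,x}$ and the $F^0 W_{\dr}$-part of $S_{k,x}$) says exactly that the $T_{i,x}$-components produced by $\sigma$ must be cancellable by $\mathcal{O}_{U_x}$-multiples of the $\sigma_i$, and that the residual $S_{k,x}$-coefficient must be regular.

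Because $h_{ik,x}$ is itself a polynomial in the $f_{j,x}$, the polar part of $\sum_i f_{i,x}h_{ik,x}$ contributes to the $S_{k,x}$-coefficient only through terms absorbed by the constants $c_{ik}^H$ (this is exactly the content of $\sigma_i$ being needed with a constant coefficient — any non-constant coefficient of $T_i$ in $\sigma$ would be forced to match $f_{i,x}$ modulo regular functions at every $x$, which by the Lemma before Definition of $\Pi$ forces the coefficient to be constant). After this cancellation, the requirement that $\sigma$ extend in $F^0$ across every $x$ becomes
$$\loc_x(r_k^H)\equiv g_{k,x}+\sum_i c_{ik}^H\,\loc_x(f_{i,x})\pmod{\text{regular at }x}\qquad\text{for all }x\in D(K),$$
which, under the (notational) identification of $\omega_i$ with $f_{i,x}$ in equation \eqref{defofc&r}, is precisely the defining equation for $r((g_{k,x})_x)$ and $\underline{c}((g_{k,x})_x)$. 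The fibre condition $1\in b^*F^0\mathcal{A}^{\dr}$ from Corollary~\ref{correspondingquotient} translates to $r_k^H(b)=0$, matching the normalisation in the definition of $r$. By the uniqueness clause of that definition (proved via the Lemma before Definition of $\Pi$), we conclude $r_k^H=r((g_{k,x})_x)$ and $c_{ik}^H=c_i((g_{k,x})_x)$; constancy of $c_{ik}^H$ drops out of the same argument as in that earlier Lemma.

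The main obstacle is the bookkeeping in the decoupling step: neither $\sigma$ nor the $\sigma_i$ individually extends to a global section of $\mathcal{A}^{\dr}$ on $X$, so one must track which parts of the pole of $\psi_x^{-1}(\sigma)$ are cancelled by scalar multiples of $\psi_x^{-1}(\sigma_i)$ and which must be absorbed into $r_k^H$. Once this separation is carefully performed, matching to \eqref{defofc&r} is tautological.
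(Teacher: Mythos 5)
Your proposal follows essentially the same route as the paper's proof: pull the candidate generators back through the charts $\psi_x$ of \eqref{eqn:fix}, kill the polar $T_{i,x}$-components by adding $f_{i,x}$-multiples of the lifted sections $T_i+\sum_k c^H_{ik}S_k$, and identify the resulting regularity condition at each $x\in D(K)$, together with the normalisation $r_k^H(b)=0$ forced by $1\in b^*F^0\mathcal{A}^{\dr}$, with the defining property \eqref{defofc&r} of $(\underline{c},r)$ and its uniqueness. Two small points: the term $\sum_i f_{i,x}h_{ik,x}$ is not merely ``absorbed by the constants'' but cancels exactly against the $-h_{ik,x}$-terms of the corrected sections (indeed it vanishes identically by antisymmetry of $\tau_{ijk}$), and your displayed congruence has a sign slip --- it should read $g_{k,x}\equiv\loc_x(r_k^H)+\sum_i c_{ik}^H\loc_x(f_{i,x})$ modulo functions regular at $x$, which is the form matching \eqref{defofc&r} and giving $c^H_{ik}=c_i((g_{k,x})_x)$ with the stated sign.
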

\begin{proof}
We need to check that the sub-bundle of $\mathcal{A}^{\dr }|_Y $ spanned by $1+\sum _{k=1}^d r_k ^H S_k $, $T_i +\sum _{k=1}^d c_{ik}^H S_k$ $(g\leq i<2g)$, and $S_k$ $(d_0 \leq k\leq d)$ extends to a sub-bundle of $\mathcal{A}^{\dr }$. Via the charts $\psi _x $, the corresponding sections of $\mathcal{A}^{\dr }_x |_{U_x -x}$ are given by 
\begin{align*}
&\psi _x ^{-1}(1+\sum _{k=1}^d r_k ^H S_k )=1_x-\sum _{i=g}^{2g-1}f _{i,x} T_{i,x} +\sum _{k=1}^d ( r_k ^H - g_{k,x})S_{k,x} ,\\
&\psi _x ^{-1}(T_i +\sum _{k=1}^d c_{ik}^H S_k )=X_{i,x} +\sum _{k=1}^d (c_{ik}^H -\sum _{j=0}^{2g-1} \tau  _{ijk}f_{j,x})S_{k,x} ,\\
&\psi _x ^{-1}(S_k )=S_{k,x}.
\end{align*}
For this $\mathcal{O}(U_x -x)$-module to be the localisation of an $\mathcal{O}(U_x )$-module, it is sufficient that there are functions $\theta _{i,x} (g\leq i<2g)$ and $\chi _{k,x}$ ($1\leq k \leq d$) in $H^0 (U_x -x,\mathcal{O})$ such that 
$$
\psi _x ^{-1}(1+\sum _{k=1}^d r_k ^H S_k )+\sum _{i=0}^{2g-1} \theta _{i,x}(T_i +\sum _{k=1}^d  c^H _{ik}S_k )+\sum _{k=1}^d \chi _{k,x}\phi _x ^{-1}S_k \in H^0 (U_x ,\mathcal{A} ^{\dr }(X)).
$$
By examining $T_i$-coordinates, we find that $\theta _{i,x}\equiv f_{i,x} \mod H^0 (U_x ,\mathcal{O})$. For $k>d_0 $ we take $\theta _k = g_{k,x}$. Hence the only nontrivial condition on the $r_k ^H$ and $c_{ik}^H$ is that for $d-d_0 \leq k\leq d_0 $,
$$
g_{k,x}-\sum _{i=0}^{2g-1} c_{ik}^H f_{i,x}-r_k ^H \in H^0 (U_x ,\mathcal{O}),
$$
for all $x$,
which hold by definition of the functions $r$ and $\underline{c}$.
\end{proof}
\subsection{The universal connection of a hyperelliptic curve}\label{5.5}
In this subsection we use the hyperelliptic splitting to provide a simple description of the Hodge filtration on $A ^{\dr }(b,z)$ when $X$ is hyperelliptic. 
In general, given an automorphism $\sigma $ of $X$, fixing the point $b$, by the universal property of $\mathcal{A}_n ^{\dr }$, we obtain a unique morphism 
$
\mathcal{A}_n ^{\dr } \to \sigma  ^* \mathcal{A}_n ^{\dr }
$
sending $1$ to $\sigma ^* 1$. The connection $\sigma ^* \mathcal{A}_n ^{\dr }$ is in a natural way isomorphic to $\mathcal{A}_n ^{\dr }$. If $\sigma (Y)=Y$, then it will also be the case that $\sigma ^* \mathcal{A}_n ^{\dr }(Y)$ is isomorphic to $\mathcal{A}_n ^{\dr }(Y)$. In this case the connection structure on $\sigma ^* \mathcal{A}_n ^{\dr }$ is given by
$$
v \otimes 1 \mapsto -\sum _{i=0}^{2g+r-2} T_i v \otimes \sigma ^* \omega _i .
$$
Restricting to the fibre at $b$, we obtain an automorphism of the algebra $A_n ^{\dr }(Y)(b)$.
For example, suppose $X/\Q$ is hyperelliptic, given by 
$$
y^2 =f(x)=x^{2g+2}+a_{2g+1}x^{2g+1} +\cdots + a_0,
$$
$Y=X-\{ \infty ^{\pm }\}$, and $\omega _i =x^i dx /2y$. Then pulling back by the hyperelliptic involution $w$ sends $\mathcal{A}_n ^{\dr } (Y)$ to the connection $
v \otimes 1 \mapsto \sum _{i=0}^{2g+r-2} T_i v \otimes \omega _i .
$
Hence we deduce that with respect to this affine trivialisation, at any Weierstrass point $b$, the automorphism on the algebra $A_n ^{\dr }(Y)(b)$ induced by $w$ is simply given by $
T_i \mapsto -T_i .
$
\begin{Definition}
For an effective divisor $D$ on $X$ whose support has points $z_1 ,\ldots ,z_n $ in an algebraic closure, we let $D[1]$ denote the divisor $D+\sum _{i=1}^n z_i $.
\end{Definition}
\begin{Lemma}\label{boundthedivisor}We have the following:
\begin{enumerate}
\item The constants $c_{ik}^H$ are independent of base point.
\item Suppose $X$ is hyperelliptic, with defining equation as above, and the $\eta _i $ are taken to be a $K$-linear combination of the basis differentials $\omega _i =x^i dx/2y$. Then $c^H _{ik}(x)$ is zero for all $i$ and $k$, and $\xi _k =0$ for all $1\leq k \leq d$. 
\item Suppose $\eta _0 ,\ldots ,\eta _{2g-1}$ are differentials in $H^0 (X,\Omega (D))$, for some effective divisor $D$, and $\eta _0 ,\ldots ,\eta _{g-1}$ are a basis of $H^0 (X,\Omega )$. Then we have that for all $k<2g-1$, $r_k ^H \in H^0 (X,D[1])$.
\end{enumerate}
\end{Lemma}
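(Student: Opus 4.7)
My approach is to prove the three parts in succession, relying on the explicit formulas for $f_{i,x}$, $h_{ik,x}$, $g_{k,x}$ and on the defining characterisations of $r_k^H$, $c_{ik}^H$, and $\xi_k$ by local principal parts.

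Part (1) is immediate. The tuple $(g_{k,x})_x$ does not involve the basepoint $b$, and the constants $c_i(\pi)$ in \eqref{defofc&r} are determined purely by matching principal parts at each $x \in D$; the basepoint enters only through the normalisation $r(\pi)(b) = 0$, which shifts $r(\pi)$ by an additive constant on $Y$ without changing its principal parts. Hence $c_{ik}^H = c_i((g_{k,x})_x)$ is independent of $b$.

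For Part (2), I take $b$ to be a Weierstrass point of $X$ and exploit the hyperelliptic involution $w$. Since each $\eta_i$ is a $K$-linear combination of the differentials $\omega_j = x^j dx/(2y)$, we have $w^*\eta_i = -\eta_i$, so $w$ acts as $-1$ on $V_{\dr}$ and as $+1$ on $W_{\dr}$ (being a quotient of $\wedge^2 V_{\dr}$). As $w$ swaps $\infty_{\pm}$, compatible local parameters give $w^* f_{i,\infty_-} = -f_{i,\infty_+}$ and $w^* h_{ik,\infty_-} = -h_{ik,\infty_+}$ as formal Laurent series. I first show $\xi_k = 0$: using $\tau_{ijk} = -\tau_{jik}$, the expression \eqref{eqn:explicit_residue} for $\res_x(\xi_k)$ becomes manifestly invariant under the $w$-symmetry, yielding $\res_{\infty_+}(\xi_k) = \res_{\infty_-}(\xi_k)$. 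Combined with the residue theorem, both residues vanish, and since $C(Y/X)$ is one-dimensional and spanned by a log differential with nonzero residue, this forces $\xi_k = 0$. Granted $\xi_k = 0$, a direct check shows the integrand defining $g_{k,x}$ is $w$-invariant, so $w^* g_{k,\infty_-} = g_{k,\infty_+}$. The decomposition $g_{k,\infty_\pm} \equiv \loc_{\infty_\pm}(r_k^H) + \sum_i c_{ik}^H f_{i,\infty_\pm} \pmod{\text{regular}}$ then admits $(w^* r_k^H, -c_{ik}^H)$ as an equally valid solution (using $w^* f_{i,\infty_-} = -f_{i,\infty_+}$ and $w^* r_k^H(b) = r_k^H(w(b)) = 0$); uniqueness forces $c_{ik}^H = -c_{ik}^H$, i.e.\ $c_{ik}^H = 0$, and Part (1) upgrades this to all basepoints.

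For Part (3), the pole of $r_k^H$ at a point $x$ of multiplicity $n_x$ in $D$ is controlled by that of $g_{k,x} - \sum_i c_{ik}^H f_{i,x}$, and the subtracted term contributes pole order at most $n_x - 1$. To bound $g_{k,x}$, substitute $h_{ik,x} = \sum_j \tau_{ijk} f_{j,x}$ and apply the antisymmetry identity $\sum_{ij}\tau_{ijk}(f_i \eta_j + f_j \eta_i) = 0$ to collapse the integrand to $\sum_{ij}\tau_{ijk} f_j df_i - \loc_x(\xi_k)$, which in turn equals $\tfrac{1}{2}\sum_{ij}\tau_{ijk}(f_j df_i - f_i df_j) - \loc_x(\xi_k)$. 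The diagonal terms in the coefficient expansion of $f_j df_i - f_i df_j$ cancel, improving the pole order by one; a careful tally after applying $I \circ S$ is what produces the bound $r_k^H \in H^0(X, D[1])$. The main obstacle is precisely this bookkeeping through the successive antisymmetrisations and the $I \circ S$ operation to extract the exact pole order claimed.
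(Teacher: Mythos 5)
Parts (1) and (2) of your argument are correct. Part (1) is exactly the paper's proof: the basepoint enters only through the normalisation $r(\pi )(b)=0$, so it shifts $r$ by a constant and leaves the constants $c_i$ untouched. For part (2) the paper argues more functorially: at a Weierstrass basepoint the hyperelliptic involution acts on the universal object, acting by $-1$ on the $V_{\dr }$-graded piece and $+1$ on the $W_{\dr }$-graded piece, and uniqueness of the Hodge filtration (Corollary \ref{correspondingquotient}) forces the $F^0$-lifts, hence the $c^H_{ik}$, to be $w$-invariant and therefore zero; for $\xi _k$ it uses the same parity-of-residues observation you do. Your version replaces the functoriality step by a direct parity computation with $f_{i,x},h_{ik,x},g_{k,x}$ and the uniqueness of the $(\underline{c},r)$-decomposition; that is a legitimate, more hands-on route to the same conclusion. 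Two small points: you should pass to a finite extension of $K$ so that a Weierstrass point exists (the constants are defined over $K$ and unchanged by base extension --- the paper says this explicitly), and the justification that $\xi _k$ is determined by its residues should be quoted as the exact sequence $0\to C(Y/X)\to \oplus _x K\to K\to 0$ rather than ``spanned by a log differential'', since the spanning differential $\eta _{2g}$ need not have only simple poles.

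Part (3) is where your proposal has a genuine gap, and you have in effect flagged it yourself (``the main obstacle is precisely this bookkeeping''). The cancellations you invoke are real but insufficient: after substituting $h_{ik,x}=\sum _j \tau _{ijk}f_{j,x}$ and using antisymmetry, the integrand collapses to $\tfrac{1}{2}\sum _{i,j}\tau _{ijk}(f_{j,x}\,df_{i,x}-f_{i,x}\,df_{j,x})-\loc _x (\xi _k )$, and the diagonal cancellation only removes the top term: the off-diagonal terms $a\neq b$ in the coefficient expansion still produce pole order up to $2n_x -2$ in the differential, hence up to $2n_x -3$ for $g_{k,x}$ after $I\circ S$, where $n_x$ is the multiplicity of $x$ in $D$. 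Since the claimed bound $r_k ^H \in H^0 (X,\mathcal{O}(D[1]))$ allows pole order only $n_x +1$, your route stalls as soon as $n_x \geq 5$ unless further input is used --- for instance the holomorphy of $\eta _0 ,\ldots ,\eta _{g-1}$ together with which $\tau _{ijk}$ can be nonzero for the range $k<2g-1$ appearing in the statement, none of which your outline exploits. The paper's own (admittedly terse) proof of (3) is of a different nature: it appeals to the defining property \eqref{defofc&r} of the pair $(\underline{c},r)$ in Lemma \ref{extM1}, i.e.\ to the fact that $r_k ^H$ is characterised by matching principal parts against the $f_{i,x}$ (which have pole order at most $n_x -1$) inside the truncated space $\Pi $, rather than by estimating the explicit iterated-integral formula for $g_{k,x}$. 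So as written, your part (3) neither reproduces the paper's argument nor closes on its own; to repair it you would need either to control the principal parts of $g_{k,x}$ using the specific structure of the $\tau _{ijk}$ and of the chosen basis, or to argue directly from the characterisation of $(\underline{c},r)$ as the paper does.
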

\begin{proof}
For part (1), we use the characterisation of $c_{ik}$ from Lemma \ref{extM1}. By \eqref{defofc&r}, changing the basepoint $b$ changes $r$ by a constant, but does not alter the $r_i (\pi )$. \\
For part (2), it suffices to prove this after a finite extension of the base field, and by part (1) we may assume that $b$ is taken to be Weierstrass. As we did in the \'etale setting, we observe that $w$ then induces an automorphism of the bundle $\mathcal{A}|_Y$. With respect to the affine trivialisation of $\mathcal{A}$ at $b$, $w$ acts as -1 on the $V_{\dr }$ component, and acts as 1 on the $W_{\dr }$ component. By functoriality, the involution must respect $w$, and hence we conclude all the $r^H _{ik}$ must be zero.
Similarly, by the explicit description of $\xi _k $ given in equation \eqref{eqn:explicit_residue}, we see that the residue of $\xi _k$ is equal to the residue of a sum of differentials which are even with respect to the hyperelliptic involution, and hence zero.
For part (3), this follows from the defining property (see \eqref{defofc&r}) of the function $c$ used to define the $c_{ik} ^H $.
\end{proof}

We now explain how to carry out some of these calculations for a hyperelliptic curve. We consider $X/K$ given by
$
y^2 =f(x)=x^{2g+2}+a_{2g+1}x^{2g+1} +\cdots + a_0,
$
and let $Y=X-\{ \infty ^{\pm }\}$ and  $\omega _i =x^i dx /2y$. 

\subsection{Computing $(c,r) $ for even degree models of hyperelliptic curves}\label{subsec:hodge_hyp}
The set $\{\omega _0 ,\ldots ,\omega _{2g}\}$ forms a basis of $H^1 _{\dr }(Y)$, and the set $\{\omega _0 ,\ldots ,\omega _{g-1}\}$ forms a basis of 
$H^0 (X,\Omega ^1 )$. In general $\omega _0, \ldots ,\omega _{2g-1}$ will not form a basis of $H^1 _{\dr }(X)$, so we take $\eta _0 ,\ldots ,\eta _{2g}$ in the $K$-span of $\omega _0 ,\ldots ,\omega _{2g}$ forming a basis of $H^1 _{\dr }(Y)$ such that $\eta _0 ,\ldots ,\eta _{g-1}$ form a basis of $H^0 (X,\Omega ^1 )$ and $\eta _0 ,\ldots ,\eta _{2g-1}$ form a basis of $H^1 _{\dr }(X)$. Let $W_{\dr }$ be any filtered quotient of $\overline{\wedge ^2 V}_{\dr }$. By truncating the power series expansion of $x^{2g+2} \sqrt{f(x^{-1})}$, we find polynomials $f_{i,\infty ^\pm }$ in $uK[u]$ such that 
$$
\omega _i -df_i \in u^{-1}K[u]du.
$$
Similarly we find the functions $g_{i,\infty ^\pm }$ and $h_{i,\infty ^{\pm }} $.

In the notation of the previous section, $r=2$, $m=g$, and for $x=\infty ^{\pm }$, we may take the uniformiser $t_x$ to be $u:=x^{-1}$. The function 
$$
(c,r ) :(u^{-g}K[u]/K[u] )\times (u^{-g}K[u]/K[u] )\to H^0 (X,\mathcal{O}(g\infty ))\times K^g
$$
is given as follows: let $s(x)=\sum _{i=1 }^g s_i u^{-i}$ be a representative of an element of $u^{-g}K[u]/K[u] $. For any polynomial $s(x)$, we have $c(s(x),s(x))=s(x)$ and $r(s(x))=0$.
 Define $B=(B_{ij})$ by
$\loc _{\infty ^+ }(\omega _{i+g})-\sum B_{ij}u^j du \in K[u]du .$
Then $(c(s(x),-s(x))=0$ and $\underline{r}(s(x),-s(x))=B^{-1}(\underline{s})$, where $\underline{s}:=(s_1 ,\ldots ,s_g )$.

\subsection{Frobenius structure on the universal connection of a hyperelliptic curve}\label{frobenius_structure}
In order to complete the description of the filtered $\phi $-module structure, we need to describe the Frobenius action on the fibres $A^{\dr }(b,z)$ of the connection $\mathcal{A}^{\dr }$. Although it will not be needed in this paper, for completeness we briefly outline how this computation might be carried out for a general curve. Let $X_{\mathbb{F}_p }$ be the special fibre of a smooth model of $X$ over $\mathbb{Z}_p $, and let $\phi $ be an overconvergent lift of the absolute Frobenius morphism to some wide open subspace in the rigid analytification of $X_{\mathbb{Q}_p}$. The analytifications of the pointed connections $(\mathcal{A}_n ,1)$ may be viewed as universal pointed objects $(\mathcal{A}_n ^{\dagger },1)$ in the category of unipotent isocrystals on $X_{\mathbb{F}_p }$. The action of Frobenius on the category of unipotent isocrystals induces a Frobenius structure on $\mathcal{A}_n ^\dagger $, and one may reduce the problem of computing the action of Frobenius on $A^{\dr }(b,z)$ to that of computing this Frobenius structure.

For a hyperelliptic curve, we use the hyperelliptic splitting principle to determine the filtered $\phi $-module $A^{\dr }(b,z)$ when $b=z$ is a Weierstrass point. This gives a characterisation of the $\phi$-module structure of $A^{\dr }(b,z)$ for general $b$ and $z$ in terms of Coleman integrals.
\begin{Lemma}\label{phi-bit} 
\begin{enumerate}
\item Let $X$ be a hyperelliptic curve, and $\eta _i $ as in section \ref{subsec:hodge_hyp}. With respect to the affine trivialisation, the unipotent $\phi $-equivariant isomorphism
\[
\Q _p \oplus V_{\dr }\oplus W_{\dr }\stackrel{\simeq }{\longrightarrow }A^{\dr }(b,z)
\]
is given by 
\[
\left( \begin{array}{ccc} 1 & 0 & 0 \\
\sum _{i=0}^{2g-1} T_i \int ^z _b \eta _i & 1 & 0 \\
\sum _{1\leq k\leq d}\left( \sum _{0\leq i,j\leq 2g-1}\tau _{ijk}\int ^z _b \eta _i \eta _j \right) S_k & \sum _{0\leq i,j\leq 2g-1,1\leq k\leq d}-\tau _{ijk}T_i ^* \otimes S_k \int ^z _{w(b)}\eta _j & 1 \\
\end{array} \right) ,
\]
modulo $F^0 \Hom (V_{\dr },W_{\dr })$. 
\item For general smooth projective $X$, there are constants $c ^\phi _{ik}$, independent of $z$, such that the $\phi $-equivariant isomorphism is given by 
\[
\left( \begin{array}{ccc} 1 & 0 & 0 \\
\sum _{i=0}^{2g-1} T_i \int ^z _b \eta _i & 1 & 0 \\
\sum _{1\leq k\leq d}\left( \int ^z _b \xi _k +\sum _{0\leq i,j\leq 2g-1}\tau _{ijk}\int ^z _b \eta _i \eta _j \right) S_k & \sum _{0\leq i<2g,1\leq k\leq d}( c ^\phi _{ik}-\sum _{0\leq j<2g}\tau _{ijk} \int ^z _{b}\eta _j  )T_i ^* \otimes S_k & 1 \\
\end{array} \right).
\]
%\begin{enumerate}
%\item The unique $\phi $-equivariant splitting of 
%$$
%0 \to IA^{\dr }(b,z)\to A^{\dr }(b,z)\to \mathbb{Q}_p \to 0 
%$$ is given by 
%$$
%1  \mapsto 1+\sum _i \int ^z _b \omega _i T_i +\sum _{i<j}\sum _{k=1}^d \tau _{ijk}\int ^z _b (\omega _i \omega _j -\omega _j \omega _i )S_k .
%$$
%\item Modulo $F^0 \Hom (V_{\dr },W_{\dr })$, the unique $\phi $-equivariant splitting of 
%$$
%0\to W_{\dr }\to IA^{\dr }(b,z)\to V_{\dr }\to 0
%$$
%is given by 
%$$
%T_i   \mapsto T_i +\sum _{0\leq j<2g,1\leq k\leq d}\tau _{ijk}\int ^z _{w(b) }\omega _i S_k .
%$$
%\end{enumerate}
\end{enumerate}
\end{Lemma}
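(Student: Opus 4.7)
I would prove both parts simultaneously by identifying the claimed unipotent $\phi$-equivariant section with the Frobenius-equivariant parallel transport along the overconvergent $F$-isocrystal $\mathcal{A}^{\dr}$. Once $\mathcal{A}^{\dr}$ is extended to a Frobenius isocrystal on the tube of the reduction of $Y$ (which exists and is unique for a unipotent overconvergent isocrystal), Besser's theory of iterated Coleman integrals supplies a canonical $\phi$-equivariant identification of any two fibers: flat sections of $\mathcal{A}^{\dr}$ that are Frobenius-equivariant at $b$ remain Frobenius-equivariant at $z$. Thus the unipotent $\phi$-equivariant map $\Q_p\oplus V_{\dr}\oplus W_{\dr}\to A^{\dr}(b,z)$ is determined once one prescribes the $\phi$-equivariant lifts of $1$, $T_i$, and $S_k$ in $A^{\dr}(b)$ itself, and transports them along the connection.

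For part (2), I would carry out the flat section calculation in the affine trivialization using the explicit connection from equation \eqref{eqn:AdR}. Writing $s=a\cdot 1+\sum b_j T_j+\sum c_k S_k$, flatness gives $da=0$, $db_j=a\eta_j$, and $dc_k=a\xi_k-\sum_{j,l}b_j\tau_{jlk}\eta_l$. The parallel transport of $1\in A^{\dr}(b)$ (with $\phi$-fixed initial data $(1,0,0)$) integrates to $1 + \sum_j(\int_b^z\eta_j)T_j+\sum_k\bigl(\int_b^z \xi_k\pm\sum_{i,j}\tau_{ijk}\int_b^z\eta_i\eta_j\bigr)S_k$, which is the first column of the claimed matrix. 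The parallel transport of $T_i+\sum_k c_{ik}^\phi S_k$ (a $\phi$-equivariant lift of $T_i$ at $b$, with $c_{ik}^\phi$ encoding the splitting of $V_{\dr}\to A^{\dr}(b)/\Q_p$) has $b_j$ constant equal to $\delta_{ij}$, and $c_k(z)=c_{ik}^\phi-\sum_j\tau_{ijk}\int_b^z\eta_j$. The constants $c_{ik}^\phi$ are independent of $z$ by construction, which gives the middle column.

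For part (1), two simplifications pin down the remaining data in the hyperelliptic case. First, by Lemma \ref{boundthedivisor}(2), $\xi_k=0$, so the $\int_b^z\xi_k$ term vanishes. Second, the hyperelliptic splitting principle fixes $c_{ik}^\phi$ as follows. Take $b_0$ a Weierstrass point; then the hyperelliptic involution $w$ preserves $Y$, hence acts on $\mathcal{A}^{\dr}(Y)$, commutes with $\phi$, and preserves the Hodge filtration. As recalled in \S\ref{5.5}, with respect to the affine trivialization at $b_0$ it sends $T_i\mapsto -T_i$ and $S_k\mapsto S_k$. The $\phi$-equivariant lift of $T_i$ at $b_0$ must therefore also be $w$-equivariant, which forces $c_{ik}^\phi(b_0)=0$. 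For a general basepoint $b$, the change-of-basepoint formula expresses $A^{\dr}(b)$ as a twist of $A^{\dr}(b_0)$ by the Frobenius-equivariant path $\gamma^\phi\in P^{\dr}(b_0,b)$; a first-order commutator computation $\gamma^\phi T_i(\gamma^\phi)^{-1}\equiv T_i-2\sum_{j,k}\tau_{ijk}(\int_{b_0}^b\eta_j)S_k$ (using the antisymmetry of $\tau$) gives $c_{ik}^\phi(b)=-2\sum_j\tau_{ijk}\int_{b_0}^b\eta_j$. Combining with the identity $\int_{w(b)}^b\eta_j=2\int_{b_0}^b\eta_j$, which follows from $w(b_0)=b_0$ and $w^*\eta_j=-\eta_j$, yields $c_{ik}^\phi(b)-\sum_j\tau_{ijk}\int_b^z\eta_j=-\sum_j\tau_{ijk}\int_{w(b)}^z\eta_j$, as claimed.

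The main obstacle is the careful bookkeeping in the change-of-basepoint step: justifying that conjugation by the Frobenius-equivariant path in $A^{\dr}(b_0,b)$ indeed implements the $\phi$-equivariant identification $A^{\dr}(b_0)\simeq A^{\dr}(b)$ of algebras, computing the required commutator modulo $F^0\Hom(V_{\dr},W_{\dr})$, and correctly pairing the factor of $2$ coming from the $w$-symmetry of the integral against the factor of $2$ produced by the commutator. Everything else is a direct ODE calculation in the affine trivialization.
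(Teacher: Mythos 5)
Your proposal is correct and follows essentially the same route as the paper: both factor the map as the $\phi$-equivariant splitting of $A^{\dr }(b)$ followed by Frobenius-equivariant Coleman transport from $b$ to $z$ (giving part (2) with the constants $c^{\phi }_{ik}$), and both pin those constants down in the hyperelliptic case via the splitting principle at a Weierstrass point together with a change of basepoint. The only cosmetic difference is that you rederive the splitting constants directly on the de Rham side (the involution forcing $c^{\phi }_{ik}(b_0 )=0$ at a Weierstrass point, then conjugation by the Frobenius-invariant path, where your factor of $2$ coming from the antisymmetry of $\tau $ is exactly the factor $2$ appearing in Lemmas \ref{leftrighttwist} and \ref{hyperellipticsplitting}), whereas the paper simply invokes those lemmas and the comparison isomorphism to identify the splitting modulo $F^0 \Hom (V_{\dr },W_{\dr })$.
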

\begin{proof}
%Let $s^\phi $ denote the unique $\phi $-equivariant splitting in (1). Since the algebra structure on $A^{\dr }(b)$ respects the filtered $\phi $-module structure, we know that there is a splitting 
%of 
%$$
%0\to IA^{\dr }(b)\to A^{\dr }(b)\to \mathbb{Q}_p \to 0
%$$
%given by $1\mapsto 1$. Then $s^\phi $ given by the composite of this splitting with the unique unipotent $\phi$-equivariant isomorphism 
We compute the isomorphism as the composite of $\phi $-equivariant isomorphisms
\[
\Q _p \oplus V_{\dr }\oplus W_{\dr }\stackrel{\simeq }{\longrightarrow}A^{\dr }(b)\stackrel{\simeq }{\longrightarrow }A^{\dr }(b,z).
\]
%$$
%A^{\dr }(b)\stackrel{\simeq }{\longrightarrow }A^{\dr }(b,z).
%$$
We compute the latter isomorphism first. By definition, such an isomorphism is given by iterated Coleman integrals, as in \cite[Corollary 3.3]{besser:2002}. More precisely, 
for all $z_1 ,z_2 ,z_3 $ in $Y(\mathbb{Q}_p )$, the unipotent $\phi $-equivariant isomorphism
$$
A^{\dr }(z_1 ,z_2 )\stackrel{\simeq }{\longrightarrow }A^{\dr }(z_1 ,z_3 )
$$
is given by 
\begin{align*}
1 & \mapsto 1+\sum _{i=0}^{2g-1} \int ^{z_3 }_{z_2 }\eta _i \otimes T_i +\sum _{k=1}^d (\int ^z _b \xi _k +\sum _{0\leq i<j\leq 2g-1} \tau _{ijk}\int ^{z_3 }_{z_2 }(\eta _i \eta _j -\eta _j \eta _i ))S_k \\
T_i & \mapsto T_i -\sum _{0\leq j\leq 2g+r-2,1\leq k\leq d} \tau _{ijk}\int ^{z_3 }_{z_2 }\eta _j \otimes S_k , \\
S_k &\mapsto S_k .
\end{align*}
This proves part (2). For part (1), we compute the other isomorphism. By Lemmas \ref{leftrighttwist} and \ref{hyperellipticsplitting}, we know that, modulo $F^0 \Hom (V_{\dr },W_{\dr })$, the $\phi$-equivariant splitting is given by $T_i -\mapsto \sum _{j,k}\tau _{ijk} \int _{2b-D}\eta _j \otimes S_k .$
Again, by the definition of Coleman integration we have 
$$
t(A_1 ^{\dr } (b,z) ,IA^{\dr }(b,z) )=\sum _{0\leq i<g }\int ^z _b \eta _i T_i -\sum \tau _{ijk}\left(\int ^z _b \eta _i \right)\left(\int _{z+b-D}\eta _j \right) \otimes S_k
$$
and for $i<g$, we have
$
\int ^z _{w(b)}\eta _i =\int ^z _b \eta _i +\int _{2b-D}\eta _i .
$
\end{proof}
\begin{Lemma}\label{preheightisgivenby}
\begin{enumerate}
\item
Let $X$ be a hyperelliptic curve, and $\eta _i $ a basis of $H^1 _{\dr }(Y)$ as in section \ref{subsec:hodge_hyp}. Then 
the generalised pre-height of $A(b,z)$ is given by 
\begin{align*}
\widetilde{h}_{\mathfrak{p}}(A(b,z)) & = \sum _k \left( -r^H _k (z)+\sum _{0\leq i<j<2g}\tau _{ijk}\int ^{z } _{b} (\eta _i \eta _j -\eta _j \eta _i -(\int ^z _b \eta _i )(\int ^b _{w(b)}\eta _j )-(\int ^z _b \eta _j )(\int ^b _{w(b)}\eta _i )) \right) S_k \\
%\widetilde{h}_{\mathfrak{p}}(A(b,z)) & =\left( -r^H _k (z)+\sum _{0\leq i<j,i<g}\tau _{ijk}\int ^{z } _{b} (\eta _i \eta _j -\eta _j \eta _i )S_k +\sum _{g\leq i<2g,1\leq k\leq d}c_{ik}^H \int ^z _b \eta _i S_k\\
% &\quad + \sum _{i<g\leq j}\tau _{ijk}\left[ \int ^{z}_{b} (\eta _i \eta _j -\eta _j \eta _i ) +\left(\int ^{z}_{w(b) } \eta _i \right)\left(\int ^{z} _{b} \eta _j \right)+\int ^z _b \xi _k -r^H _k (z)  \right]S_k.
\end{align*}
\item Let $X$ be a general smooth projective curve. $Y\subset X$ and $( \eta _i )$ be as in section \ref{defn:YsubX} and $\xi $ as in section \ref{computehodge}. Then
\begin{align*}
\widetilde{h}_{\mathfrak{p}}(A(b,z)) & = \sum _k \left( -r^H _k (z)+\int ^z _b \xi _k -\sum _{0\leq i<g}c^\phi _{ik}\int ^z _b \eta _i +\sum _{0\leq i<j <2g}\int ^z _b (\eta _i \eta _j -\eta _j \eta _i )-\sum _{g\leq i<2g}c^H _{ik}\int ^z _b \eta _i \right) S_k
%\widetilde{h}_{\mathfrak{p}}(A(b,z)) & =\sum _{0\leq i<j,i<g}\tau _{ijk}\int ^{z } _{b} (\eta _i \eta _j -\eta _j \eta _i )S_k +\sum _{g\leq i<2g,1\leq k\leq d}c_{ik}^H \int ^z _b \eta _i S_k\\
% &\quad + \sum _{i<g\leq j}\tau _{ijk}\left[ \int ^{z}_{b} (\eta _i \eta _j -\eta _j \eta _i ) +\left(\int ^{z}_{w(b) } \eta _i \right)\left(\int ^{z} _{b} \eta _j \right)+\int ^z _b \xi _k -r^H _k (z)  \right]S_k.
\end{align*}
\end{enumerate}
\end{Lemma}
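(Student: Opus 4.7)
The proof is a direct calculation that combines three ingredients already available: the explicit formula for the $\phi$-equivariant trivialisation $s^\phi$ of $A^{\dr}(b,z)$ from Lemma \ref{phi-bit}, the description of the Hodge-compatible trivialisation $s^H$ coming from the computation of $F^0\mathcal{A}^{\dr}$ in Section \ref{computehodge} (in particular Lemma \ref{extM1}), and the pre-height formula $\widetilde{h}_{\mathfrak{p}}(M)=\gamma-\beta(s_1(\alpha))$ from Lemma \ref{explicit_formula}.

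First I would fix the Hodge splitting. Dually to the basis $\eta_0,\ldots,\eta_{2g-1}$, with $\eta_0,\ldots,\eta_{g-1}$ spanning $H^0(X,\Omega^1)$, we have $F^0V_{\dr}=\langle T_g,\ldots,T_{2g-1}\rangle$, and I would take the complement to be $\langle T_0,\ldots,T_{g-1}\rangle$, so that $s_1(\sum a_iT_i)=\sum_{i<g}a_iT_i$. Using the explicit sections of $F^0\mathcal{A}^{\dr}$ from Lemma \ref{extM1}, a Hodge-compatible section $s^H$ may be taken to send $1\mapsto 1+\sum_k r^H_k(z)S_k$, $T_i\mapsto T_i+\sum_k c^H_{ik}S_k$ for $i\geq g$, $T_i\mapsto T_i$ for $i<g$, and $S_k\mapsto S_k$.

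Next I would read off $u=(s^H)^{-1}s^\phi$ in block-matrix form from the formulas in Lemma \ref{phi-bit}. After composing with $(s^H)^{-1}$, the entry $\alpha$ is unchanged and equals $\sum_iT_i\int^z_b\eta_i$, while the Hodge correction contributes $-r^H_k(z)$ and (in the general case) $-\sum_{i\geq g}c^H_{ik}\int^z_b\eta_i$ to the $W$-valued entry $\gamma$. The entry $\beta$ is inherited from Lemma \ref{phi-bit} modulo $F^0\Hom(V_{\dr},W_{\dr})$, which is exactly the precision needed for $\beta(s_1(\alpha))$ to be well-defined in $W_{\dr}/F^0$. Substituting $s_1(\alpha)=\sum_{i<g}T_i\int^z_b\eta_i$ into $\gamma-\beta(s_1(\alpha))$ and using the antisymmetry $\tau_{ijk}=-\tau_{jik}$ to fold the unordered double sum $\sum_{i,j}\tau_{ijk}\int^z_b\eta_i\eta_j$ into $\sum_{i<j}\tau_{ijk}\int^z_b(\eta_i\eta_j-\eta_j\eta_i)$ gives the formula of part (2).

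For part (1), the hyperelliptic splitting principle of Lemma \ref{boundthedivisor}(2) makes $c^H_{ik}=0$ and $\xi_k=0$, so the Hodge correction reduces to $-r^H_k(z)$. Moreover, Lemma \ref{phi-bit}(1) writes $\beta$ directly in terms of $\int^z_{w(b)}\eta_j$. Decomposing the Coleman path as $\int^z_{w(b)}\eta_j=\int^z_b\eta_j+\int^b_{w(b)}\eta_j$ separates the $z$-dependent contribution from the constant "boundary" corrections $\int^b_{w(b)}\eta_j$, and a final application of antisymmetry in $\tau_{ijk}$ (now paired with the symmetric double integrals $\int^z_b\eta_i\cdot\int^z_b\eta_j$) produces the symmetric expression $-\sum_{i<j}\tau_{ijk}\bigl((\int^z_b\eta_i)(\int^b_{w(b)}\eta_j)+(\int^z_b\eta_j)(\int^b_{w(b)}\eta_i)\bigr)$ in the claimed form.

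The only delicate step is the bookkeeping: making sure that all the combinatorial cancellations occur modulo $F^0W_{\dr}$ (which is the ambient space for $\widetilde{h}_{\mathfrak{p}}$), so that the residual symmetric terms arising from $\int^z_b\eta_i\cdot\int^z_b\eta_j$ (via the shuffle relation) are absorbed correctly by the antisymmetry of $\tau_{ijk}$ together with the cup-product constraint \eqref{eq:cupzero}. No new ingredients are required beyond the three listed above.
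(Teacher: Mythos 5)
Your proposal is correct and follows essentially the same route as the paper: the paper's proof likewise combines the formula $\widetilde{h}_{\mathfrak{p}}(M)=\gamma-\beta(s_1(\alpha))$ from Lemma \ref{explicit_formula} with the $\phi$-equivariant trivialisation of Lemma \ref{phi-bit} and the Hodge-compatible trivialisation from Section \ref{computehodge} (resp. \ref{subsec:hodge_hyp}), writes out the block-matrix entries $\alpha,\beta,\gamma$, and substitutes, with the hyperelliptic simplifications $c^H_{ik}=0$, $\xi_k=0$ supplied by Lemma \ref{boundthedivisor}. Your additional bookkeeping (the path decomposition $\int^z_{w(b)}\eta_j=\int^z_b\eta_j+\int^b_{w(b)}\eta_j$ and the shuffle/antisymmetry manipulations) just makes explicit what the paper leaves implicit.
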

\begin{proof}
We compute the local pre-height using Lemma \ref{explicit_formula}. The $\phi $-equivariant isomorphism from $\Q _p \oplus V_{\dr }\oplus W_{\dr }$ is computed in Lemma \ref{phi-bit}. A filtration preserving isomorphism is computed in section \ref{computehodge} in the general case and \ref{subsec:hodge_hyp} in the hyperelliptic case. If we write the $F^0 \backslash U(\Q _p ,V_{\dr },W_{\dr })$ class of $A^{\dr }(b,z)$ as 
\[
\left(
\begin{array}{ccc}
1 & 0 & 0 \\
\alpha & 1 & 0 \\
\gamma & \beta & 1 \\
\end{array}
\right)
\]
then, in the general case, we have 
\begin{align*}
\alpha & = \sum _{i=0}^{2g-1} T_i \int ^z _b \eta _i , \qquad \beta = \sum _{0\leq i\leq 2g-1,1\leq k\leq d}(-c ^H _{ik}-\sum _{0\leq j<2g}\tau _{ijk}) T_i ^* \otimes S_k \int ^z _{w(b)}\eta _j \\
\gamma & = \sum _{1\leq k\leq d}\left(\int ^z _b \xi _k -r^H _k (z) +\sum _{0\leq i<2g}-c^H _{ik}\int ^z _b \eta _i +\sum _{0<j<2g}\tau _{ijk}\int ^z _b \eta _i \eta _j  \right) S_k \\
\end{align*}
%Hence in the general case the $F^0 \backslash U(\Q _p ,V_{\dr },W_{\dr })$-class of of $A^{\dr }(b,z)$ is given by
%\[
%\left( \begin{array}{ccc} 1 & 0 & 0 \\
%\sum _{i=0}^{2g-1} T_i \int ^z _b \eta _i & 1 & 0 \\
%\sum _{1\leq k\leq d}\left(\int ^z _b \xi _k -r^H _k (z) +\sum _{0\leq i<2g}-c^H _{ik}\int ^z _b \eta _i +\sum _{0<j<2g}\tau _{ijk}\int ^z _b \eta _i \eta _j  \right) S_k & \sum _{0\leq i\leq 2g-1,1\leq k\leq d}(-c ^H _{ik}+\sum _{0\leq j<2g}\tau _{ijk}) T_i ^* \otimes S_k \int ^z _{w(b)}\eta _j & 1 \\
%\end{array} \right).
%\]
In the hyperelliptic case, the formula for $\alpha $ is the same, and 
\begin{align*}
\beta & =  -\sum _{0\leq i,j\leq 2g-1,1\leq k\leq d}\tau _{ijk}T_i ^* \otimes S_k \int ^z _{w(b)}\eta _j \\
\gamma & = \sum _{1\leq k\leq d}\left(\int ^z _b \xi _k -r^H _k (z) +\sum _{0\leq i,j\leq 2g-1}\tau _{ijk}\int ^z _b \eta _i \eta _j  \right) S_k . \\
\end{align*}
The Lemma now follows from Lemma \ref{explicit_formula}.
\end{proof}
A corollary of Lemmas \ref{boundthedivisor} and \ref{preheightisgivenby} is the following explicit general formula. Let $\eta _0 ,\ldots ,\eta _{2g-1}$ be  differentials of the second kind in $H^0 (X,\Omega ^1 (D))$ for some effective divisor $D$. Let $|D|_{\mathbb{F}_p }$ denote the reduction mod $p$ of the support of $D$, and let $\mathcal{W}\subset X_{\mathbb{Q}_p }$ denote the tube of $p$-adic points which are not congruent to $|D|$ mod $p$. %Recall that for an effective divisor $D=\sum n_i Z_i $ (with $n_i >0$), we defined $D[1]=\sum (n_i +1)Z_i $.
\begin{Proposition}\label{prop5}
Suppose $m:=\rho _f (J)-1>r-g$. Then  there exist constants $a_{ijk},b_{ijk},c_{ijk},b_{ik},c_{ik}$, rational functions $s_k \in H^0 (X,\mathcal{O}(D[1]))$, and differentials of the third kind $\xi _k $, such that
\begin{align*}
X(\mathbb{Q}_p )_U \cap \mathcal{W} & \subset \{ z\in \mathcal{W}  : R _{T_1 ,\ldots ,T_{r-g} }(F_1,\ldots  ,F_{m} )=0 \},\\ 
F_k (T_1 ,\ldots ,T_m ,z) & =\sum _{1\leq i,j\leq r-g}a_{ijk}T_i T_j +\sum _{1\leq i\leq r-g,0\leq j<g}b_{ijk}T_i \int ^z _b \eta _k +\sum _{1\leq i\leq r-g}b_{ik}T_i \\
&\qquad +\sum _{0\leq i,j<2g}c_{ijk}\int ^z _b \eta _i \eta _j +\sum _{0\leq i < 2g}c_{ik}\int ^z _b \eta _i +\int ^z _b \xi _k +s_k.
\end{align*}
\end{Proposition}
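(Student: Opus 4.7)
The plan is to combine the elimination-theoretic description of $X(K_\mathfrak{p})_\alpha$ from Proposition~\ref{exactformula1} with the explicit Coleman-integral formula for the local pre-height given in Lemma~\ref{preheightisgivenby}(2), applied to the Artin--Tate quotient $W$ of $\overline{\wedge^2 V}$.

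I first take $W$ to be the Artin--Tate quotient studied in the proof of the Artin--Tate proposition, so that by Lemma~\ref{artin-tate-bound} the codimension $k_2 = \dim H^1_f(G_\mathfrak{p},W) - \dim \loc_\mathfrak{p} H^1_f(G_{K,T},W)$ equals $\rho_f(J) - 1 = m$. Choose rational divisors $D_1,\ldots,D_r$ whose classes span $J(K)\otimes \Q_p$, ordered so that $[D_1],\ldots,[D_g]$ map to a basis of the image in $H^1_f(G_\mathfrak{p},V)$; then $k_1 = r-g$, and the hypothesis $m > r-g$ gives $k := k_2-k_1 > 0$. A lift $P\in \Sel(U)_\alpha$ of a point $z$ can be written $P = \sum_i \lambda_i P_{D_i}$, and the constraint $\kappa_\mathfrak{p}(z-b) = \sum_i \lambda_i\,\loc_\mathfrak{p}\kappa(D_i)$ is a system of $g$ linear equations in the $\lambda_i$ whose right-hand side is linear in the abelian integrals $\int^z_b \eta_j$ for $0\le j<g$; solving, $T_i := \lambda_i$ ($1\le i\le r-g$) remain free while the remaining $\lambda_i$ become affine-linear in the $T_j$ and the $\int^z_b \eta_j$.

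For each $\varphi_k$ in a basis of $H^1_s(G_{K,T},W^*(1))$, Proposition~\ref{exactformula1} realises $X(K_\mathfrak{p})_\alpha$ as the common zero locus of the resultants $R_{T_1,\ldots,T_{r-g}}(\mathcal{F}^k_z)$. Substituting Lemma~\ref{preheightisgivenby}(2) into $h_{\mathfrak{p},\varphi_k}(j_\mathfrak{p}(z))$ produces the summand $-r^H_k(z) + \int^z_b \xi_k$ together with linear and iterated Coleman integrals in $z$; by Lemma~\ref{boundthedivisor}(3), $r^H_k \in H^0(X,\mathcal{O}(D[1]))$, which is regular on $\mathcal{W}$, so combined with the constants $\sum_{v\in T_0} h_{v,\varphi_k}(\alpha_v)$ this yields the term $s_k$. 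Eliminating the constrained $\lambda_i$ using the affine-linear expressions above, the quadratic form $\sum \lambda_i\lambda_j m_{ij\ell} h_{\varphi_k}(P_\ell)$ expands into: pure quadratic terms $\sum a_{ijk} T_iT_j$ from free-free products; cross terms $\sum b_{ijk} T_i \int^z_b \eta_j$ from free-constrained products; and constrained-constrained terms which, after invoking the shuffle relation $\int^z_b \eta_i \cdot \int^z_b \eta_j = \int^z_b \eta_i\eta_j + \int^z_b \eta_j\eta_i$, combine with the $\int^z_b(\eta_i\eta_j-\eta_j\eta_i)$ contributions from the pre-height to yield $\sum c_{ijk}\int^z_b \eta_i\eta_j + \sum c_{ik}\int^z_b \eta_i$. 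The linear form $\sum \lambda_i m_{i\ell} h_{\varphi_k}(P_\ell)$ similarly contributes the remaining $\sum b_{ik} T_i$ together with additional $\int^z_b \eta_i$ terms.

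The main technical work is careful bookkeeping: tracking how each type of term in $F_k$ arises from a specific source in the substitution (in particular, that the cross-terms $T_i \int^z_b \eta_j$ arise only from the free-constrained pairs in the quadratic form, whereas the pure-$z$ Coleman-integral terms arise from both the pre-height and the constrained-constrained pairs), and verifying via the shuffle identity that the final combination of iterated integrals takes the claimed shape. Once the terms have been matched, the fundamental theorem of elimination theory applied to the resulting $m$ polynomials in $T_1,\ldots,T_{r-g}$ yields the stated inclusion $X(K_\mathfrak{p})_U \cap \mathcal{W} \subset \{R_{T_1,\ldots,T_{r-g}}(F_1,\ldots,F_m) = 0\}$.
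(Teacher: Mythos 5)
Your proposal is correct and takes essentially the same route as the paper: the published proof likewise combines Proposition \ref{exactformula1} with the explicit pre-height formula of Lemma \ref{preheightisgivenby} (and Lemma \ref{boundthedivisor} for the claim $s_k \in H^0(X,\mathcal{O}(D[1]))$), using the shuffle identity $\int \eta_i \eta_j + \eta_j \eta_i = \int \eta_i \int \eta_j$ to put the products of abelian integrals into the stated iterated-integral shape. Your write-up simply makes explicit the bookkeeping (free versus constrained $\lambda_i$, sources of each term type) that the paper's two-line proof leaves implicit.
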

\begin{proof}
By Proposition \ref{exactformula1}, the set $X(\mathbb{Q}_p )_2 $ is contained in the intersection of the zeroes of $F_k $.  
\label{phi-bit}
Using the identity $\int \omega _i \omega _j +\omega _j \omega _i =\int \omega _i \int \omega _j $, we can write the formula for the generalised pre-height as 
$c_{ij}\int _b ^z \omega _i \omega _j  
+\sum c_i \int ^z _b \omega _i +\int ^z _b \eta + s. $
\end{proof}

%%%%%%%%%%%%%%%%%%%%%%%%%%%%%%%%%%%%%%%%%
\section{Computing $X(K_{\mathfrak{p}})_U$}\label{sec:algorithms}
%%%%%%%%%%%%%%%%%%%%%%%%%%%%%%%%%%%%%%%%%

\subsection{Theorem \ref{thm2}, general case}
We now return to the setting of Section \ref{subsec:KMS}. $X$ is a curve of the form
\[
y^2 =x^6 +ax^4+ ax^2 +1
\]
with $a\in K_0 $, where $K_0 $ is $\Q$ or a real quadratic field, and the base field $K$ is a totally real extension of $K_0$.

Let $T_{0,V}$ denote the set of primes of potential type V reduction. At each $v$ in $V_0$ we choose an ordering of the two components of the special fibre of the stable model of $X$ over $\mathcal{O}_{K_w}$. Over such an extension, the dual graph of a minimal regular model is then a ``line'', i.e., a graph with vertex set $\{ v_0 ,\ldots ,v_n \}$ and edge set $\{ e_0 ,\ldots ,e_{n-1} \}$ where $e_i $ is an edge from $v_i $ to $v_{i+1}$. Define $\pi _v :X(K_v )\to \mathbb{Q}$ to be the map sending a point $x$ to $i/n$, where $v_i $ is the unique vertex containing the reduction of $x$ (note that the ratio $i/n$ is independent of the choice of extension $K_w $). Finally, if $\alpha $ is a function from $T_{0,V}$ to $\mathbb{Q}$, we let $X(K)_{\alpha }$ denote the set of rational points for which $\pi _v (x)=\alpha (v)$ for all $v\in T_{0,V}$. The theorem involves the following hypothesis.
\\
\textit{Hypothesis (H): For all $v\in T_0 $ of potential type V reduction, the map
$$
j_v : X(K_v )\to H^1 (G_v ,U)
$$
factors as $X(K_v )\to \mathbb{Q}_p \to H^1 (G_v ,U)$, where the first map sends 
$z$ to $\pi _v (z)-\pi _v (b)$ and the second map is a vector space homomorphism.} 
\\
In future work of the second author and Alex Betts it will be shown, using the methods of Oda \cite{oda:1995}, that all $X$ in the family satisfy Hypothesis (H).

\begin{Theorem2*}Let $K_0$ be $\mathbb{Q}$ or a real quadratic field. Let $K|K_0 $ be a totally real extension. Let $X/K_0 $ be a genus 2 curve in the family $y^2 =x^6 +ax^4 +ax^2 +1$ whose Jacobian has Mordell--Weil rank 4 over $K$. Let $b\in X(K)$ denote the point $(0,1)$. Assume $X$ satisfies Hypothesis (H) (see below), and that there is a prime $p$ of $\mathbb{Q}$ such that 
\begin{itemize}
\item The prime $p$ splits completely in $K|\mathbb{Q}.$
\item The curve $X$ has good reduction at all primes above $p$, and the action of $G_K$ on $E[p]$ is absolutely irreducible.
\item If $E$ has complex multiplication by a CM extension $L$, then $L$ is not contained in $K(\mu  _p )$.

\end{itemize} 
Then there exist constants $\lambda _v, \mu _v \in \mathbb{Q}_p  , v\in T_{0,V}$ with the following property:
Suppose $z_0 $ is a point in $X(K)$ such that $f_1 (z_0 )\wedge f_2 (z_0 )$ is of infinite order in $\wedge ^2 E(K)$. Then for all $\alpha :T_{0,V}\to \mathbb{Q}$, $X(K)_{\alpha }$ is contained in the finite set 
of $z$ in $X(K_{\mathfrak{p}})$ satisfying $G(z) = 0$, where
\begin{itemize}
\item
\[
G(z) = \det \left( \begin{array}{cc}
F_1 (z)+\sum _{v\in T_{0,V}}\lambda _v (\alpha (v)-\pi _v (b))  & F_2 (z_0 )+\sum _{v\in T_{0,V}}\mu _v (\pi _v (z_0 )-\pi _v (b)) \\
F_1 (z_0 )+\sum _{v\in T_{0,V}}\lambda _v (\pi _v (z_0 )-\pi _v (b))  & F_2 (z)+\sum _{v\in T_{0,V}}\mu _v (\alpha (v)-\pi _v (b)) \\
\end{array} \right) ,
\]
\item 
\[
F_1 (z) = \int ^z _b (\omega _0 \omega _1 -\omega _1 \omega _0 )+\frac{1}{2}\int ^z _b \omega_0 \int ^b _{w(b)}\omega _1,
\]
\item 
\[
F_2 (z) =2\int ^z _{b }(-\omega _0 \omega _3 +a \omega _1 \omega _2 +2\omega _1 \omega _4 )-\frac{1}{2}x(z) -\int ^z _{b }\omega _0 \int ^{b }_{w(b)}\omega _3.
\]
\end{itemize}
\end{Theorem2*}

\subsection{Computing $(\underline{c}^H,\underline{r}^H)$ for the Kulesz--Matera--Schost family}
To complete the proof of Theorem \ref{thm2}, by Lemma \ref{lemma22}, it will be enough to show that, with respect to a suitable basis of $W_{\dr }/F^0 $, we have 
$$
\widetilde{h}_p (A(b,z))-\frac{1}{2}\widetilde{h}_p ([E_1 ,E_2 ])=(F_1 (z),-F_2 (z)).
$$
We shall prove this by explicitly determining the functions $f_{i,x},g_{i,x}, h_{i,x},  \underline{c}^H$ and constants  $\underline{r}^H$ from Section \ref{sec:hodgefiltration}.

Let $X$ be a hyperelliptic curve of the form $y^2 =x^6 +ax^4 +ax^2 +1$. Denote by $\{ \infty ^+,\infty ^- \}$ the points at infinity with respect to this model. Suppose $b$ is a rational point of $X$ and $U(b)$ is the quotient of the fundamental group defined in Section 1. Recall the maps $f_1 $ and $f_2 $ from the introduction.
The set $\{\omega _0 ,\ldots \omega _{4}\}$ forms a basis of $H^1 _{\dr }(Y)$ and a basis of $H^1 _{\dr }(X)$ is given by $\{\eta _0 =\omega _0 ,\eta _1 =\omega _1 ,\eta _2 =a\omega _2 +2 \omega _4 , \eta _2 =\omega _3 \}$. Let $T_0 ,T_1 ,T_2 ,T_3 $ be the corresponding dual basis.
For the quotient $\mathcal{A}^{\dr }(X)$, we find that all the $\xi _k $ are zero, so that 
$$1 \mapsto -\sum _{i=0}^3 \eta _i \otimes T_i, \qquad T_j \mapsto -\sum _{0\leq j<3,1\leq k \leq 3} \eta _i \otimes (\tau _{ijk} S_k ), \qquad S_k \mapsto 0$$
extends to a connection on $X$.

Let $\omega _E = dx/2y$ denote the canonical Weierstrass differential on $E$. Let $T_{E,1}$ and $T_{E,2}$ denote the basis of $H^1 _{\dr }(E-O)$ dual to $[\omega _E ],[x\omega_E]$.
The set $\{S_0 =T_{E,0} T_{E,0} ,S_1 =T_{E,0}T_{E,1},S_2 =T_{E,1} T_{E,1}\}$ forms a basis of $W_{\dr }$, and the set $\{S_0 ,S_1\}$ forms a basis of $W_{\dr }/F^0 $. Since the map $\tau $ factors through $\wedge ^2 V_{\dr }$, it is enough to specify its values 
on the elements $T_i \wedge T_j $. These may be calculated by observing that
\[
f_1 ^* [\omega _E ]=[\eta _1 ],\qquad f_{2*}[\omega _E ]=[\eta _0 ], \qquad f_1 ^* [x\omega _E ]=[\eta _3 ], \qquad f_2 ^* [x\omega _E ]=[\eta _2 ].
\]
Hence we deduce by equation \eqref{eqn:explicit_wedge} that
$$\tau (T_0 \wedge T_1 )= - S_0, \qquad \tau (T_0 \wedge T_3 ) =-\tau (T_1 \wedge T_2 )  = -S_1,\qquad \tau (T _2 \wedge T_3 )  = -S_2.$$
With respect to these bases, we find that $
\underline{c}^H=0 $ and $ \underline{r}^H=(0,\frac{1}{2}x(z)-\frac{1}{2}x(b))$.

\subsection{Local constants at primes of bad reduction}\label{local_justification}
We now explain how to compute local pre-heights at primes away from $p$, under the assumption of Hypothesis (H). First we explain why a non-trivial contribution at $v\in T_0 $ can only arise when $v$ is a prime of potential type $V$ reduction.

\begin{Lemma}
Suppose $X$ has potential good reduction at $v$. Then $j_v$ is trivial.
\end{Lemma}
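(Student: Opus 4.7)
The plan is to reduce to the good reduction case by passing to an extension, and then appeal to weight arguments coming from the Weil conjectures.

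First, choose a finite Galois extension $L_w | K_v$ over which $X$ acquires good reduction. The functoriality of the unipotent Kummer construction provides a commutative square relating $j_v : X(K_v) \to H^1(G_v, U)$ with $j_w : X(L_w) \to H^1(G_w, U)$ via the restriction map $\mathrm{res} : H^1(G_v, U) \to H^1(G_w, U)$. So it suffices to prove two things:
\begin{enumerate}
\item[(i)] $j_w$ is trivial.
\item[(ii)] $\mathrm{res}$ is injective on the pointed set $H^1(G_v, U)$.
\end{enumerate}

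For (i), since $X_{L_w}$ has good reduction, the Galois representation on $U$ is unramified, and the torsor $P(z)$ of paths is itself unramified for every $z \in X(L_w)$. Thus $j_w(z)$ lies in the image of $H^1(G_w/I_w, U) = H^1(\widehat{\mathbb{Z}}, U)$. The latter is computed by devissage along the central series of $U$: each graded piece $U^{(i)}/U^{(i+1)}$ is a sub-quotient of some tensor power of $V = H^1_{\mathrm{\acute{e}t}}(\overline{X}, \mathbb{Q}_p)^*$, which is pure of weight $-1$ by the Weil conjectures. Hence on each graded piece, the geometric Frobenius $F_w$ acts with eigenvalues of absolute value $q_w^{i/2}$ for some $i < 0$, so $F_w - 1$ is invertible. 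By a standard inductive argument (as in the proof of Lemma \ref{locallemma}), this forces $H^1(\widehat{\mathbb{Z}}, U) = \{1\}$, so $j_w \equiv 1$.

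For (ii), use the nonabelian inflation-restriction sequence \cite[Proposition I.5.38]{serregc:1997}: the kernel of $\mathrm{res}$ is $H^1(\mathrm{Gal}(L_w | K_v), U^{G_w})$. It suffices to show $U^{G_w} = \{1\}$. Again by devissage on the central series, this reduces to showing that each graded piece of $U$ has trivial $G_w$-invariants. Each such graded piece is a sub-quotient of a tensor power of $V$, which is pure of weight $-1$. Since $G_w$ has finite index in $G_v$, the Frobenius $F_w^n$ for some $n \geq 1$ lies in $G_w$ and still acts with eigenvalues of negative weight on all graded pieces; in particular $1$ is not an eigenvalue, so the invariants vanish. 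Hence $U^{G_w} = \{1\}$, making $\mathrm{res}$ injective.

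Combining (i) and (ii) gives the triviality of $j_v$. The only subtle step is (ii), since nonabelian cohomology does not have the Hochschild--Serre machinery available in general; here the crucial input is that $U$ is unipotent with graded pieces of strictly negative weight, which guarantees the vanishing of invariants needed to run the argument.
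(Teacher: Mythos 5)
Your proof is correct and follows essentially the same route as the paper's: pass to a finite extension $L_w | K_v$ over which $X$ acquires good reduction, apply the nonabelian inflation–restriction sequence of pointed sets, and kill $U^{G_w}$ by purity of the graded pieces over $L_w$ (your step (i), the triviality of $j_w$ via unramifiedness of the path torsor at a good prime away from $p$, is exactly the Kim--Tamagawa input the paper cites rather than reproves). One small caution: the exact sequence only identifies the fibre of $\res$ over the neutral class with the image of $H^1(\Gal (L_w |K_v ),U^{G_w})$, so it yields triviality of that fibre rather than injectivity of $\res$ on all of $H^1(G_v ,U)$ --- but since triviality of that single fibre is all your argument actually uses, the conclusion stands.
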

\begin{proof}
Recall from \cite[I.5.8]{serregc:1997} that, given a profinite group $G$, closed normal subgroup $H$, and $G$-group $A$, we get an exact sequence of pointed sets
\[
H^1 (G/H,A^H ) \to H^1 (G,A)\stackrel{\res }{\longrightarrow } H^1 (H,A).
\]
Applying this when $G=G_v $, $H=G_w$ is the Galois group of a finite extension $L_w$ of $K_v$ over which $X$ acquires good reduction. The commutative diagram
$$
\begin{tikzpicture}
\matrix (m) [matrix of math nodes, row sep=3em,
column sep=3em, text height=1.5ex, text depth=0.25ex]
{X(K_v) & H^1 (G_v ,U)  \\
 X(L_w) & H^1 (G_w,U) \\ };
\path[->]
(m-1-1) edge[auto] node[auto] {$j_v$} (m-1-2)
edge[auto] node[auto] {} (m-2-1)
(m-1-2) edge[auto] node[auto] {$\res $} (m-2-2)
(m-2-1) edge[auto] node[auto] { } (m-2-2);
\end{tikzpicture} $$
implies that the composite $\res \circ j_v$ is trivial. Hence to prove the Lemma it is enough to show that $U^{G_w}$ is trivial, which may be seen from the fact that $V$ and $\Sym ^2 V_E$ are pure of weight $-1$ and $-2$ respectively over $L$.
\end{proof}

\begin{Lemma}
Suppose $E$ does not have potential good reduction at $v$. Then $j_v$ is trivial.
\end{Lemma}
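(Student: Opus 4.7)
My plan is to adapt the inflation--restriction argument used in the preceding lemma to the potential multiplicative reduction case. Let $L_w|K_v$ be a finite extension over which $E$ acquires split multiplicative reduction, and write $G_w=\Gal(\overline{K_v}|L_w)\subset G_v$. Because $U^{G_w}$ is a pro-$\Q_p$-vector space object on which the finite group $\Gal(L_w|K_v)$ acts, the cohomology $H^1(\Gal(L_w|K_v),U^{G_w})$ vanishes, so that the restriction map $H^1(G_v,U)\to H^1(G_w,U)$ is injective on the image of $j_v$. It therefore suffices to verify that the restriction $\res_{G_w}j_v(z)$ is trivial for every $z\in X(K_v)$.

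For that restriction, Lemma \ref{locallemma} gives $H^1(G_w,V)=H^1(G_w,V^*\otimes \Sym^2 V_E)=0$ for $w\nmid p$, so the mixed extension $A(b,z)$ viewed as a $G_w$-representation is classified purely by its pre-height $\widetilde{h}_w(A(b,z))\in H^1(G_w,\Sym^2 V_E)$ (modulo $V^{G_w}=0$). Over $L_w$ the representation $V_E$ sits in the Tate extension $0\to\Q_p(1)\to V_E\to \Q_p\to 0$, so $\Sym^2 V_E$ inherits a weight filtration with graded pieces $\Q_p,\Q_p(1),\Q_p(2)$, and under the Tate parametrization $E(L_w)\simeq L_w^\times/q^{\Z}$ the Kummer classes $f_{i*}\kappa_E(z-b)\in H^1(G_w,V_E)$ factor through the weight $-2$ subspace $H^1(G_w,\Q_p(1))$.

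The key step is to express $\widetilde{h}_w(A(b,z))$ explicitly. Using the hyperelliptic splitting (Lemma \ref{hyperellipticsplitting}), the decomposition $V\simeq V_E\oplus V_E$, and the map $\wedge^2 V\to\Sym^2 V_E$ from \eqref{eqn:explicit_wedge}, one writes the pre-height as a symmetric pairing of the Kummer classes $f_{1*}\kappa_E(z-b)$ and $f_{2*}\kappa_E(z-b)$ together with a contribution coming from $\kappa(b+z-D)$. Since each ingredient factors through $H^1(G_w,\Q_p(1))$ under the Tate parametrization, the resulting contribution to $H^1(G_w,\Sym^2 V_E)$ is controlled by groups of the form $H^2(G_w,\Q_p(n))$ with $n\geq 2$, all of which vanish for $w\nmid p$ by local Tate duality since $H^0(G_w,\Q_p(1-n))=0$. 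This forces $\widetilde{h}_w(A(b,z))=0$, completing the reduction.

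The main obstacle is the bookkeeping in the second half: matching the explicit twisting-construction description of $A(b,z)$ against the weight filtration on $\Sym^2 V_E$ coming from the Tate uniformisation, so that one can genuinely see the pre-height as lying in the piece of $H^1(G_w,\Sym^2 V_E)$ that is killed by the cohomological vanishing above. Once that identification is in hand, the remaining Galois-cohomological vanishings are routine, and combining the injectivity of the restriction on $\im j_v$ with the triviality of $\res_{G_w}j_v(z)$ yields the lemma.
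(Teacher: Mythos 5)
Your overall strategy -- restrict to an extension $L_w$ where $E$ becomes a Tate curve, note that the mixed extension $A(b,z)$ over $G_w$ is classified by its class in $H^1(G_w,\Sym^2 V_E)$ (since $H^1(G_w,V)=H^1(G_w,V^*\otimes \Sym^2 V_E)=0$ by Lemma \ref{locallemma}), and then kill that class -- is reasonable, but the final step, which is the entire content of the lemma, has a genuine gap. You never prove the vanishing you need, namely $H^1(G_w,\Sym^2 V_E)=0$, and the mechanism you propose cannot deliver it: the vanishing of obstruction groups $H^2(G_w,\Q_p(n))$ for $n\geq 2$ says nothing about whether a given class in $H^1(G_w,\Sym^2 V_E)$ is zero. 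Worse, a graded-piece bookkeeping argument of the kind you sketch cannot work on its own, because the graded pieces of the weight filtration on $\Sym^2 V_E$ are $\Q_p(2),\Q_p(1),\Q_p$, and for $w\nmid p$ both $H^1(G_w,\Q_p(1))$ and $H^1(G_w,\Q_p)$ are one-dimensional, not zero. What forces the cohomology of $\Sym^2 V_E$ itself to vanish is precisely the \emph{non-splitness} of the Tate filtration (the Tate parameter has nonzero valuation), and your sketch never uses it in the cohomological computation. The correct argument is: $H^1(G_v,\Q_p(2))=0$ for weight reasons, and $(\Sym^2 V_E)/\Q_p(2)$ is a \emph{nontrivial} extension of $\Q_p$ by $\Q_p(1)$, so its $H^0$, and hence (by the local Euler characteristic and duality, as in Lemma \ref{locallemma}) its $H^1$, vanish; combining gives $H^1(G_v,\Sym^2 V_E)=0$.

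You should also note that the detour through the Tate parametrisation of Kummer classes and an explicit formula for the pre-height is unnecessary (and partly vacuous, since $H^1(G_w,V_E)=0$, so those Kummer classes are automatically trivial). The paper argues more directly over $G_v$ itself: using the exact sequence of pointed sets $H^1(G_v,\Sym^2 V_E)\to H^1(G_v,U)\to H^1(G_v,V)$ together with $H^1(G_v,V)=0$ and the vanishing of $H^1(G_v,\Sym^2 V_E)$ just described, one gets that the whole set $H^1(G_v,U)$ is trivial, so $j_v$ is trivial without any restriction to $G_w$, inflation--restriction, or mixed-extension classification. Your first-paragraph reduction is thus also superfluous (and its justification that $H^1(\Gal(L_w|K_v),U^{G_w})$ vanishes because $U^{G_w}$ is ``a pro-$\Q_p$-vector space object'' is loosely stated; here $U^{G_w}$ is in fact trivial, again by the non-splitness of the Tate extension). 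To repair your write-up, replace the third paragraph by the vanishing argument for $H^1$ of $\Sym^2 V_E$ using the non-split filtration, at which point the rest of your scaffolding can be discarded.
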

\begin{proof}
It will be enough to prove that $H^1 (G_v ,U)$ is trivial. This is done using the exact sequence of pointed sets
\[
H^1 (G_v ,\Sym ^2 V_E )\to H^1 (G_v ,U)\to H^1 (G_v ,V).
\]
Recall from Lemma \ref{locallemma} that $H^1 (G_v ,V)=0$. Hence it is enough to show that $H^1 (G_v ,\Sym ^2 V_E )=0$. This is well known (see e.g. \cite[Lemma 2.10]{flach:1992}) but we recall the proof for the sake of completeness. Let $L_w$ be a finite extension of $K_v$ over which $E$ acquires semi-stable reduction. Then $\res _{G_w}V_E$ is a nontrivial extension of $\mathbb{Q}_p $ by $\mathbb{Q}_p (1)$. Hence $\Sym ^2 V_E $ is an extension
$$
0\to \mathbb{Q}_p (2)\to \Sym ^2 V_E \to (\Sym ^2 V_E )/\mathbb{Q}_p (2)\to 0,
$$
and $(\Sym ^2 V_E )/\mathbb{Q}_p (2)$ is a nontrivial extension of $\mathbb{Q}_p $ by $\mathbb{Q}_p (1)$. Then we have $H^1 (G_v ,\Sym ^2 V_E )=0$ as in Lemma \ref{locallemma}, and $H^1 (G_v ,\mathbb{Q}_p (2))$ $=0$ for weight reasons.
\end{proof}

The only remaining case is where $E$ has potential good reduction but $X$ does not, which implies that $X$ has potential type $V$ reduction. Again using injectivity of the restriction map we can recover $j_{2,v}$ from its image in $H^1 (G_{L},\Sym ^2 V_E)$ which is determined (up to a scalar) by Hypothesis (H).

\subsection{Completion of proof}
We now explain how to use this explicit description of generalised heights on $X$ to prove Theorem \ref{thm2}.
%Let $\mathcal{A}^{\dr }=\mathcal{A}^{\dr }(X)$ be the bundle from Section \ref{following_properties}. For each $x\in Y $ we have an isomorphism $A^{\dr }(b,x) \simeq \mathbb{Q}_p \oplus V_{\dr } \oplus 
%W_{\dr }$ coming from the isomorphism $\mathcal{A}|_Y \simeq (\mathbb{Q}_p \oplus V_{\dr } \oplus W_{\dr })\otimes \mathcal{O}_Y $. 
%As explained in Lemma \ref{phi-bit}, the $\phi$-equivariant isomorphism
%$
%A(b)\stackrel{\simeq }{\longrightarrow }A(b,z)
%$
%sends $1$ to $\sum _{i=0}^{3} \int ^z _b \omega _i T_i +\sum  _{0\leq j\leq 3,1\leq k\leq 3}\tau _{ijk}\int ^z _b \omega _i \omega _j S_k$. 
%The $S_0$-coordinate of this may be written as  
%$2 \int ^z _b (-\omega _0 \omega _3 +a\omega _1 \omega _2 +2 \omega _1 \omega _4 )+(\int ^z _b \omega _0) (\int ^z _b \omega _3 ) -a(\int ^z _b\omega _1 ) (\int ^z _b \omega _2 ) 
%-2 (\int ^z _b \omega _1 )(\int ^z _b \omega _4 )$. 
%By Lemma \ref{phi-bit} the $\phi $-equivariant splitting of $IA(b,z)$ sends $T_i $ to 
%$$
%\sum _{0\leq j<4,1\leq k\leq 3} \tau _{ijk}\left(\int _{z+b-D}\omega _j \right)S_k =\sum _{0\leq j<4,1\leq k\leq 3} \tau _{ijk}\left(\int _{z+b-D}\omega _j \right)S_k.
%$$ 
This gives the following proposition.
\begin{Proposition}
With respect to the basis $S_0 ,S_1 $ of $(\Sym ^2 V_E ^{\dr })/F^0 $, the local heights  $\widetilde{h}_{\mathfrak{p}}(A(b,z))$ and $\widetilde{h}_{\mathfrak{p}}([E_1 ,E_2 ])$ are given by 
\[
\widetilde{h}_{\mathfrak{p}}(A(b,z))=  (F_1 (z)+\frac{1}{2}\int ^z _b \omega _0 \int ^z _{w(b)}\omega _1 )S_0 -F_2 (z) S_1 , \qquad
\widetilde{h}_{\mathfrak{p}}([E_1 ,E_2 ])  = \frac{1}{2}\int ^z _b \omega _0 \int ^z _{w(b)}\omega _1 S_0 .
\]
\end{Proposition}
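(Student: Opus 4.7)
The plan is to compute both generalised pre-heights by directly substituting the structure constants of this family into the explicit formula of Lemma \ref{preheightisgivenby}(1), and then simplifying using Coleman-integral identities. First I will record the inputs: by the discussion immediately preceding the proposition, for the quotient $\mathcal{A}^{\dr}(X)$ attached to $W=\Sym^2 V_E$ we have $\xi_k=0$, $\underline{c}^H=0$, and $\underline{r}^H=(0,\tfrac{1}{2}(x(z)-x(b)))$; the nontrivial antisymmetric structure constants of $\tau:\wedge^2 V_{\dr}\to W_{\dr}$ modulo $F^0$ are given by $\tau(T_0\wedge T_1)=-S_0$, $\tau(T_0\wedge T_3)=-S_1$ and $\tau(T_1\wedge T_2)=+S_1$. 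These hold by the explicit identifications $f_1^*[\omega_E]=[\eta_1]$, $f_2^*[\omega_E]=[\eta_0]$, $f_1^*[x\omega_E]=[\eta_3]$, $f_2^*[x\omega_E]=[\eta_2]$ combined with the formula \eqref{eqn:explicit_wedge} for $\wedge^2 V\to\Sym^2 V_E$.

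Next I will extract the $S_0$- and $S_1$-coefficients of $\widetilde h_{\mathfrak p}(A(b,z))$ from Lemma \ref{preheightisgivenby}(1). For the $S_0$-coefficient only the pair $(i,j)=(0,1)$ contributes (since $\tau_{ij0}$ vanishes otherwise), and $r_0^H=0$; this yields an expression of the shape $\pm[\int_b^z(\omega_0\omega_1-\omega_1\omega_0) - I_0 J_1 - I_1 J_0]$, where $I_i=\int_b^z\omega_i$ and $J_i=\int_{w(b)}^b\omega_i$. Applying the shuffle relation $\int_b^z(\omega_0\omega_1+\omega_1\omega_0)=I_0 I_1$ and the path-composition identity $\int_{w(b)}^z\omega_1=J_1+I_1$ rewrites this in the target form $F_1(z)+\tfrac{1}{2}\int_b^z\omega_0\int_{w(b)}^z\omega_1$. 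For the $S_1$-coefficient, the contributing pairs are $(0,3)$ and $(1,2)$, plus the contribution $-r_1^H(z)=-\tfrac12 x(z)+\tfrac12 x(b)$; grouping the iterated integrals $\int_b^z(-\omega_0\omega_3+\omega_1\omega_2)$ and re-expressing $\eta_2=a\omega_2+2\omega_4$ yields the combination $2\int_b^z(-\omega_0\omega_3+a\omega_1\omega_2+2\omega_1\omega_4)$ that appears in $F_2(z)$. The final identity $-F_2(z)$ emerges after absorbing the remaining boundary terms $\int_b^z\omega_0\cdot J_3$ as in $F_2$.

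For $\widetilde h_{\mathfrak p}([E_1,E_2])$ I will use the direct construction: by Definition~6.11, $[E_1,E_2]$ is $E_1\otimes E_2$ modulo the alternating part $\wedge^2 V$, and $D_{\cry}$ commutes with this quotient. Hence the matrix representative of $[E_1,E_2]$ in $F^0\backslash U(\Q_p,V_{\dr},W_{\dr})$ is the $\tau_*$-symmetrisation of the Kummer classes of $E_1$ and $E_2$, computed in terms of Coleman integrals on $E$. Applying Lemma \ref{explicit_formula} to this matrix and projecting onto $S_0,S_1$ produces the claimed expression $\tfrac{1}{2}\int_b^z\omega_0\int_{w(b)}^z\omega_1\cdot S_0$ (the $S_1$-coefficient vanishes because $f_{2*}\kappa(2b-D)=0$, as noted in the proof of Lemma \ref{lemma21}(2)). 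The main bookkeeping obstacle throughout is to keep track of the sign conventions for $\tau_{ijk}$ (distinguishing between $\tau$ as a map on $V\otimes V$ versus on $\wedge^2 V$) and to reconcile the Coleman-integral boundary contributions $J_i=\int_{w(b)}^b\omega_i$ between the two formulas, but the hyperelliptic splitting principle (Lemma \ref{boundthedivisor}), which forces $r_0^H=0$ and all $c_{ik}^H=0$, reduces the problem to a direct finite calculation.
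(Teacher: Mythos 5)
Your proposal follows essentially the same route as the paper: the expression for $\widetilde h_{\mathfrak p}(A(b,z))$ comes from substituting the data computed for this family ($\xi_k=0$, $\underline c^H=0$, $\underline r^H=(0,\tfrac12 x(z)-\tfrac12 x(b))$, and the structure constants $\tau_{ijk}$) into Lemma \ref{preheightisgivenby}(1) and simplifying via the shuffle and path-composition identities, while $\widetilde h_{\mathfrak p}([E_1,E_2])$ is obtained by writing its class in $F^0\backslash U(\Q_p,V_{\dr},W_{\dr})$ and applying the explicit pre-height formula of Lemma \ref{explicit_formula}, which is exactly the paper's argument. The one small correction: the vanishing of the $S_1$-component of $\widetilde h_{\mathfrak p}([E_1,E_2])$ is seen in the paper not from $f_{2*}\kappa(2b-D)=0$ but from the fact that both classes $[E_1],[E_2]$ are multiples of $T_{E,0}$ in $H^1_f(G_{\mathfrak p},V_E)\simeq \Q_p\cdot T_{E,0}$, so every product occurring in the matrix lies in $\Q_p\cdot S_0$.
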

\begin{proof}
We have an isomorphism $H^1 _f (G_{\mathfrak{p}} ,V_E )\simeq \mathbb{Q}_p .T_{E,0} $ using the basis of $H^1 _{\dr }(E)$ above, and given extensions $[E_1 ]=\lambda _1 .T_{E,0} $ and $[E_2 ]=\lambda _2 .T_{E,0}$. Then the class of $[E_1 ,E_2 ]$ in $F^0 \backslash U(\Q _p ,V_{\dr },W_{\dr })$ is given by 
\[
\left( \begin{array}{ccc}
1 & 0 & 0 \\
\lambda _1 T_{E,0}+\lambda _2 T_{E,1} & 0 & 0 \\
\lambda _1 \lambda _2  S_0 & \lambda _2 T_{E,0}^* \otimes S_0 + \lambda _1 T_{E,0}^* \otimes S_0 & 1 \\
\end{array} \right).
\]
Hence the pre-height of $[E_1 ,E_2 ]$ is given by $-\lambda _1 \lambda _2 $, and the result follows from Lemma \ref{lemma21}. 
% the pre-height of $[E_1 ,E_2 ]$ at $\mathfrak{p}$ is equal to $(\lambda _1 \lambda _2 ,0)$. 
% Hence we find 
%
%Note that
%\begin{align*}
%F_2 (z)&=\int ^z _b (-\omega _0 \omega _3 +\omega _3 \omega _0 +a\omega_1 \omega _2 -a\omega _2 \omega _1 +2\omega _1 \omega _4 -2\omega _4 \omega _1 )+
%\frac{1}{2}(x(b )-x(z)) \\
%&  -\int ^z _{b }\omega _0 \int ^{z }_{w(b) }\omega _3
%+a\int ^z _{b }\omega _1\int ^{z }_{w(b) }\omega _2 +2\int ^z _{b } \omega _1 
%\int ^{z }_{w(b) }\omega _4 .
%\end{align*}
%Hence combining this with Lemma \ref{preheightisgivenby} gives the desired result.
\end{proof}
 Hence we find 
$$
\widetilde{h}_{\mathfrak{p}}(A(b,z))-\widetilde{h}_{\mathfrak{p}}([E_1 ,E_2 ])=(F_1 (z),F_2 (z)).
$$

\subsection{Examples}
\subsubsection{Example 1: $K=\mathbb{Q}$, $a=31$, $p=3$}\label{Ex1}
The curve $E_{31}$ has rank 2 over $\mathbb{Q}$. To determine the local constants, we first need to find the primes of potential type $V$ reduction.
$X$ has potential good reduction at all primes away from $2$ and $7$, which are both of potential type V reduction. 
\begin{enumerate}
\item $v=7$: in this case we find that all $\mathbb{Q}_7$-points reduce to a common component of the minimal regular model over $\mathbb{Z}_7$. Hence they reduce to a common component of the stable model of $X$ over a finite extension of $\mathbb{Q}_7$, and so by (H) the contribution at 7 is zero. 
\item $v=2$: we observe $H^1 (G_{\mathbb{Q}_2 },\Sym ^2 V_E )=0$, which implies that $H^1 (G_{\mathbb{Q}_2 },U )=0$. One way to see this is to note that for $H^1 (\mathbb{Q}_2 ,\Sym ^2 V_E )$ to be nonzero, it is necessarily the case that $\Hom _{G_{\mathbb{Q}_2 }}(T_3 E,T_3 E)$ has rank bigger than 1, which means that the action of inertia at 2 must factor through an abelian subgroup of GL$_2 (\mathbb{F}_3$). This does not happen at $a=31$, because $E$ does not acquire good reduction over any $(\mathbb{Z}/2 )^2$ or degree 3 extension of $\mathbb{Q}_2 $.  
\end{enumerate}
Hence our equation for rational points simplifies to $F_1(z)F_2(z_0) = F_1(z_0)F_2(z)$. The set of solutions is tabulated below. We find $X(\Q _3 )_U$ appears to contain 8 non-rational points.
\begin{center}
 \begin{tabular}{|c |  r  |c | c |}
    \hline
$\overline{z}\in X(\F_3)$ & $x(z) \in \Z_p$ & $z \in X(\Q)$ \\
  \hline
  & $O(3^{7})$ & $(0,\pm 1)$ \\ 
 $\overline{(0, \pm 1)}$ & $2 \cdot 3 + 2 \cdot 3^{3} + 2 \cdot 3^{5} + O(3^{7})$ &  \\
 & $3 + 2 \cdot 3^{2} + 2 \cdot 3^{4} + 2 \cdot 3^{6} + O(3^{7})$ &  \\
\hline 
 & $1 + O(3^{7})$ & $(1,\pm 8)$\\ 
 $\overline{(1, \pm 2)}$ & $1 + 2 \cdot 3 + O(3^{7})$ & $(7, \pm 440) $ \\ 
 & $1 + 3 + 2 \cdot 3^{3} + 3^{4} + 2 \cdot 3^{5} +O(3^{7})$ & $(\frac{1}{7}, \pm\frac{440}{343})$  \\
 \hline
 & $2 + 2 \cdot 3^{2} + 2 \cdot 3^{3} + 2 \cdot 3^{4} + 2 \cdot 3^{5} + 2 \cdot 3^{6}  + O(3^{7})$ & $(-7,\pm 440)$ \\ 
 $\overline{(2, \pm 2)}$ & $2 + 3 + 2 \cdot 3^{2} + 3^{4} + 2 \cdot 3^{6} + O(3^{7})$ & $(-\frac{1}{7}, \pm \frac{440}{343})$ \\ 
 & $2 + 2 \cdot 3 + 2 \cdot 3^{2} + 2 \cdot 3^{3} + 2 \cdot 3^{4} + 2 \cdot 3^{5} + 2 \cdot 3^{6}  +O(3^{7})$ & $(-1, \pm 8)$  \\
 \hline 
 & $2 \cdot 3^{-1} + 1 + 2 \cdot 3 + 2 \cdot 3^{2} + 2 \cdot 3^{3} + 2 \cdot 3^{4}+ O(3^{7})$ &   \\
 $\overline{\infty^{\pm}}$ & $3^{-1} + 1 + 2 \cdot 3^{5} + 2 \cdot 3^{6} + O(3^{7})$ &   \\
 & $\infty^{\pm}$ & $\infty^{\pm}$  \\
  \hline
\end{tabular}
\end{center}

\subsubsection{Example 2: $K=\mathbb{Q}(\sqrt{3})$, $a=19$, $p = 11,$ $\mathfrak{p}=(2\sqrt{3}+1)$}
Note that in the previous example, all  identities which the functions $F_i$ satisfy on rational points are also implied by the functional equations the $F_i $ satisfy with respect to the automorphisms of the curve. Numerical experiments suggest that it is rare for the formula in Theorem \ref{thm2} to produce nontrivial identities that the $F_i $ satisfies on rational points, as when a curve has many rational points relative to its Mordell--Weil rank, it typically has many potential type V primes. 

However, there are instances where the theorem produces nontrivial identities between the values of $F_i (z)$ on rational points. When $a=19$ and $K=\mathbb{Q}(\sqrt{3})$, we find that $E(K)$ has rank 2, the prime above 2 is the only potential type V prime and the set $X(\mathbb{Q}(\sqrt{3}))$ has at least 28 points, coming from the $\Aut (X)$-orbits of $(0,1)$ together with the points
 $z_1 =(\sqrt{3},16)$, $z_2 =(-\sqrt{3}+2,-24\sqrt{3}+40)$ and $z_3 =(-39\sqrt{3}/71 + 98/71 , -2736216\sqrt{3}/357911 + 5551000/357911 )$. 

Local constants at $v$ above 2: one can compute a semistable model of $X$ over the totally ramified extension $L$ of $\mathbb{Q}_2 $ cut out by the polynomial
$$ 
F(t)=t^8 + 32t^7 + 448t^6 + 3584t^5 + 16096t^4 + 28160t^3 - 18432t^2 - 6912
$$
by first computing a smooth model of $E$ over $L$, for example as described in \cite[\S 10.2.3]{liu:2002}. 
Let $\beta $ be a root of $F$ in $L$, and define $\gamma := \frac{1}{3}(\beta ^2 +8\beta ). $
Using this model, we can show that the regular semistable model of $X$ over $L(\sqrt{3})$ has 9 irreducible components, and that the map
$
\pi _v :X(K_v )\to \{ a/8 :0\leq a\leq 8\}
$
is given by 
$$
z\mapsto \left\{ \begin{array}{cc}
0 & v(x(z)-1)>1,v(x(z)^2 +1-\gamma )\geq 3  \\
(3-v(x(z)^2 +1-\gamma ))/2 & v(x(z)-1)>1, 2\leq v(x(z)^2 +1-\gamma )\leq 3 \\
1/2 & v(x(z)^2 +1-\gamma )\leq 2 \\
(v(x(z)^2 +1-\gamma )-1)/2 & v(x(z)-1)=1, 2\leq v(x(z)^2 +1-\gamma )\leq 3 \\
1 & v(x(z)-1)=1,v(x(z)^2 +1-\gamma )\geq 3  \\
\end{array} \right.
$$
where the valuation $v$ is normalised so that $v(2)=1$.
For example this tells us that $\pi _v (z_0 )=1/2$. For $z_1 ,z_2 $ and $z_3 $, we are in case 4, since
$$
v(x(z_1 )^2 -1+\gamma )=5/2,
v(x(z_2 )^2 -1+\gamma )=11/4,
v(x(z_3 )^2 -1+\gamma )=11/4.
$$
Hence $\pi _v (z_1 )=3/4$ and $\pi _v (z_2 )=\pi _v (z_3 )=7/8$. 
We find that the divisor $$3[z_2 ]+[z_3 ]-6[z_1 ]$$ maps to zero in $\wedge ^2 E(K)\otimes \mathbb{Q}$ and in $H^1 (G_v ,\Sym ^2 V_E )$. Working at  a prime above 11, we compute that
$$
3F_1 (z_2 ) +F_1 (z_3 )-6F_1 (z_1 )=
3F_2 (z_2 ) +F_2 (z_3 )-6F_2 (z_1 ) =O(11^{18}).
$$

%\printindex

\section*{Acknowledgements}We are indebted to Minhyong Kim and Jan Vonk for helpful discussions, encouragement, and suggestions on the material in this paper. We thank the anonymous referees for several valuable comments on an earlier version of this manuscript, and Jan Steffen M\"uller and Jan Tuitman for numerous helpful discussions about $p$-adic heights. Part of this paper builds on the thesis of the second author, who is very grateful to his examiners Victor Flynn and Guido Kings for several suggestions which have improved the present work. The first author was supported by NSF grant DMS-1702196 and the Clare Boothe Luce Professorship (Henry Luce Foundation).  The second author was supported by the EPSRC during his thesis and by NWO/DIAMANT grant number 613.009.031.

\end{document}